\declaretheorem[name=Definition,style=definition,qed=$\dashv$,numberwithin=section]{dfn}
\declaretheorem[name=Theorem,style=plain,sibling=dfn]{tm}
\declaretheorem[name=Theorem,style=plain,numbered=no]{tm*}
\declaretheorem[name=Lemma,style=plain,sibling=dfn]{lem}
\declaretheorem[name=Proposition,style=plain,sibling=dfn]{prop}
\declaretheorem[name=Corollary,style=plain,sibling=dfn]{cor}
\declaretheorem[name=Remark,style=definition,sibling=dfn]{rem}
\declaretheorem[name=Remark,style=definition,numbered=no]{rem*}
\declaretheorem[name=Claim,style=plain]{clm}
\declaretheorem[name=Claim,style=plain,numbered=no]{clm*}
\declaretheorem[name=Sublaim,style=plain,numbered=no]{sclm*}
\declaretheorem[name=Case,style=definition]{case}
\declaretheorem[name=Fact,style=plain,sibling=dfn]{fact}
\declaretheorem[name=Notation,style=definition,sibling=dfn]{ntn}
\declaretheorem[name=Convention,style=definition,sibling=dfn]{conv}
\declaretheorem[name=Definition,style=definition,qed=$\dashv$,sibling=dfn]{newdfn}
\declaretheorem[name=Remark,style=definition,sibling=newdfn]{newrem}
\declaretheorem[name=Lemma,style=definition,sibling=newdfn]{newlem}
\declaretheorem[name=Theorem,style=plain,sibling=newdfn]{newtm}
\declaretheorem[name=Proposition,style=plain,sibling=newdfn]{newprop}
\declaretheorem[name=Corollary,style=definition,sibling=newdfn]{newcor}}
\declaretheorem[name=NewDefinition,style=definition,qed=$\dashv$,numberwithin=section]{newdfn}
\declaretheorem[name=NewRemark,style=definition,sibling=newdfn]{newrem}
\declaretheorem[name=NewLemma,style=definition,sibling=newdfn]{newlem}
\declaretheorem[name=NewTheorem,style=plain,sibling=newdfn]{newtm}
\declaretheorem[name=NewProposition,style=plain,sibling=newdfn]{newprop}
\declaretheorem[name=OldDefinition,style=definition,qed=$\dashv$,sibling=dfn]{olddfn}
\declaretheorem[name=OldLemma,style=definition,sibling=dfn]{oldlem}
\declaretheorem[name=NewCorollary,style=definition,sibling=newdfn]{newcor}}
\newcommand{\Troot}{\mathrm{root}}
\newcommand{\measdom}{\mathrm{meas}}
\newcommand{\spaceord}{\mathrm{space}}
\newcommand{\trcoll}{\mathrm{trcoll}}
\newcommand{\trclos}{\mathrm{trclos}}
\newcommand{\trcollt}{\mathrm{tc}}
\newcommand{\eqdef}{=_{\mathrm{def}}}
\newcommand{\shortimplies}{\Rightarrow}
\newcommand{\iso}{\cong}
\newcommand{\Ff}{\mathcal{F}}
\newcommand{\CC}{\mathbb C}
\newcommand{\RR}{\mathbb R}
\newcommand{\BB}{\mathbb B}
\newcommand{\sub}{\subseteq}
\newcommand{\cross}{\times}
\newcommand{\all}{\forall}
\newcommand{\ex}{\exists}
\newcommand{\inter}{\cap}
\newcommand{\om}{\omega}
\newcommand{\pow}{\mathcal{P}}
\newcommand{\OR}{\mathrm{OR}}
\newcommand{\Hull}{\mathrm{Hull}}
\newcommand{\cut}{\backslash}
\newcommand{\N}{N}
\newcommand{\Tt}{\mathcal{T}}
\newcommand{\Ss}{\mathcal{S}}
\newcommand{\Uu}{\mathcal{U}}
\newcommand{\Vv}{\mathcal{V}}
\newcommand{\Ww}{\mathcal{W}}
\newcommand{\Wwbar}{{\bar{\Ww}}}
\newcommand{\Xxbar}{{\bar{\Xx}}}
\newcommand{\rg}{\mathrm{rg}}
\newcommand{\dom}{\mathrm{dom}}
\newcommand{\ins}{\trianglelefteq}
\newcommand{\nins}{\ntrianglelefteq}
\newcommand{\pins}{\triangleleft}
\newcommand{\crit}{\mathrm{cr}}
\newcommand{\union}{\cup}
\newcommand{\rest}{\!\upharpoonright\!}
\newcommand{\com}{\circ}
\newcommand{\lh}{\mathrm{lh}}
\newcommand{\Ult}{\mathrm{Ult}}
\newcommand{\Ebar}{{\bar{E}}}
\newcommand{\sats}{\models}
\newcommand{\elem}{\preccurlyeq}
\newcommand{\J}{\mathcal{J}}
\newcommand{\PS}{\mathsf{PS}}
\newcommand{\AC}{\mathsf{AC}}
\newcommand{\HOD}{\mathrm{HOD}}
\newcommand{\ZFC}{\mathsf{ZFC}}
\newcommand{\ZF}{\mathsf{ZF}}
\newcommand{\ZFmin}{\mathsf{ZF^{-}}}
\newcommand{\Coll}{\mathrm{Col}}
\newcommand{\es}{\mathbb{E}}
\newcommand{\qbar}{\bar{q}}
\newcommand{\fbar}{\bar{f}}
\newcommand{\zetabar}{{\bar{\zeta}}}
\newcommand{\kappabar}{{\bar{\kappa}}}
\newcommand{\Pbar}{\bar{P}}
\newcommand{\eps}{\varepsilon}
\newcommand{\Ubar}{{\bar{U}}}
\newcommand{\jbar}{{\bar{j}}}
\newcommand{\core}{\mathfrak{C}}
\newcommand{\her}{\mathcal{H}}
\newcommand{\pred}{\mathrm{pred}}
\newcommand{\tc}{\mathrm{tc}}
\newcommand{\dam}{\mathrm{dam}}
\newcommand{\un}{\union}
\newcommand{\id}{\mathrm{id}}
\newcommand{\sq}{\mathrm{sq}}
\newcommand{\nth}{{\textrm{th}}}
\newcommand{\conc}{\,\,\widehat{\ }}
\newcommand{\bfSigma}{\utilde{\Sigma}}
\newcommand{\rSigma}{\mathrm{r}\Sigma}
\newcommand{\rPi}{\mathrm{r}\Pi}
\newcommand{\rDelta}{\mathrm{r}\Delta}
\newcommand{\spt}{\mathrm{spt}}
\DeclareMathOperator{\Th}{Th}
\DeclareMathOperator{\pTh}{pTh}
\DeclareMathOperator{\card}{card}
\DeclareMathOperator{\cof}{cof}
\DeclareMathOperator{\wfp}{wfp}
\newcommand{\Two}{\mathrm{II}}
\newcommand{\pbar}{\bar{p}}
\newcommand{\xvec}{\vec{x}}
\newcommand{\Dd}{\mathcal{D}}
\newcommand{\gammavec}{\vec{\gamma}}
\newcommand{\bfrSigma}{\utilde{\rSigma}}
\newcommand{\bfrDelta}{\utilde{\rDelta}}
\newcommand{\psub}{\subsetneq}
\newcommand{\Yy}{\mathcal{Y}}
\newcommand{\Xx}{\mathcal{X}}
\newcommand{\cHull}{\mathrm{cHull}}
\newcommand{\minterm}{\mathrm{m}\tau}
\newcommand{\unsq}{\mathrm{unsq}}
\newcommand{\Vvbar}{\bar{\Vv}}
\newcommand{\ttt}{\mathfrak{t}}
\newcommand{\dd}{\mathsf{dd}}
\newcommand{\lhfunc}{\mathfrak{lh}}
\newcommand{\origlast}{\zeta}
\newcommand{\adjlast}{{\origlast'}}
\newcommand{\qhat}{\hat{q}}
\newcommand{\cc}{\mathfrak{c}}
\newcommand{\nt}{\mathrm{nt}}
\newcommand{\M}{\mathsf{M}}
\newcommand{\fin}{M_\infty}
\newcommand{\Mbar}{\bar{\M}}
\newcommand{\pP}{\mathfrak{P}}
\newcommand{\ot}{\mathrm{ot}}
\newcommand{\lpole}{\left\lfloor}
\newcommand{\rpole}{\right\rfloor}
\newcommand{\lrgcrd}{\mathrm{lgcd}}
\newcommand{\iotabar}{\bar{\iota}}
\newcommand{\Hh}{\mathcal{H}}
\newcommand{\univ}[1]{\lpole #1\rpole}
\renewcommand{\dd}{\mathfrak{d}}
\newcommand{\tu}{\textup}
\newcommand{\R}{\mathcal{R}}
\newcommand{\lex}{{\text{lex}}}
\renewcommand{\dam}{{\mathrm{Da}}}
\newcommand{\Ein}{e}
\newcommand{\equality}{{\approx}}
\renewcommand{\qedsymbol}{$\Box$}
\newcommand{\shiftsupport}{\mathsf{Shift}}
\newcommand{\critical}{\mathsf{Crit}}
\newcommand{\starlevel}{\mathsf{Star}}
\newcommand{\definition}{\mathsf{Def}}
\renewcommand{\cut}{\backslash}
\newcommand{\meas}{\mathsf{MeasDef}}
\newcommand{\satisfaction}{\mathsf{Sat}}
\newcommand{\picode}{\varsigma}
\newcommand{\picodebar}{\bar{\picode}}
\newcommand{\adjust}{\mathrm{adj}}
\renewcommand{\dd}{\mathfrak{d}}
\newcommand{\Qfin}{Z}
\newcommand{\Rstar}{Q}
\renewcommand{\R}{R}
\renewcommand{\Ss}{S}
\newcommand{\pr}{\upsilon}
\newcommand{\prbar}{\bar{\pr}}
\newcommand{\prprime}{\epsilon}
\newcommand{\betavec}{\vec{\beta}}
\newcommand{\dropset}{\mathscr{D}}
\newcommand{\lgcd}{\mathrm{lgcd}}
\begin{document}
\title{The definability of $\es$ in self-iterable mice}
\author[fs]{Farmer Schlutzenberg}
\ead{farmer.schlutzenberg@gmail.com}
\fntext[thanks]{Thanks to John Steel for discussions on the topic of this paper.}
\begin{abstract}
Let $M$ be a fine structural mouse and let $F\in M$ be such that $M\sats$``$F$
is a total extender'' and $(M||\lh(F),F)$ is a premouse. We show that it
follows that $F\in\es^M$, where $\es^M$ is the extender sequence of $M$. We also
prove generalizations of this fact.

Let $M$ be a premouse with no largest cardinal
and let $\Sigma$ be a sufficient iteration strategy for $M$.
We prove that if $M$ knows enough of $\Sigma\rest M$
then $\es^M$ is definable over the universe $\univ{M}$ of $M$,
so if also $\univ{M}\sats\ZFC$ then $\univ{M}\sats$``$V=\HOD$''.
We show that this result applies in particular
to $M=M_\nt|\lambda$, where $M_\nt$ is the least non-tame mouse
and $\lambda$ is any limit cardinal of $M_\nt$.

We also show that there is no iterable bicephalus $(N,E,F)$ for which $E$ is
type $2$ and $F$ is type $1$ or $3$. As a corollary, we deduce a uniqueness
property for maximal $L[\es]$ constructions computed in iterable background
universes.
\end{abstract}
\begin{keyword}
inner model \sep mouse \sep extender \sep sequence \sep self-iterable \sep bicephalus
\MSC[2010] 03E45 \sep 03E55
\end{keyword}
\maketitle

\ifbool{refereecomments}{
\pagebreak

\input{comments_to_referee}

\pagebreak
}{}

\section{Introduction}\label{sec:intro}
Kunen \cite{kunen} showed that if $V = L[U]$ where $U$ is a
normal measure, then $U$ is the unique normal measure. Mitchell \cite{mitchell,
mitchellrevisit} constructed inner models with sequences of measurables
and proved analogous results regarding these. Steel \cite[\S8]{cmip}
and
Schimmerling/Steel \cite[\S2]{maxcore} also proved related results,
asserting roughly that if $P$ is an iterate of $K$
and $E$ is an extender which is sufficiently certified
and $(P||\alpha,E)$ is a premouse, then $E$ is on the extender sequence of $P$.

In this paper we will consider variants of these results pertaining to
fine-structural mice $M$ below a superstrong cardinal. Let $F\in M$ be such that
$M\sats$``$F$ is a
total (and possibly wellfounded) extender''. We are
interested in questions such as:
\begin{enumerate}
 \item Is $F$ in the extender sequence $\es_+^M$ of $M$?
 \item Is $F$ the extender of an iteration map on
$M$?\footnote{John Steel suggested this question, under the added
assumption that $V=\univ{M}$.}\footnote{Where the iteration map is via a fine
structural
tree on $M$, as in \cite{fsit}.}
\end{enumerate}

We will show in Theorems
\ref{thm:easy_coh}-\ref{thm:cohering}, the central results of the paper, that
under further reasonable (but significant) assumptions, the answer (to at
least one of the above questions) is in
the affirmative, and thus, $\es^M$ is in some sense maximal with regard to
extenders. The statements of these theorems are analogous to the results
of Steel and Schimmerling/Steel mentioned above. The simplest case is the following theorem. (See 
\S\ref{ssec:ntn} for a review of terminology and notation. Throughout the paper, we use the 
definition of \emph{premouse} from \cite[\S2]{outline}. These models do not have extenders of 
superstrong type on their extender sequence. For the
central results, this is the only anti-large cardinal hypothesis we require.
For premice with superstrongs things are somewhat different,
as discussed in \ref{rem:superstrong}.)

\begin{tm*}[\ref{thm:easy_coh}]
Let $N$ be a $(0,\om_1+1)$-iterable premouse. Let $E\in N$ be such that
$(N||\lh(E),E)$ is a premouse and $E$ is total over $N$.
Then $E\in\es^N$.
\end{tm*}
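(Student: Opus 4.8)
The plan is to run a comparison argument in the style of Schimmerling and Steel \cite[\S2]{maxcore}, with their certification hypothesis replaced by the assumption $E\in N$, which already makes $E$ close to $N$. Suppose toward a contradiction that $E$ is not on $\es^N$. Put $\kappa=\crit(E)$, $\nu=\neu(E)$, $\lambda=\lh(E)$, and let $i=i^N_E\colon N\to R=\Ult(N,E)$; since $E$ is total over $N$, this is a full ($\Sigma_0$-cofinal) ultrapower of $N$ with no drop at the base, and $R$ is wellfounded because $N$ is. From the hypothesis that $(N\|\lambda,E)$ is a premouse --- so $E$ coheres $N\|\lambda$ and obeys the initial segment condition --- together with totality of $E$, one checks that $N$ and $R$ agree below $\nu$, and indeed up to $\lambda$; this is exactly the agreement needed to form the ultrapower phalanx $\mathcal{B}=((N,R),\nu)$.

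The first substantial step is the key lemma that $\mathcal{B}$ is $(0,\om_1+1)$-iterable. Here one exploits $E\in N$: the pair of maps $(\id_N,i)$ ``realizes'' $\mathcal{B}$ inside $N$, and a copying/realization argument turns any putative ill-behaved tree on $\mathcal{B}$ into a tree on $N$ played by the iteration strategy for $N$, so that ill-foundedness is pushed back to $N$. I expect this lemma to be the main obstacle: it needs the precise agreement facts above and some care with the fine structure --- degrees of ultrapowers, dropping, and the type of $E$ (which is $1$, $2$, or $3$, since premice here carry no superstrong extenders) --- though it is of a familiar kind. In the special case $\es^N_\lambda\neq\emptyset$ one can instead observe that $(N\|\lambda,E,\es^N_\lambda)$ is then a bicephalus, iterable because both of its top extenders are in $N$, and conclude $E=\es^N_\lambda$ by bicephalus comparison; but the phalanx argument treats all cases at once.

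Granting iterability, coiterate $\mathcal{B}$ with $N$, obtaining successful normal trees $\Tt$ on $\mathcal{B}$ and $\Uu$ on $N$, of length $\le\om_1+1$, with last models $P$ and $Q$, one an initial segment of the other and the branch reaching the longer model not dropping. Since the $0$th node of $\mathcal{B}$ is literally $N$, which is also the base of $\Uu$, the least disagreement is located using the $1$st node $R$; hence the main branch of $\Tt$ issues from $R$, giving a branch embedding $\sigma\colon R\to P$ all of whose extenders have critical point $\ge\nu$, so that $\sigma\circ i\colon N\to P$ is elementary, while on the $N$ side one argues in the usual way that the main branch of $\Uu$ does not drop, yielding an iteration map $\pi\colon N\to Q$. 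Applying the Dodd--Jensen property of the strategy for $N$ to $\pi$ and to $\sigma\circ i$ gives $P=Q$ and $\pi(\xi)\le\sigma(i(\xi))$ for every ordinal $\xi$; as $\crit(\sigma)\ge\nu>\kappa$ this pins $\crit(\pi)=\kappa$ and forces $\pi$ to agree with $i$ low down. A final analysis of the two trees below $\lambda$ --- using the agreement of $N$ and $R$ up to $\lambda$ and the coherence of $E$ --- then identifies the first extender applied in $\Uu$ as $E$ itself, so $E=\es^N_\lambda$ lies on $\es^N$: contradiction. The degenerate possibilities met along the way ($\Tt$ or $\Uu$ trivial, or the $\Tt$-main branch issuing from the $0$th node) are excluded because each would exhibit the nontrivial ultrapower map $i$ as a factor of the identity.
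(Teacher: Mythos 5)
Your overall plan — prove that a suitable ultrapower phalanx is iterable and then run a comparison — matches the shape of the paper's argument. But two linked choices in your proposal are wrong in a way that hides the real work, and the iterability lemma, which you correctly identify as the crux, does not go through as you describe.

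\textbf{The realization $(\id_N,i)$ does not make sense.} You propose to realize $\mathcal{B}=((N,R),\nu)$ ``inside $N$'' via $\id_N$ and $i=i^N_E$. But a realization of the phalanx must send the second node $R=\Ult(N,E)$ \emph{into} an iterate or initial segment of $N$; the map $i$ goes the other way, from $N$ into $R$. If $E$ were on $\es^N$ one could realize $R$ trivially (as $M^{\Tt_E}_1$ for the legitimate one-extender tree $\Tt_E$), but that is exactly what is at issue. The paper gets around this by taking a countable hull $M=\cHull^{N|\gamma}_{k+1}(\emptyset)$, forming $\Ubar=\Ult_k(M,\Ebar)$, and then producing a near $k$-embedding $\psi':\Ubar\to M'$ where $M'$ is a hull of $\Ult(\R,E)$ chosen to project to $\iota=\lrgcrd(P\|\lh(E))$; condensation and the fact that $\lh(E)$ is a cardinal of $\Ult(\R,E)$ (i.e., coherence, which is where the premousehood hypothesis is doing real work) then show $M'\pins N|\lh(E)$. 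That is the nontrivial step your outline replaces with an assertion.

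\textbf{The exchange ordinal cannot be $\nu(E)$.} Even with the correct realization, $\psi'$ agrees with the uncollapse $\pi$ only up to $\iotabar=\lrgcrd(\Pbar\|\lh(\Ebar))$, and $\psi'(\lh(\Ebar))<\lh(E)=\pi(\lh(\Ebar))$. When $E$ is type~$2$ we have $\iota<\nu(E)$, so if you put the exchange ordinal at $\nu$ as you propose (following \cite[8.6]{cmip}), a tree extender with critical point in $[\iotabar,\nu(\Ebar))$ would apply to the base model, and the copy maps would no longer agree on the relevant interval; the lift breaks. The paper's Lemma \ref{lem:extendermaximality} deliberately uses $\iota$ as the exchange ordinal precisely for this reason, and the footnote in the proof of the iterability Claim explains why even $\iota+1$ would fail. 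This is not a cosmetic choice: Steel's original version of this theorem (which your argument is effectively reproducing) carried the extra hypothesis that $\nu_E$ is regular in $N$, which is what made the $\nu_E$-exchange phalanx tractable; Theorem \ref{thm:easy_coh} drops that hypothesis.

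\textbf{This is why the type~$2$ case is genuinely harder.} Once the exchange ordinal is $\iota$, the comparison branch embedding $i^\Xx$ may have critical point exactly $\iota\le\gamma$ (the largest generator of $E$), so the final identification of the comparison extender with $E$ is no longer ``standard arguments''. The paper handles this with the Dodd decomposition and core sequence machinery of \S\ref{sec:Dodd} (Claims \ref{clm:1extG}--\ref{clm:factor} in the proof of \ref{lem:extendermaximality}), which your proposal omits entirely. Your ``final analysis of the two trees below $\lambda$'' works for types $1$ and $3$ (where $\iota=\nu(E)$) but is a gap for type~$2$. The side remark about the bicephalus $(N\|\lambda,E,\es^N_\lambda)$ has the same problem: ``iterable because both of its top extenders are in $N$'' is not a justification --- the iterability of a bicephalus is proved via backgrounds in an $L[\es]$-construction, and knowing merely that $E,\es^N_\lambda\in N$ (with only $\es^N_\lambda$ known to be on $\es^N$) does not yield it.
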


Steel first proved the following version of \ref{thm:easy_coh};
see \cite{mim} for the proof.

\begin{tm*}[Steel]
Let $N$ be a $(0,\om_1+1)$-iterable premouse with no largest cardinal.
Let $E\in N$ be such that $N\sats$``$E$ is a total wellfounded extender
such that $\nu=\nu(E)$ is regular and $N||\nu=\Ult(N,E)||\nu$''.
Then $E\in\es^N$.\end{tm*}

Note that Steel's result requires slightly less coherence
of $E$ with respect to $\es^N$ (through $\nu(E)$), than does \ref{thm:easy_coh} (through $\lh(E)$).
The ``no largest cardinal'' is more than enough for Steel's result, but
some such closure was used. On the other hand, in \ref{thm:easy_coh},
$\nu(E)$ need not be a cardinal in $N$,
and we do not explicitly demand that $\Ult(N,E)$
be fully wellfounded; it is only required that $\Ult(N,E)$ be wellfounded through
$\lh(E)+1$ (this is part of the requirement that $(N||\lh(E),E)$ be a premouse).

Theorems \ref{thm:finite_coh}--\ref{thm:cohering} are modifications of
\ref{thm:easy_coh} in which $E$ fits on the sequence of an iterate $P$ of $N$ (that is, $(P||\lh(E),E)$ is a premouse).
In \ref{thm:cohering}, we also relax the requirement that $E\in N$,
demanding instead that $E$ be appropriately definable over $N$ (and that the iteration tree leading from $N$ to $P$
is likewise definable).
In order to state and prove this result precisely we must develop
a coding of iteration trees over premice, which takes some work.

Because these results give criteria for an extender $E$ to be in $\es_+^N$, or
to be in $\es_+^P$ for an iterate $P$ of $N$, one can try to use them to show
that a mouse can recognize its own extender sequence, and we achieve this in certain situations.
Given a premouse $M$, we write $\univ{M}$ for the universe of $M$.
Recall that $M_n$ is the canonical proper
class inner model with $n$ Woodin cardinals.
Steel showed that for $n\leq\om$, 
$K^{M_n}=M_n$,\footnote{For $n=1$, Steel argued as follows. In $M_1$, let $\mu$ be 
measurable and $\delta$ be Woodin. Let $K_\mu$ be Steel's core model of height 
$\mu$,
as computed in $V_\delta^{M_1}$ (see \cite{cmip}). Steel showed that $K_\mu=M_1|\mu$.
This yields a definition of $\es^{M_1}$ in $\univ{M_1}$.
For $n>1$, he defines $K|\delta_i$ inductively on $i$,  where $\delta_i$ is Woodin and $\delta_{-1}=0$,
defining
$K|\delta_i=K(K|\delta_{i-1})$, proceeding much as for $M_1$ in the interval $(\delta_{i-1},\delta_i)$. In $M_n$, $K|\delta_i=M_n|\delta_i$
is above-$\delta_{i-1}$, $\delta_{i}$-iterable.} and therefore
$\es^{M_n}$ is definable over
$\univ{M_n}$ and $\univ{M_n}\sats$``$V=\HOD$''.
It seems to be unknown precisely how far Steel's result generalizes.
We will show anyway in \ref{prop:it_error} that
mice with a measurable limit of Woodins typically have a significant
failure of self-iterability, one which seems to present a difficulty
in generalizing Steel's result to this level.
Nonetheless, using results in \S3,
we will prove Theorem \ref{thm:tamesi},
a consequence of which  is the following
(we write $M_\nt$ for the minimal
non-tame mouse; see \S4):

\begin{tm*}
Suppose $M_\nt$ exists and is $(\om,\om_1,\om_1+1)$-iterable.
Let $\lambda$ be a limit cardinal of $M_\nt$ and $N=M_\nt|\lambda$.
Then $\es^N$ is definable over $\univ{N}$.
\end{tm*}

Note that $M_\nt$ is well beyond a measurable limit of Woodins.
Analogous methods work for many tame mice.
The method depends on
Theorems \ref{thm:es_def} and \ref{thm:es_def_2},
a corollary of which is:
\begin{tm*}
Let $Z$ be a premouse satisfying $\ZFC+$``I am $(\om,\OR,\OR)$-iterable''. Then $\es^Z$ is definable 
over $\univ{Z}$ and 
$\univ{Z}\sats$``$V=\HOD$''.
\end{tm*}

The self-iterability hypothesis of this theorem
fails if $Z$ has a Woodin cardinal,
but Theorems \ref{thm:es_def} and \ref{thm:es_def_2} themselves 
are versions suited to premice with Woodin cardinals,
and which give the same conclusion.
The
method breaks down with non-tame mice, because of a more serious lack of self-iterability
(see \ref{prop:nontame}).

Finally, in \S\ref{sec:bicephali}, we adapt Mitchell and Steel's ``Uniqueness of the next extender''
result \cite[9.2]{fsit} to bicephali $B$ of
the form $(\univ{B},E,F)$, with $E$ type 2 and $F$ type 1 or 3.  This has
positive implications regarding the uniqueness of maximal
$L[\es]$-constructions. (Although the question is natural, uniqueness
in this particular case was not required for the arguments in \cite{fsit};
and such bicephali were not considered there.)

The paper proceeds as follows. In \S\ref{ssec:ntn} we review notation and terminology.
In \S\ref{sec:Dodd} we review Dodd fine structure 
for extenders (due to Steel) and prove various related facts. In
\S\ref{sec:cohering} we state and prove the central results
\ref{thm:easy_coh}--\ref{thm:cohering}, making use of most of the work in
\S\ref{sec:Dodd}. However, the proof of \ref{thm:easy_coh}, for example,  only 
depends on the results in \S\ref{sec:Dodd} in the case that $E$ is type 2, so one  can basically  skip 
\S\ref{sec:Dodd} if one is only interested in the case that $E$ is type 1 or 3,
and refer to parts as needed.
In \S\ref{sec:stacking} we consider, for
premice $N$, the
definability of $\es^N$ over $\univ{N}$. This section
uses only Theorems \ref{thm:strategy_coh_proj} and \ref{thm:cohering_simple} from \S\ref{sec:cohering}, and in
particular, not Theorem \ref{thm:cohering}, nor the material on coding of
iteration
trees in \S\ref{sec:cohering}. Finally, \S\ref{sec:bicephali} is on
bicephali. This section can be read independently of
\S\S\ref{sec:cohering},\ref{sec:stacking}, and depends only a little on \S\ref{sec:Dodd}.

The pieces of \S\S\ref{sec:Dodd}--\ref{sec:stacking} due to the author are taken primarily from the author's dissertation \cite{mim}, with various
refinements having been incorporated. There was a significant error in 
a draft of this paper, which is addressed in \ref{rem:it_error} and \ref{prop:it_error}.

\subsection{Conventions and Notation}\label{ssec:ntn}
\textbf{General}: For $\nu\in\OR$, $\her_\nu$ denotes the set of sets
hereditarily of size less than $\nu$. For an extensional $H$, $\trcoll(H)$
denotes the transitive collapse of $H$. Given any set $X$, $\trclos(X)$ denotes the transitive 
closure of $X$.
We use the lexicographic order on $[\OR]^{<\om}$: $a<_\lex b$
iff $a\neq b$ and $\max(a\Delta b)\in b$. We sometimes identify elements of $[\OR]^{<\om}$ with 
strictly descending sequences of ordinals. Let 
$a\in[\OR]^{<\om}$ with $a=\{a_0,\ldots,a_{k-1}\}$
where $a_i>a_{i+1}$ for all $i+1<k$. We write $a\rest j$ for $\{a_0,\ldots,a_{j-1}\}$.
Let either $\sigma,\tau$ be sequences, or $\sigma,\tau\in[\OR]^{<\om}$.
We write $\sigma\ins\tau$ iff there is $k$ such that $\sigma=\tau\rest k$,
and write $\sigma\pins\tau$ iff $\sigma\ins\tau$ and $\sigma\neq\tau$.

\textbf{Premice}: Premice are as in \cite{outline}, except that we officially consider a \emph{premouse}
to be an amenable structure $P=(\J_\alpha^\es,\es,\widetilde{F})$,
where $\widetilde{F}$ is the amenable coding of the active extender $F$
of $P$, as described in \cite[2.9--2.10]{outline}. (Note that this coding is independent from squashing,
and in this paragraph we consider no squashed premice.) We may blur the distinction between $F$ and $\widetilde{F}$;
whenever we write $(\J_\alpha^\es,\es,F)$ we literally mean $P$ as above.
We write $\univ{P}$ for the universe $\J_\alpha^\es$ of $P$, $F^P=F(P)$ for $F$, $\es^P=\es(P)$ for
the extender sequence $\es$ of $P$, excluding $F^P$,
and $\es^P_+=\es_+(P)$ is $\es^P\conc F^P$.
If $P$ has a largest cardinal it is denoted
by $\lrgcrd(P)$. 
We write $Q\ins P$ iff $Q$ is an initial segment of $P$,
that is, either $Q=P$ or $Q=(\J_\alpha^{\es^P},\es^P\rest\alpha,\es^P_\alpha)$
for some limit ordinal $\alpha<\OR^P$.
We write $Q\pins P$ iff $Q\ins P$ and $Q\neq P$.
Let $\alpha\leq\OR^P$ be a limit. Then
$P|\alpha$ denotes the $Q\ins P$ such that $\OR^Q=\alpha$; and
$P||\alpha$ denotes $(\univ{Q},\es^Q,\emptyset)$ where $Q=P|\alpha$.
The notation $\nu(P)=\nu(F^P)$ is discussed below.
\emph{ISC} abbreviates ``initial
segment condition''. Such notation and terminology is used likewise a little more generally, for example with 
respect to segmented-premice (defined in \ref{dfn:segmented}).

\textbf{Extenders}: Our use of the term 
\emph{extender} allows long, non-total measure spaces, and does not require 
full wellfoundedness of corresponding ultrapowers (clarified below).\footnote{The motivation for 
this is as 
follows. Most of the time we will deal with extenders $E$ over some structure $M$, where $E$ is 
likely not over $V$ (it does not measure enough sets).
We want to be able to use the term \emph{extender} to refer to such $E$,
instead of \emph{partial \tu{(}pre-\tu{)}extender}. But for partial extenders, it is difficult to 
give a useful general notion of \emph{completeness} or \emph{wellfoundedness}. Thus, we prefer not 
to make any such demand in general (in our definition of \emph{extender}). Although this 
usage of \emph{extender} diverges from the conventional formal one (being $V$-total and 
 countably complete),
it also seems to be pretty common in informal usage, and is convenient for our present 
purposes. We also need to deal with long extenders in general.} We will only consider 
those extenders which can be considered as being over (the universe of a) premouse. By restricting 
our attention to these we simplify some small considerations. Let 
us clarify. Let $M$ be a premouse. Much as in \cite[Definition 2.1 and 
following 
paragraph]{outline}, we say that $E$ is an
\emph{extender over} $\univ{M}$ iff there is a structure $N$
 ($N$ need not be transitive) and $j:\univ{M}\to N$ which is $\Sigma_0$-elementary 
(in the language of set theory) such that $\kappa=\crit(j)$ exists and $\kappa+1\sub\wfp(N)$, and 
there is $\delta\in(\kappa,\OR^M]$ and $S\sub\wfp(N)$ such that
\[ E=\{(x,a)\mid x\in M\ \&\ \ex\xi<\delta[x\sub[\xi]^{|a|}]\ \&\ a\in[S]^{<\om}\inter j(x)\}. \]
We write $\kappa_E=\crit(E)$ for the critical point of $E$.
The \emph{space} of $E$, denoted $\spaceord(E)$, is $\delta$.
The \emph{measure domain} of $E$ is
\[ \measdom(E)=\bigcup_{(\xi,n)\in\delta\cross\om}\pow([\xi]^n)\inter M.\]
We say that $E$ is \emph{total} iff
$\measdom(E)=\bigcup_{(\xi,n)\in\delta\cross\om}\pow([\xi]^n)$.
The \emph{support} of $E$ is $\spt(E)=S$. 
We say that $E$ is \emph{short} iff $\spaceord(E)=\kappa_E+1$.
(One could require that $S\sub\bigcup_{\xi<\delta}j(\xi)$ and
there be at most one $\xi<\delta$ such that $S\sub j(\xi)$,
as only this part of $E$ is relevant.) If $E$ is short and $S=\lambda\in\OR$,
we write $E_{j,\lambda}=E$, and write $E_j=E_{j,j(\kappa_E)}$.

An \emph{amenable transitive structure} is a structure 
$R=(\univ{R},P_0,\ldots,P_{k-1})$ with transitive universe $\univ{R}$ and finitely many 
predicates $P_i$, such that each $P_i$ is amenable to $\univ{R}$. An amenable transitive structure 
$R=(\univ{R},\es,\ldots)$ is \emph{pm-based}\footnote{\emph{pm} abbreviates 
\emph{premouse}.} iff $(\univ{R},\es,\emptyset)$ is a (passive)
premouse.
Given $F$ and a pm-based $R$, we say that $F$ is an \emph{extender over} $R$ iff
$F$ is an extender over $\univ{R}$.
A \emph{pm-based extender} is an extender over (the universe of) some premouse (equivalently, 
over some pm-based structure).
So pm-based extenders can be be non-short,
and when short, we use the phrase \emph{extender over} (a premouse) where 
\cite{outline} uses \emph{pre-extender over}.
Finally, \emph{in this paper all extenders we consider are pm-based extenders}, so at this point we 
adopt the:
\begin{conv}\label{conv:extender}
 \emph{Extender} abbreviates \emph{pm-based extender}.
\end{conv}
Thus, we always explicitly state assumptions
on the totality of extenders, and on the wellfoundedness of ultrapowers by extenders.

Let $R$ be a premouse and $E$ an extender over $R$. Let $M$ be a pm-based 
structure. Suppose that 
$\her_{\spaceord(E)}^M=\her_{\spaceord(E)}^R$. Then $E$ is an extender over $M$. In fact, 
$N=\Ult_0(M,E)$ 
is formed as usual (including the predicates of $N$)
and likewise the associated ultrapower embedding $j=i^{M}_E$, and $j:M\to N$ is 
cofinal and $\Sigma_0$-elementary with respect to the language of $M$; this is routine by Lo\'{s}' 
Theorem. (But $N$ might be illfounded.)
We abbreviate $i^M_E$ by $i_E$ if $M$ is understood.
Ultrapowers are generally by default at degree $0$ (as above), unless context dictates otherwise.
If $M$ is a premouse, then $i^{M,k}_E$
denotes the degree $k$ ultrapower embedding $M\to\Ult_k(M,E)$, if defined.
Sometimes we might abbreviate this with $i^M_E$ or $i_E$.
Given $a\in[\spt(E)]^{<\om}$ and an $\bfrSigma_k^M$ function $f$,
$[a,f]^{M,k}_E$ denotes the object represented by the pair $(a,f)$ in $\Ult_k(M,E)$;
we may also write $[a,f]^M_E$ if we wish to suppress $k$.
The notation $\Ult_k(E,F)$, for $F$ an extender, and the notation $\com_k$, are
introduced in \ref{dfn:extcomp}.

Let $E$ be an extender over $M$. Note that if $\spt(E)\in\OR$ then for each $\alpha\in\spt(E)$ we have $\alpha=[\{\alpha\},f]^M_E$
where $f=\id$ has sufficiently large domain.
For $\beta\leq\alpha\in\spt(E)$, recall that $\beta$ is a \emph{generator} of $E$
iff $\beta\neq[a,f]^M_E$ for all $f$ and $a\in[\beta]^{<\om}$.
The \emph{natural length} $\nu(E)=\nu_E$ of $E$ is the maximum of $(\kappa_E^+)^M$ and the strict sup of generators of $E$.
For $X\sub\spt(E)$, $E\rest X$ is the sub-extender of $E$ with support $X$ and domain the 
least $\delta\leq\spaceord(E)$ such that either $\delta=\spaceord(E)$ or $i_E(\delta)\geq\sup(X)$.
(Thus, $E\rest\spt(E)$ is equivalent to $E$ in terms of the ultrapowers it produces, but might 
have smaller domain.)
Given an active premouse or related structure $P$, $\lh(F^P)$ denotes
the length of $F^P$, that is, $\OR^P$.
We sometimes write $\lh(F)$ to denote $\lh(F^P)$,
when $P$ is as above and
$F=F^P\rest(\tau\cup t)$ and $F^P$
is generated by $\tau\cup t$ where $\tau\in\OR$ and $t\in[\OR]^{<\om}$
(see \ref{dfn:gen}).

\textbf{Fine structure}: For definability over premice, we basically follow \cite{outline},
using the $\rSigma_n$ hierarchy and $n^\nth$ core $\core_n(P)$ basically as there.
As in \cite{outline}, $\core_0(P)$ denotes the squash $P^\sq$ of $P$ if $P$ is type 3,
and otherwise just denotes $P$ (as mentioned earlier, $P$ is by definition amenable).
If $P$ is not type 3, we also define $P^\sq=P$, so in all cases, $\core_0(P)=P^\sq$.
Also in general, $\core_0(P)^\unsq$ denotes $P$.

Let $n<\om$, let $P$ be an $n$-sound premouse with $\om<\rho_n^P$, and let $X\sub
\core_0(P)$. Let $H$ be (i) the set of points 
in $\core_0(P)$ definable
over $\core_0(P)$ with a generalized $r\Sigma_{n+1}$ term from parameters in
$X\un\{u_n^P\}$ (cf.~\cite[2.8.1]{fsit}). By 
\cite[Proof of Lemma 2.10]{fsit}, $H$ coincides with (ii) the set of points $y\in\core_0(P)$ such 
that for some $\rSigma_{n+1}$ formula $\varphi$ and $\vec{x}\in(X\un\{u_n^P\})^{<\om}$, $y$ is the 
unique $y'\in\core_0(P)$ such that $\core_0(P)\sats\varphi(\vec{x},y')$ (this uses the 
$n$-soundness of $P$ and that $\vec{x}$ can include $u_n^P$). We write 
$\Hull^P_{n+1}(X)$ for the
structure
\[ (H,H\inter\es^{\core_0(P)},H\inter F^{\core_0(P)}),\]
and write $\cHull_{n+1}^P(X)$ for its transitive collapse. We 
may occasionally
identify a type 3 premouse with its squash, so in particular, if $P$ is type
3, then we might write $\cHull_{n+1}^P(X)$ where we really mean
$\cHull_{n+1}^P(X)^{\unsq}$. We define $\Hull_\om^P(X)$ and $\cHull_\om^P(X)$
similarly. For $\delta\in[\rho_{n+1}^P,\OR^P]$, the $\delta$-core of $P$ is
$\cHull_{n+1}^P(\delta\un\{p_{n+1}^P\})$.
Also, $\Th_{n+1}^P(X)$ and $\pTh_{n+1}^P(X)$ respectively denote the generalized
and pure $\rSigma_{k+1}$ theories of $\core_0(P)$ in parameters in
$X\un\{u_n^P\}$. (These theories include only generalized or pure $\rSigma_{k+1}$ formulas,
not negations thereof.) We will make use of the stratification of the pure theories given in 
\cite[Proof of Lemma 2.10]{fsit}. Because $H$ above is determined by the corresponding pure theory, 
we also have a corresponding stratification of $H$; see
\ref{dfn:minterm}.
Given $p\in[\rho_0^P]^{<\om}$, an \emph{$(n+1)$-solidity witness
for $(P,p)$} (or just for $p$) is a theory
\[ \cHull_{n+1}^P(\alpha\cup(p\cut(\alpha+1))),\]
where $\alpha\in p$. A \emph{generalized $(n+1)$-solidity witness
for $(P,p)$} is analogous, but defined as in \cite{zeman}.
A \emph{\tu{(}generalized\tu{)} $(n+1)$-solidity witness for $P$} is 
that for $(P,p_{n+1}^P)$. And $w_{n+1}^P$ denotes the set of
all $(n+1)$-solidity witnesses for $P$.

We take \emph{weak $k$-embedding} $\pi:M\to N$ to be defined as in \cite{copy_con} (this definition is due to Steve Jackson).
That is, the definition is as usual, except that we add the demand that there
be a cofinal set $X\sub\rho_k^M$ such that $\pi$ is $\rSigma_{k+1}$-elementary on $\Hull_{k+1}^M(X)$.
(This ensures that the proof of the Shift Lemma goes through as expected. We do not know
whether one can prove the Shift Lemma for weak $k$-embeddings as defined in \cite{fsit},
when $1\leq k<\om$.)

\textbf{Iteration trees}: Structures such as premice, phalanxes of premice, bicephali, pseudo-premice, etc, we call 
\emph{premouse-related}.
All iteration trees (see \cite{outline}) we consider in this paper will be fine-structural, in that they are much as in 
\cite[\S5]{fsit}, and based on premouse-related structures.
We will not specify 
exactly what we mean by the general term \emph{iteration tree}, but it 
suffices to consider 
$k$-maximal trees (see below) and stacks thereof. Let $\Tt$ be an iteration tree. We write 
$M^{*\Tt}_{\alpha+1}$ for 
the model to
which $E^\Tt_\alpha$ applies after any drop in model, and
$i^{*\Tt}_{\alpha+1,\beta}:M^{*\Tt}_{\alpha+1}\to M^\Tt_\beta$
for the canonical embedding, if it exists. We write
$\kappa^\Tt_\alpha=\crit(E^\Tt_\alpha)$,
$\nu^\Tt_\alpha=\nu(E^\Tt_\alpha)$ and $\lh^\Tt_\alpha=\lh(E^\Tt_\alpha)$. If
$\lh(\Tt)=\theta+1$, then $\fin^\Tt=M^\Tt_\theta$, $b^\Tt=[0,\theta]_\Tt$,
and if there is no drop along
$b^\Tt$ then $i^\Tt=i^\Tt_{0,\theta}$. We say
$\Tt$ is \emph{above} $\rho$ iff $\rho\leq\crit(E^\Tt_\alpha)$ for each
$\alpha+1<\lh(\Tt)$.

For $k\leq\om$, the notion \emph{$k$-maximal iteration tree $\Tt$} (on a $k$-sound premouse) is 
defined in 
\cite[Definition 3.4]{outline}, and
equivalently in \cite[Definition 6.1.2]{fsit}.
The main points are that for all $\alpha+1<\lh(\Tt)$, with $\kappa=\kappa^\Tt_\alpha$, we have: 
(i) $\lh(E^\Tt_\beta)<\lh(E^\Tt_\alpha)$ for all 
$\beta<\alpha$;\footnote{In \cite{fsit} this requirement is included in the definition of 
\emph{iteration tree}, whereas in 
\cite{outline} it is a requirement in the game $\mathcal{G}_k(M,\theta)$. The book \cite{cmip} 
seems to weaken this requirement in its use of \emph{$k$-maximal}.} (ii) 
$\pred^\Tt(\alpha+1)$ is the least $\beta$ such that $\kappa<\nu^\Tt_\beta$; (iii) 
$M^{*\Tt}_{\alpha+1}$ is the largest $N\ins M^\Tt_\beta$ such that $E^\Tt_\alpha$ measures 
$\pow(\kappa)\inter N$; and (iv) $\deg^\Tt(\alpha+1)$ is the largest $n\leq\om$ such that 
$\kappa^\Tt_\alpha<\rho_n(M^{*\Tt}_{\alpha+1})$ and either $[0,\alpha+1]_\Tt$ drops or 
$n\leq k$. We will also extend, in an obvious 
manner, the term \emph{$k$-maximal iteration tree} to trees on premouse-related structures. Any 
non-obvious details in relation to such will hopefully be clear in 
context. We often abbreviate \emph{$k$-maximal iteration tree} by \emph{\tu{(}$k$-\tu{)}maximal 
\tu{(}tree\tu{)}}. Whenever we use the term \emph{maximal} with regard to
an iteration tree, it is in the sense of \emph{$k$-maximal} (for some, or for the relevant, $k$).
The iteration trees we consider will all be either maximal, or stacks thereof (but most
will be maximal).

Let $k\leq\om$, let $M$ be a $k$-sound premouse, and let $\theta\leq\OR$. The notions 
\emph{$(k,\theta)$-iteration strategy} for $M$ and \emph{$(k,\theta)$-iterability} of $M$ are 
defined in \cite[Definition 3.9]{outline}. (In particular, such a strategy $\Sigma$ yields 
$\Tt$-cofinal wellfounded branches exactly for $k$-maximal trees $\Tt$ which are according to 
$\Sigma$ and have limit length $<\theta$.) For \emph{$(k,\alpha,\theta)$-iteration strategy} and 
\emph{$(k,\alpha,\theta)$-iterable} see \cite[Definition 4.4]{outline}.\footnote{
The author claims that all assumptions of $(k,\alpha,\theta)$-iterability
in the paper can actually be weakened and replaced with $(k,\alpha,\theta)^*$-iterability
(see \cite[p.~1202]{cmwmwc}).}
Given $\rho\in\OR$ and $\Sigma$, we say that $\Sigma$ is an ($(k,\theta)$-, etc) \emph{iteration 
strategy for $M$ above $\rho$} iff $\Sigma$
works as an ($(k,\theta)$-, etc) iteration strategy with respect to trees above $\rho$.
We extend this language in the obvious manner to \emph{iteration strategies} and \emph{iterability}
for premouse-related structures.

If $\Tt$ is $k$-maximal then $\Phi(\Tt)$ denotes the phalanx associated to
$\Tt$ (see \cite{cmip}). 
Let $M,N$ be premice and $m,n\leq\om$ such that $M$ is $m$-sound and $N$ is $n$-sound.
Let $\delta\leq\OR^M$ and $\lambda\leq\OR^N$ with
$\delta\leq\lambda$, $M||\delta=N|\delta$ and $\delta$ is a cardinal of $N$.
We write $\pP=((M,m,\delta),(N,n),\lambda)$ for the phalanx on which maximal trees $\Tt$ are formed
with the usual conditions augmented by the following: (i) $M^\Tt_{-1}=M$ and $\deg^\Tt(-1)=m$ and $M^\Tt_0=N$ and $\deg^\Tt(0)=n$, (ii) $E^\Tt_0$ is the first extender of $\Tt$ and $E^\Tt_0\in\es_+^N$
with $\lambda\leq\lh(E^\Tt_0)$, (iii) if $\crit(E^\Tt_\alpha)<\delta$ then $\pred^\Tt(\alpha+1)=-1$.
For the phalanx notation $\Phi(\Tt,\iota,E)$ see \ref{dfn:potentialtree}.

\section{Dodd structure}\label{sec:Dodd}
The proof of Theorems   \ref{thm:easy_coh} and \ref{thm:finite_coh}--\ref{thm:cohering} will 
involve the
analysis of a comparison. In some cases we will need to deal with the possibility that some 
generators of extenders are moved by an iteration map resulting from the comparison. We will need 
to analyze how such generators are moved. This section
gives us the tools for this analysis. Parts of this section are also needed in 
\S\ref{sec:bicephali}.
Somewhat restricted versions\footnote{In particular,
when the extender $E$ in \ref{thm:easy_coh} and \ref{thm:finite_coh}--\ref{thm:cohering} is type 1 or 3.} of the above-mentioned results
basically do not rely on the current section,
except that they refer to Definition \ref{dfn:potentialtree}
and use a simple case of Lemma \ref{lem:extendermaximality}.

We begin by surveying some definitions and notation. These definitions
are taken from, or are slight variants of definitions from, \cite{fsit} and
\cite{combin}. See 
\S\ref{ssec:ntn} for background notation, etc.

\begin{dfn}[Generators; Dodd condensation]\label{dfn:gen} 
Let $E$ be an extender\footnote{Our use of the term \emph{extender} is specified in 
Convention~\ref{conv:extender}.}
 over a premouse $N$.
We say that $E$ is \emph{standard} iff $\spt(E)\in\OR$ and
$\alpha=[\id,\{\alpha\}]^N_E$ for each $\alpha<\spt(E)$.\footnote{Note that this is 
independent of $N$, and if we consider a degree $k$ ultrapower ($k\leq\om$) instead of degree $0$, we also get the same result;
likewise for other notions defined in \ref{dfn:gen}.}
Suppose $E$ is standard and let $\gamma=\spt(E)$ and $\kappa=\kappa_E$.

Let $X\sub\gamma$ and $\alpha<\gamma$. Then
$\alpha$ is \emph{generated by $X$} (with respect to $E$) iff
there are $f\in N$ and $b\in[X]^{<\om}$ such that $\alpha=[b,f]^N_E$.
We say that $X$ \emph{generates} $E$ iff every element of $\gamma$ is 
generated
by $X$. We write $E\approx E\rest X$ iff $X$ generates $E$.
We say that $E$ is \emph{finitely generated} iff there is some finite $X$ generating $E$.
For $t\in[\gamma]^{<\om}$ and $\alpha<\gamma$ we say that $\alpha$ is a
\emph{$t$-generator} iff
$\alpha$ is not generated by $\alpha\un t$.
We say $E$ is \emph{weakly amenable \tu{(}to $N$\tu{)}}\footnote{This is equivalent to the 
requirement 
that for every $a\in[\gamma]^{<\om}$ and $\xi<\spaceord(E)$ and $f\in N$
such that $f:\kappa\to\pow([\xi]^{|a|})$, we have $\{\gamma\mid f(\gamma)\in E_a\}\in N$. Thus, it 
agrees with the terminology of 
\cite[\S2.3]{zeman} regarding extenders on structures. However, it differs from the condition 
described in \cite[Definition 1.0.8]{fsit} -- we might, for example, have 
$\OR^N<\spt(E)\in\OR$.}
iff
\[\pow(\kappa)\inter N=\pow(\kappa)\inter\Ult(N,E).\]
If $E$ is weakly amenable then $(\kappa^+)^E$ denotes $(\kappa^+)^N=(\kappa^+)^{\Ult(N,E)}$.

Suppose $E$  is 
weakly amenable.
We define the \emph{Dodd parameter} $t_E=t(E)$ of $E$,
and \emph{Dodd projectum} $\tau_E=\tau(E)$ of $E$. Let $(k,t_0,\ldots,t_{k-1})$
be lex-largest in $[\OR]^{<\om}$ (ordered as in \S\ref{ssec:ntn})
such that for each $i<k$,
\[ t_i\text{ is the largest }\{t_0,\ldots,t_{i-1}\}\text{-generator }\alpha\text{ of }E\text{ such 
that }\alpha\geq(\kappa^+)^E.\]
Here $k$ exists as $t_{i+1}<t_i$ for $i+1<k$ (maybe $k=0$).
Then $t_E=\{t_0,\ldots,t_{k-1}\}$
and $\tau_E$ is the sup
of $(\kappa^+)^E$ and all $t_E$-generators of $E$.\footnote{Note that $\tau_E>(\kappa^+)^N$ 
iff $\tau_E$ is a limit of $t_E$-generators of $E$ (by choice of $k$). 
Possibly $k=0$ and  $\tau_E=\nu(E)$; 
this happens for example if $E$ is the active extender of a type 1 or 3 premouse.}
We say $E$ is \emph{Dodd-sound} iff both
\begin{enumerate}[label=--]
 \item $E\rest (t_i\un t_E\rest i)\in\Ult(N,E)\text{ for all }i<k\text{, and}$
 \item $\text{ if }\tau_E>(\kappa^+)^N\text{ then }E\rest(\alpha\un t_E)\in\Ult(N,E)\text{
for all }\alpha<\tau_E$.
\end{enumerate}

Now let $F$ be any extender (possibly non-standard) over $N$. Let 
$E$ be the
(equivalent) standard extender, derived from $i_F$, such that
\[ \spt(E)=\sup_{\alpha\in\spt(F)}([\{\alpha\},\id]^N_F+1)\]
and $\spaceord(E)$ is as small as possible (i.e. $E=E\rest\spt(E)$).
We might apply the preceding terminology to $F$ in two 
ways,
depending on context: (a) applying it literally to $E$ (for example, ``$X$
generates $\alpha$'' would
apply to a set $X$ iff $X\sub\spt(E)$, even if
$\spt(E)\not\sub\spt(F)$); (b) dealing directly with $X\sub\spt(F)$, as with
the notation ``$F\rest X$'' introduced in \S\ref{ssec:ntn}.\footnote{That is,
let $\pi:N\to T$ be $\Sigma_0$-elementary, such that $F$ is derived from $\pi$.
Let $U=\Ult_0(N,F)$ and $k:U\to T$ the factor map. So $\rg(k)$ is isomorphic to $U$,
but maybe $\alpha<k(\alpha)\in\spt(F)$.
In case (b) we deal literally with $X\sub\spt(F)$, so might have $k(\alpha)\in X$,
whereas in case (a) we deal with the collapsed version of $F$, and sets $X\sub\spt(E)$, so might have $\alpha\in X$ (and might have $\alpha\sub X$).}
The intended meaning will hopefully be clear in context.

Let $\sigma\in\OR$ and 
$s,t\in[\OR\cut\sigma]^{<\om}$ with $\lh(s)=\lh(t)$. Let $E,F$ be extenders with $\measdom(E)=\measdom(F)$ and 
$\sigma,s,t\sub\spt(E)\inter\spt(F)$. We write
$E\equiv_{\sigma,s,t}F$ iff  
$E\rest(\sigma\un s)$ and $F\rest(\sigma\un t)$ are isomorphic, via the mapping of support which is the identity below $\sigma$ and sends $s$ to $t$. That is, 
\[ (x,a\un s)\in E\iff(x,a\un t)\in F \]
for all $x\in\measdom(E)$ and $a\in[\sigma]^{<\om}$. \end{dfn}

The first fact below is straightforward to prove; see \cite[3.1]{combin}.

\begin{fact}\label{fact:Dodd} Let $E$ be a weakly amenable
extender with $\spt(E)\in\OR$ and $\kappa=\kappa_E$.
Then $\tau_E$ is the least $\tau\geq(\kappa^+)^E$ such that
there is $t\in[\OR]^{<\om}$ with $\tau\cup t$ generating $E$. And $t_E$ is the least
$t\in[\OR]^{<\om}$ witnessing this property of $\tau_E$.
In particular, $E\approx E\rest\tau_E\un t_E$.

If $\tau_E>(\kappa^+)^E$ then $E$ is not
finitely generated. If $\tau_E>(\kappa^+)^E$ and $\tau_E\in\wfp(\Ult(N,E))$ then
$\tau_E$ is a cardinal of $\Ult(N,E)$. If
$\tau_E=(\kappa^+)^E$ then $E$ is finitely generated, indeed, generated by $t_E\un\{\kappa\}$.
\end{fact}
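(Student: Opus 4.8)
The plan is to establish each of the four assertions in turn, working directly from Definition~\ref{dfn:gen} and the characterization of $\tau_E$ and $t_E$ as least witnesses. For the first assertion, one shows two inequalities. That $\tau_E$ is a candidate $\tau$ is immediate: by construction $\tau_E\un t_E$ generates $E$ (since $\tau_E$ exceeds every $t_E$-generator, every generator of $E$ above $(\kappa^+)^E$ is generated by $\tau_E\un t_E$, and generators below $(\kappa^+)^E$ are trivially in $\tau_E$). For minimality, suppose $\tau\geq(\kappa^+)^E$ and $t\in\OR^{<\om}$ with $\tau\un t$ generating $E$; I would argue by comparing $t$ with $t_E$ coordinate-by-coordinate in the lexicographic order, using that at stage $i$ the Dodd parameter entry $t_i$ was chosen \emph{as large as possible} among $t_E\rest i$-generators $\geq(\kappa^+)^E$. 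The key point: if $t<_\lex t_E$, then some $t_E\rest i$-generator $\geq(\kappa^+)^E$ fails to be generated by $\tau\un t$, forcing $\tau$ to be large; pushing this through shows $\tau\geq\tau_E$ and then, at equality of the $\tau$-component, that $t_E\leq_\lex t$. This simultaneously gives that $t_E$ is the lex-least witness. The relation $E\approx E\rest\tau_E\un t_E$ is then just the definition of $\approx$ applied to the fact that $\tau_E\un t_E$ generates $E$.

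For the second assertion, suppose $\tau_E>(\kappa^+)^E$ but $E$ is finitely generated, say $E\approx E\rest X$ with $X\in\OR^{<\om}$. Then $X\cup\{(\kappa^+)^E\}$ — or rather $(\kappa^+)^E\cup X'$ where $X'$ lists the elements of $X$ above $(\kappa^+)^E$ — is a witness of the form $\tau\un t$ with $\tau=(\kappa^+)^E$, contradicting the minimality of $\tau_E$ just established. Hence $E$ is not finitely generated.

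For the third assertion, assume $\tau_E>(\kappa^+)^E$ and $\tau_E\in\wfp(\Ult(P,E))$, and suppose toward a contradiction that $\tau_E$ is not a cardinal of $\Ult(P,E)$, so there is a surjection in $\Ult(P,E)$ from some $\bar\tau<\tau_E$ onto $\tau_E$; pulling this back along $i_E$ and using that $\tau_E\un t_E$ generates $E$, one produces a witness $\bar\tau\un t'$ generating $E$ with $\bar\tau<\tau_E$ (the surjection lets one rewrite each generator below $\tau_E$ in terms of parameters below $\bar\tau$ together with $t_E$), again contradicting minimality. This is the assertion where I expect the main obstacle: one must be careful that the rewriting genuinely stays inside $P$ (equivalently, that the relevant functions are in $P$, not merely in $\Ult(P,E)$), and the hypothesis $\tau_E\in\wfp(\Ult(P,E))$ is what makes the pullback sensible. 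The cleanest route is probably the standard one from \cite[\S3]{combin}: note $\tau_E$ is the Dodd projectum, argue that a collapse of $\tau_E$ in the ultrapower would give a strictly smaller generating pair, and invoke \ref{fact:Dodd}'s own minimality clause. Finally, for the fourth assertion, if $\tau_E=(\kappa^+)^E$ then by the first part $\tau_E\un t_E=(\kappa^+)^E\cup t_E$ generates $E$; since $\kappa<(\kappa^+)^E$ and every ordinal $<(\kappa^+)^E$ is generated by $\kappa\cup\{$suitable parameters$\}$ — more simply, $(\kappa^+)^E\subseteq$ the set generated by $t_E\cup\{\kappa\}$ because $(\kappa^+)^E=[\{\kappa\},f]_E^P$-style computations recover all ordinals below $i_E(\kappa)$ from $\kappa$ — we get that $t_E\cup\{\kappa\}$ generates $E$, as claimed. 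I would double-check this last step against the precise convention for $\spt(E)$ and the standardness requirement $\alpha=[\{\alpha\},\id]_E^P$, but it is routine.
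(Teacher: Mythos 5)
The paper itself does not prove this Fact: it records it with the remark ``straightforward to prove; see [3.1]'' and defers to Steel, so there is no in-paper argument to compare against. Assessing your reconstruction on its own terms: the reductions you give for the second, third, and fourth assertions are sound once the first assertion (minimality of $\tau_E$) is in hand, and your reasoning for the ``lex-least witness'' half of the first assertion is also essentially right (if $\tau_E\un t$ generates $E$ and $t<_\lex t_E$, the inclusion $\tau_E\un t\sub t_i\un t_E\rest i$ at the first disagreement forces $\tau_E>t_i\geq\tau_E$, a contradiction). The parenthetical in your treatment of the fourth assertion about recovering ``all ordinals below $i_E(\kappa)$'' is not the right statement; what is needed, and standard, is that $\{\kappa\}$ generates $(\kappa^+)^E=i_E((\kappa^+)^P)$, which is exactly what Remark 2.4 invokes.

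The gap is in the minimality of $\tau_E$ itself. Your coordinate comparison handles $t<_\lex t_E$ (forcing $\tau>\tau_E$) and $t=t_E$, but says nothing about $t>_\lex t_E$ with $\tau<\tau_E$, and that case does not close by the same inclusion argument. If $\delta$ is the first entry where $t$ exceeds $t_E$, say $\delta=t(i)>t_i$ with $t\rest i=t_E\rest i$, then $\delta$ is not a $t_E\rest i$-generator and so is generated by $\delta\un t_E\rest i$, but the witnessing tuple may use ordinals in $[\tau,\delta)$ that lie in neither $\tau$ nor $t$, so one cannot simply substitute and delete $\delta$; likewise one cannot directly delete the entry $\tau_E$ when $t_E\conc\left<\tau_E\right>$ is a proper initial segment of $t\conc\left<\tau\right>$. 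What is missing is a redundancy lemma: if $\tau\un t$ generates $E$ and some $\delta\in t$ is generated by $\tau\un(t\setminus\{\delta\})$, then $\tau\un(t\setminus\{\delta\})$ already generates $E$ (this one \emph{does} hold, by substitution, because the witness for $\delta$ stays inside $\tau\un(t\setminus\{\delta\})$). Granting it, pass to a $\sub$-minimal $t'\sub t$ with $\tau\un t'$ still generating; minimality makes each $t'(i)$ a $t'\rest i$-generator $\geq(\kappa^+)^E$, so $t'(i)\leq t_i$, while the fact that $t_i$ is a $t_E\rest i$-generator generated by $\tau\un t'$ (with $\tau<\tau_E\leq t_i$) forces an element of $t'\setminus t_E\rest i$ at or above $t_i$, giving $t'(i)\geq t_i$; hence $t'$ is an initial segment of $t_E$, and one then checks $\lh(t')=\lh(t_E)$ and $\tau\geq\tau_E$ by looking at the next would-be entry. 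Your sketch skips this reduction, and without it the argument does not rule out the problematic direction of the lex comparison.
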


\begin{fact}[Steel,
\protect{\cite[3.2]{combin},\cite[4.1]{deconstruct}}]\label{fact:Doddsound}
Let $N$ be an active, $1$-sound, $(0,\om_1,\om_1+1)$-iterable premouse. Then
$F^N$ is Dodd-sound.\footnote{We remark that although the proof superficially uses $\AC$,
the fact nonetheless follows from 
$\ZF$ alone. This is because one can first pass to an inner model of choice containing the premouse 
and closed under the iteration strategy. Likewise for solidity, condensation etc.}\end{fact}

\begin{newrem}\label{rem:Dodd-sound_all_frags}
 Let $M$ be a Dodd-sound active premouse, $F=F^M$, $\mu=\kappa_F$, $t=t^M$, $\tau=\tau^M$, 
and suppose that $(\mu^+)^M<\tau$.
Let $q\in[\OR^M]^{<\om}$ and $\gamma<\tau$. Then $F\rest(q\un\gamma)$ is in $M$.
For $F$ is generated by $t\un\tau$, so we may pick $\xi\in[\gamma,\tau)$
 such that $q$ is generated by $t\un\xi$. By the second clause of Dodd-soundness, $F\rest(t\un\xi)\in M$, but $F\rest(q\un\gamma)$ is easily computed 
from $F\rest(t\un\xi)$.
\end{newrem}

\begin{rem}\label{rem:crit_generated} For active premice $N$ such that
$\Ult(N|(\kappa^+)^N,F)$ is wellfounded, where $F=F^N$ and $\kappa=\kappa_F$,
the definition of Dodd-soundness is optimal, in that no larger fragments
of $F$ can possibly appear in $\Ult(N,F)$ (but this seems to use the assumption
that $N$ is below superstrong). If $\tau_F>(\kappa^+)^F$ the optimality is clear. Suppose
$\tau_F=(\kappa^+)^F$. It is well-known that $\{\kappa\}$ generates $(\kappa^+)^F$. So if 
$t_F=\emptyset$ then $F\approx F\rest\{\kappa\}$, so $F$ is just a normal measure.
So suppose $t_F\neq\emptyset$. Then we claim that
\[ F\approx F\rest t_F\notin\Ult(N,F).\]
For 
$F\approx F\rest t_F\un\{\kappa\}$,
so it 
suffices to see
that $\kappa$ is generated by $\{\gamma\}$ for every
$\gamma\in[\kappa,\lambda)$ where $\lambda=i_F(\kappa)$.
So fix $\gamma\in[\kappa,\lambda)$. By \cite[Claim 2 of proof of
8.27]{outline} (the proof of which uses the non-superstrong assumption), we may fix $f\in 
N\inter{^\kappa\kappa}$
such that $\gamma\leq i_F(f)(\kappa)<\lambda$.
Define $g:\kappa\to\kappa$ by letting $g(\alpha)$
be the least $\beta$ such
that $\beta=\alpha$ or $f(\beta)\geq\alpha$. Then $g\in N$ and
$i_F(g)(\gamma)=\kappa$.\end{rem}

We now introduce a variant of the Dodd parameter and projectum, more analogous
to the standard parameter and projectum.

\begin{dfn}\label{dfn:lexorder} Let $\Dd$ be the class of pairs
$(s,\sigma)\in[\OR]^{<\om}\cross\OR$ such that if $s\neq\emptyset$ then
$\sigma\leq\min(s)$.
We extend $<_\lex$ to wellorder $\Dd$. Fix $(s_j,\sigma_j)\in\Dd$ for $j=1,2$.
Let $s_j^*$ be the strictly decreasing sequence with range $s_j$. Let 
$s_j'=s_j^*\conc\left<\sigma_j\right>$,
a monotone decreasing sequence. Let
$(s_1,\sigma_1)<_\lex(s_2,\sigma_2)$ iff either
$s_1'\ins s_2^*$ or there is $i\leq\min(\lh(s_1^*),\lh(s_2^*))$ such that
$s_1^*\rest 
i=s_2^*\rest i$ and 
$s_1'(i)<s_2'(i)$.
\end{dfn}

\begin{dfn}\label{dfn:Dsp} Let $M$ be a pm-based structure and $\pi:M\to N$ be
$\Sigma_0$-elementary ($N$ need not be wellfounded). We say $\pi$ is \emph{cardinal preserving} 
iff 
\[ M\sats\text{``}\alpha\text{ is an cardinal''}\iff N\sats\text{``}\pi(\alpha)\text{ is a 
cardinal''} \]
for all $\alpha\in\OR^M$. We say $\pi$ is \emph{Dodd-appropriate} iff $\pi$ is cardinal preserving,
$\mu=\crit(\pi)$ exists and is inaccessible in $M$, $M||(\mu^+)^M=N|(\mu^+)^N$ and
there is $\lambda\leq\pi(\mu)$ such that $\lambda\in\wfp(N)$ and $E_{\pi,\lambda}\notin N$.

Assume $\pi$ is Dodd-appropriate. Let $\mu=\crit(\pi)$. The \emph{Dodd-fragment parameter,
projectum} $(s_\pi,\sigma_\pi)$ of $\pi$
is the $<_\lex$-least $(s,\sigma)\in\Dd$ with
\[ \sigma\geq(\mu^+)^M\text{ and } E_{\pi}\rest(\sigma\un s)\notin N.\]

If $F$ is a weakly amenable extender over $M$, $N=\Ult(M,F)$ and $\pi=i_F$ is Dodd-appropriate, then $(s_F,\sigma_F)$ denotes
$(s_\pi,\sigma_\pi)$. For an active premouse $R$,
$(s_R,\sigma_R)$ denotes $(s_{F^R},\sigma_{F^R})$ (note that $i_{F^R}$ is Dodd-appropriate).
\end{dfn}

\begin{rem}\label{rem:Dfrag} Let $\pi:M\to N$ be Dodd-appropriate with $N$ wellfounded.
Let $\mu=\crit(\pi)$ and $\pi(\mu)\sub S\sub\OR^N$ and $F$ be the (possibly 
long) extender derived from $\pi$ with support $S$; so $E_\pi\sub F$. Let $U=\Ult(M,F)$ and $k:U\to 
N$ be the factor map. Then $\crit(k)>i_F(\mu)=\pi(\mu)$ and
\begin{equation}\label{eqn:mu^+_agmt} U|i_F(\mu)=N|\pi(\mu)\ \text{and}\ 
U|(\mu^+)^U=N|(\mu^+)^N=M||(\mu^+)^M,\end{equation}
and it easily follows that $i_F$ is Dodd-appropriate and
\[
(s_F,\sigma_F)=(s_\pi,\sigma_\pi)\leq_\lex(\emptyset,\pi(\mu)).\]

Moreover, $(s_F,\sigma_F)\leq_\lex(t_F,\tau_F)$:
Suppose not. Let $G=F\rest(t_F\un\tau_F)$. Then
\begin{equation}\label{eqn:t_tau_<} 
(t_F,\tau_F)<_\lex(s_F,\sigma_F)\leq_\lex(\emptyset,\pi(\mu)).\end{equation}
Therefore $G$ is short, and because $t_F\un\tau_F$ generates $F$, therefore $G\notin U$.
But $G\in U$ by line (\ref{eqn:t_tau_<}) and the definition of $(s_F,\sigma_F)$, contradiction.
It easily follows that $F$ is Dodd sound iff $(s_F,\sigma_F)=(t_F,\tau_F)$.

There is a characterization of $s_\pi,\sigma_\pi$ analogous to the definition of
the Dodd parameter and projectum.
Let $s=\left<s_0,\ldots,s_{l-1}\right>\in\OR^{<\om}$ be the longest possible
sequence such that for each $i<l$,
\[ s_i\text{ is the largest }\alpha\geq(\mu^+)^M\text{
 such that } E_\pi\rest ((s\rest i)\un\alpha)\in N.\]
Note $s_{i+1}<s_i$. Then $s_\pi=s$ and
$\sigma_\pi$ is the sup of $(\mu^+)^M$ and
all $\alpha$ such that $E_\pi\rest (s_\pi\un\alpha)\in S$. We omit the proof.
\end{rem}

In \S\ref{sec:bicephali} we apply some of the results of \S\ref{sec:Dodd}
to pm-based structures which are not premice. For example, we
will deal with structures of the form $\Ult(P,H)$, where $P$ is type 3 and
$\crit(H)=\nu_P$ (computing the ultrapower without squashing $P$). See
\cite[\S9]{fsit}. We now set up terminology in relation to this.

\begin{dfn}\label{dfn:segmented}
We say that $M$ is a
\emph{segmented-premouse} (\emph{seg-pm}) iff either (a) $M$ is a passive premouse, or (b)
there are $N,F,\widetilde{F}$ such that:
\begin{enumerate}
 \item $M=(N,\widetilde{F})$,
 \item $N$ is a passive premouse,
 \item $N$ has a largest cardinal $\delta$,
 \item $F$ is a short extender over $N$ of length $\OR^N$,
 \item $\nu(F)\leq\delta$,
 \item $\OR^N=(\delta^+)^{\Ult(N,F)}$,
 \item $N=\Ult(N,F)|\OR^N$, and
 \item $\widetilde{F}$ is the amenable coding of $F$ as in
\cite[2.9--2.10]{outline}, except that $\delta$ is used in place of $\nu(F)$; that
is, $\widetilde{F}$ is the set of all tuples $(\gamma,\xi,a,x)$ such that
\begin{enumerate}[label=--]
\item $\kappa_F<\xi<(\kappa_F^+)^N$ and $\delta<\gamma<\OR^N$, and
\item $F'\in N|\gamma$ and $(a,x)\in F'$ where $F'=F\inter(N|\xi\cross[\delta]^{<\om})$.
\end{enumerate}
\end{enumerate}

A segmented-premouse which is not a premouse is \emph{proper}.
\end{dfn}

\begin{rem}\label{rem:segmented}
Premice are segmented-premice. However, active seg-premice $P$ can
fail the ISC, or can have $\nu(F^P)<\lrgcrd(P)$, or both. Proper
seg-premice can arise, for
example, from taking ultrapowers of type 3 premice at the unsquashed level.
There are two main kinds of examples of this.
Let $Q$ be a type 3 premouse and $\nu=\nu(F^Q)=\lgcd(Q)$.
Let $E$ be a short extender weakly amenable to $Q$ and $P=\Ult(Q,E)$ (formed at the unsquashed level);
suppose $P$ is wellfounded.
Let $\mu=\crit(E)$. The first example arises in the case that
\[ \mu=\cof^Q(\nu)<\nu. \]
For then as discussed in \cite[\S9]{fsit} (or see \ref{lem:Dsp3}),
$i^Q_E$ is discontinuous at $\nu$ at
\[ \nu(F^P)=\sup i^Q_E``\nu<i^Q_E(\nu)=\lgcd(P).\]
But in this case $P$ satisfies the ISC.
The second kind of example results if
$\nu=\mu$ (so $\nu$ is inaccessible in $Q$).
In this case $P$ fails the ISC, as (by \ref{lem:Dsp3})
\[ \nu<\nu(F^P)=\nu(E),\]
and $F^Q\rest\nu=F^P\rest\nu$ but $F^Q\rest\nu\notin P$.

Regarding fine structure for proper seg-premice $P$, we only deal
with degree $0$. That is, we only define $\rho_0=\OR^P$ and $0$-soundness (declared trivially true), and ultrapowers of $P$ are degree $0$, formed
without squashing. Moreover, whenever we form $U=\Ult_0(P,E)$ for an
active seg-premouse $P$, without squashing $P$, we define
$\widetilde{F^U}$ as usual, that is,
\[ \widetilde{F^U}=\bigcup_{x\in 
P}i^P_E(\widetilde{F^P}\inter x).\]\end{rem}

\begin{lem}\label{lem:Dsp3} Let $P$ be an active seg-pm,
$F=F^P$, and $H$
a \textup{(}possibly long\textup{)} extender over $P$, with $W=\Ult_0(P,H)$ a seg-pm.\footnote{Seg-premousehood requires just that $W$ is wellfounded and $i^P_H$ is continuous at $(\kappa_F^+)^P$.}
Let $A_F\sub\lh(F)$ generate $F$ and $A_H\sub\spt(H)$ generate
$H$. Let $j=i^P_H$. For an extender $E$, given $t\in[\spt(E)]^{<\om}$,
write $G_{E,t}$ for the set of $t$-generators of $E$, and $G_{E}=G_{E,\emptyset}$.

Then:
\begin{enumerate}[label=\tu{(}\roman*\tu{)}]
 \item\label{item:generating_set} $A_H\un j``A_F$ generates $F^W$. Therefore $G_H\un j``G_F$ 
generates $F^W$
and if $\spaceord(H)\leq\crit(F)+1$ then $j``A_F$ generates 
$F^W$.
 \item\label{item:i_H_pres_gens} $j``G_{F,t}=G_{F^W,j(t)}\inter\rg(j)$ for each $t\in[\OR^P]^{<\om}$.
 \item\label{item:G_H_gens} If $\crit(F)<\crit(H)$ then $G_H\sub G_{F^W}$.
 \end{enumerate}
So if $\crit(F)<\crit(H)$ then $G_H\un j``G_F$ is a generating set of generators 
of $F^W$; and if $\spaceord(H)\leq\crit(F)+1$ then $j``G_F$ is a generating set of generators.\footnote{However, $F^W$ can also have generators outside of 
$G_H\un j``G_F$. For example, suppose that $P$ is a type 2 premouse,
$\crit(F)<\xi=\crit(H)<\nu(F)$, $W$ is wellfounded, and $H$ is short,
with $G_H$ bounded in $j(\xi)$. So $W$ is a premouse and $j(\xi)$ 
is a cardinal of $W$, so (by the ISC) is a limit of generators of $F^W$, whereas $\xi$ is not a 
limit point of $G_H\un j``G_F$.}
\end{lem}

\begin{proof} Part \ref{item:generating_set}: The first sentence is proven like \cite[9.1]{fsit}: 
Let
$\alpha<\nu(F^W)$.
Let
\[ \psi:\Ult_0(P,F)\to\Ult_0(W,F^W)\] be
the map induced by $j$.
Let $f\in P$ and $b\in [A_H]^{<\om}$ with
$\alpha=j(f)(b)$. Let
$g\in P$ and $c\in [A_F]^{<\om}$ with $f=i^P_F(g)(c)$. Then $\alpha$ is generated by
$b\un j(c)$ with respect to $F^W$. For $\psi\rest P=j$ and $\psi\com
i^P_F=i^W_{F^W}\com j$, so
\[ \alpha = j(f)(b)=\psi(f)(b)=\psi(i^P_F(g)(c))(b) =
i^W_{F^W}(j(g))(j(c))(b).\]

The second sentence follows easily; note that if $\spaceord(H)\leq\crit(F)+1$
then $G_H\sub j(\crit(F))=\crit(F^W)$.

Part \ref{item:i_H_pres_gens}: Because being a $t$-generator is an $\rPi_1$ property of $t$.

Part \ref{item:G_H_gens}:
Suppose $\kappa=\crit(F)<\crit(H)$ and $\alpha\in G_H$ but $\alpha\notin G_{F^W}$. 
Fix
$b\in[\alpha]^{<\om}$ and $f:[\kappa]^{|b|}\to\kappa$ with $f\in W$, such that
\[ i^W_{F^W}(f)(b)=\alpha.\]
Then $f\in P$; in fact $(\kappa^+)^P=(\kappa^+)^W<\crit(H)$
because $W$ is a premouse.\footnote{Not just a protomouse; see \cite{coveringuptowoodin}.}
Let $\mu\in P$ be such that $j(\mu)>\alpha$. Let
\[ g=i^P_{F^P}(f)\inter([\mu]^{|b|}\cross\mu).\]
Then $g\in P$ and
\[ j(g)=i^W_{F^W}(f)\inter([j(\mu)]^{|b|}\cross j(\mu)),\]
so $j(g)(b)=\alpha$,
contradicting the fact that $\alpha\in G_H$.\end{proof}

Lemma \ref{lem:Dsp4} below is essentially \cite[2.1.4]{coveringuptowoodin}.
It can be deduced
from \ref{lem:Dsp3}, using that $\tau_F\un t_F$ generates $F$, and, by
$\rPi_1$-elementarity, that
$j$ sends total fragments of $F$ to total
fragments of $F^W$.
\begin{lem}\label{lem:Dsp4} Let $P,F,H,W,j$ be as in \ref{lem:Dsp3}, with
$\spaceord(H)\leq\tau_F$.
Assume that $F$ is Dodd sound. Then $F^W$ is Dodd sound. Moreover,
$t_{F^W}=j(t_F)$ and $\tau_{F^W}=\sup(j``\tau_F)$.\end{lem}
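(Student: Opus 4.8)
The plan is to deduce everything from Lemma \ref{lem:Dsp3} together with the characterization of Dodd-soundness via the generating property recorded in Fact \ref{fact:Dodd}, and the fact that $i_H$ is $\rPi_1$-elementary on the relevant structures. First I would observe that since $F$ is Dodd sound, Fact \ref{fact:Dodd} (and Remark \ref{rem:Dfrag}) gives $(s_F,\sigma_F)=(t_F,\tau_F)$, i.e. $\tau_F\un t_F$ generates $F$ and each proper fragment $F\rest(t_i\un t_F\rest i)$, as well as each $F\rest(\alpha\un t_F)$ for $\alpha<\tau_F$, lies in $\Ult_0(P,F)$. Because $d(H)\leq\tau_F$, the extender $H$ only measures subsets of ordinals $<\tau_F$, so $\crit(H)<\tau_F$; I should also note $\crit(F)<\crit(H)$ is not assumed here, so I will use only part (a) of \ref{lem:Dsp3}, not part (b). Applying \ref{lem:Dsp3}(a) with $A_F=\tau_F\un t_F$ and $A_H=\nu(H)$ (or any set generating $H$, e.g. the set of generators of $H$), we get that $\nu(H)\un i_H``(\tau_F\un t_F)$ generates $F^W$. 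Since $d(H)\leq\tau_F$, $i_H$ is the identity below $\crit(H)$ and in any case $i_H``(\tau_F\un t_F)=\sup(i_H``\tau_F)\un i_H(t_F)$ as a subset of ordinals, and $\nu(H)\leq i_H(\tau_F)=\sup(i_H``\tau_F)$ because $H$ measures only subsets of $\tau_F$ so all its generators are $<i_H(\tau_F)$; hence $\sup(i_H``\tau_F)\un i_H(t_F)$ already generates $F^W$.

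Next I would show the fragments are in the ultrapower. For $\alpha<\sup(i_H``\tau_F)$, pick $\betabar<\tau_F$ with $\alpha<i_H(\betabar)$; then $F\rest(\betabar\un t_F)\in\Ult_0(P,F)$ by Dodd-soundness of $F$, and applying $i_H$ (via the map $\psi\colon\Ult_0(P,F)\to\Ult_0(W,F^W)$ from the proof of \ref{lem:Dsp3}) sends this total fragment to a total fragment of $F^W$ lying in $\Ult_0(W,F^W)$; by $\rPi_1$-elementarity this image is $F^W\rest(i_H(\betabar)\un i_H(t_F))\in\Ult_0(W,F^W)$, and its restriction $F^W\rest(\alpha\un i_H(t_F))$ is then also in $\Ult_0(W,F^W)$. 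The same argument applied to the proper Dodd fragments $F\rest(t_i\un t_F\rest i)$, using that $i_H(t_i)$ is the $i$-th entry of $i_H(t_F)$, shows $F^W\rest(i_H(t_i)\un i_H(t_F)\rest i)\in\Ult_0(W,F^W)$. Thus the pair $(i_H(t_F),\sup(i_H``\tau_F))$ witnesses that $F^W$ is generated with all the corresponding fragments in the ultrapower.

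Finally I would argue minimality to conclude $(t_{F^W},\tau_{F^W})=(i_H(t_F),\sup(i_H``\tau_F))$ and hence Dodd-soundness. By Fact \ref{fact:Dodd}, $(t_{F^W},\tau_{F^W})$ is the $<_\lex$-least pair whose union generates $F^W$, so $(t_{F^W},\tau_{F^W})\leq_\lex(i_H(t_F),\sup(i_H``\tau_F))$. For the reverse inequality I would pull back: if some strictly smaller pair generated $F^W$, then since it lies in the range of $i_H$ below $\sup(i_H``\tau_F)$ one can reflect through $i_H$ — using that ``$x\un y$ generates $F$'' and ``$F\rest(x\un y)\in\Ult$'' are $\rPi_1$ (respectively expressible) over the relevant structures, and that $d(H)\leq\tau_F$ guarantees any candidate generating set for $F^W$ below $\sup(i_H``\tau_F)$ is captured — to produce a pair $<_\lex$-below $(t_F,\tau_F)$ generating $F$, contradicting Dodd-soundness of $F$ (equivalently minimality of $(t_F,\tau_F)=(s_F,\sigma_F)$). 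This reflection step, making precise exactly which fragments of $F^W$ are hit by $i_H$ and that nothing generated ``between'' the $i_H$-images can help, is the main obstacle; the rest is a routine transfer along $\psi$ exactly as in the proof of \ref{lem:Dsp3}. Combining the two inequalities gives $t_{F^W}=i_H(t_F)$ and $\tau_{F^W}=\sup(i_H``\tau_F)$, and since the witnessing fragments are in $\Ult_0(W,F^W)$, $F^W$ is Dodd sound. $\qed$
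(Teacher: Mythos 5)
The paper itself gives only a two-sentence sketch for this lemma (citing \cite{coveringuptowoodin}), and your proposal fleshes out exactly the ingredients that sketch names: Lemma \ref{lem:Dsp3}(a), the fact that $\tau_F\cup t_F$ generates $F$, and $\rPi_1$-elementarity for transferring total fragments. Your first two steps are essentially right, though one small slip should be flagged: you write $i_H(\tau_F)=\sup(i_H``\tau_F)$, which need not hold (the map can be discontinuous at $\tau_F$); what you actually need, and what is true, is that the generators of $H$ all lie below $\sup(i_H``d(H))\leq\sup(i_H``\tau_F)$, since $d(H)\leq\tau_F$ forces every $\alpha\in\spt(H)$ to be representable as $[\{\alpha\},\id_\delta]_H$ with $\delta<d(H)$, hence $\alpha<i_H(\delta)<\sup(i_H``\tau_F)$.

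The genuine gap is in the minimality step, which you flag yourself as the main obstacle. Your suggested reflection ``since it lies in the range of $i_H$ below $\sup(i_H``\tau_F)$ one can reflect through $i_H$'' does not work as stated: if $(s,\sigma)<_\lex(i_H(t_F),\sup(i_H``\tau_F))$, then the first place $s$ differs from $i_H(t_F)$ need not be in $\rg(i_H)$, so there is no direct pull-back. The clean argument avoids pulling back and instead verifies the recursive characterization of the Dodd ordinals (\ref{dfn:gen}, \ref{rem:Dfrag}) directly. First, ``$\alpha$ is a $t$-generator'' is $\rPi_1$, so each $i_H(t_F(i))$ is an $i_H(t_F\rest i)$-generator of $F^W$. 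To see it is the largest one $\geq(\crit(F^W)^+)^W$: if $\alpha>i_H(t_F(i))$ were such a generator, then since $\tau_F\leq t_F(\lh(t_F)-1)\leq\dots\leq t_F(i)$, we have $\sup(i_H``\tau_F)\cup i_H(t_F)\subseteq\alpha\cup i_H(t_F\rest i)$, and by your generating claim $\alpha$ is generated by the left side, hence by $\alpha\cup i_H(t_F\rest i)$ — contradiction. This gives $t_{F^W}=i_H(t_F)$; the same maneuver bounds the $t_{F^W}$-generators by $\sup(i_H``\tau_F)$, and cofinally many $t_F$-generators below $\tau_F$ push up under $i_H$ (again by $\rPi_1$-elementarity) to give $\tau_{F^W}=\sup(i_H``\tau_F)$. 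With that repaired, your fragment-transfer argument via $\psi$ then yields Dodd soundness of $F^W$.
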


\begin{lem}\label{lem:Dsp5} Let $P,F,H,W,j$ be as in \ref{lem:Dsp3}.
Assume $\sigma_F\leq\crit(H)$ and
\[ \pow(\sigma_F)\inter P=\pow(\sigma_F)\inter W. \] Then $F^W$ is \emph{not} Dodd
sound.
Moreover, $s_{F^W}=j(s_F)$ and $\sigma_{F^W}=\sigma_F$.\end{lem}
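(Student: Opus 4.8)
The plan is to mimic the proof strategy behind \ref{lem:Dsp4}, but tracking the \emph{failure} of Dodd soundness rather than its preservation. Recall from \ref{rem:Dfrag} (and the characterization of $s_\pi,\sigma_\pi$ stated there) that $(s_F,\sigma_F)$ is the $<_\lex$-least pair $(s,\sigma)\in\Dd$ with $\sigma\geq(\kappa_F^+)^P$ (where $\kappa_F=\crit(F)$) such that the fragment $E_{i_F}\rest(\sigma\un s)\notin P=\Ult(P,F)|\OR^P$, equivalently $F\rest(\sigma\un s)\notin\Ult(P,F)$. Since $F$ is not Dodd sound, $(s_F,\sigma_F)<_\lex(t_F,\tau_F)$. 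We have $d(H)\leq\sigma_F\leq\crit(H)$, so $H$ only measures bounded subsets of $\sigma_F$, and by the hypothesis $\pow(\sigma_F)\inter P=\pow(\sigma_F)\inter W$ the map $i_H$ is the identity on $\pow(\sigma_F)\inter P$ and fixes every ordinal $<\sigma_F$; in particular $i_H(s_F)$ makes sense and $\sigma_F\leq\crit(i_H)$.

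First I would show $(s_{F^W},\sigma_{F^W})\leq_\lex(i_H(s_F),\sigma_F)$. For this, fix any total fragment $G=F\rest(\alpha\un s_F)$ with $\alpha<\sigma_F$; by definition of $\sigma_F$ we have $G\in\Ult(P,F)$, and since $\alpha<\sigma_F\leq\crit(H)$ the coding of $G$ is a subset of $\pow(\kappa_F^{<\om})\inter P\times\alpha^{<\om}$, so by the argument of \ref{lem:Dsp3}(a) applied to $\psi:\Ult_0(P,F)\to\Ult_0(W,F^W)$ and $\rPi_1$-elementarity (exactly as in the proof sketch of \ref{lem:Dsp4}), $F^W\rest(\alpha\un i_H(s_F))\in\Ult(W,F^W)$. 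Thus $i_H(s_F)$ together with ordinals below $\sigma_F$ behaves for $F^W$ the way $s_F$ and $\sigma_F$ did for $F$: every proper bounded-below-$\sigma_F$ fragment relative to $i_H(s_F)$ is total in the ultrapower, which gives $\sigma_{F^W}\leq\sigma_F$ and, comparing the defining sequences entry by entry, $s_{F^W}\trianglelefteq i_H(s_F)$ or an earlier split, i.e. $(s_{F^W},\sigma_{F^W})\leq_\lex(i_H(s_F),\sigma_F)$.

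Next I would rule out strict inequality, i.e. show the fragment $F^W\rest(\sigma_F\un i_H(s_F))$ is genuinely \emph{not} in $\Ult(W,F^W)$, which simultaneously gives $s_{F^W}=i_H(s_F)$, $\sigma_{F^W}=\sigma_F$, and the failure of Dodd soundness of $F^W$ (since $(s_{F^W},\sigma_{F^W})=(i_H(s_F),\sigma_F)<_\lex(t_{F^W},\tau_{F^W})$, the latter inequality because $t_{F^W}=i_H(t_F)\geq i_H(s_F)$ lexicographically dominates by \ref{rem:Dfrag} and a fragment argument, or more simply because any pair witnessing Dodd soundness would witness $E_{i_{F^W}}\in\Ult(W,F^W)$). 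Suppose toward a contradiction that $F^W\rest(\sigma_F\un i_H(s_F))\in\Ult(W,F^W)$. Then I would pull this back along $\psi$: since $\psi\rest P=i_H$, $\psi\compose i_F=i_{F^W}\compose i_H$, and $\psi$ is $\rPi_1$-elementary with critical point $\geq\crit(H)\geq\sigma_F$, the $\psi$-preimage of this fragment is exactly $F\rest(\sigma_F\un s_F)$ computed in $\Ult(P,F)$ — here I use $\pow(\sigma_F)\inter P=\pow(\sigma_F)\inter W$ to see that the relevant piece of $F$ and of $F^W$ live in the common part and are literally identified by $\psi$ below $\sigma_F$. That would put $F\rest(\sigma_F\un s_F)\in\Ult(P,F)$, contradicting the definition of $(s_F,\sigma_F)$. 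This pullback step — getting $\psi$-elementarity to commute correctly and verifying that the fragment of $F^W$ really does correspond to the fragment of $F$ under $\psi$, using the measure-agreement hypothesis to control things strictly below $\sigma_F$ while $i_H$ may move ordinals above it — is the main obstacle, and is exactly where the hypotheses $\sigma_F\leq\crit(H)$ and $\pow(\sigma_F)\inter P=\pow(\sigma_F)\inter W$ are essential (compare the role of $d(H)\leq\tau_F$ in \ref{lem:Dsp4}). Once that contradiction is in hand, the three displayed conclusions follow immediately.
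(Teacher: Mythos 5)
Your overall skeleton matches the paper's: establish $(s_{F^W},\sigma_{F^W})=(i_H(s_F),\sigma_F)$ by proving two inequalities (one from ``$i_H$ maps total fragments to total fragments,'' the other from ``the critical fragment of $F^W$ is essentially the same object as the critical fragment of $F$ and so lies outside $W$ by the $\pow(\sigma_F)$-agreement hypothesis''), and then deduce failure of Dodd soundness. Two issues, one cosmetic and one substantive.

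The cosmetic one: your inequality labels are reversed. Showing that every fragment $F\rest(\alpha\un s_F\rest i)\in\Ult(P,F)$ gets carried by $i_H$ to a fragment $F^W\rest(i_H(\alpha)\un i_H(s_F)\rest i)\in\Ult(W,F^W)$ forces each entry of $s_{F^W}$ to be \emph{at least} the corresponding entry of $i_H(s_F)$, and forces $\sigma_{F^W}\geq\sigma_F$ when the parameters agree; by \ref{rem:Dfrag} this gives $(s_{F^W},\sigma_{F^W})\geq_\lex(i_H(s_F),\sigma_F)$, not $\leq_\lex$ as you wrote (more is in the ultrapower, so the least missing pair is higher). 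Your second step, showing $F^W\rest(\sigma_F\un i_H(s_F))\notin\Ult(W,F^W)$, is what gives $\leq_\lex$. So the argument does establish equality, but your description of step two as ``rule out strict inequality in the same direction'' is incoherent; the two steps prove opposite inequalities.

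The substantive one: your justification that $F^W$ is not Dodd sound has a genuine gap. You claim $(s_{F^W},\sigma_{F^W})<_\lex(t_{F^W},\tau_{F^W})$ ``because $t_{F^W}=i_H(t_F)$.'' But $t_{F^W}=i_H(t_F)$ is the conclusion of \ref{lem:Dsp4}, which needs the hypothesis $d(H)\leq\tau_F$ (and Dodd soundness of $F$), neither of which is assumed here. Under the hypotheses of \ref{lem:Dsp5} there may be generators of $H$ strictly between $\sigma_F$ and $i_H(\tau_F)$, so the Dodd parameter of $F^W$ need not be $i_H(t_F)$ at all. Your fallback, ``any pair witnessing Dodd soundness would witness $E_{i_{F^W}}\in\Ult(W,F^W)$,'' is not correct: Dodd soundness of $F^W$ does not put the full derived extender into $\Ult(W,F^W)$ (indeed by \ref{rem:crit_generated}, $E\approx E\rest(\tau_E\un t_E)\notin\Ult$). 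The paper avoids this entirely with a direct generator argument: by the computation from \ref{lem:Dsp3}(b), $\crit(H)$ is an $i_H(s_F)$-generator of $F^W$, and $\crit(H)\geq\sigma_F=\sigma_{F^W}$ while $\crit(H)\notin\rg(i_H)\supseteq i_H(s_F)$. If $F^W$ were Dodd sound, $(t_{F^W},\tau_{F^W})$ would equal $(s_{F^W},\sigma_{F^W})=(i_H(s_F),\sigma_F)$ and hence $\sigma_F\un i_H(s_F)$ would generate $F^W$, so $\crit(H)$ would be generated by $\crit(H)\un i_H(s_F)$ — contradiction. You should replace your $t_{F^W}=i_H(t_F)$ argument with this.
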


\begin{proof} Let $\sigma=\sigma_F$ and $s=s_F$ and 
$s'=j(s)$. We have $(s_{F^W},\sigma_{F^W})\geq_\lex(s',\sigma)$
as $j$ maps fragments of $F$ to fragments of $F^W$. 
As $\sigma\leq\crit(H)$,
\[ F\equiv_{\sigma,s,s'}
F^W\]
(cf.~\ref{dfn:gen}). So $F\rest(\sigma\un s)$ and $F^W\rest(\sigma\un s')$ are coded by identical sets in $\pow(\sigma)$,
and $\pow(\sigma)\inter P=\pow(\sigma)\inter W$, so
\[ F^W\rest(\sigma\un s')\notin W.\]
Therefore $(s_{F^W},\sigma_{F^W})=(s',\sigma)$. And as in the proof of 
\ref{lem:Dsp3}\ref{item:G_H_gens}, $\crit(H)$ is an $s'$-generator
of $F^W$, so $F^W$ is not Dodd sound.
\end{proof}

The proof of \ref{lem:gen1hull} to follow is similar to that of
\cite[4.4]{combin}. Moreover, \ref{cor:taurho_1} improves the conclusion of
\cite[4.4]{combin} to $0=1$. Recall from \cite{fsit} that the premouse language uses the constant symbols $\dot{\mu}$,
$\dot{\nu},\dot{\gamma}$, interpreted by $\mu^P=\crit(F^P)$, $\nu^P=\nu(F^P)$ and $\gamma^P=$ the largest witness to the ISC for $P$, when $P$ is type 2.

\begin{newdfn}
Let $P$ be a premouse, $F=F^P$, $\mu=\mu^P$, $X\sub\OR^P$. Then
$\Lambda^P(X)$ denotes the set of all $x\in P$ such that  $x=[a,f]^P_{F^P}$
for some $f\in P$ and $a\in[X]^{<\om}$ (note we may take $f\in P|(\mu^+)^P$).
\end{newdfn}

\begin{lem}\label{lem:gen1hull} Let $P$ 
be a type 2 premouse, $F=F^P$,
$\mu=\mu^P$, $X\sub\OR^P$ and $\Lambda=\Lambda^P(X)$. Suppose $\nu^P,\gamma^P\in\Lambda$.
Then
\[ \Lambda=\{x\in P\mid x\in\Hull_1^P(X\un(\mu^+)^P)\}.\]
\end{lem}
\begin{proof}
Clearly
\[ \mu\un X\sub\Lambda\sub H\eqdef\Hull_1^P(X\un(\mu^+)^P).\]
Let us show $H\sub\Lambda$. We have $\mu\in\Lambda$ because $\nu^P\in\Lambda$ and by the 
argument in
\ref{rem:crit_generated}. So
$(\mu^+)^P\un X\sub\Lambda$, so it suffices to see that 
$\Lambda\elem_{\rSigma_1} P$, and for this we define a premouse $\bar{P}$ and an 
$\rSigma_1$-elementary 
$\pi:\bar{P}\to P$ such that $\rg(\pi)=\Lambda$. Recall here that 
$\rSigma_1$-elementarity is with respect to the language of premice, incorporating
$\dot{\mu},\dot{\nu},\dot{\gamma}$. 

Let $F^*=F\rest(\Lambda\inter\OR)$, $Q=P|(\mu^+)^P$ and $\bar{R}=\Ult(Q,F^*)$ and $R=\Ult(Q,F)$. Let
$\varrho:\bar{R}\to R$ be the natural factor map. Let $\varrho(\bar{\nu})=\nu^P$ and 
$\xi=(\bar{\nu}^+)^{\bar{R}}$ and 
$\pi=\varrho\rest(\bar{R}|\xi)$. So $(\mu^+)^P<\crit(\pi)$ and
$\rg(\pi)=\Lambda$ and $\varrho(\xi)=\OR^P$. Let $\bar{F}$ be the $(\mu,\xi)$-extender derived from $i^Q_{F^*}$; so $(X,a)\in\bar{F}$ iff $(X,\pi(a))\in F^*$.
Let $\bar{P} = (\bar{R}|\xi,\widetilde{\bar{F}})$, where $\widetilde{\bar{F}}$ codes $\bar{F}$ as in
\cite[2.9--2.10]{outline},
but replacing the triple $(\alpha,E_\alpha,\nu(E_\alpha))$ of \cite{outline}
with $(\xi,\bar{F},\bar{\nu})$. So $\pi:\bar{P}\to P$. Let $\gamma^{\bar{P}}=\pi^{-1}(\gamma^P)$, 
$\nu^{\bar{P}}=\bar{\nu}$,
$\mu^{\bar{P}}=\mu$. It suffices to see that $\pi$ is $\rSigma_1$-elementary (so $\bar{P}$ is a
premouse).

By construction, 
$\pi(\dot{\gamma}^{\bar{P}},\dot{\nu}^{\bar{P}},\dot{\mu}^{\bar{P}})=(\gamma^P,\nu^P,
\mu^P)$.\footnote{This is the main reason for incorporating the 
assumption that $\gamma^P,\nu^P\in\Lambda$. There are standard counterexamples when these 
assumptions fail; see the proof 
that iterable $1$-sound mice are Dodd-sound.} Now $\varrho\com i_{\bar{F}}^Q=i_F^Q$. So for
$A\in\pow([\mu]^{<\om})\inter P$ and $\gamma<\xi$, we have
\[ \pi(i^Q_{\bar{F}}(A)\inter[\gamma]^{<\om}) = i^Q_F(A)\inter[\pi(\gamma)]^{<\om}.\]
So $\pi``\widetilde{\bar{F}}\sub\widetilde{F^P}$. Since
$\pi\rest(\mu^+)^P=\id$, \cite[2.9]{outline} shows that
$\pi$ is cofinal in $P$. It follows that $\pi$ is
$\rSigma_1$-elementary.\end{proof}
Part (ii) of the following lemma is proved in almost the same way; part (i) is easy:

\begin{newlem}\label{lem:ext_hull_equiv}
 Let $P,F,\mu$ be as in \ref{lem:gen1hull}, $\xi\in\OR^P$ and $q\in[\OR^P]^{<\om}$,
 with $(\mu^+)^P\leq\xi$. Then (i) if $\cHull_1^P(q\un\xi)\in P$ then $F\rest(q\un\xi)\in 
P$, and (ii) if $F\rest(q\un\xi)\in P$ and $\gamma^P,\nu^P\in\Lambda^P(q\un\xi)$ then 
$\cHull_1^P(q\un\xi)\in P$.
\end{newlem}

\begin{cor}\label{cor:taurho_1}
Let $P$ be a type 2,
Dodd sound premouse, and $\mu=\mu^P$.

Then
$\tau_P=\rho_1^P\cup(\mu^+)^P$.\end{cor}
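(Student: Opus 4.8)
The plan is to combine Lemma~\ref{lem:gen1hull} with Fact~\ref{fact:Dodd} and the definition of Dodd-soundness. Recall $\tau_P=\tau_{F^P}$ is by Fact~\ref{fact:Dodd} the least $\tau\geq(\mu^+)^P$ such that some $\tau\cup t$ (with $t\in\OR^{<\om}$) generates $F^P$, and $t_P=t_{F^P}$ is the $<_\lex$-least such $t$. Since $P$ is type $2$, $\nu^P=\nu(F^P)$ is a successor ordinal, say $\nu^P=\gamma^P+1$ in the notation of \cite{fsit}, and $\gamma^P$ is the largest generator of $F^P$; thus any set $X$ generating $F^P$ must include (or generate) $\gamma^P$, and the Dodd parameter $t_P$ therefore has $\gamma^P$ as its first (largest) entry whenever $\tau_P<\nu^P$. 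So first I would unwind what ``$\tau_P\cup t_P$ generates $F^P$'' says: by $F^P\approx F^P\rest(\tau_P\cup t_P)$ from Fact~\ref{fact:Dodd}, and since $F^P$ is Dodd sound, we also get the fragments $F^P\rest(\alpha\cup t_P)\in\Ult(P,F^P)$ for all $\alpha<\tau_P$.

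Next I would apply Lemma~\ref{lem:gen1hull} with $X=\tau_P\cup t_P$ (more precisely $X=\tau_P\cup(t_P\cap\nu^P)$, discarding any entries of $t_P$ that are $\geq\nu^P$, which make no difference to what is generated). We need the hypothesis $\nu^P,\gamma^P\in\Lambda$ where $\Lambda$ is the set of $[a,f]^P_{F^P}$ with $a\in X^{<\om}$; but $\gamma^P\in X$ (it is the top entry of $t_P$ when $\tau_P<\nu^P$, and if $\tau_P=\nu^P$ then $\gamma^P<\tau_P\subseteq X$), so $\gamma^P\in\Lambda$, and $\nu^P=\gamma^P+1$ is then trivially generated, so $\nu^P\in\Lambda$. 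Hence Lemma~\ref{lem:gen1hull} gives $\Lambda=\Hull_1^P(X\cup(\mu^+)^P)=\Hull_1^P(\tau_P\cup t_P\cup(\mu^+)^P)$. But $\Lambda$ is exactly the set of points generated by $X=\tau_P\cup t_P$, which by Fact~\ref{fact:Dodd} is all of $\spt(F^P)\cup(\text{stuff below }\mu)$, i.e. $\Lambda\supseteq\nu^P$; in fact one checks $\Lambda\cap\OR=\nu^P$ together with everything below the relevant ordinals, so $\rho_1^P\leq$ the ordinal height witnessed, and conversely $\Hull_1^P(\tau_P\cup t_P)$ collapses to something of height $\leq$ the next cardinal.

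The cleaner way to finish: $\Hull_1^P(\tau_P\cup t_P\cup(\mu^+)^P)$ contains $\nu^P$ and hence, by acceptability and the fact that $P$ is $1$-sound with $F^P$ coded cofinally, contains a set of ordinals cofinal in $\OR^P$ projecting $P$ to $\max(\tau_P,(\mu^+)^P)=\tau_P$; this shows $\rho_1^P\leq\tau_P$. For the reverse inequality, $\rho_1^P\geq\tau_P$: any $\rSigma_1$-definable-from-$\delta$ subset of $\delta<\tau_P$ lies in $\Ult(P,F^P)$ once we know the fragments $F^P\rest(\delta\cup t_P)$ are in the ultrapower (Dodd-soundness) — more carefully, a standard argument shows $\pTh_1^P(\delta\cup\{p_1^P\})$ is computable from such a fragment together with the theory of the ultrapower, hence lies in $P$ for $\delta<\tau_P$, so no such $\delta$ can be the projectum. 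And $\rho_1^P\geq(\mu^+)^P$ always, since $F^P$ is total over $P|(\mu^+)^P=\Ult(P,F^P)|(\mu^+)^P$ and $\mu$ is not $\rSigma_1$-definably collapsed. Combining, $\rho_1^P=\max(\tau_P,(\mu^+)^P)$.

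The main obstacle I anticipate is the direction $\rho_1^P\leq\tau_P$, i.e. verifying that $\Hull_1^P(\tau_P\cup t_P\cup(\mu^+)^P)$ really does contain a cofinal-in-$\OR^P$ set of parameters giving a surjection from $\tau_P$: this needs the type~$2$ structure (so that $F^P$ and hence its amenable code is $\rSigma_1$-available and cofinal in $P$ via \cite[2.9]{outline}) together with the precise bookkeeping relating $\Lambda$, the squash, and $u_0^P$. The symmetric subtlety is handling the possible entries of $t_P$ lying above $\nu^P$ (which occur exactly when $\tau_P=(\mu^+)^P$, cf.\ Fact~\ref{fact:Dodd} and \ref{rem:crit_generated}); in that degenerate case $\tau_P=(\mu^+)^P\leq\rho_1^P$ and one argues directly that $\rho_1^P=(\mu^+)^P$ as well, so the formula still holds. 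Everything else is a routine unwinding of Lemma~\ref{lem:gen1hull}, Fact~\ref{fact:Dodd}, and the definition of Dodd-soundness.
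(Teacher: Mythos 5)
Your approach matches the paper's: both rest on Lemma~\ref{lem:gen1hull}, Fact~\ref{fact:Dodd}, and Dodd-soundness. The main structural difference is that the paper's displayed proof establishes only $\tau_P\leq\max(\rho_1^P,(\mu^+)^P)$ (arguing by contradiction that otherwise $\Th_1^P(\rho_1^P\cup\{p_1^P\})\in P$), treating the converse as prior material (cf.\ the reference to \cite[4.4]{combin}), whereas you try to prove both inequalities. That is reasonable, but your argument for $\rho_1^P\leq\tau_P$ is vague and quietly invokes $1$-soundness of $P$, which is not among the hypotheses. A cleaner route, and one that needs no soundness: since $F^P$ is Dodd sound, $(s_{F^P},\sigma_{F^P})=(t_P,\tau_P)$, so $F^P\rest(\tau_P\cup t_P)\notin\Ult(P,F^P)$ by Definition~\ref{dfn:Dsp}; this fragment is $\bfrSigma_1^P$, is codable as a subset of $\tau_P$, and $\pow(\tau_P)\cap P=\pow(\tau_P)\cap\Ult(P,F^P)$ because $\tau_P<\lh(F^P)=\OR^P$, so $\rho_1^P\leq\tau_P$. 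The sentence in your middle paragraph beginning ``in fact one checks $\Lambda\cap\OR=\nu^P$\dots'' is garbled and should be dropped; $\Lambda$ is cofinal in $\OR^P$ once $X$ generates $F^P$, by Lemma~\ref{lem:gen1hull} and \cite[2.9]{outline}.

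For the paper's direction you have a genuine gap. The claim ``any $\rSigma_1$-definable-from-$\delta$ subset of $\delta<\tau_P$ lies in $\Ult(P,F^P)$'' overstates what the fragment gives. From $F^P\rest(\delta\cup t_P)\in\Ult(P,F^P)$ and Lemma~\ref{lem:gen1hull} one gets $\Th_1^P(\delta\cup t_P\cup(\mu^+)^P)\in P$ — that is, the theory in parameters generated by $\delta\cup t_P$. To conclude $\rho_1^P>\delta$ you need $\Th_1^P(\delta\cup\{q\})\in P$ for \emph{arbitrary} parameters $q$ (equivalently for $q=p_1^P$), and a given $q$ need not be generated by $\delta\cup t_P$. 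The fix is to enlarge $\delta$ to some $\delta'<\tau_P$ so that the finitely many $<\tau_P$ coordinates needed to generate $q$ from $\tau_P\cup t_P$ fall below $\delta'$; this is exactly what the paper's choice of $q$ with $p_1^P,\nu^P,\gamma^P\in\Hull_1^P(\{q\})$ and $X=\gamma\cup q$ is packaging, and it is also what makes the assertion ``$F^P\rest X\in P$'' correct. You gesture at this at the end but don't carry it out. Finally, your worry about entries of $t_P$ above $\nu^P$ is moot: $t_P\sub\nu^P$ always, since any $\alpha\geq\nu(F^P)$ is generated by $\nu(F^P)\sub\alpha$ and hence is not a $t$-generator.
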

\begin{proof}
Let $\gamma=\rho_1^P\un(\mu^+)^P$. Easily $\tau_P\geq\gamma$, so suppose
$\tau_P>\gamma$. Let $X=q\un\gamma$,
where
$q\in[\OR^P]^{<\om}$ is such that $p_1^P,\nu^P,\gamma^P\in\Lambda^P(q)$. By 
\ref{rem:Dodd-sound_all_frags},
$F^P\rest X\in P$, so
by \ref{lem:ext_hull_equiv}, $\cHull_1^P(X)\in P$, so
$\Th_1^P(\rho_1^P\un\{p_1^P\})\in P$, contradiction.
\end{proof}

\begin{lem}\label{lem:Dsp6} Let $k\geq 1$ and let $P$ be a $k$-sound, type 2,
Dodd sound premouse. Let $H$ be a short extender,
weakly amenable to $P$, with $\crit(H)<\rho_k^P$ and $R=\Ult_k(P,H)$ wellfounded.
Let $j=i^{P,k}_H$.

Then $R$ is Dodd sound. Moreover, $t_R=j(t_P)$.

If $\tau_P=((\mu^P)^+)^P$ or $k>1$ then $\tau_R=j(\tau_P)$.

If $k=1$ then
$\tau_R=\sup j``\tau_P$.\end{lem}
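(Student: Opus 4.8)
The plan is to reduce the general statement to the case $k=1$ already handled (implicitly) by the Dodd-fragment lemmas \ref{lem:Dsp3}--\ref{lem:Dsp5}, together with \ref{cor:taurho_1} which pins down $\tau_P$ in terms of $\rho_1^P$. The key point is that forming $\Ult_k(P,H)$ for a $k$-sound type 2 premouse $P$, with $\crit(H)<\rho_k^P$, can be analyzed level-by-level: there is a natural factoring of the degree-$k$ ultrapower through the degree-$0$ (i.e. unsquashed, or here simply ``full'') ultrapower of the relevant initial segment carrying $F^P$, and $i_H$ acts on the generators and total fragments of $F^P$ exactly as in \ref{lem:Dsp3}. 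So first I would set $F=F^P$, $\mu=\crit(F^P)$, and observe that since $\crit(H)<\rho_k^P$ and $P$ is $k$-sound and type 2, we have $\crit(H)<\rho_1^P\le\tau_P$ (using \ref{cor:taurho_1}), hence in particular $\mu<\crit(H)$, so the hypothesis $\crit(F^P)<\crit(H)$ of \ref{lem:Dsp3}(b) is met; thus the set of generators of $F^R$ is $G_H\cup i_H``G_F$, and $i_H``t_P$ consists of generators of $F^R$.

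Next I would establish Dodd soundness of $R$ together with $t_R=i_H(t_P)$ by transferring the total-fragment witnesses. Since $P$ is Dodd sound, for each $i<\lh(t_P)$ we have $F\rest(t_i\cup t_P\rest i)\in\Ult_0(P^-,F)=\Ult(P||\lh(F),F)$ (writing $P^-$ for the segment carrying $F$), and these are total fragments; by $\rPi_1$-elementarity of the copy map $\psi$ induced by $i_H$ (as in the proof of \ref{lem:Dsp3}), $i_H$ sends each such total fragment to the corresponding total fragment of $F^R$, which lands in $\Ult_0$ of the corresponding segment of $R$, hence in $R$. By the characterization in \ref{fact:Dodd} (least $t$ with $\tau\cup t$ generating, with $\tau\ge(\kappa^+)$), and since $i_H``G_F$ together with $G_H$ exhaust the generators of $F^R$ above $\crit(H)$, one checks that $i_H(t_P)$ is exactly the Dodd parameter $t_R$: no generator of $F^R$ exceeds $\max(i_H(t_P))$ except possibly ones below $(\crit(F^R)^+)$, and each entry of $i_H(t_P)$ is the largest generator above $(\crit(F^R)^+)^R$ over the previous entries, because $i_H$ preserves ``largest $\alpha\ge(\mu^+)$ with $F\rest(\alpha\cup s\rest i)$ total''. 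This gives $t_R=i_H(t_P)$ and Dodd soundness of $R$, provided the projectum behaves correctly.

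For the projectum, split into the stated cases. If $\tau_P=((\mu^P)^+)^P$, then by \ref{fact:Dodd}, $F$ is generated by $t_P\cup\{\mu\}$, so $F^R$ is generated by $i_H(t_P)\cup\{i_H(\mu)\}$ and hence $\tau_{F^R}=((\crit(F^R))^+)^R=i_H((\mu^+)^P)=i_H(\tau_P)$ (using that $i_H$ is continuous at successor cardinals of this form, as $\mu<\crit(H)$). If $k>1$, then $\rho_1^R=i_H(\rho_1^P)$ since the degree-$k$ ultrapower for $k>1$ is continuous at $\rho_1$ (as $\crit(H)<\rho_k^P\le\rho_1^P$ and the $\rho_1$-level is computed inside the ultrapower as the image), and then applying \ref{cor:taurho_1} to both $P$ and $R$ (note $R$ is $k$-sound, type 2, and Dodd sound by the above) gives $\tau_R=\max(\rho_1^R,((\crit F^R)^+)^R)=i_H(\max(\rho_1^P,(\mu^+)^P))=i_H(\tau_P)$. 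Finally, if $k=1$, then $\rho_1^R=\sup i_H``\rho_1^P$ (the degree-$1$ ultrapower is only continuous/cofinal at $\rho_1$, not continuous), and again by \ref{cor:taurho_1} applied on both sides, $\tau_R=\max(\rho_1^R,((\crit F^R)^+)^R)=\sup i_H``\tau_P$, since $\tau_P=\max(\rho_1^P,(\mu^+)^P)$ and $\sup i_H``$ commutes with this max (if $\tau_P=(\mu^+)^P$ we're in the first case anyway, and otherwise $\tau_P=\rho_1^P$).

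The main obstacle I expect is the bookkeeping in the case $k=1$: one must be careful that ``$\sup i_H``\tau_P$'' is genuinely the Dodd projectum and not merely a lower bound, i.e. that no total fragment $F^R\rest(\alpha\cup i_H(t_P))$ for $\alpha\in[\sup i_H``\tau_P,\,\tau_{F^R})$ exists in $R$. This follows because such an $\alpha$ would, via the factor map, pull back to witness that $F\rest(\bar\alpha\cup t_P)\in\Ult$ for $\bar\alpha\ge\tau_P$ (or directly contradict $\tau_P=\rho_1^P$ via the hull argument of \ref{cor:taurho_1}, now run inside $R$), so the only subtlety is confirming that the factor map $\tilde\pi:R\to\Ult_1(P,H)$-type analysis lets us reflect fragments of $F^R$ below their natural length down to fragments of $F$; this is exactly the kind of argument in \ref{rem:Dfrag} and \ref{lem:Dsp5}, so I would cite those rather than redo it.
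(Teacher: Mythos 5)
Your treatment of $k>1$ and of the case $\tau_P=((\mu^P)^+)^P$ is roughly in line with the paper's, but two of your intermediate claims are wrong. The inference ``$\crit(H)<\rho_1^P\le\tau_P$, hence $\mu<\crit(H)$'' (with $\mu=\crit(F^P)$) does not follow, and is in fact false when $\tau_P=(\mu^+)^P$: there $\rho_1^P\le\tau_P=(\mu^+)^P$, while $\mu<\rho_1^P$ (the standard fact for active premice, which the paper also uses), so $\rho_1^P=(\mu^+)^P$ and hence $\crit(H)\le\mu$. In particular Lemma~\ref{lem:Dsp3}(b) is unavailable there, and your ground for continuity of $i_H$ at $(\mu^+)^P$ (``as $\mu<\crit(H)$'') has a false premise. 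The paper's reason --- that $\mu<\rho_1^P$ rules out a $\bfrSigma_1^P$ function singularizing $(\mu^+)^P$ --- is what is actually needed.

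The genuine gap is the core case $k=1$, $\tau_P>(\mu^+)^P$. Your route --- establish $\rho_1^R=\sup i_H``\rho_1^P$ and then apply Corollary~\ref{cor:taurho_1} to both $P$ and $R$ --- is circular: that corollary applies only to \emph{Dodd sound} premice (its proof uses Dodd soundness to get $F^P\rest X\in P$), so to use it for $R$ you must already know $R$ is Dodd sound, and Dodd soundness of $R$ in turn involves $\tau_R$, which is what you are computing. Transferring total-fragment witnesses shows that $i_H$ carries fragments of $F^P$ to fragments of $F^R$, but it does not exclude the one thing that must be excluded: that some $\gamma\ge\sup i_H``\tau_P$ is an $i_H(t_P)$-generator of $F^R$, which would force $\tau_R>\sup i_H``\tau_P$. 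When $\crit(H)\le\mu$, Lemma~\ref{lem:Dsp3}(b) does not describe the generators of $F^R$ at all; and even when $\mu<\crit(H)$, you have not supplied the argument locating the generators coming from $H$ below $\sup i_H``\tau_P$. The paper excludes the bad $\gamma$ directly over $P$: if $\gamma=[a,f]^P_H\ge\sup i_H``\tau_P$ were an $i_H(t_P)$-generator of $F^R$, with $f$ an $\rSigma_1^P(\{q\})$ function (possible since $\crit(H)<\rho_1^P$), then by Los' theorem and the $\rPi_1$-ness of ``$\alpha$ is a $t_P$-generator'', $\rg(f)$ contains a $\tau_P$-cofinal set of $t_P$-generators, all lying in $\Hull_1^P(\{q\}\cup\crit(H))$; taking $X=\lambda\cup t_P$ in Lemma~\ref{lem:gen1hull} for suitable $\lambda\in(\crit(H),\tau_P)$, that hull is contained in $\Lambda=\Hull_1^P(X\cup(\mu^+)^P)$, whose ordinals below $\nu^P$ are generated by $X$, so $\Lambda$ contains no $t_P$-generator $\ge\lambda$ --- contradiction. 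Your suggested repairs at the end (``pull back via the factor map'' or ``run the Corollary~\ref{cor:taurho_1} argument inside $R$'') do not close this: the first is inapplicable to generators coming from $H$, which have no $F^P$-preimage, and the second is the same circularity. The $\Lambda$-hull argument over $P$ via Lemma~\ref{lem:gen1hull} is the missing idea.
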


\begin{proof}\footnote{If $R$ is $(0,\om_1,\om_1+1)$-iterable, then the Dodd
soundness
of $R$ follows from \ref{fact:Doddsound}. But we don't want to assume this much iterability here.}
If $k>1$, elementarity considerations give the lemma.
So suppose $k=1$. Let $F=F^P$, $\mu=\mu^P$, $t=t_F$ and $\tau=\tau_F$. Now $j$ maps fragments of $F$ to fragments of $F^R$ 
by elementarity.

Suppose $\tau=(\mu^+)^P$. Then $F$ is generated by $t\un\{\mu\}$, an $\rPi_2$
condition, preserved by $j$, so
$R$ is Dodd sound with $t_R=j(t_P)$ and $\tau_R=(j(\mu)^+)^R=j(\tau_P)$. But $j$ is continuous at $(\mu^+)^P$, as $(\mu^+)^P$ is $\bfrSigma_1^P$-regular, as
$\mu<\rho_1^P$.

Now suppose $\tau>(\mu^+)^P$. It suffices to see that $j(t)\un\sup
j``\tau$ generates $F^R$. Suppose not. Let $\kappa=\crit(H)$. Let $a,f$ be such that 
$\gamma=[a,f]^{P,1}_H$ is a
$j(t)$-generator 
for $F^R$ with $\gamma\geq\sup j``\tau$, where $f:[\kappa]^n\to P$
is given by a $\rSigma^P_1(\{q\})$ term for some $q\in P$. Since the statement
``$\alpha$ is a $j(t)$-generator'' is $\rPi_1$, $\rg(f)$
includes a $\tau$-cofinal set of $t$-generators. We have
\[\rg(f)\sub 
J=\Hull_1^P(\kappa\cup\{q\}).\] Let $\lambda<\tau$ be
large enough that
\[ \mu,q,\nu^P,\gamma^P\in\Lambda=\Lambda^P_F(t\un\lambda).\]
Then $\Lambda$ has no $t$-generators above $\lambda$.
But by \ref{lem:gen1hull}, $J\sub\Lambda$, contradiction.
\end{proof}

For the iteration trees $\Uu$ we will encounter, $E^\Uu_\alpha$ will always
be weakly amenable (over $M^{*\Uu}_{\alpha+1}$), but ostensibly may not be close to $M^{*\Uu}_{\alpha+1}$.
We now show that weak amenability gives a little more fine structural
preservation than
was established in \cite{fsit}. The
argument is related to that for \cite[6.2(Claim 5)]{fsit} (though there, the
extenders were always close to the relevant models), and also related
to some of the preceding lemmas.

\begin{dfn}\label{dfn:solidp} Let $N$ be a $k$-sound premouse with $\om<\rho_k^N$. The
\emph{$(k+1)$-solidity
parameter, projectum} of $N$, denoted
$(z^N_{k+1},\zeta^N_{k+1})$, is the $<_\lex$-least pair
$(z,\zeta)\in\Dd$ such that $\zeta\geq\om$ and
$\Th_{k+1}^N(\zeta\un\{z\})\notin N$. (See \ref{dfn:lexorder}.)\end{dfn}

\begin{rem}\label{rem:solidp}
The (rather trivial) requirement that $\zeta\geq\om$ is just made to
simplify the comparison between $\zeta_{k+1}$ and $\rho_{k+1}$.
By \cite[2.10]{fsit}, one can equivalently replace ``$\Th$'' in
\ref{dfn:solidp} with ``$\pTh$''. The author does not know an example, but
it seems that $\zeta_{k+1}^N$ might fail to be a cardinal of $N$. However, the
following facts follow easily from the definition; let $(z,\zeta)=(z_{k+1}^N,\zeta_{k+1}^N)$ and 
$(p,\rho)=(p_{k+1}^N,\rho_{k+1}^N)$:
\begin{enumerate}[label=--]
\item There is a characterization of
$(z,\zeta)$ like that of the Dodd-solidity ordinals in
\ref{rem:Dfrag}.
\item $\rho\leq\zeta\leq\rho_k^N$ and $(z,\zeta)\leq_\lex(p,\rho)$.
\item $N$ is $(k+1)$-solid iff
$p=z$ iff $\rho=\zeta$.
\item $N$ is $(k+1)$-sound iff $N=\Hull_{k+1}^N(\zeta\un
z)$.
\end{enumerate}
(The last item follows from the others: Suppose $N=\Hull_{k+1}^N(\zeta\un
z)$ but $\rho<\zeta$. Note $p=z\un q$ for some 
$q\sub[\rho,\zeta)$ with $q\neq\emptyset$. But then $(p,\rho)<_\lex(z,\zeta)$, a contradiction.)
\end{rem}

\begin{lem}\label{lem:soliditypreservation}Let $N$ be a $k$-sound premouse. Let
$E$ be a short extender
weakly amenable to $N$ with $\crit(E)<\rho_k^N$. Let $U=\Ult_k(N,E)$ and $j=i^{N,k}_E$. Suppose $U$ is wellfounded. Then
$z^U_{k+1}=j(z^N_{k+1})$ and
$\zeta^U_{k+1}=\sup j``\zeta^N_{k+1}$.\end{lem}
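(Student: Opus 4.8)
The plan is to show that $(z^U_{k+1},\zeta^U_{k+1})$ equals $(i_E(z^N_{k+1}),\sup i_E``\zeta^N_{k+1})$ by two inequalities, exactly in the style of the proof of \ref{lem:Dsp5} and \ref{lem:Dsp6}. Write $(z,\zeta) = (z^N_{k+1},\zeta^N_{k+1})$ and $\kappa=\crit(E)$. First I would establish $(z^U_{k+1},\zeta^U_{k+1})\geq_\lex(i_E(z),\sup i_E``\zeta)$. Here the point is that $i_E$ maps ``fragments of the theory'' to ``fragments of the theory'': for any $(s,\sigma)<_\lex(i_E(z),\sup i_E``\zeta)$ that could be a candidate, one either has $s\conc\langle\sigma\rangle$ an initial segment of $i_E(z)$ (with $\sigma<\sup i_E``\zeta$), in which case $\sigma<i_E(\alpha)$ for some $\alpha<\zeta$, and then $\mathrm{Th}^U_{k+1}(\sigma\un\{s\})$ is computable inside $U$ from $i_E(\mathrm{Th}^N_{k+1}(\alpha\un\{z\rest i\}))\in U$ (using that $\mathrm{Th}^N_{k+1}(\alpha\un\{z\rest i\})\in N$ by minimality of $(z,\zeta)$ together with solidity-type considerations, and that $i_E$ acts on such theories via the usual shift-lemma computation, $\crit(E)<\rho_k^N$ guaranteeing $i_E$ is the $\Ult_k$ map); or one is at a coordinate strictly below $i_E(z)(i)$, handled similarly. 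So the $<_\lex$-least pair whose theory escapes $U$ is at least $(i_E(z),\sup i_E``\zeta)$.

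For the reverse inequality $\leq_\lex$, I would show two things: (i) $\mathrm{Th}^U_{k+1}((\sup i_E``\zeta)\un\{i_E(z)\})\notin U$, and (ii) for every $\sigma<\sup i_E``\zeta$, $\mathrm{Th}^U_{k+1}(\sigma\un\{i_E(z)\})\in U$ — i.e. $\sup i_E``\zeta$ really is the projectum associated to $i_E(z)$ — and that no smaller $z$-candidate works. For (i): if that theory were in $U$, then since $\rho_{k+1}^N\leq\max(\om,\zeta)$ (\ref{rem:solidp}) and $N$ is $k$-sound, the standard argument (as in the proof that the theory at the $(k+1)$-projectum is not in the model) gives that $\mathrm{Th}^N_{k+1}(\zeta\un\{z\})$ would be recoverable in $N$ from pulling back through the factor/ultrapower map, contradiction; one uses here that $i_E$ is continuous at $\zeta$ if $\zeta$ has $N$-cofinality $\geq\rho_k^N$ and otherwise that $\sup i_E``\zeta$ genuinely captures all the fragments. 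For (ii): for $\sigma<\sup i_E``\zeta$ pick $\alpha<\zeta$ with $\sigma<i_E(\alpha)$; then $\mathrm{Th}^N_{k+1}(\alpha\un\{z\})\in N$ by minimality, and its image under $i_E$, intersected/decoded appropriately below $\sigma$, lies in $U$ and computes $\mathrm{Th}^U_{k+1}(\sigma\un\{i_E(z)\})$ — this is the shift-lemma bookkeeping, using semi-closeness $\pow(\kappa)\cap N = \pow(\kappa)\cap\Ult(N,E)$ to know the relevant subsets agree. Minimality in the first coordinate follows because if some $z'$ with $z'\conc\langle\text{something}\rangle$ properly before $i_E(z)$ gave a smaller escaping pair, pulling back along $i_E$ would contradict the $<_\lex$-minimality of $(z,\zeta)$ in $N$; this uses that $i_E(z)$ is a ``generalized solidity parameter'' and the pullback of its proper initial-segment theories are in $N$.

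The main obstacle I expect is (i) — showing $\mathrm{Th}^U_{k+1}((\sup i_E``\zeta)\un\{i_E(z)\})\notin U$ — precisely in the case $k=1$ and $\zeta$ of small $N$-cofinality, where $\sup i_E``\zeta$ is a strict sup and $i_E$ is discontinuous there (this is exactly the phenomenon making $\zeta^U_{k+1}=\sup i_E``\zeta^N_{k+1}$ rather than $i_E(\zeta^N_{k+1})$, paralleling the $k=1$ clause of \ref{lem:Dsp6} and the conclusion of \ref{lem:Dsp5}). The resolution should mirror \ref{lem:gen1hull}/\ref{lem:Dsp6}: build the appropriate hull $\Hull^U_{k+1}((\sup i_E``\zeta)\un\{i_E(z)\})$ and its transitive collapse, observe via a factor map that it would give a premouse whose theory-fragment data contradicts either $k$-soundness of $N$ or the minimality of $(z,\zeta)$; the semi-closeness hypothesis (weaker than closeness) is exactly what is needed, and is available, to push the relevant subsets of $\kappa$ across. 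The remaining steps are then routine shift-lemma computations of the kind already deployed in \ref{lem:Dsp3}–\ref{lem:Dsp6}.
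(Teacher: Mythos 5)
You have the right top-level shape: show $(i_E(z),\sup i_E``\zeta)\leq_\lex(z^U_{k+1},\zeta^U_{k+1})$ using the standard fact that $i_E$ maps partial theories in $N$ to partial theories in $U$ (the paper cites the proof of \cite[4.6]{fsit} here), and then show the hard inequality, namely $\pTh_{k+1}^U(\zeta'\un\{i_E(z)\})\notin U$ where $\zeta'=\sup i_E``\zeta$. You also correctly flag the hard case: $k\geq 1$, $\kappa<\zeta$, and $i_E$ discontinuous at $\zeta$.

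But your proposed resolution of that hard case is not an argument. Invoking ``build $\Hull^U_{k+1}(\zeta'\un\{i_E(z)\})$, take a factor map, derive a contradiction with soundness or minimality'' doesn't say what premouse you get, why the collapse would be an initial segment of anything, or how the contradiction materializes; and \ref{lem:gen1hull} is specifically about generators of a type~2 top extender, not about $z_{k+1}$-theories, so it's not the right template. The paper's actual work is of a different kind. Assuming $t'=\pTh_{k+1}^U(\zeta'\un\{i_E(z)\})\in U$, one first disposes of $\kappa\geq\zeta$ (elementarity plus semi-closeness) and shows $\zeta<\rho_k^N$, and then, since $\zeta<\rho_k^N$, one represents $t'=[a,f]_E^N$ for some $f\in N$ and tries to \emph{reconstruct} $t=\pTh_{k+1}^N(\zeta\un\{z\})$ inside $N$ from $f$. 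The delicate case is when for some $\gamma<\zeta$, $i_E(t_\gamma^N)\neq t_{i_E(\gamma)}^U$ (where $t^N_\alpha=\pTh_{k+1}^N(\alpha\un\{z\})$): there the paper pulls in the prewellorder $<^*$ on $\pTh_{k+1}^N(N)$ from \cite[proof of 2.10]{fsit}, uses a $\kappa$-cofinal, $<^*$-continuous $h:\kappa\to t_\gamma$ to truncate the $f(x)$'s into a coherent family $f_\alpha$, and then splits on $\cof^N(\zeta)\leq\kappa$ vs.\ $\cof^N(\zeta)>\kappa$, using weak amenability to pull the assembled data into $N$. None of the machinery of that argument (the $<^*$ prewellorder, the eventual stabilization of the relation $R_\alpha$, the cofinality split) falls out of a hull-and-collapse picture. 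So while you correctly isolated what must be shown, the proposed method would not prove it.

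A secondary remark: you treat ``(ii) $\Th^U_{k+1}(\sigma\un\{i_E(z)\})\in U$ for $\sigma<\zeta'$, plus minimality of the first coordinate'' as separate work, but it is already subsumed in the $\geq_\lex$ direction; once $\geq_\lex$ is established, the only thing left to check is the single non-membership $t'\notin U$, which is where all the difficulty lives.
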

\begin{proof}
Let $(z,\zeta)=(z_{k+1}^N,\zeta_{k+1}^N)$ and $z'=j(z)$ and $\zeta'=\sup
j``\zeta$.

First observe that $(z',\zeta')\leq_\lex(z^U_{k+1},\zeta^U_{k+1})$: By 
the proof of \cite[4.6]{fsit}, if $(b,\alpha)\in\Dd$ and
$\Th_{k+1}^N(\alpha\un\{b\})\in N$ then
\[ \Th_{k+1}^U(j(\alpha)\un\{j(b)\})\in U.\]

It remains to see that
\begin{equation}\label{eqn:theorymissing}
t'\eqdef\pTh_{k+1}^U(\zeta'\un\{z'\})\notin U. \end{equation}

So suppose $t'\in U$. We show
$t\eqdef\pTh_{k+1}^N(\zeta\un\{z\})\in N$, a contradiction.

Let $\kappa=\crit(E)$.
If $\kappa\geq\zeta$ then $t\approx t'$ 
by $\rSigma_{k+1}$ elementarity (exchange $z$ with $z'$). So by weak amenability, $t\in N$.
(The details in \ref{lem:Dsp5} are analogous.)

So $\kappa<\zeta$. Also, $\zeta<\rho_k^N$.
For otherwise $\zeta=\rho_k^N$. But
$\pTh_{k+1}^N(\rho_k^N)\notin N$
(since $\pTh_{k+1}$ has access to parameters $p_k^N,u_{k-1}^N$), so
$z=\emptyset$, and $\rho_k^U=\zeta'$, so
\[ t'=\pTh_{k+1}^U(\rho_k^U)\notin U,\]
contradiction.

Since $\zeta<\rho_k^N$ there is $f\in N$ and $b\in[\nu(E)]^{<\om}$ such that
$f:[\kappa]^{<\om}\to N$ and
$t'=[b,f]^{N,k}_E$. (We get $f\in N$ even if $\rho_k^N=(\zeta^+)^N$,
because in this case there is no $\bfrSigma_k^N$ singularization of $\rho_k^N$.)
We will compute
$t$ from $f$, giving $t\in N$. Now $\pTh_{k+1}$ is a set of
$\bfrSigma_{k+1}$ formulas. We may
assume that for each $x\in[\kappa]^{<\om}$, $f(x)$ is a set of $\rSigma_{k+1}$ formulas in parameters in
$\zeta\un\{z,u_k^N\}$.

For $\alpha<\zeta$, let
\[ t_\alpha^N=\pTh_{k+1}^N(\alpha\un\{z\})\]
and let $f_\alpha:[\kappa]^{|b|}\to N$ be defined
\[ f_\alpha(x)=f(x)\rest(\alpha\un\{z,u_k^N\}), \]
where $T\rest X$ is the restriction of a theory $T$ to parameters in $X$.
For $\alpha<\zeta'$, let
\[ t_\alpha^U=\pTh_{k+1}^U(\alpha\un\{z'\}). \]

\begin{case} For each $\gamma<\zeta$,
$j(t_\gamma^N)=t_{j(\gamma)}^U$.\end{case}

We leave this case to the reader; it is simpler than the next one.

\begin{case}\label{case:a_case}For some $\gamma<\zeta$, $j(t_\gamma^N)\neq
t_{j(\gamma)}^U$.\end{case}

Fix such a $\gamma$. Let $<^*$ be the prewellorder on $\pTh_{k+1}^N(N)$ defined
in \cite[Proof of 2.10]{fsit}. It follows that $t_\gamma$ is cofinal in
$<^*$, and that
$\cof^N(<^*\rest t_\gamma)=\kappa$. Let
$h\in N$ be such that $h:\kappa\to t_\gamma$ and $h$ is cofinal,
increasing and continuous with respect to $<^*$.
For $\alpha\in[\gamma,\zeta)$ let
$g_\alpha:\kappa\to N$ be defined
\[ g_\alpha(\beta)=t^N_\alpha\rest^{<^*}h(\beta) \]
where $T \rest^{<^*}\varphi$ is the restriction of a theory $T$ (of the appropriate kind) to 
formulas $\psi$ such that $\psi<^*\varphi$. Then $g_\alpha\in N$ (maybe $\left<g_\alpha\right>_{\alpha}\notin N$) and
\[ [g_\alpha,\{\kappa\}]^{N,k}_E=t_{j(\alpha)}^U.\]
We
may assume that $\kappa=\min(b)$. Let $g_\alpha':[\kappa]^{|b|}\to N$ be defined
\[ g_\alpha'(x)=g_\alpha(\min(x)).\]
So there is $X_\alpha\in E_b$ such that
$g_\alpha'\rest X_\alpha=f_\alpha\rest X_\alpha$.

Define the relation
$R\sub\zeta\cross([\kappa]^{|b|})^2$ by
\[ R(\alpha,x_1,x_2)\iff [\max(x_1)<\min(x_2)\text{ and }f_\alpha(x_1)\sub f_\alpha(x_2)].\]
So
$R\in N$. Define the relation $\psi\sub\zeta\cross[\kappa]^{|b|}$ by
\[ \psi(\alpha,x_1)\iff R(\alpha,x_1,x_2)\text{ holds for }E_b\text{-almost all }x_2.\]
Fix $\alpha\in[\gamma,\zeta)$. Note that if $x_1,x_2\in X_\alpha$ and
$\max(x_1)<\min(x_2)$, then $R(\alpha,x_1,x_2)$. So if $x_1\in X_\alpha$
then $\psi(\alpha,x_1)$.
On the other hand, if $\psi(\alpha,x_1)$ then there is
$x_2\in X_\alpha$ such that
$R(\alpha,x_1,x_2)$, and so
$f_\alpha(x_1)\sub t_\alpha$. So let 
$S_\alpha=\{x_1\mid\psi(\alpha,x_1)\}$. Then $S_\alpha\in N$ as $R\in N$ and by weak amenability, and
\[ t_\alpha=\bigcup_{x_1\in S_\alpha}f_\alpha(x_1). \]

Suppose $\cof^N(\zeta)\leq\kappa$.
Fix a cofinal function $g:\kappa\to\zeta$ with $g\in N$ and
$\rg(g)\sub[\gamma,\zeta)$. 
Again by weak amenability and as $R\in N$,
\[ S'\eqdef\{(\beta,x)\mid\psi(g(\beta),x)\}\in N.\]
But
then
\[ t=\bigcup_{(\beta,x)\in S'}f_{g(\beta)}(x)\in N,\]
as required.

Now suppose instead that $\cof^N(\zeta)>\kappa$. Let
$R_\alpha$ be the $\alpha$-section of $R$. For
$\alpha_1,\alpha_2\in[\gamma,\zeta)$ with $\alpha_1<\alpha_2$, we have
$R_{\alpha_2}\sub R_{\alpha_1}$.
So $R_{\alpha_1}=R_{\alpha_2}$ for all sufficiently large
$\alpha_1,\alpha_2<\zeta$. Fix such an $\alpha_1$. Then
$S\eqdef S_\alpha$ is independent of
$\alpha\in[\alpha_1,\zeta)$, and $S\in N$. So let
$S'=[\alpha_1,\zeta)\cross S$. Then
\[ t=\bigcup_{(\alpha,x)\in S'}f_\alpha(x)\in N, \]
completing the
proof.\renewcommand{\qedsymbol}{$\Box$(\ref{lem:soliditypreservation})}
\end{proof}

\begin{ntn} Let $N$ be a $k$-sound seg-pm. Let
$\Ff=\left<F_\alpha\right>_{\alpha<\lambda}$ be a sequence of weakly amenable short extenders. For
$\xi\leq\lambda$, let $\left<N_\alpha\right>_{\alpha\leq\xi}$
be the degree $k$ iterated ultrapower based
on $N$, using the extenders in $\Ff$, if it is defined. That is, $N_0=N$,
\[ N_{\alpha+1}=\Ult_k(N_\alpha,F_\alpha),\]
and we take direct limits at limit $\alpha$.
If $N_\alpha$ is illfounded, or $F_\alpha$ is not an extender over $N_\alpha$,
or $\crit(F_\alpha)\geq\rho_k^{P_\alpha}$, then for all $\beta>\alpha$,
$N_\beta$ is undefined. Say that $\Ff$ is \emph{$k$-pre-good for $N$} iff $N_\lambda$ is defined (but 
is maybe illfounded), and \emph{$k$-good} iff $k$-pre-good and wellfounded.
Suppose $\Ff$ is $k$-pre-good for $N$. We write
\[ \Ult_k(N,\Ff)=N_\lambda \]
and
\[ i^{N,k}_\Ff:N\to\Ult_k(N,\Ff) \]
for the ultrapower embedding. If $N$ is active,
$\Ult_k(F^N,\Ff)$ and $\Ult_k(\id,\left<F^N\right>\conc\Ff)$ both denote
$F(\Ult_k(N,\Ff))$.\end{ntn}

Following the (kind of) calculations used in \cite[Proof of Lemma 
2.10]{fsit}:

\begin{newdfn}[Minimal Skolem terms]\label{dfn:minterm}Let $\varphi$ be an
$\rSigma_{k+1}$ formula of
$n+1$ free variables. The \emph{minimal Skolem term associated
to $\varphi$} is denoted $\minterm_\varphi$, and
has $n$ variables.
Let $R$ be a $k$-sound premouse with $\om<\rho_k^R$. We define the partial
function
\[ \minterm_\varphi^R:\core_0(R)^n\to\core_0(R), \]
by induction on $k$, maintaining the following: Let
$\Gamma=\rSigma_{k+1}^R(\{p_k^R,u_{k-1}^R\})$. The graph of $\minterm_\varphi^R$
is $\Gamma$, and
$\Gamma$ is closed under substitution of
minimal Skolem terms; this is uniform jointly in $\varphi$ and in $k$-sound premice
$R$ of the same type. Moreover, for $X\sub\core_0(R)$,
\[ \Hull_{k+1}^R(X)=\{\minterm^R_\varphi(\xvec,u_k^R)\mid\varphi\text{ is
}\rSigma_{k+1}\ \&\ \xvec\in X^{<\om}\}.\footnote{Recall
here that $\Hull_{k+1}^R(X)$ includes $u_k^R$ by definition;
likewise $\Th_{k+1}^R(X)$ is a theory in parameters in $X\cup\{u_k^R\}$.}\]

If $k=0$ then $\minterm_\varphi^R$ is just the usual $\rSigma_1$
Skolem function associated to $\varphi$.

Suppose $k\geq 1$. Let $u=u_{k-1}^R$ and $p=p_k^R$.
Let $\psi$ be an $\rSigma_1$ formula and $\varphi(v,w)$ be the $\rSigma_{k+1}$ formula
\[ \ex t\in T_k[\psi(t,v,w)].\]
Let $T$ be a set
of $\rSigma_k$ formulas in parameters in $\alpha\cup\{u,p\}$, where $\alpha<\rho_k^N$.
Let $\gamma<\alpha$, let $\vec{\beta}=(\beta_0,\beta_1,\beta_2,\beta_3)\in\alpha^4$,
let $\vec{r}=(r_0,r_1,r_2,r_3)$ be minimal $\rSigma_k$ Skolem terms.
Write $s_i$ for the term-in-parameters $r_i(\beta_i,u,p)$.
Let $x\in\core_0(R)$.
Say $T$ \emph{codes a witness to $\ex w\varphi(x,w)$ at
$(\gamma,\vec{\beta},\vec{r})$} iff
\begin{enumerate}[label=--]
\item $x=s_2^R$ (that is, $r_2^R(\beta_2,u,p)$ is defined and $=x$),
\item $T$ contains the formula ``$\ex t,q,x',w[t=s_0,q=s_1,x'=s_2,w=s_3$,
$t$ is a set of $\rSigma_k$ formulas in parameters in $\gamma\cup\{q\}$,
and $\psi(t,x',w)]$'',
\item for each $\rSigma_k$ formula $\varrho$ and $\gammavec\in\gamma^{<\om}$,
$T$ contains the formula
\[ \text{``}\ex t,q[t=s_0,q=s_1\text{ and }t\text{ contains the formula ``}\varrho(\gammavec,q)\text{''}]\text{''}\]
iff $T$ contains the formula ``$\ex q[q=s_1\text{ and }\varrho(\gammavec,q)]$''.
 \end{enumerate}

Let $\varphi(v,w)$ be $\rSigma_{k+1}$.
 Then $\varphi'(v,y)$ is the formula ``$\varphi(v,y)$ and letting $(\alpha,\gamma,\betavec,\vec{r})$ be lex-least
 such that $\Th_k(\alpha\cup\{p_k\})$ codes a witness to $\ex w\varphi(v,w)$, then $y=r_3(\beta_3,u_{k-1},p_k)$''
 (the formula is to be interpreted over $k$-sound premice $S$, with $u_{k-1},p_{k-1}$ interpreted, of course, as $u_{k-1}^S,p_k^S$).
 Note that:
 \begin{itemize}[label=--]
  \item 
$\varphi'$ is $\rSigma_{k+1}(\{u_{k-1},p_k\})$ (uniformly so),
\item $\varphi\mapsto\varphi'$ is recursive (when we choose $\varphi'$ naturally),
\item $\core_0(R)\sats\ex w\varphi(x,w)$ iff $\core_0(R)\sats\ex w\varphi'(x,w)$ iff $\core_0(R)\sats\ex !w\varphi'(x,w)$.
\end{itemize}

We now define $\minterm_\varphi^R$, for $\varphi$ $\rSigma_{k+1}$.
Let $\xvec\in\core_0(R)^n$.
If $\core_0(R)\sats\neg\ex y\varphi(\xvec,y)$, then
$\minterm_\varphi^R(\xvec)$ is undefined.
Otherwise
\[ \minterm_\varphi^R(\xvec)=\text{ the unique }w\in\core_0(R)\text{ such that }\core_0(R)\sats\varphi'(\xvec,w).\qedhere\]
\end{newdfn}

\begin{cor}\label{cor:k+1solid}Let $N$ be a $k$-sound premouse.
Let $\Ff=\left<F_\alpha\right>_{\alpha<\lambda}$ be a sequence of weakly amenable short
extenders, $k$-good for $N$. Let $N_\alpha=\Ult_k(N,\Ff\rest\alpha)$.
\tu{(}So $N_\lambda$ is well-defined and wellfounded.\tu{)}
Let $j=i^{N,k}_\Ff$.
Then:
\begin{enumerate}
 \item\label{item:z_zeta_pres} $z_{k+1}^{N_\lambda}=j(z_{k+1}^N)$ and $\zeta_{k+1}^{N_\lambda}=\sup
j``\zeta_{k+1}^N$.
 \item\label{item:p_rho_pres} If $N,N_\lambda$ are both $(k+1)$-solid then
$p_{k+1}^{N_\lambda}=j(p_{k+1}^N)$ and $\rho_{k+1}^{N_\lambda}=\sup j``\rho_{k+1}^N$.
\item\label{item:non-solid} If $N$ is not $(k+1)$-solid then neither is $N_\lambda$.
\item\label{item:soundness_charac} $N_\lambda$ is $(k+1)$-sound iff $N$ is $(k+1)$-sound and
$\crit(F_\alpha)<\rho_{k+1}^{N_\alpha}$ for
all $\alpha<\lambda$.
\end{enumerate}
\end{cor}

\begin{proof}
Let $\rho=\rho_{k+1}^N$, $p=p_{k+1}^N$,
$\zeta=\zeta_{k+1}^N$, $z=z_{k+1}^N$,
$\zeta'=\sup j``\zeta$ and $z'=j(z)$. 

Part \ref{item:z_zeta_pres} is by induction on $\lambda$, using
\ref{lem:soliditypreservation}. We just verify that the induction does not fail at a
limit stage. We do have the necessary
theories in $N_\lambda$ as usual. So suppose
\[ t'=\Th_{k+1}^{N_\lambda}(\zeta'\un z')\in N_\lambda. \]
Let $\alpha<\lambda$ and $i_{0\alpha}=i^{N,k}_{\Ff\rest[0,\alpha)}$ and $i_{\alpha\lambda}=i^{N_\alpha,k}_{\Ff\rest[\alpha,\lambda)}$
 and $t^*$ be such that
$i_{\alpha\lambda}(t^*)=t'$. Let
$z^*=i_{0\alpha}(z)$ and $\zeta^*=\sup i_{0\alpha}``\zeta$.
Then by $\rSigma_{k+1}$ elementarity,
\[ \Th_{k+1}^{N_\alpha}(\zeta^*\un z^*)=t^*\in N_\alpha, \]
contradicting the inductive hypothesis.

Part \ref{item:p_rho_pres} follows from part \ref{item:z_zeta_pres} and \ref{rem:solidp}.

Part \ref{item:non-solid}: Suppose that $N$ is not $(k+1)$-solid. Then by
\ref{rem:solidp}, $\rho<\zeta$.
For a premouse $M$ and $q\in M$, let $\zeta_{k+1}^M(q)$ and
$z_{k+1}^M(q)$ be the relativization of the $(\zeta,z)$ notions for $M$ at degree
$k+1$, to theories in the expanded language with a constant symbol $\dot{q}$ interpreted 
by $q$. Clearly the preceding results
relativize (for example,
$i(z_{k+1}^M(q))=z^R_{k+1}(i(q))$ for appropriate ultrapower maps $i:M\to R$, etc). Now 
$\zeta^N_{k+1}(p)=\rho$ and
$z^N_{k+1}(p)=\emptyset$. So
\[
\rho_{k+1}^{N_\lambda}\leq\zeta^{N_\lambda}_{k+1}
(j(p))=\sup
j``\zeta_{k+1}^N(p)<\zeta'. \]
So again by \ref{rem:solidp}, $N_\lambda$ is not $(k+1)$-solid.

Part \ref{item:soundness_charac}: Suppose that $N$ is not $(k+1)$-sound. Then by \ref{rem:solidp},
\[ N\neq
H\eqdef\Hull_{k+1}^N(\zeta\un z),\]
and since $\zeta'\leq j(\zeta)$ it suffices to see
\[ N_\lambda\neq\Hull_{k+1}^{N_\lambda}(j(\zeta)\un z').\]
So let $x\in N\cut H$. We claim that
\begin{equation}\label{eqn:image_x_not_in_hull}
j(x)\notin\Hull_{k+1}^{N_\lambda}(j(\zeta)\un z'),\end{equation}
which suffices. Now clearly $\zeta<\rho_k^N$. Stratify $H$ via the methods of  \cite[2.10]{fsit} (cf.~\ref{dfn:minterm}).
So
\[ H=\bigcup_{\zeta\leq\alpha<\rho_k^N}H_\alpha,\]
where $H_\alpha$ is the
set of all $y\in H$ such that for some $\beta<\zeta$ and minimal $\rSigma_{k+1}$ Skolem term $r$,
\[ t_\alpha=\Th_k^N(\alpha\un\{p_k^N\})\]
codes a witness to the fact that $y=r(z,\beta,u_k^N)$. 
(Here if $k=0$, $t_\alpha$ should be replaced with the $\alpha^\nth$ ``fragment'' of
$\core_0(N)$ in the usual manner.)

Let $\alpha\in(\zeta,\rho_k^N)$ with
\[ x,z,u_k^N\in\Hull_k^N(\alpha\un\{p_k^N\}),\]
or $x,z,u_k^N\in t_\alpha$ if $k=0$, and  $\alpha$ a  limit if $k>0$. The
fact that $x\notin H_\alpha$ is then an $\rSigma_{k}$ assertion about
$(x,z,u_k^N,t_\alpha)$.\footnote{The $\rSigma_k$ complexity arises in identifying some terms which output $x,z,u_k^N$.} 
But
\[j(t_\alpha)=\Th_k^{N_\lambda}(j(\alpha)\un\{p_k^{N_\lambda}\}),\] and
$j$ is $\rSigma_{k}$-elementary, so  $j(x)\notin
H^{N_\lambda}_{j(\alpha)}$, where $H^{N_\lambda}_\gamma$ is defined
over $N_\lambda$ analogously. Since $\rho_k^{N_\lambda}=\sup
j``\rho_k^N$, line (\ref{eqn:image_x_not_in_hull}) follows. So by
\ref{rem:solidp}, $N_\lambda$ is not $(k+1)$-sound.

Now suppose instead that $N$ is $(k+1)$-sound. Let $\xi$ be least such that either
\[ \xi=\lambda\text{ or }\crit(F_\xi)\geq\rho'=\sup i_{0\xi}``\rho.\] Let
$p'=i_{0\xi}(p)$. Then by standard arguments, 
$\rho_{k+1}^{N_\xi}=\rho'$, $p_{k+1}^{N_\xi}=p'$ and $N_\xi$ is $(k+1)$-sound. (Alternatively,
some of those arguments can be avoided by noting that by part \ref{item:z_zeta_pres} and some 
other arguments,\[ N_\xi=\Hull_{k+1}^{N_\xi}(\zeta^{N_\xi}_{k+1}\un\{z_{k+1}^{N_\xi}\}),\]
so by \ref{rem:solidp}, $N_\xi$ is $(k+1)$-sound.) 
So we are done if 
$\lambda=\xi$, and if $\lambda>\xi$ then note that
$\cHull_{k+1}^{N_\lambda}(\zeta'\un z')=N_\xi\neq N_\lambda$,
so $N_\lambda$ is not sound.\renewcommand{\qedsymbol}{$\Box$(\ref{cor:k+1solid})}
\end{proof}

\begin{newrem}\label{rem:fs_pres_optimal}
 The previous result is optimal in the sense that a degree $0$ ultrapower of a 
$1$-sound but non-$2$-sound structure, can be fully sound. For let $N$ be a $1$-sound premouse such 
that for some $\kappa<\lambda<\OR^N$, $N=\J_\kappa(N|\lambda)$ and $\rho_1^N=\lambda$ and $\lambda$ 
is $\bfrSigma_1^N$-regular, and suppose there is a short extender $E$ weakly amenable to $N$ with 
$\crit(E)=\kappa$. Let $U_1=\Ult_1(N,E)$ and $U_0=\Ult_0(N,E)$, and suppose that each $U_i$ is 
wellfounded. Then it is easy to see that $U_0\pins U_1$, and so $U_0$ is fully sound. We can arrange 
this situation with $N$ being non-$2$-sound (and so $\rho_2^N<\lambda$, but we might have either 
$\kappa<\rho_2^N$ or $\rho_2^N\leq\kappa$).
\end{newrem}

We now analyze extenders used in
iteration trees. The analysis decomposes such extenders into linear
compositions of Dodd sound extenders, identified via Dodd-fragment parameters and projecta.
The associativity of extenders underlies the analysis.

\begin{dfn}\label{dfn:extcomp} Let $P$ be
a $k$-sound segmented-premouse, $F=F^P$, and $E$ a short extender over $P$ with
$\crit(E)<\rho_k^P$. Then $E\com_k P$ denotes $\Ult_k(P,E)$; $E\com_k F$ or
$\Ult_k(F,E)$ denotes $F^{\Ult_k(P,E)}$. Write $\com$ for $\com_0$. In the
absence of parentheses, we take association of $\com_k$ to the right:
$E\com_k F\com_l Q = E\com_k (F\com_l Q)$.
\end{dfn}

\begin{lem}[Associativity of Extenders]\label{lem:extass}
Let $P,Q$ be seg-premice.
Suppose  $F=F^P\neq\emptyset$, $F$ is over $Q$, $Q$ is $k$-sound and $\kappa_F<\rho_k^Q$.
Let $E$ be a  short extender over $P$ such that $(\kappa_F^+)^P<\kappa_E$.\footnote{We initially
had the stronger assumption that $\kappa_F<\kappa_E$ and $E$ is weakly amenably;
the referee noticed that it might be enough to assume that
$(\kappa_F^+)^P<\kappa_E$, which it is.}
Let $U=\Ult_k(Q,F)$ and $U^P_E=\Ult_0(P,E)$.
Suppose  $U$ and $U^P_E$ are wellfounded. Then
\[ (E\com F)\com_k Q= E\com_k(F\com_k Q), \]
\[ i^{Q,k}_{E\com F}=i^{U,k}_E\com i^{Q,k}_F.\qedhere\]\end{lem}
\begin{proof}
Let
$\lambda=\OR^P$. Then $\lambda\text{ is a }U\text{-cardinal, }\lambda<\rho_k^U\text{ and }
\her_\lambda^U=\univ{P}$. So $E$ is an extender over $\her_\lambda^U$, hence over $U$.
Let $j=i^{U,k}_E$. Note $j$ is continuous at $\lambda$, $j(\lambda)=\OR^{U^P_E}$ and
$\her_{j(\lambda)}^{\Ult_k(U,E)}=\univ{U^P_E}$ and $j\rest\her_\lambda^U=i^P_E$. By
\ref{lem:Dsp3},
\[ \nu_{E\com F}=\nu_E\un\sup i^P_E``\nu_F.\]
And letting $\xi=\max(\nu_F,\lgcd(P))$, for
$A\in P\inter\pow([\kappa_F]^{<\om})$,
\begin{equation}\label{eqn:assoc} i_{E\com
F}(A)\inter[\nu_{E\com
F}]^{<\om}=i^P_E(i_F(A)\inter[\xi]^{<\om})\inter[\nu_{E\com F}]^{<\om}.
\end{equation}

Define an isomorphism $\psi:\Ult_k(Q,E\com F)\to\Ult_k(U,E)$ by
\[ \psi([\tau_q,i^P_E(a)\un b]^{Q,k}_{E\com F})=
[\tau'_{(q',a)},b]^{U,k}_E, \]
for $k$-terms $\tau_q$ defined from parameter $q\in Q$, and $a\in[\nu_F]^{<\om}$,
$b\in[\nu_E]^{<\om}$, and where $q'=i^{Q,k}_F(q)$ and
$\tau'$ is defined from $\tau$ by converting the appropriate arguments to
parameters. Los' Theorem, (\ref{eqn:assoc}) and the fact that degree $k$
embeddings respect the $T_k$ predicate, show $\psi$ is
well-defined and $\rSigma_k$-elementary; surjectivity is clear. Moreover, $\psi$
commutes with the ultrapower embeddings, so $i^{Q,k}_{E\com
F}=i^{U,k}_E\com i^{Q,k}_F$.\end{proof}

\begin{cor}\label{cor:nass}
Let $P_n,\ldots,P_0,Q$ be segmented-premice. Suppose that for each $i$,
$E_i=F^{P_i}\neq\emptyset$,
$\crit(E_{i+1})>\crit(E_i)$,
$E_{i+1}$ is over $P_i$, $E_0$ is over
$Q$, and $\crit(E_0)<\rho^Q_k$. Then writing $Q_0=Q$ and $Q_{i+1}=\Ult_k(Q_i,E_i)$,
\begin{eqnarray*} ((\ldots(E_n\com E_{n-1})\com\ldots)\com E_0)\com_k
Q&=&E_n\com_k(\ldots\com_k(E_0\com_k Q));\\
 i^{Q,k}_{((\ldots(E_n\com E_{n-1})\com\ldots)\com E_0)}&=&i^{Q_n,k}_{E_n}\com\ldots\com 
i^{Q_0,k}_{E_0}.\end{eqnarray*}
\end{cor}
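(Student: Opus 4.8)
The plan is to prove the corollary by induction on $n$, the case $n=0$ being trivial and $n=1$ reducing to \ref{lem:extass} (take $P=P_0$, $F=E_0=F^{P_0}$, $E=E_1$). To make the induction go through I would strengthen the statement to also record that the composed extender $H:=((\ldots(E_n\com E_{n-1})\com\ldots)\com E_0)$ is a short extender, semi-close to $Q$, with $\crit(H)=\crit(E_0)$; these auxiliary facts are needed both to make sense of ``$H\com_k Q$'' and to feed $H$ back into \ref{lem:extass} at the next stage.

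For the inductive step ($n\ge 1$) I would first apply the inductive hypothesis to the shorter tower $E_n,\ldots,E_1$ over the base $P_0$ at degree $0$ — here $P_0$ is $0$-sound and $\crit(E_1)<\OR^{P_0}=\rho_0^{P_0}$ — to learn that $H_1:=((\ldots(E_n\com E_{n-1})\com\ldots)\com E_1)$ is a short extender, semi-close to $P_0$, with $\crit(H_1)=\crit(E_1)$. Then \ref{lem:extass} applies with $P=P_0$, $F=E_0=F^{P_0}$, $E=H_1$ (its hypotheses hold: $H_1$ is short and semi-close to $P_0$, $\crit(E_0)<\rho_k^Q$, and $\crit(E_0)<\crit(E_1)=\crit(H_1)$), giving
\[ H\com_k Q=(H_1\com_0 E_0)\com_k Q=H_1\com_k U_0,\qquad i^Q_H=i^{U_0}_{H_1}\com i^Q_{E_0}, \]
where $U_0:=\Ult_k(Q,E_0)=E_0\com_k Q$ and $i^{U_0}_{H_1}$ is the degree-$k$ ultrapower map. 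The analysis inside the proof of \ref{lem:extass} also yields the agreement $U_0|\lh(E_0)=P_0||\lh(E_0)$, with $\lh(E_0)=\OR^{P_0}$ a cardinal of $U_0$; since $\crit(E_1)<\OR^{P_0}$, this agreement transfers the relevant hypotheses on $E_1$ — semi-closeness, and $\crit(E_1)<\rho_k^{U_0}$ — from $P_0$ to $U_0$.

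I would then apply the inductive hypothesis a second time, to $E_n,\ldots,E_1$ over the base $U_0$ at degree $k$, obtaining $H_1\com_k U_0=E_n\com_k(\ldots\com_k(E_1\com_k U_0))$ and $i^{U_0}_{H_1}=i_{E_n}\com\ldots\com i^{U_0}_{E_1}$. Since $E_1\com_k U_0=E_1\com_k(E_0\com_k Q)$, combining this with the previous display gives exactly the two asserted identities for the full tower. The auxiliary facts for $H$ are then routine: $\crit(H)=\crit(E_0)$ because $\crit(E_0)<\crit(H_1)$, $H$ is short because it is the active extender of the segmented-premouse $\Ult_0(P_0,H_1)$ (cf. \ref{rem:segmented}), and $H$ is semi-close to $Q$ because the subsets of $\crit(E_0)$ appearing in the ultrapower of $Q$ by $H$ come from functions valued in $\pow(\crit(E_0))\cap U_0=\pow(\crit(E_0))\cap Q$, using $\crit(E_0)<\crit(H_1)$ and semi-closeness of $E_0$ to $Q$. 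The main obstacle, such as it is, is precisely this bookkeeping with the auxiliary structural data: checking that the partial composition $H_1$ is genuinely a short extender semi-close to the relevant base model with critical point $\crit(E_1)$, so that both \ref{lem:extass} and the inductive hypothesis are legitimately applicable, and that the semi-closeness and projectum hypotheses survive the passage from $P_0$ to $U_0$. Each such verification is of the same elementary character as the corresponding points in the proof of \ref{lem:extass}, resting on the agreement $U_0|\lh(E_0)=P_0||\lh(E_0)$ together with $\crit(E_0)<\crit(E_1)$.
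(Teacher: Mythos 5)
Your argument is correct and is the natural one: the paper states \ref{cor:nass} without proof as an immediate consequence of \ref{lem:extass}, and an induction on $n$ that peels off $E_0$, applies \ref{lem:extass} once, and then re-applies the (suitably strengthened) inductive hypothesis over $U_0=\Ult_k(Q,E_0)$ is surely what is intended. You are right that the real content is the bookkeeping — verifying that the partial composite $H_1$ is short and semi-close with the correct critical point, and that the hypotheses on $E_1$ pass from $P_0$ to $U_0$ via the agreement $U_0|\lh(E_0)=P_0||\lh(E_0)$ — and your handling of these points is sound.
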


\begin{dfn}[Dodd core]\label{dfn:Dcore}
Let $G$ be an extender such that $(\sigma_G,s_G)$ are defined. The \emph{Dodd
core} of $G$, denoted
$\core_D(G)$, is the transitive collapse of $G\rest\sigma_G\un s_G$.\footnote{
That is, let $E_0=G\rest\sigma_G\cup s_G$,
let $s'=[s,\id]_{E_0}^M$ where $G$ is over $M$,
and let $j=i_{E_0}^M$. Then $\core_D(G)$ is the extender derived from $j$ with support $\sigma_G\cup s'$.}
We often identify $\core_D(G)$ with its trivial completion.
\end{dfn}

\begin{rem}\label{rem:Dcore}
Let $S$ be a $k$-sound premouse
such that every  $E\in\es_+^S$  is Dodd sound.
Let $\Ww$ be a $k$-maximal tree on $S$. Let $\alpha<\lh(\Ww)$ and
$G\in\es_+(M^\Ww_\alpha)$,  with $G$ not Dodd sound.

By elementarity, $G=F(M^\Ww_{\alpha})$, so $\alpha$ is the
unique $\alpha'$ such that $G\in\es_+(M^\Ww_{\alpha'})$. Write
$\alpha_G=\alpha$.
Lemmas \ref{lem:Dsp4}--\ref{lem:Dsp6} show $\deg^\Tt({\alpha_G})=0$ and
$\core_D(G)=F(M^{*\Ww}_{\beta+1})$, where
$\beta$ is
the least $\beta'$ such that (i) $\beta'+1\leq_\Ww{\alpha_G}$ and
$\Ww$ does not drop in model in $(\beta'+1,\alpha_G]_\Ww$, and (ii)
$F(M^\Ww_{\beta'+1})$
is not Dodd sound; condition (ii) can equivalently be replaced with (ii')
$\crit(i^{*\Ww}_{\beta'+1,{\alpha_G}})\geq\tau_F$ where
$F=F(M^{*\Ww}_{\beta'+1})$.

Also, $\gamma=\pred^\Ww(\beta+1)$ is the unique ordinal $\gamma'$ such that
$\core_D(G)\in\es_+(M^\Ww_{\gamma'})$. For $\Ww$ is normal and
$\lh(E^\Ww_\gamma)\leq\OR(M^{*\Ww}_{\beta+1})=\lh(\core_D(G))$.\footnote{Literally
here, we mean $\OR(M^{*\Ww}_{\beta+1})=\lh(G')$ where $G'$ is the trivial completion of $\core_D(G)$.}

For Dodd sound $G\in\es_+(M^\Ww_\alpha)$, $\alpha_G$ denotes the least
$\beta$ with $G\in\es_+(M^\Ww_\beta)$.
\end{rem}

\begin{dfn}[Dodd ancestry]\label{dfn:Dancestry}
Let $\Ww,\alpha$ be as in  \ref{rem:Dcore} and $G\in\es_+(M^\Ww_\alpha)$.
We define the
\emph{\textup{(}Dodd\textup{)}
ancestry of $G$ in $\Ww$}, denoted $\dam^\Ww(G)$, and the ordering
$<^\Ww_\dam$, recursively on $\alpha_G$. If $G$ is Dodd-sound let
$\dam^\Ww(G)=\emptyset$. Suppose $G$ is not Dodd-sound. Let $\gamma$ be such that
$\core_D(G)\in\es_+(M^\Ww_\gamma)$ (by \ref{rem:Dcore}, $\gamma$ is unique).
Define
$\dam^\Ww(G)$ to be the sequence $d=\left<d_\beta\right>_{\beta\in D}$ with domain
\[ D=\{\beta\mid\beta+1\in(\gamma,\alpha_G]_W\},\]
such that $d_\beta= \dam^\Ww(E^\Ww_\beta)$.

Now recursively define the relation $<^\Ww_\dam$ by:
\[ \beta<^\Ww_\dam\gamma\ \iff\ 
\ex\beta'\in\dom(\dam^\Ww(E^\Ww_\gamma))[\beta=\beta'\ \text{or}\
\beta<^\Ww_\dam\beta'].\]
We also write $E^\Ww_\beta<^\Ww_\dam E^\Ww_\gamma$ to mean
$\beta<^\Ww_\dam\gamma$.
\end{dfn}

Note that in \ref{dfn:Dancestry}, $G$ is Dodd-sound iff
$\dam^\Ww(G)=\emptyset$.

Figure \ref{fgr:Dancestry} presents a typical Dodd ancestry. An
extender $E$ is represented
by the symbol $\rfloor$, with $\crit(E)$ and $\lh(E)$ corresponding to the
lower and upper bounds of the symbol respectively. $E<_\dam F$ iff $E$ is
pictured to
the left of $F$, within its vertical bounds. So Dodd sound extenders have
no extenders to their left. In the figure, $G<_\dam H<_\dam J$ and $G<_\dam J$,
but $G,H\not<_\dam I$.
\begin{figure}
\[
\setlength{\unitlength}{1mm}
\begin{picture}(100,51)(24,0)

\put(100,0){\line(-1,0){2}}
\put(100,0){\line(0,1){50}}
\put(96,47){$J$}



\put(90,1){\line(-1,0){2}}
\put(90,1){\line(0,1){22}}
\put(86,20){$H$}

\put(90,24){\line(-1,0){2}}
\put(90,24){\line(0,1){8}}
\put(86,29){$I$}

\put(90,35){\circle*{0}}
\put(90,36){\circle*{0}}
\put(90,37){\circle*{0}}

\put(90,40){\line(-1,0){2}}
\put(90,40){\line(0,1){5}}

\put(90,46){\circle*{0}}
\put(90,47){\circle*{0}}
\put(90,48){\circle*{0}}



\put(80,2){\line(-1,0){2}}
\put(80,2){\line(0,1){5}}

\put(80,8){\line(-1,0){2}}
\put(80,8){\line(0,1){9}}
\put(76,14){$G$}


\put(80,18){\circle*{0}}
\put(80,19){\circle*{0}}
\put(80,20){\circle*{0}}


\put(80,25){\line(-1,0){2}}
\put(80,25){\line(0,1){5.5}}

\put(73,28){\circle*{0}}
\put(70,28){\circle*{0}}
\put(67,28){\circle*{0}}
\put(64,28){\circle*{0}}



\put(73,5){\circle*{0}}
\put(70,5){\circle*{0}}
\put(67,5){\circle*{0}}
\put(64,5){\circle*{0}}
\put(61,5){\circle*{0}}
\put(58,5){\circle*{0}}
\put(55,5){\circle*{0}}

\put(50,4){\line(-1,0){2}}
\put(50,4){\line(0,1){2}}

\put(73,11){\circle*{0}}
\put(70,11){\circle*{0}}
\put(67,11){\circle*{0}}
\put(64,11){\circle*{0}}

\put(60,10){\line(-1,0){2}}
\put(60,10){\line(0,1){2}}

\put(60,13){\circle*{0}}
\put(60,14){\circle*{0}}
\put(60,15){\circle*{0}}


\put(60,27){\line(-1,0){2}}
\put(60,27){\line(0,1){2}}


\end{picture}
\]
\caption{\label{fgr:Dancestry}\textit{Dodd ancestry of extender $J$}}
\end{figure}

\begin{lem}
Let $\Ww$, $G$ be as in \ref{dfn:Dancestry}.
Then:
\begin{itemize}
 \item[\textup{(}a\textup{)}] For all $\beta<^\Ww_\dam\gamma$ we have
$\crit(E^\Ww_\gamma)<\crit(E^\Ww_\beta)<\lh(E^\Ww_\beta)<\lh(E^\Ww_\gamma)$.
\item[\textup{(}b\textup{)}] Let $\gamma_1,\gamma_2\in\dom(\dam^\Ww(G))$, with
$\gamma_1<\gamma_2$. For
all $\gamma_1'\leq^\Ww_\dam\gamma_1$ and $\gamma_2'\leq^\Ww_\dam\gamma_2$, we
have $\gamma_1'<\gamma_2'$.
\item[\textup{(}c\textup{)}] Let $\gamma\in\dom(\dam^\Ww(G))$ and let
$\lambda=\pred^\Ww(\gamma+1)$.
Suppose that $E^\Ww_{\gamma}=F(M^\Ww_{\gamma})$. Then $\Ww$ drops in model at
some $\beta\in(0,\gamma]_\Ww$ such that $\beta>\lambda$.
\end{itemize}
\end{lem}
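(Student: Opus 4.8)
The plan is to prove (a) by induction along the wellfounded relation $<^\Ww_\dam$, to extract from (a) that distinct members of any set $\dom(\dam^\Ww(E))$ are $<^\Ww_\dam$-incomparable, to prove (c), and finally to deduce (b) from (a), (c), and the structure of Dodd ancestry recorded in \ref{rem:Dcore}. I expect (c) to be the main obstacle.

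For (a): the three inequalities are transitive, since by normality of $\Ww$ the map $\delta\mapsto\lh(E^\Ww_\delta)$ is strictly increasing, so it suffices to treat an immediate Dodd ancestor $\beta\in\dom(\dam^\Ww(E^\Ww_\gamma))$. Then $E^\Ww_\gamma$ is not Dodd sound, so by \ref{rem:Dcore} (applied to $E^\Ww_\gamma$ in place of $G$) we have $E^\Ww_\gamma=F(M^\Ww_\gamma)$, $\core_D(E^\Ww_\gamma)=F(M^{*\Ww}_{\delta_0+1})$ for the least relevant $\delta_0$, $\core_D(E^\Ww_\gamma)\in\es_+(M^\Ww_{\gamma'})$ for $\gamma'=\pred^\Ww(\delta_0+1)$, and $\beta+1\in(\gamma',\gamma]_\Ww$, with no model drop along $(\gamma',\gamma]_\Ww$. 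By \ref{rem:Dcore} and Lemmas \ref{lem:Dsp4}--\ref{lem:Dsp6}, each extender $E^\Ww_{\beta''}$ used along $(\gamma',\gamma]_\Ww$ has $\crit(E^\Ww_{\beta''})\geq\sigma$, where $\sigma:=\sigma_{\core_D(E^\Ww_\gamma)}$ is the Dodd-fragment projectum, preserved throughout the build-up, and $\sigma>\crit(\core_D(E^\Ww_\gamma))=\crit(E^\Ww_\gamma)$ by \ref{dfn:Dsp}; this gives $\crit(E^\Ww_\gamma)<\crit(E^\Ww_\beta)$, while $\crit(E^\Ww_\beta)<\lh(E^\Ww_\beta)$ is trivial and $\lh(E^\Ww_\beta)<\lh(E^\Ww_\gamma)$ follows from $\beta<\gamma$ and normality. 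For the incomparability claim: if $\delta_1<\delta_2$ lie in a common set $\dom(\dam^\Ww(E))$, then $\delta_1+1<_\Ww\delta_2+1$ along the relevant branch, so $\pred^\Ww(\delta_2+1)>\delta_1$ and hence $\crit(E^\Ww_{\delta_2})\geq\nu(E^\Ww_{\delta_1})>\crit(E^\Ww_{\delta_1})$ by normality; by (a) this excludes $\delta_1<^\Ww_\dam\delta_2$, and the length clause of (a) excludes $\delta_2<^\Ww_\dam\delta_1$.

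For (c), fix $\gamma$, $\lambda=\pred^\Ww(\gamma+1)$ as in the statement, with $E^\Ww_\gamma=F(M^\Ww_\gamma)$, and suppose toward a contradiction that $\Ww$ drops in model at no $\beta\in(0,\gamma]_\Ww$ with $\beta>\lambda$. First, $\lambda<\gamma$: the active top extender of a premouse cannot be applied to a segment of that same premouse, using that our premice carry no superstrong extenders on the sequence. Next, $\lambda\leq_\Ww\gamma$: since $\crit(E^\Ww_\gamma)<\nu(E^\Ww_\lambda)\leq\lh(E^\Ww_\lambda)$ and, by agreement of models in $\Ww$, $M^\Ww_\gamma$ and $M^\Ww_\delta$ agree below $\lh(E^\Ww_\delta)$ for every $\delta<\gamma$, the branch $[0,\gamma]_\Ww$ cannot leave the main branch strictly before $\lambda$ without $F(M^\Ww_\gamma)$ failing to measure the relevant subsets of $\crit(E^\Ww_\gamma)$. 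Now the no-drop hypothesis makes $i=i^\Ww_{\lambda,\gamma}\colon M^\Ww_\lambda\to M^\Ww_\gamma$ a total, degree-preserving embedding, so $M^\Ww_\lambda$ is active and $F(M^\Ww_\gamma)=i(F(M^\Ww_\lambda))$. Since $\crit(F(M^\Ww_\gamma))<\lh(E^\Ww_\lambda)$ and $M^\Ww_\lambda$ and $M^\Ww_\gamma$ agree below $\lh(E^\Ww_\lambda)$, while (by \ref{rem:Dcore}) the active extender of $M^\Ww_\gamma$, or its Dodd core, has length at least $\lh(E^\Ww_\lambda)$ and arises from a build-up passing strictly beyond $M^\Ww_\lambda$, this contradicts the initial segment condition / coherence of $M^\Ww_\gamma$: the extender $F(M^\Ww_\lambda)$ would already realise that build-up over $M^\Ww_\lambda$. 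Hence the desired drop exists.

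For (b), let $\gamma_1<\gamma_2$ in $\dom(\dam(G))$ and $\gamma_i'\leq^\Ww_\dam\gamma_i$. By (a), $\lh(E^\Ww_{\gamma_1'})\leq\lh(E^\Ww_{\gamma_1})$, so the index $\gamma_1'$ is at most $\gamma_1$. On the other side I claim every $\gamma_2'\leq^\Ww_\dam\gamma_2$ has index exceeding $\gamma_1$: this is clear for $\gamma_2'=\gamma_2$, since $\gamma_2>\gamma_1$; and if $\gamma_2'<^\Ww_\dam\gamma_2$, then $E^\Ww_{\gamma_2}$ is non-Dodd-sound, so $E^\Ww_{\gamma_2}=F(M^\Ww_{\gamma_2})$ by \ref{rem:Dcore}, so (c) (applied with $\gamma_2$ in place of $\gamma$) provides a model drop on $[0,\gamma_2]_\Ww$ strictly past $\pred^\Ww(\gamma_2+1)\geq\gamma_1+1$, which by \ref{rem:Dcore} forces the starting model of the Dodd ancestry of $E^\Ww_{\gamma_2}$, hence every immediate Dodd ancestor of $\gamma_2$, to have index $>\gamma_1$; iterating this through the ancestry (each step using (c) for a non-Dodd-sound ancestor, legitimate by \ref{rem:Dcore}) yields the claim for all $\gamma_2'<^\Ww_\dam\gamma_2$. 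Thus $\gamma_1'\leq\gamma_1<\gamma_2'$, completing (b). The steps I expect to cost real work are the verification of $\lambda\leq_\Ww\gamma$ in (c), the precise coherence computation producing the contradiction there, and the inductive bookkeeping in (b); all are fine-structural but routine given the agreement properties of normal iteration trees and the lemmas of \S\ref{sec:Dodd}.
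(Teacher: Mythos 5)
Your treatment of (a), including the transitivity reduction and the incomparability observation, is essentially correct: normality of $\Ww$, the fact from \ref{rem:Dcore} that all critical points along $(\gamma_0,\alpha_{E^\Ww_\gamma}]_\Ww$ are $\geq\tau_{\core_D(E^\Ww_\gamma)}=\sigma_{\core_D(E^\Ww_\gamma)}>\crit(E^\Ww_\gamma)$, and the observation $\beta<\gamma$ do the work. The problems are in (c), and consequently in the way (b) is routed through (c).

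In (c), the step ``$\lambda\leq_\Ww\gamma$'' is not justified and is false in general. From $\lambda=\pred^\Ww(\gamma+1)$ you get $\lambda<_\Ww\gamma+1$, hence $\lambda\in[0,\gamma+1]_\Ww$ and $\lambda\in[0,\alpha_G]_\Ww$; but $\gamma\notin[0,\gamma+1]_\Ww$ whenever $\lambda<\gamma$, and nothing forces $\lambda$ to lie on the \emph{other} branch $[0,\gamma]_\Ww$. In fact the situation of (c) is precisely one where $\gamma\notin[0,\alpha_G]_\Ww$: if $\gamma\in[0,\alpha_G]_\Ww$ then $\gamma=\lambda$, so the branch $[0,\alpha_G]_\Ww$ passes through $\gamma$ with no drop in $(\beta_0+1,\gamma]_\Ww$ (here $\beta_0,\gamma_0$ are as in \ref{rem:Dcore} for $G$), making $\crit(E^\Ww_\gamma)=\crit(\core_D(G))$, which contradicts the crit bound $\crit(E^\Ww_\gamma)\geq\crit(E^\Ww_{\beta_0})\geq\tau_{\core_D(G)}>\crit(\core_D(G))$ that comes from $\gamma+1\in(\gamma_0,\alpha_G]_\Ww$. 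So $\lambda<\gamma$ does hold, but for this reason, not the one you give: the assertion that an active top extender can never be applied to a segment of its own premouse is simply not true (e.g.\ for $\gamma=0$ with $E^\Ww_0=F^N$ and $\crit(F^N)$ small), and the appeal to the no-superstrong hypothesis is a non-sequitur here. Once $\lambda\not\leq_\Ww\gamma$ is possible, your argument has no map $i^\Ww_{\lambda,\gamma}$ to work with, and the coherence contradiction you sketch does not get off the ground. The paper's intended route is to trace the top extender back along $[0,\gamma]_\Ww$ as in the proof of closeness \cite[6.1.5]{fsit}, which handles exactly the case where $[0,\gamma]_\Ww$ and $[0,\alpha_G]_\Ww$ have already diverged.

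For (b), you do not need (c) at all, and the iteration you run through the ancestry has a gap (at the second and later levels, the drop that (c) produces lies above $\pred^\Ww(\delta+1)$, which need not exceed $\gamma_1$, so the induction does not close on its own). The direct argument is: from $\gamma_1+1<_\Ww\gamma_2+1$ we get $\crit(E^\Ww_{\gamma_2})\geq\nu^\Ww_{\gamma_1}$; by (a), $\crit(E^\Ww_{\gamma_2'})\geq\crit(E^\Ww_{\gamma_2})\geq\nu^\Ww_{\gamma_1}$ for every $\gamma_2'\leq^\Ww_\dam\gamma_2$; since also $\crit(E^\Ww_{\gamma_2'})<\nu^\Ww_{\gamma_2'}$ and $\delta\mapsto\nu^\Ww_\delta$ is non-decreasing in a normal tree, $\nu^\Ww_{\gamma_1}<\nu^\Ww_{\gamma_2'}$ forces $\gamma_1<\gamma_2'$, and $\gamma_1'\leq\gamma_1$ follows from (a) and normality as you observed. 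This matches the paper's remark that (a) and (b) need only \ref{rem:Dcore} and normality.
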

\begin{proof}[Proof Sketch] We omit the proof.
Parts (a),(b) use \ref{rem:Dcore} and the normality
of $\Ww$. Part (c) is mostly similar to part of the proof of closeness,
\cite[6.1.5]{fsit}.\end{proof}

\begin{dfn}[Dodd decomposition]\label{dfn:Ddecomp}
Let $\Ww$, $G$ be as in \ref{dfn:Dancestry}.
Let $\mathcal{E}=\{\core_D(E^\Ww_\beta)\mid\beta\leq_\dam\alpha_G\}$.
The \emph{Dodd decomposition of $G$} is the sequence of extenders
enumerating $\mathcal{E}$
in order of increasing critical point. 
\end{dfn}

\begin{dfn}[Core sequence]\label{dfn:coreseq}
Let $P,Q$ be premice, $j:P\to Q$ a $k$-embedding.
We define the \emph{degree $k$ core
sequence} $\left<Q_\alpha,j_\alpha\right>_{\alpha\leq\lambda}$ of $j$.
Set $Q_0=P$ and $j_0=j$. Let $j_\alpha:Q_\alpha\to Q$ be given.
If $j_\alpha=\id$ or is Dodd-inappropriate
set $\lambda=\alpha$. Otherwise let $(s,\sigma)=(s_{j_\alpha},\sigma_{j_\alpha})$ and $H_\alpha=\rg(j_\alpha)$. Set
\[ Q_{\alpha+1}=\cHull_{k+1}^{Q}(H_\alpha\un
s\un\sigma)\] 
and $j_{\alpha+1}:Q_{\alpha+1}\to Q$
the uncollapse. Take
direct limits at limits $\alpha$. Since $H_\alpha\psub H_\beta$ for
$\alpha<\beta$, the process terminates.
\end{dfn}

\begin{lem}\label{lem:Ddecomp}
Let $\Ww,G$ be as in \ref{dfn:Dancestry} with $\Ww$ being $k$-maximal on $S$. Let
$\Ff=\left<F_\alpha\right>_{\alpha<\lambda}$ be the Dodd decomposition of $G$,
and $G_\alpha = \Ult_0(\id,\Ff\rest\alpha)$.

Then $G_1=\core_D(G)$ and $G_\lambda=G$.

Let $\alpha\in[1,\lambda]$.
Then $G_\alpha=F^{N_\alpha}$ for some premouse $N_\alpha$,
and if $\alpha<\lambda$ then $F_\alpha$ is close to $N_\alpha$ and
$\rho_1^{N_\alpha}\leq\crit(F_\alpha)$.
Moreover, for all $\alpha$, there is a $k$-maximal tree $\Ww_\alpha$ on $S$ and
$\zeta\leq\alpha_G$ and $m\in\om$ such that\textup{:}
\begin{enumerate}[label=\tu{(}\roman*\tu{)}]
\item $\Ww_\alpha\rest\zeta+1=\Ww\rest\zeta+1$,
\item $\lh(\Ww_\alpha)=\zeta+m+1$,
\item $G_\alpha\in\es_+(\fin^{\Ww_\alpha})$,
\item $\lh(E^{\Ww_\alpha}_\beta)<\lh(G_\alpha)$ for
all $\beta+1<\lh(\Ww_\alpha)$,
\item\label{item:crit(G)<nu_zeta} if $\zeta<\alpha_G$ then $\crit(G)<\nu^{\Ww}_\zeta$, and 
\item if $m>0$ then $\crit(G)<\nu^{\Ww_\alpha}_{\zeta}$.
\end{enumerate}
\end{lem}

\begin{proof}
We prove the lemma by induction on $\alpha_G$ for $\alpha_G+1<\lh(\Ww)$, with a
subinduction on $\alpha\in[1,\lambda]$. So assume it holds for
$H=E^\Ww_{\alpha_H}$ for each $\alpha_H<\alpha_G$.

If $G$ is Dodd sound then $\lambda=1$, and we use $\zeta=\alpha_G$
and $m=0$.

Suppose $G$ is not Dodd sound. So $\lambda>1$. We use \ref{rem:Dcore} in the
following.

Suppose $\alpha=1$. We have $G_1=\core_D(G)$. 
Set $m=0$ and $\zeta$ with $G_1\in\es_+(M^\Ww_\zeta)$ and $\beta+1$ such that $\zeta=\pred^\Ww(\beta+1)\leq_\Ww\alpha_G$.
Then \ref{item:crit(G)<nu_zeta} holds because
\[ \crit(G)=\crit(G_1)<\tau_{G_1}\leq\crit(i^{*\Ww}_{\beta+1,\alpha_G}
)<\nu^\Ww_\zeta.\]

Suppose $\alpha=\beta+1>1$. So $G_{\beta+1}=\Ult_0(G_\beta,F_\beta)$. Let
$\beta'$ be such that
$F_\beta=\core_D(E^\Ww_{\beta'})$.
Let $\gamma'\leq_\dam^\Ww\alpha_G$ be such
that $\beta'\in\dom(\dam^\Ww(E^\Ww_{\gamma'}))$. Let
$\eps'=\pred^\Ww(\beta'+1)$.
Let $\zeta$ be such that $F_\beta\in\es_+(M^\Ww_\zeta)$. Then define
$m\in[1,\om)$ and $\Xx=\Ww_{\beta+1}$ by setting
$E^\Xx_\zeta=F_\beta$, and
$E^\Xx_{\zeta+i}=F(M^\Xx_{\zeta+i})$ for $1\leq i<m$,
until we reach $M^\Xx_{\zeta+m}$ with active extender $F$ with $\crit(F)=\crit(G)$. Each
$E^\Xx_{\zeta+i}$ is a
sub-extender of some $G'\leq_\dam^\Ww G$, and applies to the same premouse
in $\Xx$ as does $G'$ in $\Ww$. So
$\crit(E^\Xx_{\zeta+i+1})<\crit(E^\Xx_{\zeta+i})$.

For
instance, $\pred^\Xx(\zeta+1)=\eps'$. For by \ref{rem:Dcore} and normality of
$\Ww$,
\[ \crit(F_\beta)=\crit(E^\Ww_{\beta'})<\min(\nu^\Ww_\zeta,\nu^\Ww_{\eps'}),\]
and for each
$\delta<\zeta$, we have $\lh(E^\Ww_\delta)\leq\nu^\Ww_\zeta<\lh(F_\beta)$.
In particular, $\zeta\geq\eps'$. But
$\Ww\rest\zeta+1=\Xx\rest\zeta+1$, so
$\pred^\Xx(\zeta+1)=\eps'$. If $\gamma'=\alpha_G$ then $m=1$. Otherwise
$m>1$, and $E^{\Xx}_{\zeta+1}=F(M^{\Xx}_{\zeta+1})$ is a subextender of
$F(M^\Ww_{\beta'+1})$, and each apply
to the same premouse, etc.

We claim that $\Ww_{\beta+1}=\Xx$ is as desired. Certainly
$\Xx$ is normal, and its models are wellfounded, since they embed
into models of $\Ww$.
Now let $H_0=F_\beta$ and let $H_{i+1}=F(M^{*\Xx}_{\zeta+i+1})$
for $i<m$. Then using \ref{cor:nass}, $F(M^\Xx_{\zeta+m})$ is
\begin{equation}\label{eqn:Hiassoc} (\ldots((H_0\com
H_1)\com
H_2)\com\ldots\com H_m) = H_0\com H_1\com\ldots\com H_m. \end{equation}
Let $\alpha_D$ be such that $\core_D(G)\in\es_+(M^\Ww_{\alpha_D})$.
Let $\iota=\pred^\Xx(\zeta+m)$. So
\[ \iota=\max([\alpha_D,\alpha_G)_\Ww\inter\beta'+1). \]
Let
$X=\{\delta'\mid\delta'+1\in(\alpha_D,\iota]_\Ww\}$.
Then
\begin{equation}\label{eqn:H_m}H_m=\Ult_0(\core_D(G),\left<E^\Ww_{
\delta'}\right>_ {
\delta'\in X}).
\end{equation}
By induction, for $\delta'\in X$,
$E^\Ww_{\delta'}=\Ult_0(F_\delta,\Ff\rest(\delta,\eps))$, where
$\Ff\rest[\delta,\eps)$ is the Dodd decomposition of $E^\Ww_{\delta'}$ (so $F_\delta=\core_D(E^\Ww_{\delta'})$).
But then
\[
\Ult_0(M^{*\Ww}_{\delta'+1},E^\Ww_{\delta'})=\Ult_0(M^{*\Ww}_{\delta'+1},\Ff\rest[\delta,
\eps)),\]
 and the ultrapower maps agree. This fact is a straightforward extension of
\ref{lem:extass}; we omit the details.
Therefore $H_m=\Ult_0(\core_D(G),\Ff\rest[1,\xi))$,
where $\Ff\rest[1,\xi)$ is the concatenation of the Dodd decompositions of $E^\Ww_{\delta'}$ for $\delta'\in X$, and
the ultrapower map is that corresponding to (\ref{eqn:H_m}). We get a similar
representation for each $H_i$, thus partitioning
$\Ff\rest[1,\beta]$ into $m+1$ intervals. Finally, using
(\ref{eqn:Hiassoc}) and the extension of \ref{lem:extass}, then
$F(M^{\Xx}_{\zeta+m})$ is
\[ \Ult_0(\core_D(G),\Ff\rest[1,\beta+1))=G_{\beta+1}. \]
We leave the remaining details and limit case to the reader.
\renewcommand{\qedsymbol}{$\Box$(\ref{lem:Ddecomp})}\end{proof}

\begin{lem}\label{lem:Dcoreseq}
Adopt the hypotheses and notation of \ref{lem:Ddecomp}.
Let $P$ be an $m$-sound seg-pm, with $G$ over
$P$ and $\crit(G)<\rho_m^P$. Suppose that $\Ult_m(P,G)$ is wellfounded.
Then for $\alpha\leq\lambda$, we have
$\Ult_m(P,G_\alpha) = \Ult_m(P,\Ff\rest\alpha)$
and $i^{P,m}_{G_\alpha}=i^{P,m}_{\Ff\rest\alpha}$, and the degree $m$ core sequence of
$i^{P,m}_G$ is
\[
\left<Q_\alpha=\Ult_m(P,\Ff\rest\alpha),i^{Q_\alpha,m}_{\Ff\rest[\alpha,\lambda)}
\right>_ {
\alpha\leq\lambda}.\]
In particular, $i^{Q_\alpha,m}_{\Ff\rest[\alpha,\lambda)}$ is not superstrong.

Suppose that $P$ is $(m+1)$-solid.
Then $i^{P,m}_{\Ff\rest\alpha}$ and
$i^{Q_\alpha,m}_{\Ff\rest[\alpha,\lambda)}$ are $m$-embeddings,
preserve $p_{m+1}$, and are cofinal at $\rho_{m+1}$.
\end{lem}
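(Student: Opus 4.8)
The plan is to prove the three groups of assertions in turn, the engine throughout being associativity of extenders (\ref{lem:extass}, \ref{cor:nass}) together with the structural facts about the Dodd decomposition recorded in \ref{lem:Ddecomp}.

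First I would establish, by induction on $\alpha\le\lambda$, that $\Ult_m(P,G_\alpha)=\Ult_m(P,\Ff\rest\alpha)$ with $i^P_{G_\alpha}=i^P_{\Ff\rest\alpha}$. The base case $\alpha\le 1$ is trivial, since $G_1=\core_D(G)=F_0$, and the limit case is a direct-limit computation. For the successor step, \ref{lem:Ddecomp} gives $G_\alpha=F^{N_\alpha}$, that $F_\alpha$ is close (hence semi-close) to $N_\alpha$, and that $G_{\alpha+1}=F_\alpha\com_0 G_\alpha$. Noting $\crit(G_\alpha)=\crit(G)<\rho_m^P$, that $\crit(G_\alpha)<\crit(F_\alpha)$ (the critical points along $\Ff$ are strictly increasing), and that $G_\alpha$ is still semi-close to $P$ (it has the same critical point as $G$, and, since the later $F_\beta$ have larger critical point, the same action on that critical point's subsets, so semi-closeness transfers from $G$), one applies \ref{lem:extass} with $(P,Q,F,E,k)$ there instantiated as $(N_\alpha,P,G_\alpha,F_\alpha,m)$ to conclude $\Ult_m(P,G_{\alpha+1})=\Ult_m(\Ult_m(P,G_\alpha),F_\alpha)$ with commuting ultrapower maps; the induction hypothesis then identifies this with $\Ult_m(P,\Ff\rest(\alpha+1))$ and $i^P_{\Ff\rest(\alpha+1)}$. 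One also checks the routine point that $\crit(F_\alpha)<\rho_m(\Ult_m(P,G_\alpha))$, needed to form the degree-$m$ ultrapower, which holds because the whole iteration computing $\Ult_m(P,G)$ is degree $m$. This disposes of the first two displayed conclusions, and $G_\lambda=G$ gives the overall map.

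Next I would prove the description of the core sequence, again by induction on $\alpha\le\lambda$: the $\alpha$-th stage of the degree-$m$ core sequence of $i^P_G$ is $(Q_\alpha,\,i^{Q_\alpha}_{\Ff\rest[\alpha,\lambda)})$ with $Q_\alpha=\Ult_m(P,\Ff\rest\alpha)$. The case $\alpha=0$ is the first part with $\alpha=\lambda$, and at $\alpha=\lambda$ the map is the identity, so the process halts there. For the successor step with $\alpha<\lambda$, factor $j_\alpha=j_{\alpha+1}\com i^{Q_\alpha}_{F_\alpha}$; using the clause $\rho_1^{N_{\alpha+1}}\le\crit(F_{\alpha+1})$ of \ref{lem:Ddecomp} to bound the generators of $F_\alpha$ below $\crit(j_{\alpha+1})$, the extender $E_{j_\alpha}$ derived from $j_\alpha$ of length $j_\alpha(\crit(F_\alpha))$ is, modulo $j_{\alpha+1}$, the initial segment of $F_\alpha$ of length $i_{F_\alpha}(\crit(F_\alpha))$, which is not in $\Ult(Q_\alpha,F_\alpha)$ since $F_\alpha$ has a Dodd parameter; hence $E_{j_\alpha}\notin Q'$, so $j_\alpha$ is not superstrong — this is the ``in particular'' clause — and its Dodd-fragment ordinals $(s_{j_\alpha},\sigma_{j_\alpha})$ are defined. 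Since $F_\alpha$ is a Dodd core, and so Dodd sound, \ref{rem:Dfrag} identifies its Dodd-fragment parameter/projectum with its Dodd parameter/projectum $(t_{F_\alpha},\tau_{F_\alpha})$; tracing the factorization then shows $(s_{j_\alpha},\sigma_{j_\alpha})$ is the $j_{\alpha+1}$-image of $(i^{Q_\alpha}_{F_\alpha}(t_{F_\alpha}),\sup i^{Q_\alpha}_{F_\alpha}``\tau_{F_\alpha})$. Consequently $\cHull_{m+1}^{Q'}(\rg(j_\alpha)\un s_{j_\alpha}\un\sigma_{j_\alpha})$ equals the $j_{\alpha+1}$-image of $\cHull_{m+1}^{Q_{\alpha+1}}(\rg(i^{Q_\alpha}_{F_\alpha})\un i^{Q_\alpha}_{F_\alpha}(t_{F_\alpha})\un\sup i^{Q_\alpha}_{F_\alpha}``\tau_{F_\alpha})$, and a generation argument modeled on \ref{lem:gen1hull}, using Dodd soundness of $F_\alpha$, shows this inner hull is all of $Q_{\alpha+1}$. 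Thus the core-sequence hull is $Q_{\alpha+1}$ with uncollapse map $j_{\alpha+1}=i^{Q_{\alpha+1}}_{\Ff\rest[\alpha+1,\lambda)}$ — precisely the $(\alpha+1)$-st stage; limit stages are direct limits, and since $\rg(j_\beta)\psub\rg(j_\gamma)$ for $\beta<\gamma\le\lambda$, the process reaches and halts at $\lambda$.

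For the final clause, suppose $P$ is $(m+1)$-solid. Both $\Ff\rest\alpha$ (over $P$) and $\Ff\rest[\alpha,\lambda)$ (over $Q_\alpha$) are sequences of short extenders; each $F_\beta$ is semi-close to the corresponding model $\Ult_m(P,\Ff\rest\beta)$ by \ref{lem:Ddecomp}, with critical point below that model's $\rho_m$ (established in the first step) and indeed below its $\rho_{m+1}$ (otherwise $i^P_G$ could not be cofinal at $\rho_{m+1}$). So \ref{cor:k+1solid}, applied to $P$ with $\Ff\rest\alpha$ and to $Q_\alpha$ with $\Ff\rest[\alpha,\lambda)$, yields that every model in the iteration is $(m+1)$-solid and $(m+1)$-sound, with $z_{m+1},\zeta_{m+1}$ (hence $\rho_{m+1},p_{m+1}$) transformed exactly as in that corollary, which is the assertion that $i^P_{\Ff\rest\alpha}$ and $i^{Q_\alpha}_{\Ff\rest[\alpha,\lambda)}$ are $m$-embeddings preserving $p_{m+1}$ and cofinal at $\rho_{m+1}$. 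I expect the main obstacle to be the core-sequence step: pinning down the Dodd-fragment ordinals of the composite embedding $j_\alpha$ in terms of the Dodd parameter and projectum of the single Dodd-sound extender $F_\alpha$, and verifying through the generation argument that coring off precisely those ordinals reconstitutes $Q_{\alpha+1}$ with the expected uncollapse map.
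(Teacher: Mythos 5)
Your overall structure matches the paper's (very terse) sketch exactly: establish $\Ult_m(P,G_\alpha)=\Ult_m(P,\Ff\rest\alpha)$ inductively via \ref{lem:extass}/\ref{cor:nass}; characterize the core sequence by tracking the Dodd-fragment ordinals of the tail maps $j_\alpha$, using the Dodd soundness of $F_\alpha$, \ref{rem:Dfrag}, and a \ref{lem:Dsp5}-style transfer; and invoke \ref{cor:k+1solid} for the $(m+1)$-solidity clause. The factorization $j_\alpha=j_{\alpha+1}\com i^{Q_\alpha}_{F_\alpha}$ and the identification of $(s_{j_\alpha},\sigma_{j_\alpha})$ as the image of $(t_{F_\alpha},\tau_{F_\alpha})$ is precisely what the sketch in the paper intends.

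One citation is, however, wrong in a way worth fixing. You invoke the clause $\rho_1^{N_{\alpha+1}}\le\crit(F_{\alpha+1})$ from \ref{lem:Ddecomp} ``to bound the generators of $F_\alpha$ below $\crit(j_{\alpha+1})$,'' i.e.\ to get $\tau_{F_\alpha}\le\crit(F_{\alpha+1})$. But that clause does not give this: since $\rho_1^{N_\alpha}\le\crit(F_\alpha)$ and $\crit(F_\alpha)<\rho_1(N_{\alpha+1})$ fails (indeed $\rho_1^{N_{\alpha+1}}=\rho_1^{N_\alpha}\le\crit(F_\alpha)$ by \ref{lem:soliditypreservation}), we always have $\rho_1^{N_{\alpha+1}}\le\crit(F_\alpha)<(\crit(F_\alpha)^+)\le\tau_{F_\alpha}$, so $\tau_{F_\alpha}$ is \emph{not} bounded by $\rho_1^{N_{\alpha+1}}$. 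The bound $\sigma_{F_\alpha}=\tau_{F_\alpha}\le\crit(F_{\alpha+1})$ (and its strict form for further-apart indices, which the paper records as $\sigma_{F_\beta}<\crit(F_{\alpha'})$ for $\beta+1<\alpha'$) comes instead from the structure of the Dodd ancestry in $\Ww$, specifically \ref{rem:Dcore}'s characterization of the Dodd core in terms of $\crit(i^{*\Ww}_{\beta+1,\cdot})\ge\tau_F$. Separately, in the final paragraph your justification ``otherwise $i^P_G$ could not be cofinal at $\rho_{m+1}$'' for $\crit(F_\beta)<\rho_{m+1}(Q_\beta)$ is circular (cofinality at $\rho_{m+1}$ is a conclusion, not a hypothesis), and neither that inequality nor the $(m+1)$-soundness of the iterates is needed or asserted: \ref{cor:k+1solid} together with $(m+1)$-solidity alone already yields preservation of $p_{m+1}$ and cofinality at $\rho_{m+1}$.
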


\begin{proof}[Proof Sketch]
For the (inductively established) characterization of the core sequence of $i_G^{P,m}$,
given $Q_\alpha$ and factor embedding $j_\alpha=i^{Q_\alpha,m}_{\Ff\rest[\alpha,\lambda)}$ as above, note that
\[ \Ult_m(P,G)=\Ult_m(Q_\alpha,\Ff\rest[\alpha,\lambda)), \]
and show that the natural factor embedding
\[ k:\Ult_m(Q_\alpha,F_\alpha)\to\Ult_m(P,G)\]
maps the maximal fragments of $F_\alpha$ in $\Ult_m(Q_\alpha,F_\alpha)$ (corresponding to $(s_{F_\alpha},\sigma_{F_\alpha})$) to those of
the extender derived from $i^{Q_\alpha,m}_{\Ff\rest[\alpha,\lambda)}$. This follows the argument for \ref{lem:Dsp5}, and
that $\sigma_{F_\alpha}<\crit(F_{\beta})$ when $\alpha<\beta$.
For the second paragraph, use \ref{cor:k+1solid} and commutativity.
\end{proof}

\begin{newdfn}\label{dfn:potentialtree} Let $\Ww$ be a $k$-maximal tree on a
$k$-sound premouse $N$, of length $\zeta+1$. Let $P=M^\Ww_\zeta$ and
$\gamma\leq\OR^P$, with $\lh(E^\Ww_\alpha)<\gamma$ for every
$\alpha+1<\lh(\Ww)$. Let $E$ be an extender such that $(P||\gamma,E)$ is a
premouse.

The \emph{potential $k$-maximal tree}
$\Ww\conc\left<E\right>$ is the ``putative iteration tree'' $\Ww^+$ on $N$ of
length $\zeta+2$, extending $\Ww$, with $E^{\Ww^+}_\zeta=E$, and
with
$\pred^{\Ww^+}(\zeta+1)$, etc, determined by the rules for
$k$-maximality. We say the tuple $(k,\Ww,\zeta,P,E,\Ww^+)$ is \emph{potential
for $N$}.

As usual $\Phi(\Ww)$ denotes the phalanx associated to
$\Ww$ (see \cite{cmip}). 
Let $(k,\Ww,\ldots)$ be potential for $N$. Let $U=M^{\Ww^+}_{\zeta+1}$ and $d=\deg^{\Ww^+}(\zeta+1)$.
Let $\iota=\lgcd(P||\lh(E))$.
We write $\Phi(\Ww,\iota,E)$ for the phalanx
\[ \left<(\Phi(\Ww),{<\iota}),(U,d),\lh(E)\right>.\]
Normal trees $\Uu$ on $\Phi(\Ww,\iota,E)$, indexed with ordinals $\geq\zeta+1$
(the first model of $\Uu$ is $M^\Uu_{\zeta+1}$), must satisfy the usual conditions for $k$-maximality,
except/including that (i) $M^\Uu_{\zeta+1}=U$, (ii) $\lh(E)<\lh(E^\Uu_{\zeta+1})$,
(iii) if $\crit(E^\Uu_\alpha)<\iota$ then $\pred^\Uu(\alpha+1)=\delta\leq\zeta$
and $M^{*\Uu}_{\alpha+1}\ins M^\Ww_\delta$ and $\deg^\Uu(\alpha+1)$ are as usual (like for $\Phi(\Ww)$), and
(iv) if $\crit(E^\Uu_\alpha)=\iota$
then $\pred^\Uu(\alpha+1)=\zeta+1$ and $M^{*\Uu}_{\alpha+1}=U$ and $\deg^\Uu(\zeta+1)=d$.
\end{newdfn}

We now  establish a phalanx iterability criterion guaranteeing that an extender is on the sequence of a premouse.
The lemma should be compared with \cite[8.6]{cmip}
(to which it is very similar); there, the exchange ordinal
is $\nu_E$ instead of $\iota$. The lower exchange ordinal leads to our need to appeal to the
Dodd-structure analysis in the proof. (And in our application of the lemma later, our iterability proof
only seems to give iterability with respect to $\iota$.)

\begin{newlem}\label{lem:extendermaximality} Let $M$ be an $\om$-sound premouse
projecting to $\om$. Suppose that every $E\in\es_+^M$ is Dodd sound. Let $\cc<\OR^M$ and let $\Sigma$ be an above-$\cc$,
$(\om,\om_1+1)$-strategy for $M$. Let
\[ (\om,\Ww,\zeta,P,E,\Ww^+)\]
be potential for $M$, with $\Ww$ via $\Sigma$.  Let
$\iota=\lrgcrd(P||\lh(E))$. Suppose that $\cc<\lh(E)$ and $\cc$ is a
cutpoint of $P|\lh(E)$.\footnote{Note we don't assume $\cc\leq\crit(E)$. But if $P|\lh(E)$ is active then $\cc\leq\crit(F^{P|\lh(E)})$.}

Then $E\in\es_+^P$ iff the phalanx $\pP=\Phi(\Ww,\iota,E)$ is normally $(\om_1+1)$-iterable.\end{newlem}

\begin{proof}
We have $\zeta+1=\lh(\Ww)$ and $P=M^\Ww_\zeta$ and $(P||\lh(E),E)$ is a premouse. Let 
$\delta=\pred^{\Ww^+}(\zeta+1)$ and
$Q=M^{*\Ww^+}_{\zeta+1}\ins M^\Ww_\delta$ and $m=\deg^{\Ww^+}(\zeta+1)$ and
\[ i_E=i^{Q,m}_E:Q\to \Ult_m(Q,E)=M^{\Ww^+}_{\zeta+1}.\]

The proof that $\pP$ is iterable assuming $E\in\es_+^P$ we mostly
leave to the reader since we won't use this fact. The point is
that
$\Phi(\Ww^+)=\Phi(\Ww,\nu(E),E)$ is iterable since $M$ is iterable above $\cc$, and there
is a direct correspondence between normal trees $\Uu$ on $\pP$ and normal trees $\Vv$ on
$\Phi(\Ww^+)$. (For example when $E$ is type 2:
Let $\beta+1<\lh(\Uu)$ with $\pred^\Uu(\beta+1)=\zeta+1$ and $\crit(E^\Uu_\beta)=\iota$.
Then $M^{*\Uu}_{\beta+1}=M^{\Ww^+}_{\zeta+1}$ and $\deg^\Uu(\beta+1)=m$, whereas
$\pred^\Vv(\beta+1)=\zeta$ and $M^{*\Vv}_{\beta+1}=P|\lh(E)$ and $\deg^\Vv(\beta+1)=0$.
It follows that 
\[ M^\Uu_{\beta+1}=\Ult_m(Q,F(M^\Vv_{\beta+1})),\]
and in particular, $M^\Uu_{\beta+1}$ is wellfounded.)
 
So assume $\pP$ is iterable. In comparison of 
$\pP$ vs $\Phi(\Ww)$, the resulting trees are above $\cc$
(but we allow $\crit(E)<\cc$).
For if $P|\lh(E)$ is active then $\crit(F^{P|\lh(E)})\geq\cc$ because $\cc$ is a cutpoint of $P|\lh(E)$.
So suppose there is an active premouse $N$ such that $P||\lh(E)\pins N$
and $\lh(E)$ is a cardinal of $N$ and $\crit(F^N)<\cc$.
Then $\crit(F^N)=\iota<\cc$, by the ISC and since $\cc$ is a cutpoint of $P|\lh(E)$.
So $\iota$ is inaccessible in $P|\lh(E)$, so $E$ is type 2 or 3,
so
\[ \crit(E)\text{ is }{<\iota}\text{-strong in }P||\lh(E)\text{, as witnessed by }
 \es^{P||\lh(E)}.\]
The ultrapower map $j:P||\lh(E)\to \Ult(P||\lh(E),F^N)$
preserves this statement. But then by the ISC,
$P||\lh(E)$ does not have a cutpoint above $\iota$, contradiction.

So we get a successful $\om$-maximal comparison $(\Xx',\Yy')$,
with $\Yy'$ such that $\Ww\conc\Yy'$ is via $\Sigma$. Write
\[ \Xx=\Ww^+\conc\Xx'\text{ and }\Yy=\Ww\conc\Yy',\]
so $\Yy$ is an $\om$-maximal tree on $M$ with $\Yy\rest(\zeta+1)=\Ww$,
and $\Xx\rest(\zeta+2)=\Ww^+$ and $\Xx$ is $\om$-maximal except that $E^\Xx_\zeta=E$
so maybe $E\notin\es_+(M^\Xx_\zeta)$,
and maybe the exchange ordinal $\iota<\nu_E$.
So we can't apply the Closeness Lemma \cite[6.1.5]{fsit} to $\Xx$. But
for all $\alpha+1<\lh(\Xx)$, $E^\Xx_\alpha$ is weakly amenable over
$M^{*\Xx}_{\alpha+1}$. Let $\Qfin=\fin^\Xx$. Clearly
$\Qfin\ins\fin^\Yy$. By \ref{cor:k+1solid},
$\Qfin$ is unsound, so $\Qfin=\fin^\Yy$. Let $\alpha+1\in b^\Xx$ be
least such that $(\alpha+1,\infty]_\Xx$ does not drop in model or degree,
and let $n=\deg^\Xx(\alpha+1)$.
Since $\Qfin=\fin^\Yy$, therefore $\Qfin$ is $n+1$-solid, so by
\ref{cor:k+1solid},
\[ p_{n+1}^\Qfin=i^{*\Xx}_{\alpha+1,\infty}(p_{n+1}(M^{*\Xx}_{\alpha+1})) \]
and $\rho_{n+1}^\Qfin=\rho_{n+1}(M^{*\Xx}_{\alpha+1})$. We have $\Ww=\Yy\rest(\zeta+1)$.
Standard
arguments now show that $b^\Xx$ is above $U$ and $b^\Xx$ does
not drop in model or degree above $U$ (that is, $\zeta+1\in b^\Xx$ and $(\zeta+1,\infty]_\Xx$ does not drop in model or degree),
so
\[ j^\Xx\eqdef i^{*\Xx}_{\zeta+1,\infty}:U\to \Qfin.\]
Similar arguments show that
$\Yy\neq\Ww$, and letting
$\beta+1=\min(b^\Yy\cut(\zeta+1))$, that $\delta=\pred^\Yy(\beta+1)$,
$M^{*\Yy}_{\beta+1}=\Rstar=M^{*\Ww^+}_{\zeta+1}$
and
$\deg^\Yy(\beta+1)=m$
and $(\beta+1,\infty]_\Yy$ does not drop in model or degree, so
\[ j^\Yy=i^{*\Yy}_{\beta+1,\infty}:Q\to Z.\]
We have $j^\Xx\com i_E=j^\Yy$, since these maps preserve $p_{m+1}$ and do not move the generators that generate $\Rstar$.
If $\nu_E\leq\crit(j^\Xx)$, standard arguments now show
$E\in\es_+^P$. So assume $E$ is type 2, with largest generator $\gamma$, and
\[ \crit(j^\Xx)=\iota\leq\gamma<(\iota^+)^U=\lh(E).\]
Let $G=E^\Yy_\beta$. So $G\rest\nu_G\sub E_{j^\Yy}$.

\setcounter{clm}{0}

\begin{clm}{\label{clm:1extG}} $G$ is the only
extender used on $b^\Yy$, and $\nu_G=j^\Xx(\gamma+1)$.
\begin{proof}
Let
$\sigma\in[\zeta+1,\infty]_\Xx$ be least such that
\[ \sigma=\infty\text{ or }\crit(i^\Xx_{\sigma\infty})>i^\Xx_{\zeta+1,\sigma}(\gamma).\]
Let $(\iota_1,\gamma_1)=i^\Xx_{\zeta+1,\sigma}(\iota,\gamma)$. Now $E\rest\iota\in U$
and
\[ E_{j^\Yy}\rest\iota_1=j^\Xx(E\rest\iota)\in\Qfin,\]
since $\crit(E)<\crit(j^\Xx)$ and $j^\Xx\com
i_E=j^\Yy$. It follows that $\iota_1<\nu_G$
and $G\rest\iota_1\in\Qfin$.
Since $p_{m+1}^\Qfin=j^\Xx\com i_E(p_{m+1}^\Rstar)$, 
$\Qfin$
has the $(m+1)$-hull property at $\gamma_1+1$, i.e.,
\begin{equation}\label{eqn:hullprop}
\Hull_{m+1}^\Qfin((\gamma_1+1)\un\{p_{m+1}^\Qfin\}
)\inter\pow(\gamma_1+1)=\Qfin\inter\pow(\gamma_1+1) \end{equation}
(cf.~\cite[Example 4.3 and following Remark]{cmip}).
But $\gamma_1<((\iota_1)^+)^\Qfin$ and $\gamma_1$ is a generator of $G$. For
if $f:[\mu]^{<\om}\to\mu$ with $f\in \Rstar$, then
\[ \gamma\notin i_E(f)``[\gamma]^{<\om}\implies
\gamma_1\notin
j^\Xx(i_E(f))``[\gamma_1]^{<\om}\implies
\gamma_1\notin
i^{Q,m}_G(f)``[\gamma_1]^{<\om}.\]
Therefore $\gamma_1+1=\nu_G$.
It follows
that
\[ M^\Xx_\sigma=\Hull_{m+1}^\Qfin((\gamma_1+1)\un\{p_{m+1}^\Qfin\})=M^\Yy_{\beta+1
},\]
but then in fact $M^\Xx_\sigma=\Qfin=M^\Yy_{\beta+1}$, proving the claim.
\renewcommand{\qedsymbol}{$\Box$(Claim \ref{clm:1extG})}\end{proof}\end{clm}

By Claim \ref{clm:1extG}, $E$ is a subextender of $G$. We will refine this
observation, using the Dodd structure analysis.
Now \ref{lem:Ddecomp},
\ref{lem:Dcoreseq} apply to $\Yy,G$. Let
$\Ff=\left<F_\alpha\right>_{\alpha<\lambda}$ be the Dodd decomposition of $G$.
Let \[G_\alpha=\Ult_0(\id,\Ff\rest\alpha).\]
Let
$\left<\Qfin_\alpha,j_\alpha\right>_{\alpha\leq\lambda}$ be the degree $m$ core
sequence of $i^{Q,m}_G=j^\Yy$. For $\alpha\leq\beta$ let
$j_{\alpha\beta}=j_\beta^{-1}\com j_\alpha$.
 By \ref{lem:Dcoreseq},
\[\Qfin_\alpha=\Ult_m(\Rstar,\Ff\rest\alpha),\]
\[ j_{\alpha\beta}=i^{Z_\alpha,m}_{\Ff\rest[\alpha,\beta)}\text{ and }j_\alpha=i^{Z_\alpha,m}_{\Ff\rest[\alpha,\lambda)},\]
and $j_{\alpha\beta},j_\alpha$ are $m$-embeddings which preserve $p_{m+1}$.

\begin{clm}\label{clm:factor} There is $\eps\leq\lambda$ such that
$\Qfin_\eps=U$ and $G_\eps=E$.\end{clm}

\begin{proof}\setcounter{case}{0}
We will inductively define $m$-embeddings
$i_\alpha:\Qfin_\alpha\to U$
such that (see Figure \ref{fgr:commuting} for a partial summary):
\begin{enumerate}[label=--]
 \item $j_\alpha=j^\Xx\com i_\alpha$ (so $i_\alpha$ preserves $p_{m+1}$),
 \item $i_\beta\com j_{\alpha\beta}=i_\alpha$ for $\alpha\leq\beta$,
 \item if $\alpha>0$ then $\iota,\gamma\in\rg(i_\alpha)$,
 \item if $\Qfin_\alpha\neq U$ or $i_\alpha\neq\id$ then $\crit(i_\alpha)<\iota$ (so if $\alpha>0$ then $i_\alpha(\crit(i_\alpha))\leq\iota$).
\end{enumerate}

\begin{figure}
\[
\begin{picture}(100,75)(50,0)
\put(46,0){$\Qfin_\alpha$}
\put(58,11){\vector(1,4){15}} 
\put(61,11){\vector(1,1){15}} 
\put(62,8){\vector(3,1){70}} 
\put(75,30){$\Qfin_\beta$}
\put(75,75){$U$}
\put(135,30){$\Qfin$}
\put(80,40){\vector(0,1){30}} 
\put(90,35){\vector(1,0){37}} 
\put(85,70){\vector(3,-2){47}} 
\put(52,40){$\scriptstyle i_{\alpha}$}
\put(95,10){$\scriptstyle j_{\alpha}$}
\put(109,58){$\scriptstyle j^\Xx$}
\put(84,51){$\scriptstyle i_{\beta}$}
\put(100,40){$\scriptstyle j_{\beta}$}
\end{picture}
\]
\caption{\label{fgr:commuting}\textit{Commuting maps for $\alpha\leq\beta$}}
\end{figure}

\begin{case} $\alpha=0$.

We have $\Qfin_0=\Rstar$ and $j_0=j^\Yy$. Set $i_0=i_E$.\end{case}

\begin{case}\label{case:alpha=1} $\alpha=1$.

Observe first that $E\rest\gamma\in U$. This follows the ISC if
$E\rest\gamma$ is not type Z, so suppose otherwise.\footnote{We
can't quote \cite{deconstruct} here since we don't know that $(P||\lh(E),E)$
is iterable.} Then $\iota$ is the largest generator of $E\rest\gamma$, and
\[ E\equiv_{\iota,\{\iota\},\{\iota_1\}}G.\]
But
$G\rest\gamma_1\in \Qfin$, by \cite{deconstruct}, so
$G\rest\iota\un\{\iota_1\}\in \Qfin$. Since $\pow(\iota)\inter
\Qfin=\pow(\iota)\inter U$,
then
$E\rest\iota+1\in U$, so $E\rest\gamma\in U$ as required.

Let $s=s_E$ and $\sigma=\sigma_E$. Then $\gamma=\max(s)$ since $\nu_E=\gamma+1$ and $E\rest\gamma\in
U$. Also, $\sigma\leq\iota$ since $E\approx
E\rest\iota\un\{\gamma\}$. So as above, $G\rest\sigma\un j^\Xx(s)\notin
\Qfin$,
but for each $X$ such that $E\rest X\in U$, we have $G\rest
j^\Xx(X)=j^\Xx(E\rest X)\in \Qfin$. Therefore $s_G=j^\Xx(s)$ and
$\sigma_G=\sigma$.
It follows that
\[ \Qfin_1=\cHull_{m+1}^U(\sigma\un\{s\}\un\{p_{m+1}^U\}).\] Let
$i_1:\Qfin_1\to U$ be the uncollapse. Then $\gamma=\max(s)\in\rg(i_1)$, so $\iota\in\rg(i_1)$.

Now suppose $\iota\leq\crit(i_1)$. Since
$\gamma\in\rg(i_1)$, then $\gamma<\crit(i_1)$. Since $i_1$ is an $m$-embedding preserving
$p_{m+1}$, therefore $U\sub\rg(i_1)$, so $\Qfin_1=U$ and $i_1=\id$.\end{case}

\begin{case} $\alpha=\beta+1>1$.

Suppose $\Qfin_\beta\neq U$ and $i_\beta:\Qfin_\beta\to U$ with
\[ \kappa=\crit(i_\beta)=\crit(j_\beta)=\crit(F_\beta)<\iota.\]
Now $F_\beta$ is over $\Qfin_\beta$, $F_\beta$ is on the extender sequence of a
premouse (by \ref{rem:Dcore}) and $\Qfin_\beta\inter\pow(\kappa)=Z\inter\pow(\kappa)=U\inter\pow(\kappa)$.
Therefore $\kappa$ is inaccessible in $\Qfin_\beta$
and in $U$. Since $\iota\in\rg(i_\beta)$ we have
$\kappa'=i_\beta(\kappa)\leq\iota$.

Now $E_{i_\beta}\notin U$, for
otherwise $j^\Xx(\kappa')=j_\beta(\kappa)$ and
$E_{j_\beta}=j^\Xx(E_{i_\beta})\in
\Qfin$, contradicting \ref{lem:Dcoreseq}.
So $i_\beta$ is Dodd-appropriate. 
Then $j^\Xx(s_{i_\beta})=s_{j_\beta}$ and $\sigma_{i_\beta}=\sigma_{j_\beta}$ because $\sigma\leq\kappa'\leq\iota$ and $j^\Xx\com
i_\beta=j_\beta$. (In fact, therefore $s_{i_\beta}=s_{j_\beta}$ as $s_{i_\beta}\sub\iota$.) Now proceed as in
Case \ref{case:alpha=1}.\end{case}

\begin{case} $\alpha$ is a limit.

This case follows from the commutativity of the maps before stage
$\alpha$.\end{case}

This defines all $i_\alpha$. Now since $\Qfin_\lambda=\Qfin$ and
$j_\lambda=\id$,
there is $\eps\leq\lambda$ such that $\Qfin_\eps=U$ and $i_\eps=\id$, so
$i^{Q,m}_{G_\eps}=i_E$, so $G_\eps=E$.\renewcommand{\qedsymbol}{$\Box$(Claim
\ref{clm:factor})}\end{proof}

Fix $\eps$ as in Claim \ref{clm:factor}. Let
$\Ww_\eps$ be as in
\ref{lem:Ddecomp}. Then
$E=G_\eps\in\es_+(\fin^{\Ww_\eps})$. But $\Ww$ and $\Ww_\eps$
are both
normal trees via $\Sigma$, using only extenders $F$ with
$\lh(F)<\lh(E)$, and $\fin^\Ww||\lh(E)=\fin^{\Ww_\eps}||\lh(E)$.
Therefore
$\Ww=\Ww_\eps$, and $E\in\es_+^P$, as required.
\renewcommand{\qedsymbol}{$\Box$(\ref{lem:extendermaximality})}\end{proof}

\section{Extender maximality}\label{sec:cohering}
Let $N$ be a $(k+1)$-sound mouse with $\om<\rho_{k+1}^N$. Our proofs of Theorems
\ref{thm:easy_coh}-\ref{thm:cohering} require the formation 
of $(k+1)$-sound,
$\rSigma_{k+1}$-elementary, proper hulls of $N$, containing a
given parameter. The following lemma helps with this.

\begin{newlem}\label{lem:sound_hull}
Let $N,n$ be such that $N$ is an
$(n+1)$-sound premouse and either
\begin{enumerate}[label=\tu{(}\roman*\tu{)}]
 \item $N$ is $(n,\om_1,\om_1+1)$-iterable, or
 \item $N$ is $(n+3)$-sound and $(n+3,\om_1+1)$-iterable.
\end{enumerate}

Let $\theta\in[\om,\rho_{n+1}^N)$ be an $N$-cardinal and
$x\in\core_0(N)$. Then $\exists q\in\core_0(N)$ such that letting
\[ H=\Hull_{n+1}^N(\theta\un\{q\}) \]
and $M=\trcoll(H)$ and $\pi:M\to N$ be the
uncollapse, then $x\in H$, $M\pins N$, $\rho_{n+1}^M=\theta$, $q=\pi(p_{n+1}^M)$, and $p_{n+1}^N=q\cut\alpha$
for some $\alpha$.\footnote{Given $N,n,x,\theta,q$ as in \ref{lem:sound_hull},
we say that $q$ witnesses
\ref{lem:sound_hull} with respect to $(N,n,x,\theta)$.}
\end{newlem}
\begin{proof}We may assume $N$ is countable, and if (ii) holds, then by replacing $N$
with $\Hull_{n+4}^N(\emptyset)$, we may assume $\rho_{n+4}^N=\om$ and $N$
is $(n+4)$-sound.
For the assertion in the second paragraph of the lemma $\rPi_{n+4}$, given $\rho_{n+3}>\om$.
This uses that $\{p_{n+1}^N\}$ is an $\rPi^N_{n+3}$
singleton. Also, $\{u_n^N\}$ is an $\rPi^N_{n+2}$ singleton.
We prove both of these facts by induction on $n$. Suppose that $\{u_n^N\}$ is $\rPi^N_{n+2}$;
we show that $\{p_{n+1}^N\}$ is $\rPi^N_{n+3}$. By the induction hypothesis,
we may use $u=u_n^N$ as a parameter. But $p_{n+1}^N$ is the unique $p\in[\OR(\core_0(N))]^{<\om}$
such that
\begin{enumerate}[label=(\alph*)]
 \item\label{item:p_n+1-solid_for_N}
$p$ is $(n+1)$-solid for $N$,
and \item\label{item:N_is_Hull_n+1^N} $N=\Hull_{n+1}^N(\rho_{n+1}^N\cup\{p\})$
\end{enumerate}
(note that in both \ref{item:p_n+1-solid_for_N} and \ref{item:N_is_Hull_n+1^N}, $u$ is an implicit parameter).
Condition \ref{item:p_n+1-solid_for_N} is $\rSigma^N_{n+2}(\{u\})$,
because we only need to assert the existence of generalized solidity witnesses.
And \ref{item:N_is_Hull_n+1^N} holds iff for every $x\in\core_0(N)$ there is $\alpha\in\OR^N$
such that $x\in\Hull_{n+1}^N(\{p,\alpha\})$ and there is a generalized $(n+1)$-witness
for $p\cup(\alpha+1)$ (apply the latter condition with $x=p_{n+1}^N$ and note that $\alpha<\rho_{n+1}^N$);
this is $\rPi^N_{n+3}(\{u\})$. It follows easily that $\{u_{n+1}^N\}$
is also $\rPi^N_{n+3}$.

For notational simplicity, we assume $n=1$, but the proof easily generalizes. Let $w_0$ be the set of
$2$-solidity witnesses for $p_2^N$. Let $p\in(\rho_2^N)^{<\om}$ be $<_\lex$-least
such that
\[ w_0,x\in\Hull_{2}^N(\theta\un\{p,p_{2}^N\}).\] If $p=\emptyset$
then $q=\emptyset$ witnesses the lemma (by degree $2$ condensation\footnote{\label{ftn:d2con}If (i)
fails then we don't have the usual iterability assumptions
for degree $2$ condensation,
but one can easily modify the proof thereof using the fine structural assumptions in (ii).}), so assume $p\neq\emptyset$.
Let $\gamma=\max(p)$.
Let
\[ H_0=\cHull_{2}^N((\gamma+1)\un\{p_{2}^N\}).\] We have $H_0\in N$
since $\gamma<\rho_{2}^H$. Let $\pi:H_0\to N$ be the uncollapse. Let
$\gamma_1=\crit(\pi)=(\gamma^+)^{H_0}$. Let
$\delta=\card^N(\gamma)=\card^N(\gamma_1)=\rho_{2}^{H_0}$.
Let $R\ins N$ be least such that $\gamma_1\leq\OR^R$ and $R$ projects to
$\delta$. For a theory $t$ and parameters $a,b$, write $t_{a/b}$ for the theory resulting
by replacing $a$ with $b$.

\begin{clm*} Let  $t=\pTh_{2}^N((\gamma+1)\un\{p_{2}^N\})$. Then $t_{p_2^N/\dot{p}}$
is
$\bfrSigma_2^R$, where $\dot{p}$ is some constant symbol.\end{clm*}

\begin{proof}
 Suppose (i) holds. Let $\pP$ be the phalanx
$((N,1,\delta),(H_0,1),\gamma_1)$ (see \ref{ssec:ntn}). Compare $\pP$ with $N$, forming
normal/$1$-maximal trees $\Tt$ on $\pP$ and $\Uu$ on $N$. The details, including
iterability and the analysis of the comparison, mostly follow the proof of solidity in
\cite[\S8]{fsit} and \cite{outline}, using 
weak Dodd-Jensen as in \cite{outline}.

We get that $b^\Tt$ is above $H_0$ and
non-dropping (with $\deg^\Tt(b^\Tt)=1$), and $\fin^\Tt\ins\fin^\Uu$. If
$\fin^\Tt\pins\fin^\Uu$ then $\fin^\Tt$ is sound, and it follows\footnote{Here the traditional argument involves showing that $E^\Tt_\alpha$ is close
to $M^{*\Tt}_{\alpha+1}$, which takes some extra work because we are iterating a phalanx.
An alternative is to use \ref{cor:k+1solid} to deduce that if $\Tt$ is non-trivial then $M^\Tt_\infty$ is not sound.} that
\[ M^\Tt_\infty=H_0=R\pins N,\]
which suffices.
Otherwise let $Q=\fin^\Tt=\fin^\Uu$. Note that $\Uu$ is non-trivial.
In fact, $b^\Uu$ drops in model, since $\rho_{2}^Q\leq\delta<\rho_{2}^N$,
and by \ref{cor:k+1solid}.\footnote{In fact, $\rho_2^Q=\delta$,
again either by closeness of extenders or by
\ref{cor:k+1solid}.}
Moreover,
letting $\alpha+1=\min(b^\Uu\cut\{0\})$, $b^\Uu$ drops in model at
$\alpha+1$, but not in model or degree beyond there. So letting
$R^*=M^{*\Uu}_{\alpha+1}\pins N$, we have
$\delta=\rho_{2}^{R^*}\leq\crit(i^{\Uu}_{R^*,Q})$, so
$R^*=R$; also $\deg^\Uu(b^\Uu)=1$. So all $\bfrSigma_{2}^Q$ subsets of
$\delta$ are
$\bfrSigma_{2}^R$, which suffices.

Now suppose (ii) holds. Let $\pP=((N,3,\delta),(H_0,1),\gamma_1)$. Compare $\pP$ with $N$, again producing $\Tt$ on
$\pP$ and $\Uu$ on $N$, with $\Uu$ being $3$-maximal. Argue as in the previous case, but using the fact that $N$
is $(n+4)$-sound and
$\rho_{n+4}^N=\om$, with standard fine structure, in place
of weak Dodd-Jensen.\renewcommand{\qedsymbol}{$\Box$(Claim)}\end{proof}

Now $H_0=\cHull_{2}^N(\gamma_1\un\{p_{2}^N\})$ (recall $\gamma_1=(\gamma^+)^{H_0}$), so $H_0$ is a
$2$-solidity witness for $p_{2}^N\cup\{\gamma_1\}$. Let
$q_1=p_{2}^N\un\{\gamma_1\}$ and $w_1=w\un\{H_0\}$. Note that $R\cup\{R\}\sub\Hull_{1}^N(\delta\un\{\gamma_1\})$ by choice of $R$,
so by the claim and choice of $\gamma,\gamma_1$,
\[ x,w_1\in\Hull_2^N(\delta\cup\{q_1\}).\] 
Let $p'\in[\OR]^{<\om}$ be $<_\lex$-least with
\[ x,w_1\in\Hull_{2}^N(\theta\un\{p',q_1\}).\]
So $p'\sub\delta$. If
$p'=\emptyset$ we set $q=q_1$. Otherwise, repeat
the preceding argument with $q_1,w_1,p'$ in place of $q_0=p_2^N,w_0,p$, and
so on, producing
\[ q=p_{2}^N\un\{\gamma_1,\gamma_2,\ldots,\gamma_k\},\] with
$\gamma_1>\gamma_2>\ldots>\gamma_k$ (and $k$ as large as possible). Let $M=\cHull_2^N(\theta\cup\{q\})$
and $\pi:M\to N$ be the uncollapse. We have
$\rho_\om^M=\rho_{2}^M=\theta$ is a cardinal of $N$, $M$ is sound and $M\in
N$. Degree $2$ condensation (arguing as in Footnote \ref{ftn:d2con}
in case (ii)) gives $M\pins N$. 
\renewcommand{\qedsymbol}{$\Box$(\ref{lem:sound_hull})}
\end{proof}

\begin{newdfn}\label{dfn:reasonable}
Let $N$ be an $(n+1)$-sound premouse.
We say that $N$ is $n$-\emph{reasonable} if every $E\in\es_+^N$ is Dodd sound
and $N$ satisfies the conclusion of \ref{lem:sound_hull} with respect to $n$.
We say that $N$ is \emph{reasonable} iff $N$ is $\om$-sound and $n$-reasonable for all $n$.
\end{newdfn}

Note that $n$-reasonableness is first order,
and a consequence of $(n+1)$-soundness and $(n,\om_1,\om_1+1)$-iterability.
And by condition (ii) of \ref{lem:sound_hull},
it is also a consequence of $\om$-soundness and $(\om,\om_1+1)$-iterability,
and therefore:

\begin{newcor}
 Let $N$ be a $(0,\om_1+1)$-iterable premouse.
 Then every proper segment of $N$ is reasonable.
\end{newcor}

We now state the most central theorems of the paper. They share the
same basic theme, but differ in certain details.
The first was stated in the introduction:

\begin{tm}\label{thm:easy_coh}
Let $N$ be a $(0,\om_1+1)$-iterable premouse. Let $E\in N$ be such that
$(N||\lh(E),E)$ is a premouse and $E$ is total over $N$.
Then $E\in\es^N$.
\end{tm}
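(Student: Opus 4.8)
The plan is to compare $N$ with the premouse $P = (N\|\lh(E),E)$, using the iterability of $N$ to drive the comparison, and argue that the only way the comparison can terminate is for $E$ to already appear on $\es^N$. The key structural point is that $P$ and $N$ agree below $\lh(E)$: $P\|\lh(E) = N\|\lh(E)$, so in a comparison of $N$ with $P$, the first disagreement between $\es^N$ and $\es^P$ (if any) is exactly at index $\lh(E)$, where $P$ has the extender $E$ and $N$ has whatever (if anything) sits on $\es^N$ at that index. If $E$ is on $\es^N$ we are done; so suppose not, and aim for a contradiction.

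First I would set up the comparison/coiteration of $N$ against $P$. Since $N$ is $(0,\om_1+1)$-iterable, $N$ has an iteration strategy; and $P$, being an initial segment (or near-initial-segment) structure of the $N$-side, inherits enough iterability from $N$'s strategy to run the comparison — indeed the only extenders that can be used on the $P$-side are $E$ itself and extenders from $\es^{N\|\lh(E)} = \es^{P\|\lh(E)}$, which already live inside $N$. So the comparison is really a comparison of $N$ with $\Ult(N,E)$-like structure, driven on one side by the single extender $E$ and its images. I would then invoke the standard comparison theorem: the coiteration terminates successfully, with the final models $N_\infty$ and $P_\infty$ lined up, one an initial segment of the other, and with no drop on the main branch on the side that "wins."

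The heart of the argument is then the analysis of which side uses $E$ (or its image) and when. On the $P$-side, the first extender used must be $E$ (since below $\lh(E)$, $P$ and $N$ agree, and $E\notin\es^N$ forces a disagreement at $\lh(E)$). On the $N$-side, the first extender used, $E^{\Tt}_0$, has $\lh(E^{\Tt}_0) \ge \lh(E)$ and, by coherence of $\es^N$ together with $P\|\lh(E) = N\|\lh(E)$, one shows $\crit(E^{\Tt}_0) = \crit(E)$ and the two sides agree up through $\lh(E)$ after this first step. The usual "who moves least / the extender used is compatible with $E$" arguments (as in the Mitchell–Steel comparison analysis and the Dodd-structure machinery of \S\ref{sec:Dodd} — in particular \ref{lem:Ddecomp} and \ref{lem:Dcoreseq} to handle the case where $E$ is type $2$ and its generators are moved) then pin down that after finitely many steps the $P$-side has caught up to the $N$-side, and the only consistent outcome is that $E$, or its Dodd core together with a short composition recovering $E$, is already present on $\es^{N_\infty}$ in a position reflected back by the iteration map to give $E\in\es^N$. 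Here I'd use Dodd-soundness of the extenders on $\es^N$ (\ref{fact:Doddsound}) and the lemmas \ref{lem:Dsp4}–\ref{lem:Dsp6}, \ref{cor:taurho_1}, which say precisely how $\tau, t, \sigma, s$ transform under the relevant ultrapowers, so that the image of $E$ under any iteration map on the $N$-side remains Dodd-sound and cannot be "further decomposed" in a way inconsistent with its being on the sequence.

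\textbf{The main obstacle} I expect is the type $2$ case: when $E$ has generators above $\nu_E$ that can be moved by the comparison iteration on the $N$-side, one cannot simply say "$E$ is used verbatim." Instead one must track how $\es^N$-extenders shuffle the Dodd parameter/projectum of $E$, show that $E$ decomposes as a linear composition of Dodd-sound pieces each of which must appear (in image form) on the sequence of the relevant iterate, and then reassemble to conclude $E \in \es^N$. This is exactly what the apparatus of \S\ref{sec:Dodd} — Dodd decomposition, Dodd ancestry, the core sequence, and the associativity lemma \ref{lem:extass}/\ref{cor:nass} — is built to handle, so the work is in correctly matching the comparison tree's structure to the Dodd decomposition of $E$ and verifying there is no nontrivial drop or length mismatch at the end. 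The type $1$ and type $3$ cases are, as the excerpt notes, the standard comparison argument and should go through directly.
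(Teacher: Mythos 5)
Your proposal has a genuine gap at the very first step: you propose to compare $N$ directly with $P = (N||\lh(E),E)$, but this comparison cannot even be set up, because there is no reason $P$ (or the $\Ult(\cdot,E)$ models it produces) should be iterable. You assert that $P$ ``inherits enough iterability from $N$'s strategy,'' but this is circular: $P$ being an initial segment of $N$ is precisely what we are trying to prove, and after one use of $E$ the $P$-side becomes $\Ult_0(P,E)$, whose extender sequence above $\lh(E)$ is $i_E``\es^{N||\lh(E)}$ and in general does not live in $N$. Worse, there is no a priori guarantee that $\Ult(N,E)$ is even wellfounded, so ``comparing $N$ with an $\Ult(N,E)$-like structure'' is not meaningful as stated. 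The paper's proof is careful precisely here: it first passes to a countable sound hull $M$ of a suitable $N|\gamma$ with $\rho_{k+1}^M=\omega$, forms the \emph{phalanx} $\pP=(M,(\iotabar,\Ult_k(M,\Ebar),k,\lh(\Ebar)))$, and proves $\pP$ is $(\om_1+1)$-iterable by lifting trees on $\pP$ into $V$. The lifting map on the ultrapower model is $\psi':\Ubar\to M'$, where $M'$ is a hull of $\Ult(N|\gamma,E)$ which, by condensation, collapses to an actual proper initial segment of $P||\lh(E)$ --- and is therefore genuinely iterable. That hull-and-condensation construction is the real technical content of the proof and is entirely absent from your sketch.

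The second gap is downstream: even granting iterability, the argument is a phalanx comparison against a sound $M$ projecting to $\omega$, and it is precisely this soundness that (via \ref{cor:k+1solid}) forces the final models to coincide and gives the compatibility of the branch extender with $E$ needed to reassemble $E$ from its Dodd decomposition. Without the hull-taking step you do not have $\rho_{k+1}=\omega$ available, and the comparison analysis you gesture at (``after finitely many steps the $P$-side has caught up'') is not justified. You do correctly identify that the type $2$ case uses the Dodd-structure apparatus of \S\ref{sec:Dodd} --- that much matches Claims \ref{clm:1extG} and \ref{clm:factor} inside the proof of \ref{lem:extendermaximality} --- but the framework in which those lemmas are applied (the phalanx, the hull $M$, the core sequence, and the factor maps $i_\alpha$) is the substance of the proof, and your proposal does not supply it.
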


\ifbool{standardmode}{}{
\begin{olddfn}\end{olddfn}}

\begin{tm}\label{thm:finite_coh}
Let $N$ be a $(0,\om_1+1)$-iterable premouse. Suppose
there are cofinally many $\gamma<\OR^N$ such that $N|\gamma$ is admissible.
Suppose that if $N$ has a largest
cardinal $\kappa$ then $N\sats$``$\cof(\kappa)$ is not measurable''.

Let $(0,\Ww,\zeta,P,E,\Ww^+)$
be potential for $N$, with $E\in N$ and $\zeta<\om$. Suppose that
$b^{\Ww^+}$ does not drop in model.
Then $E\in\es^P_+$.
\end{tm}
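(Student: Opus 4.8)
The plan is to compare the phalanx $\Phi(\Ww^+)$ against $N$ (equivalently, to compare the putative last model $\fin^{\Ww^+} = \Ult(M^*, E)$ of $\Ww^+$ against $N$, keeping track of the tree $\Ww$ already built), and to derive a contradiction from the assumption that $E \notin \es^P_+$. First I would observe that, since $E \in N$ and $(P\|\lh(E),E)$ is a premouse and $b^{\Ww^+}$ does not drop in model, the model $\fin^{\Ww^+}$ is wellfounded and iterable (being a model on a normal tree on $N$, or via the phalanx), so the comparison is legitimate. The key structural point is that because $\zeta < \om$, the tree $\Ww$ is finite, so the critical points used in $\Ww$ and their images are controlled, and the iteration maps into $P$ are not superstrong (by the anti-superstrong hypothesis built into the notion of premouse here, cf. \ref{lem:Dcoreseq}). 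I would run the coiteration $(\Tt,\Uu)$ on $(\fin^{\Ww^+}, N)$ (more precisely on the phalanx $\Phi(\Ww^+)$ versus $N$), obtaining a common last model or at least a wellfounded terminal comparison by the iterability of $N$.

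The heart of the argument is the usual Schimmerling–Steel style analysis of the least disagreement. Assume toward a contradiction that $E$ is not on $\es^P_+$. Then $E$ witnesses the first disagreement between $\fin^{\Ww^+}$ and the appropriate initial segment of $P$ (or its iterate), and one argues that on the $N$-side the extender $E^\Uu_0$ used must be compatible with $E$, while on the other side $E$ itself is (essentially) used. The standard ``uniqueness of the next extender'' / comparison argument then forces $E$ to be on the sequence of the relevant model, contradicting the choice of $E$. The two cases according to the type of $E$ behave differently: if $E$ is type $1$ or type $3$, the comparison analysis is entirely standard (this is explicitly remarked in \S\ref{sec:Dodd}); if $E$ is type $2$, one must track how generators of $E$ move under the comparison maps, and here the Dodd-structure machinery of \S\ref{sec:Dodd} — in particular the Dodd decomposition (\ref{dfn:Ddecomp}), \ref{lem:Ddecomp}, and \ref{lem:Dcoreseq} — is what lets one conclude that the relevant fragments of $E$ land back on the sequence.

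The role of the admissibility hypothesis (cofinally many $\gamma$ with $N|\gamma$ admissible) and the hypothesis on $\cof(\kappa)$ when $N$ has a largest cardinal $\kappa$ is to guarantee enough closure of $N$ to run and close off the comparison inside the correct models and to rule out pathological behaviour at the top of $N$ (e.g.\ the disagreement occurring exactly at $\OR^N$, or $\Ww$ being forced to use an extender overlapping $\kappa$ whose critical point has measurable cofinality). Concretely, admissibility lets one reflect the comparison to a proper segment, and the cofinality hypothesis rules out the bad case where, after comparison, the would-be image of $E$ sits on the sequence only ``at the very top'' in a way incompatible with $P$ being a premouse. Since here $\zeta < \om$, there are only finitely many such top-level obstructions to dispose of.

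I expect the main obstacle to be the type $2$ case: controlling the images of the generators of $E$ under the comparison map on the $\fin^{\Ww^+}$-side, and showing that the Dodd core of $E$ (and each extender in its Dodd decomposition) reappears on $\es^P_+$. This is exactly what the lemmas of \S\ref{sec:Dodd} (associativity of extenders \ref{lem:extass}--\ref{cor:nass}, the Dodd-decomposition lemmas \ref{lem:Ddecomp}--\ref{lem:Dcoreseq}, and the soundness-preservation results \ref{lem:soliditypreservation}--\ref{cor:k+1solid}) are designed to handle, so the proof should consist of: (i) setting up the comparison of $\Phi(\Ww^+)$ with $N$; (ii) the routine type $1$/type $3$ case; (iii) the type $2$ case, invoking the Dodd machinery to identify the fragments of $E$ that must lie on $\es^P_+$; and (iv) using admissibility and the cofinality hypothesis to close off the argument and handle the finitely many top-level cases. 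A secondary subtlety is checking that $\Ww^+$ is genuinely a $0$-maximal tree (so that the phalanx $\Phi(\Ww^+)$ is iterable via the given strategy for $N$), which follows from the hypothesis that $\lh(E^\Ww_\alpha) < \lh(E)$ for all $\alpha+1 < \lh(\Ww)$ together with $(P\|\lh(E),E)$ being a premouse.
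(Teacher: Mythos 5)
Your proposal has the right overall flavour (a comparison argument \`a la Schimmerling--Steel, with the Dodd machinery handling the type 2 case), but it contains a genuine circularity at the very first step and omits the reflection/hull argument that the paper's proof turns on.

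The circularity: you assert that ``$\fin^{\Ww^+}$ is wellfounded and iterable (being a model on a normal tree on $N$, or via the phalanx), so the comparison is legitimate.'' This is exactly what cannot be taken for granted. If $E\notin\es^P_+$, then $\Ww^+$ is \emph{not} a tree via the iteration strategy for $N$ --- the strategy never produces a model of the form $\Ult(M^{*\Ww^+}_{\zeta+1},E)$ unless $E$ is on the sequence of the relevant model. So the iterability of $\Phi(\Ww^+)$ is precisely the conclusion of \ref{lem:extendermaximality}, not a hypothesis you can feed into a comparison. The paper gets around this by defining a \emph{different} phalanx $\pP=\Phi(\Wwbar)\conc\langle\iotabar,\Ubar,k,\lh(\Ebar)\rangle$ --- note the exchange ordinal is $\iotabar=\lrgcrd(\Pbar||\lh(\Ebar))$, \emph{not} $\nu(\Ebar)$ --- and proves $\pP$ iterable by lifting its trees to above-$\cc$ trees on $\Phi(\Ww)$ (not $\Phi(\Ww^+)$), using an embedding $\psi':\Ubar\to M'$ into a proper segment of $P||\lh(E)$. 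That lifting is to a phalanx that genuinely is iterable from the strategy for $N$, so there is no circularity. All the comparison and Dodd-decomposition analysis you sketch lives inside \ref{lem:extendermaximality}; the proof of \ref{thm:finite_coh} itself is essentially a reduction to that lemma.

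You are also missing the crucial reflection step and hence the actual function of the two closure hypotheses. The paper first replaces $N$ by a countable hull $M=\cHull_{k+1}^{N|\gamma}(\emptyset)$ (for suitable $\gamma<\OR^N$), so that $M$ is sound and $\rho_{k+1}^M=\om$; this is required because \ref{lem:extendermaximality} needs an $\om$-sound premouse projecting to $\om$. The hypothesis ``$N\sats$ $\cof(\kappa)$ is not measurable'' is used together with the cofinal admissibles to guarantee that each $M^\Ww_m$ has height $\leq\OR^N$ (so the finite tree $\Ww$ does not outrun $N$), and the admissibility of the chosen $N|\gamma$ is used to make the models $M^\Ww_m$ and the failure of the theorem expressible over $N|\gamma$, so that they reflect to $M$. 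Your account of these hypotheses as simply ``closure to close off the comparison'' does not capture this, and a direct comparison of $\Phi(\Ww^+)$ with $N$, as you propose, would not obviously make any use of them.
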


Note that in \ref{thm:finite_coh} it is possible that $\lh(E)=\OR^P$, given
that $[0,\zeta]_\Vv$ drops. In this case, the conclusion is that $E=F^P$.

\begin{tm}\label{thm:strategy_coh_proj}
Let $N$ be a premouse and $\om<\eta<\OR^N$ with $\eta$ an $N$-cardinal and
 $N\sats$``$\eta^{++}$ exists''. Let $\R\in N$ be a reasonable
premouse with $\lpole\R\rpole=
(\her_{\eta^+})^N$.\footnote{$\lpole M\rpole$ denotes the universe of $M$.}
Let $\cc\in\OR^N$ and $\Sigma\in N$\footnote{Note that $\Sigma\sub
N|(\eta^{++})^N$, so it makes sense to have $\Sigma\in N$.}  with
\[ N\sats\text{``}\Sigma\text{ is an above-}\cc,\
(\om,\eta^+)\text{-strategy for }R\text{''.}\]
Let
$(\om,\Ww,\zeta,P,E,\Ww^+)$
be potential for $R$ and such that:
\begin{enumerate}[label=--]
\item $\Ww,E\in N$ and $\Ww$ is above $\eta$, of length $<(\eta^+)^N$ and via $\Sigma$,
\item $\cc<\lh(E)$ and $\cc$ is a cutpoint of $P|\lh(E)$,\footnote{The hypothesis ``$\cc<\lh(E)$'' is
redundant if $\Ww$ is non-trivial, as $\Ww$ is above $\cc$.}
\item $\crit(E)<\eta<\lh(E)$ \textup{(}therefore $b^{\Ww^+}$ does not drop\tu{)}.\footnote{The hypothesis ``$\eta<\lh(E)$'' is
redundant if $\Ww$ is non-trivial, as $\Ww$ is above $\eta$.}
\end{enumerate}

Then $E\in \es^P_+$.\end{tm}

In both \ref{thm:strategy_coh_proj} above and \ref{thm:strategy_coh_card} below,
$\Ww$ is above $\cc$ because it is via $\Sigma$, but we do not assume 
$\cc\leq\crit(E)$ explicitly (but $\cc\leq\crit(E)$
follows from the conclusion that $E\in\es_+^P$). Note that in \ref{thm:strategy_coh_proj},
although $\Sigma$ is an above-$\cc$ strategy
and possibly $\cc<\eta$, we assume also that $\Ww$ is above $\eta$,
and since $R$ has largest cardinal $\eta$,
this is a serious restriction on $\Ww$. It ensures that
$\Ww$ is equivalent to a tree $\Vv$ on some $R'\pins R$, and $\Vv\in R$.
However, \ref{thm:strategy_coh_card} makes no such  assumption.

\begin{tm}\label{thm:strategy_coh_card}
Let $N$ be a premouse and $\eta\geq\om$
regular in $N$ with $N\sats$``$\eta^+$ exists''. Let
$\R\in N$ be a reasonable premouse with $\lpole\R\rpole = \her_\eta^N$. Let $\cc<\eta$ be a cutpoint of $\R$. Suppose
that if $\tau$ is the largest cardinal of $R$
and $\cc\leq\cof^N(\tau)$ then $\cof^N(\tau)$ is not the critical 
point of any $R$-total $F\in\es^R$.

Let $\Sigma\in N$ be such that $N\sats$``$\Sigma$ is an above-$\cc$,
$(\om,\eta)$-strategy for $R$''. Let $(\om,\Ww,\zeta,P,E,\Ww^+)$
be potential for $\R$ with
$\Ww,E\in N$ and $\Ww$ via $\Sigma$,
$\lh(\Ww)<\eta$ and
$\cc<\OR^P$,
and $b^{\Ww^+}$ non-dropping.
Then $E\in\es^P_+$.\end{tm}

In the following two variants, the tree $\Ww\in N$,
and $\Ww$ is via $\Sigma$,
but $\Sigma$ is external, not assumed to be in $N$.
The first is actually a corollary of the second, but it contains the main point and its statement is simpler,
so we give it first:

\begin{newtm}\label{thm:cohering_simple}
Let $N$ be a premouse and $\Sigma$ a $(0,\theta^++1)$-strategy
for $N$, where $\theta\geq\om$. Let $\eta<\OR^N$ be an $N$-cardinal.
Let $(0,\Ww,\zeta,P,E,\Ww^+)$
be potential for $N|\eta$, such that $\Ww,E\in N$, and
$\Ww$ follows the strategy for $N|\eta$ induced by $\Sigma$,
$\lh(\Ww)<\theta^+$,
and
$b^{\Ww^+}$ does not drop.
Then $E\in\es^P_+$.
\end{newtm}

Theorem \ref{thm:cohering} below is
finer than the previous results. Hypothesis
\ref{item:Wdefinable} of
\ref{thm:cohering}, which asserts that
$\Ww$ and $E$ are suitably definable over $N$, will be made precise in 
\ref{rem:clarification}.

\begin{tm}\label{thm:cohering}
Let $k<\om$. Let $N$ be a $(k+1)$-sound $k$-reasonable premouse and $\cc\in\OR$ be
a cutpoint of $N$.
Let $\theta\in\OR$ and $\Sigma$ be an above-$\cc$,
$(k,\theta^++1)$-strategy for $N$.\footnote{Actually if $\theta\geq\om_1$
then a $(k,\theta^+)$-strategy suffices.} Let $(k,\Ww,\zeta,P,E,\Ww^+)$
be potential for $N$, such that:
\begin{enumerate}[label=\textup{(}\alph*\textup{)},
ref=\textup{(}\alph*\textup{)}]
 \item $\Ww$ is via $\Sigma$ with $\lh(\Ww)<\min(\theta^+,\rho_0^N)$,
 \footnote{$\lh(\Ww)<\rho_0^N$ will actually follow from the assumption that $\Ww$ is $\bfrSigma_{k+1}^N$ in the codes.}
 \item $\cc<\lh(E)$,
 \item $b^{\Ww^+}$ does not drop, and
$\crit(E)<\rho_{k+1}(M^\Ww_\pr)$ where $\pr=\pred^{\Ww^+}(\zeta+1)$,
 \item\label{item:Wdefinable} $\Ww$
and  $E$ are $\bfrSigma_{k+1}^N$ in the
codes.\footnote{\label{ftn:clarify}Cf.
\ref{rem:clarification}}
\end{enumerate}

Then $E\in\es^P_+$.
\end{tm}

Note that in \ref{thm:cohering}, we might have $\lh(E)=\OR^P$ even with $[0,\zeta]_\Ww$ non-dropping.
By \ref{cor:k+1solid}, the assumptions of \ref{thm:cohering} imply
$\rho_{k+1}(M^\Ww_\pr)=\sup i^\Ww_{0,\pr}``\rho_{k+1}^N$.

\begin{rem}\label{rem:superstrong}The literal generalization
of these results to ``premice'' $M$ with enough extenders of superstrong
type on $\es_+^M$ fails. For suppose $E$ is a total
type 2 extender on $\es^M$, $U=\Ult(M,E)$, $\kappa=\lgcd(M|\lh(E))$,
$F\in\es_+^U$, $\kappa=\crit(F)$, $F$ is of superstrong type and $U$-total.
Then $\Ult(M|\lh(E),F)$ and $U|\lh(F)$ are distinct
active premice, with the same reduct. If $M$ is iterable and $M\sats\ZFmin$,
then the other hypotheses of \ref{thm:cohering} also hold.

This doesn't appear to be a strong failure of \ref{thm:cohering}, however, since
both extenders are on the extender sequence of normal iterates of $M$.
(Also, in Jensen's $\lambda$-indexing, these extenders are not indexed at
the same point.)\end{rem}

\ifbool{standardmode}{}{
\begin{oldlem}\end{oldlem}}

In the proof of \ref{thm:strategy_coh_proj} we will need to consider ostensibly illfounded ultrapowers,
and fine structural embeddings between them. Toward this we define generalizations of the usual fine structural notions
for such possibly illfounded ``premice'', at least to the extent that these notions are well defined.
The definitions and calculations are almost the usual ones; however when $U=\Ult_m(M,E)$ is illfounded
there is a subtlety in the definition of $\rho_m^U,T_m^U$ and $\rSigma_{m+1}^U$,
as in this case it seems possible that the natural candidate for $\rho_m^U$ is a proper segment of $\OR^U$
which is however not a cut ``in'' $U$,
i.e. there is no $\rho\in\OR^U$ such that
\[ \rho_m^U=\{\alpha\in U\mid U\sats\alpha\in\rho\}.\]
So we give a detailed description of things in order to handle this issue.
\footnote{In an earlier version of this paper,
the author had not noticed the issue with $\rho_m^U$ and
simply claimed that there was no problem
in adapting the fine structure to illfounded structures,
and omitted any discussion thereof.
A question from the anonymous referee regarding the topic
lead to the author's noticing the issue.}
\begin{newdfn}\label{dfn:illffs}
Let $N=(\univ{N},\es,F)$ be a structure in the premouse language, possibly illfounded.
Suppose that $N\sats$``I am a premouse'' and $N$ has wellfounded $\omega$;
so $N$ is correct about what formulas are.
We define the fine structure of $N$ as far as possible. Write
\[ \OR^N=\{\alpha\in\univ{N}\mid N\sats\alpha\in\OR\}.\]

Suppose first that $N\sats$``I am not type 3''. Define $\rho_0^N=\OR^N$. We say that $N$ is \emph{$0$-sound}
and \emph{$0$-feasible}. Define $\rSigma_1^N$ as usual, and for $X\sub N$,
define $\Hull_1^N(X)$ as usual: the set of all $y\in N$ such that for some $\Sigma_1$ formula $\varphi$ and $\xvec\in X^{<\om}$,
$y$ is the unique $y'\in N$ such that $N\sats\varphi(\xvec,y')$.

Define $\rho_1^N$ as the set of all $\alpha\in\OR^N$ 
such that either $\alpha<\om$ or for all $p\in N$, we have $\Th_1^N(\alpha\cup\{p\})\in N$. Note that $\rho_1^N$
is an $\in^N$-initial segment of $\OR^N$, but we might have $\rho_1^N\psub\OR^N$ but $\rho_1^N$ not ``in'' $N$.
Given $p\in[\OR^N]^{<\om}$, we define the \emph{$1$-solidity} of $p$ for $N$ as usual.
We say that $N$ is \emph{$1$-sound} iff there is $p\in[\OR^N\cut\rho_1^N]^{<\om}$
such that $p$ is $1$-solid for $N$ and $N=\Hull_1^N(\rho_1^N\cup\{p\})$.

Suppose $N$ is $1$-sound. Define $p_1^N$ to be the unique $p$ witnessing this,
and define $u_1^N$ as usual.
(Uniqueness: Suppose $p<_\lex^N q$ are both such. By the $1$-solidity of $q$,
there is a ``limit ordinal'' $\alpha\in\OR^N$ such that ``$\rho_1^N\sub\alpha$'' and $\Th_1^N(\alpha\cup\{p\})\in N$.
But $N=\Hull_1^N(\alpha\cup\{p\})$, and the usual diagonalization argument then constructs
a new subset of $\alpha^{<\om}$, a contradiction.)
Define $T_1^N$ as usual; that is, the set of 
all $t=\Th_1^N(\alpha\cup\{q\})$ for some $\alpha\in\rho_1^N$ and $q\in N$.
So $T_1^N\sub N$.
Define $\rSigma_2^N$ and $\Hull_2^N(X)$ from  $T_1^N$ as usual.
(So $u_1^N\in\Hull_2(X)$ by definition.)

We say that $N$ is \emph{$1$-feasible} iff $N$ is $1$-sound and either $\rho_1^N=\OR^N$ or $\rho_1^N$ is ``in'' $N$. 
If $N$ is $n$-feasible where $n\geq 1$ and $\rho_n^N>\om$, define $\rho_{n+1}^N$, $(n+1)$-solidity and $(n+1)$-soundness as above.
If $N$ is $(n+1)$-sound define $p_{n+1}^N,u_{n+1}^N,T_{n+1}^N,\rSigma_{n+2}^N,\Hull_{n+2}^N$
(note these definitions do not require $(n+1)$-feasibility) and also $(n+1)$-feasibility
as above. 

If $N$ is instead type 3 then these things are adapted in the obvious manner to $\core_0(N)=N^\sq$.
(Here $N^\sq$ is well-defined as $\nu(F^N)$ is well-defined by the premouse axioms.)

Let $M,N$ be as above and $\pi\colon\core_0(M)\to\core_0(N)$.
We say that $\pi$ is a \emph{virtual \tu{(}weak/near\tu{)} $m$-embedding} given the usual conditions on $\pi$,
using the parameters etc defined as above. For example, for virtual weak $m$:\footnote{Cf.~the definition of weak $m$-embedding in \S\ref{ssec:ntn}.}
\begin{itemize}[label=--]
 \item  $M,N$ are $m$-sound,
\item for $k\leq m$ we have $\pi(p_k^M)=p_k^N$,
\item for $k<m$ we have either
\begin{itemize}[label=--]
\item $\rho_k^M=\OR(\core_0(M))$ and $\rho_k^N=\OR(\core_0(N))$, or
\item $\rho_k^M<\OR(\core_0(M))$ and $\pi(\rho_k^M)=\rho_k^N$,
\end{itemize}
\item $\pi$ is $\rSigma_m$-elementary,
\item there is a cofinal $X\sub\rho_m^M$ such that $\pi$ is $\rSigma_{m+1}$-elementary on $\Hull_{m+1}^M(X)$.
\end{itemize}
Because $M,N$ are $m$-sound (hence $(m-1)$-feasible if $m>0$) we have defined all the fine structural notions needed to make sense of these requirements.

We also extend the definition of \emph{reasonable} to possibly
illfounded structures $N$ as above, in the obvious fashion.
\end{newdfn}
\begin{newdfn}
 Let $P,Q$ be active premice and $\sigma:\core_0(P)\to\core_0(Q)$ a weak $0$-embedding.
 Then $\psi_\sigma:\Ult_0(P,F^P)\to\Ult_0(Q,F^Q)$ denotes the map induced by $\sigma$ through the Shift Lemma.
 (Note $\sigma\sub\psi_\sigma$.)
\end{newdfn}

\begin{newlem}\label{lem:illffs}
 Let $m\leq n\leq\om$. Let $M,N$ be premice, $M$ is $m$-sound, $N$ is $n$-sound,
$\pi:M\to N$ a weak $m$-embedding.
 Let $P,Q$ be active premice,
 $\sigma:P\to Q$
 a weak $0$-embedding.
 Let $E=F^P$, $F=F^Q$, $\kappa=\kappa_E$, $\mu=\kappa_F$.
 Suppose
 \[ M||(\kappa^+)^M=P|(\kappa^+)^P\text{ and  }N||(\mu^+)^N=Q|(\mu^+)^Q \]
 and $\pi\rest(\kappa^+)^M\sub\sigma$.
 Suppose $\kappa<\rho_m^M$ and $\mu<\rho_n^M$.
 Let
 \[ U=\Ult_m(M,E)\text{ and }W=\Ult_n(N,F)\]
 \tu{(}$U,W$ might be illfounded\tu{)}. Let
 $i=i^{M,m}_E$ and $j=i^{N,n}_F$ and
$\psi:U\to W$
 be the Shift Lemma map.
 
 Then $U$ is $m$-sound, $W$ is $n$-sound,
 $i$ a virtual $m$-embedding, $j$ a virtual $n$-embedding, $\psi$ a virtual weak $m$-embedding,
$\psi\com i=j\com\pi$,
 \[ \psi_\sigma\rest(\OR^P+1)\sub\psi.\]
  Moreover, if $M$ is $\kappa$-sound and $\rho_{m+1}^M\leq\kappa$ then
 \[ U=\Hull_{m+1}^U(\nu(E)\cup\{i(p_{m+1}^M)\}).\]
 \end{newlem}
 \begin{proof}[Proof Sketch]
 For simplicity we assume $m=2$ and $M,N$ are not type 3.
 Note that ($U$ could be illfounded and) $\rho_2^U$ might not be ``in'' $U$.
 
 To verify that $U$ is $2$-sound and $i$ is a virtual $2$-embedding, one proves:
 \begin{enumerate}
  \item  $\rSigma_0$- and $\rSigma_1$-Los theorem holds. Hence $i$ is $\rSigma_1$-elementary.
 \item Let $b\in[\nu(E)]^{<\om}$ and
 \[ f,g:[\kappa]^{|b|}\to M \]
 be $\bfrSigma_2^M$ with $\rg(g)\sub\rho_1^M$.
 Define
 \[ h:[\kappa]^{|b|}\to M,\]
 \[ h(u)=\Th_1^M(g(u)\cup\{f(u)\}).\]
 Note that $h$ is $\bfrSigma_2^M$.
 Let $\widetilde{f}=[f,b]^{M,2}_E$ and likewise $\widetilde{g},\widetilde{h}$.
 Then
 \[ U\sats\text{``}\widetilde{h}=\Th_1(\widetilde{g}\cup\{\widetilde{f}\})\text{''.}\]
 \item If $\rho_1^M=\OR^M$ then $\rho_1^U=\OR^U$, and if $\rho_1^M<\OR^M$ then $i(\rho_1^M)\leq\rho_1^U$.
 \item $i(p_1^M)$ is $1$-solid for $U$, as witnessed by $i(w_1^M)$ ($w_1^M$ is the set of $1$-solidity witnesses for $M$).
 \item $U=\Hull_1^U(i(\rho_1^M)\cup\{i(p_1^M)\})$.
 \item $U$ is $1$-sound and $1$-feasible with $p_1^U=i(p_1^M)$ and $u_1^U=i(u_1^M)$
 and either $\rho_1^M=\OR^M$ and $\rho_1^U=\OR^U$, or $\rho_1^M<\OR^M$ and $\rho_1^U=i(\rho_1^M)$ (use diagonalization to see $\rho_1^U\leq i(\rho_1^M)$).
 \item $\rSigma_2$-Los theorem holds. Hence $i$ is $\rSigma_2$-elementary.
 \item\label{item:2-theories} Let $f:[\kappa]^{|b|}\to M$ be $\rSigma_2^M(\{q\})$ and $\kappa\leq\alpha<\rho_2^M$.
 Let $t=\Th_2^M(\alpha\cup\{q\})$. Define $h:[\kappa]^{|b|}\to M$ by $h(u)=$ the $\rSigma_2$-theory
 in parameters $\alpha\cup\{f(u)\}$ determined by $t$ and substitution. Then $h$ is $\rSigma_2^M(\{(q,t)\})$
 and
 \[ U\sats\text{``}\widetilde{h}=\Th_2(i(\alpha)\cup\{\widetilde{f}\})\text{''.}\]
 \item $\sup i``\rho_2^M\leq\rho_2^U$.
 \item $i(p_2^M)$ is $2$-solid for $U$, as witnessed by $i(w_2^M)$.
 \item Let $b\in[\nu(E)]^{<\om}$ and $\varphi$ be $\rSigma_2$ and $q\in M$ be such that $\varphi(q,\cdot,\cdot)$
 defines a function $f:[\kappa]^{|b|}\to M$ over $M$. Let $\widetilde{f}=[f,b]^{M,2}_E$. Then
 \[ U\sats\text{``}\widetilde{f}\text{ is the unique }y\text{ such that }\varphi(i(q),b,y)\text{''}.\]
 \item\label{lem:U_is_2-Hull} $U=\Hull_2^U(\rg(i)\cup\nu(E))=\Hull_2^U(i``\rho_2^M\cup\{i(p_2^M)\}\cup\nu(E))$.
 \item $U$ is $2$-sound with $p_2^U=i(p_2^M)$ and $i(u_2^M)=u_2^U$
 and $\rho_2^U=\sup i``\rho_2^M$ (but maybe $U$ is not $2$-feasible).
 \item $i$ is $\rSigma_3$-elementary; this follows from properties
 \ref{item:2-theories}, \ref{lem:U_is_2-Hull} and $\rSigma_1$-elementarity,
 just like when $U$ is wellfounded.
 \end{enumerate}

 Similarly, $j:N\to W$ is also a virtual $n$-embedding, and the $\rSigma_n^N$-Los theorem holds, etc.
 The usual calculations using all these facts (including $\rSigma_2$-Los theorem for both ultrapowers),
 now give that $\psi$ is a virtual weak $2$-embedding, commutativity holds etc.

 Finally suppose that $M$ is $\kappa$-sound and $\rho_3^M\leq\kappa$.
 So $M=\Hull_3^M(\kappa\cup\{p_3^M\})$.
 By the $\rSigma_3^M$-elementarity of $i$,
 \[ \rg(i)=\Hull_3^U(\kappa\cup\{i(p_3^M)\}). \]
 But 
 $U=\Hull_2^U(\rg(i)\cup\nu(E))$ (as above), so
 $U=\Hull_3^U(\nu(E)\cup\{i(p_3^M)\})$.
 \end{proof}
\begin{newlem}\label{lem:illflift}
 Let $\pi:\core_0(M)\to\core_0(N)$ be a virtual weak $k$-embedding.
 Let $\varphi$ be $\rSigma_{k+1}$ and $x\in\core_0(M)$. If $\core_0(M)\sats\varphi(x)$
 then $\core_0(N)\sats\varphi(\pi(x))$.
\end{newlem}
\begin{proof}
 If $k=0$ this is immediate. If $k>0$ it holds because $\pi``T_k^M\sub T_k^N$,
 which holds just as for a weak $k$-embedding.\footnote{That is, with \emph{weak $k$-embedding} defined as in \S\ref{ssec:ntn}.}
\end{proof}
\begin{proof}[Proof of \ref{thm:strategy_coh_proj}]\setcounter{clm}{0}
We argue overall as in  the proof of the ISC in
\cite[\S10]{fsit}. First, for motivation,
make the following simplifying assumptions: that $\R=N|(\eta^+)^N$, $\Ww$
is trivial, so $E$ coheres $\es^N$, and that
we can compare $\Ult(N,E)$ with
$N$, producing trees $\Uu,\Vv$ such that
$\fin^\Uu=\fin^\Vv$,
$b^\Uu$ and $b^\Vv$ do not drop,
$i^\Uu\com i_E=i^\Vv$, and
$\nu_E\leq\crit(i^\Uu)$.
Then standard
arguments using the ISC show that $E$ is used in $\Vv$, and
$E\in\es^N$, as required.

Now drop the above simplifying assumptions. We don't even know, for
instance, that $\Ult(N,E)$ is wellfounded. So
we will instead arrange a situation similar to that in the preceding paragraph,
but with
$\R$ replaced by a certain hull $M$, which will be $k+1$-sound for some $k$,
with $\rho_{k+1}^M=\om$. Denoting the resulting collapses of $E$, $\Ww$, etc, by
$\Ebar$, $\Wwbar$, etc, and letting $\iota=\lgcd(P|\lh(E))$, we will
show that the phalanx
$\pP=\Phi(\Wwbar,\iotabar,\Ebar)$ (see \ref{dfn:potentialtree}) is sufficiently
iterable in $N$ to allow comparison with $M$.
If
$\iotabar=\nu(\Ebar)$ (that is, $E$ is type 1 or 3), then the proof is completed as in the previous paragraph.
In general we will rely on the Dodd structure analysis from
\S\ref{sec:Dodd}, appealing to \ref{lem:extendermaximality}.

We now commence with the details. Because $\Ww$ is above $\eta$,
it is equivalent to a tree $\Vv$ on some $R'\pins R$ such that $\rho_\om^{R'}=\eta$.
Note then that $\Vv,E\in
\R$. Let $k<\om$ be large enough to reflect the facts about $x=(\Vv,E,\cc)$
into the model $M$ defined below.
Now by assumption $\R$ is reasonable. So we can fix $q\in\R^{<\om}$ witnessing \ref{lem:sound_hull}
with
respect to $(\R,k,x,\theta=\om)$. Let
\[ M=\cHull_{k+1}^{\R}(\{q\})\]
and $\pi:M\to\R$ be the uncollapse. So $M$ is sound, reasonable, $\rho_{k+1}^M=\om$,
$p_{k+1}^M=\pi^{-1}(q)$, and $\pi$  is a near
$k$-embedding. Since $R\sats\ZFC^-$ and $\pi,\Sigma\in N$ and $\om_1^N<(\eta^+)^N$,
\[ N\sats\text{``}M\text{ is above-}\bar{\cc},\ (\om,\om_1+1)\text{-iterable''.}\] 

Let $\Vvbar=\pi^{-1}(\Vv)$, etc. Let $\Wwbar$ be the same
tree except with $M^\Wwbar_0=M$ (we have $M^{\Vvbar}_0\pins M$).
So
$(\om,\Wwbar,\zetabar,\Pbar,\Ebar,\Wwbar^+)$ is potential for
$M$ and $\bar{\cc}$ is a cutpoint of $\Pbar|\lh(\Ebar)$.
Let
\[ \bar{U}=\Ult_k(M,\bar{E})=M^{\Wwbar^+}_\infty.\]
We will show that $\bar{U}$ is wellfounded, and define the phalanx
$\pP=\Phi(\Wwbar,\iotabar,\Ebar)$. (See \ref{dfn:potentialtree}. Note that
$\pP$ has last model $\bar{U}$, at degree $k$.)

\begin{clm*}\label{clm:pP_iter} $\bar{U}$ is wellfounded and $N\sats$``$\pP$ is
$(\om_1+1)$-iterable.''\end{clm*}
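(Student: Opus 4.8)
The plan is to realize the phalanx $\pP$ into a phalanx that $N$ regards as iterable and then invoke the standard copying construction for phalanxes; since the relevant maps and the strategy $\Sigma$ all lie in $N$, this produces an iteration strategy for $\pP$ inside $N$, so it suffices to argue in $N$, where $\R$ is $(\om,\eta^++1)$-iterable via $\Sigma$. Recall the maps already built above. Copying the tree $\Wwbar=\pi^{-1}(\Ww)$ via (the restriction of) the near $k$-embedding $\pi:M\to\R$ yields (the lift to $\R$ of) $\Ww$ together with copy maps $\pi_\alpha:M^{\Wwbar}_\alpha\to M^\Ww_\alpha$, hence a realization of $\Phi(\Wwbar)$ into $\Phi(\Ww)$, with last models $\Pbar=M^{\Wwbar}_{\zetabar}$ and $P=M^\Ww_{\zetabar}$. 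On the extra node we have the near $k$-embedding $\psi':\Ubar\to M'$, where $M'\pins P||\lh(E)$ and so $M'$ is a passivised initial segment of $P$, and — by the agreements recorded in the construction of $\psi$ and $\psi'$, together with the standard agreement properties of copy maps — $\psi'$ agrees with the copy map on $\Pbar$ up to and including the exchange ordinal $\iotabar$ of $\Ubar$, while $\iota<\psi'(\lh(\Ebar))<\lh(E)$. These data realize
\[ \pP=\Phi(\Wwbar)\conc\left<\iotabar,\Ubar,k,\lh(\Ebar)\right> \]
into the phalanx $\mathcal{Q}$ obtained from $\Phi(\Ww)$ by attaching $M'$ at the exchange ordinal $\psi'(\iotabar)$, with degree $k$ and first-extender-length bound $\psi'(\lh(\Ebar))$; note $\psi'(\lh(\Ebar))$ dominates $\lh(E^\Ww_\alpha)$ for every $\alpha+1<\lh(\Ww)$, so $\mathcal{Q}$ is a legitimate phalanx.

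By the copying construction for phalanxes (as in \cite[\S9]{fsit} and \cite{cmip}, with the usual handling of near $k$-embeddings and of drops in model and degree), every putative iteration tree on $\pP$ of length $\leq\om_1+1$ copies to a putative iteration tree on $\mathcal{Q}$ of the same length, with the former having wellfounded models and cofinal wellfounded branches at limits whenever the latter does, and with limit branches obtained by pulling back those chosen for $\mathcal{Q}$. Hence it is enough to see that $N\sats$``$\mathcal{Q}$ is $(\om_1+1)$-iterable''. But $M'$ is an initial segment of the last model $P$ of $\Phi(\Ww)$, and its exchange ordinal $\psi'(\iotabar)$ is a cardinal of $P$ with $\psi'(\iotabar)<\OR(M')$; so an iteration tree on $\mathcal{Q}$ is, after deleting the redundant $M'$-node and reading $M'$ as the corresponding segment of $P$, literally an iteration tree on $\Phi(\Ww)$ — extenders with critical point below $\psi'(\iotabar)$ act on $\Phi(\Ww)$-models exactly as before, and an extender with critical point $\geq\psi'(\iotabar)$ applied to an image of $M'$ is applied instead to the corresponding segment of the corresponding image of $P$, recording the induced model drop. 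Thus the iterability of $\mathcal{Q}$ reduces to that of $\Phi(\Ww)$; and since $\Ww$ is via $\Sigma$, is above $\eta$ (hence above $\cc$, as $\cc\leq\eta$), and has length $<(\eta^+)^N$, its lift to $\R$ is via $\Sigma$, and continuing to follow $\Sigma$ witnesses that $N\sats$``$\Phi(\Ww)$ is $(\om_1+1)$-iterable''. This would establish the Claim.

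The substantive part of the argument is the verification that $\psi'$ — which is only a near $k$-embedding, and which, when $E$ is type $2$, moves generators of $\Ebar$ — together with the copy maps $\pi_\alpha$ really does meet the agreement and degree requirements of the phalanx-copying lemma: in particular that $\iotabar$ is positioned correctly relative to the critical points of the $\pi_\alpha$ and to the point at which $\psi'$ and the copy map on $\Pbar$ begin to diverge, so that each extender of a copied tree is applied to the intended model on both sides. This is the standard, if delicate, bookkeeping of \cite[\S9]{fsit}; in the type $2$ case it is also the point at which the Dodd-structure analysis of \S\ref{sec:Dodd} enters the larger argument, though it is not yet needed for the bare iterability statement asserted in the Claim.
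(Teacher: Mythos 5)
Your overall strategy is the same as the paper's: lift trees on $\pP$ via the maps $\pi_\alpha$ and $\psi'$ to trees on $\Phi(\Ww)$ (your explicit intermediate phalanx $\mathcal{Q}$, with $M'$ attached, is a cosmetic re-packaging of the fact that $M'\pins P$), and conclude from $N$-iterability of $\R$ via $\Sigma$. The first ``key point'' the paper singles out — that $\pi(\iotabar)=\psi'(\iotabar)=\iota$ so that $\Uu$ and $\Vv$ assign drops and degrees in the same way, with $M^{*\Vv}_{\alpha+1}=M'$ when $\crit(E^{\Uu}_\alpha)=\iotabar$ — you do discuss in your final paragraph, if somewhat loosely.

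However, there is a genuine gap. The strategy $\Sigma$ is only an above-$\cc$ strategy for $\R$, so for the lift to produce trees that $\Sigma$ can answer, you must verify that the lifted trees $\Vv$ stay above $\cc$. This does not follow just from $\Ww$ being above $\cc$, nor is it ``standard bookkeeping'': a priori, an extender $E^\Uu_\alpha$ on a model of $\Uu$ of length $>\lh(\Ebar)$ could have critical point $<\bar{\cc}$, and then the lifted extender would have critical point $<\cc$, taking $\Vv$ outside the domain of $\Sigma$. The paper rules this out by showing that $\bar{\cc}$ remains a cutpoint of every $M^\Uu_\alpha$: any $F\in\es_+(M^\Uu_\alpha)$ with $\lh(F)>\lh(\Ebar)$ and $\crit(F)<\bar{\cc}$ would, by the ISC together with the coherence of $\Ebar$ and the fact that $\etabar,\lh(\Ebar)$ are cardinals of $M^\Uu_\alpha$, yield $F\rest\etabar\in\es^{\Pbar}$ crossing $\bar{\cc}$, contradicting that $\bar{\cc}$ is a cutpoint of $\Qprojbar$ and hence of $\Pbar$. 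Since the copy maps send $\bar{\cc}$ to $\cc$, the lifted tree $\Vv$ is then above $\cc$. Your proposal never raises this point, and without it the appeal to $\Sigma$ at the end does not go through.
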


\begin{proof}
We first find $M'\pins P||\lh(E)$ and a virtual weak $k$-embedding $\psi':\bar{U}\to M'$
such that $\psi'\in N$ and $\pi\rest(\max(\iotabar,\bar{\cc})+1)\sub\psi'$.
It follows that $\bar{U}$ is wellfounded,
and so $\psi'$ is in fact a weak $k$-embedding.
Working in $N$, we will then lift
normal trees $\Tt$ on $\pP$ to above-$\cc$, normal trees $\Uu$ on $\Phi(\Ww)$.
Since $\Ww$ is via $\Sigma\in N$, this suffices. 
We will use $\pi,\psi'$, which are in $N$, as initial copying maps, using $\pi$ to
lift the models of $\Phi(\Wwbar)$ to (some) models of $\Phi(\Ww)$ and $\psi'$ to lift
$\Ubar$ to $M'$.

Let $U=\Ult_0(R,E)=\Ult_\om(R,E)$. (We don't assume $U$ is wellfounded.) Let $j=i^R_E$, $\jbar=i^{M,k}_{\Ebar}$ and
$\psi:\Ubar\to U$
be the Shift Lemma map. By \ref{lem:illffs} and in the sense of \ref{dfn:illffs},
$\Ubar$ is $k$-sound, $U$ is fully sound,
$\psi$ is a virtual weak $k$-embedding, $\psi\com\jbar=j\com\pi$ and
$\pi\rest(\lh(\Ebar)+1)\sub\psi$.

By elementarity, $U$ is reasonable (in the sense of \ref{dfn:illffs}). 
Let $x'=(j(q),\nu(E),\cc)$ and $q'\in U^{<\om}$ ``witness
\ref{lem:sound_hull}'' with respect to
$(U,k,x',\iota)$ (in the obvious sense).
Let
\[ M'=\cHull_{k+1}^{U}(\iota\un q')\] and let
$\pi':M'\to U$
be the uncollapse, so $\pi'$ is a virtual near $k$-embedding. Then $M'\pins U|\lh(E)$ since $\lh(E)$ is a
cardinal in
$U$, so $M'\pins P||\lh(E)$ and $M'$ is wellfounded. Moreover,
\[ (\nu(E)+1)\un\{\cc,j(q)\}\sub\rg(\pi').\]

We claim that $\rg(\psi)\sub\rg(\pi')$. First let us verify that
$\rg(\psi\com\jbar)\sub\rg(\pi')$. Let $y\in M$.
Let $\varphi$ be $\rSigma_{k+1}$ with
$y=\minterm^M_\varphi(\qbar)$ (see \ref{dfn:minterm}).
Then by commutativity and elementarity of $j$,
\[ \psi(\jbar(y))=j(\pi(y))=\minterm^U_\varphi(j(q))\in\rg(\pi'),\]
as desired. But
\[ \bar{U}=\Hull_k^{\bar{U}}(\rg(\jbar)\cup\nu(\Ebar)) \]
(see the proof of \ref{lem:illffs}) and $\psi$
is $\rSigma_k$-elementary and
$\psi``\nu(\Ebar)\sub\nu(E)\sub\rg(\pi')$,
which suffices.

It follows that
$\psi':\bar{U}\to M'$
is a virtual weak $k$-embedding
where
\[ \psi'(x)=(\pi')^{-1}(\psi(x)).\]
But then $\bar{U}$ is wellfounded, so $\psi'$ is in fact a weak $k$-embedding.

So we have $M',\psi'$ as desired. We now
lift trees $\Tt$ on $\pP$ to $\Uu$ on $\Phi(\Ww)$, via $\pi,\psi'$.
The details are mostly standard; we just make a
couple of key points.

Note
$\pi\rest(\max(\iotabar,\bar{\cc})+1)\sub\psi'$ and
\[\psi'(\iotabar,\bar{\cc})=\iota,\cc<\psi'(\lh(\Ebar))<\lh(E)=\pi(\lh(\Ebar)).\]
For the models $M^\Wwbar_\alpha$ we use copy map
\[ \pi\rest M^\Wwbar_\alpha:M^\Wwbar_\alpha\to M^\Ww_{\pi(\alpha)},\]
and for $M^\Tt_0=\Ubar$ we use
\[ \pi_0=\psi':\Ubar\to M'\pins M^\Ww_\zeta=M^\Uu_0.\]
For $\alpha>0$ we then proceed to define $\pi_\alpha:M^\Tt_\alpha\to M^\Uu_\alpha$
inductively as usual.
 Now $\lh(\Ebar)<\lh(E^\Tt_0)$, so
\[ \iota,\cc<\psi'(\lh(\Ebar))<\lh(E^\Uu_0)\leq\OR^{M'}<\lh(E),\]
and $\psi'\rest(\lh(\Ebar)+1)\sub\pi_\alpha$ for all $\alpha\geq 0$.
It follows that if $\crit(E^\Tt_\alpha)<\iotabar$ then
$\Troot^\Uu(\alpha+1)=\pi(\Troot^\Tt(\alpha+1))$ and $\Tt,\Uu$ agree about drops and degree at $\alpha+1$, and if there is
a drop in model then $\pi(M^{*\Tt}_{\alpha+1})=M^{*\Uu}_{\alpha+1}$. If
$\crit(E^\Tt_\alpha)=\iotabar$ then $M^{*\Tt}_{\alpha+1}=\Ubar$ and
$M^{*\Uu}_{\alpha+1}=M'$ and
$\deg^\Tt(\alpha+1)=\deg^\Uu(\alpha+1)=k$.\footnote{If we had
used $\iotabar+1$ in place of $\iotabar$ as an exchange ordinal in defining
$\pP$, and $\iotabar=\crit(E^\Tt_\alpha)$, then
$P'=M^{*\Tt}_{\alpha+1}\ins\Pbar$ and $M^{*\Uu}_{\alpha+1}=M'$. We have
not constructed appropriate embeddings for lifting such $P'$ to $M'$, so
the copying process would break down.}

Because $\bar{\cc}<\lh(\Ebar)$ is a cutpoint of $\Pbar|\lh(\Ebar)$,
we get that $\bar{\cc}$ is a cutpoint of $M^\Tt_\alpha$ for all $\alpha$ 
just as at the start of the proof of \ref{lem:extendermaximality}.
It follows that $\Uu$ is above $\cc$, so we can use $\Sigma$ for forming $\Uu$.\end{proof}

Now since $M$ is above-$\bar{\cc}$, $(\om,\om_1+1)$-iterable in $N$, the claim above combined
with Lemma \ref{lem:extendermaximality} implies
$\Ebar\in\es^{\Pbar}_+$, so $E\in\es^P_+$, completing
the proof.\footnote{In \ref{lem:extendermaximality}, $b^{\Ww^+}$ is allowed to drop,
but we needed $b^{\Ww^+}$ to be non-dropping in
order to prove that $\pP$ is iterable.}
\end{proof}
\ifbool{standardmode}{}{
\begin{oldlem}\end{oldlem}}

The proofs of the variants are 
similar, proceeding by defining an iterable phalanx $\pP$
and appealing to Lemma \ref{lem:extendermaximality}.  So we just give a sketch.
\begin{proof}[Proof of \ref{thm:easy_coh}]
We may assume $N$ is passive, and note that every extender in $\es^N$ is Dodd sound
and every proper segment of $N$ is reasonable. Let $\gamma<\OR^N$ be such that $E$ is definable over $N|\gamma$ (so
$\lh(E)\leq\gamma$). We have $\crit(E)<\rho_\om^{N|\gamma}$ as $E$ is $N$-total.
Using \ref{lem:sound_hull} let $k<\om$ and $q\in[\gamma]^{<\om}$ be such that $M=\cHull_{k+1}^{N|\gamma}(\{q\})$
is sound (and $\rho_{k+1}^M=\om$) and the facts about $E$ reflect to an extender $\Ebar$ defined similarly over $M$.
We have $\crit(\Ebar)<\rho_k^M$.
Let $\Ubar=\Ult_k(M,\Ebar)$ and $\iotabar=\lgcd(M||\lh(\Ebar))$
and define the phalanx
\[\pP=((M,k,\iotabar),(\Ubar,k),\lh(\Ebar)).\]
It suffices to see that $\pP$ is $(\om_1+1)$-iterable,
as then we can use is \ref{lem:extendermaximality}.
But we have the Shift Lemma map
$\psi:\Ubar\to i^{N,0}_E(N|\gamma)$, and we  can find $M'\pins N||\lh(E)$ and a suitable weak $k$-embedding $\psi':\Ubar\to M'$ as
in the preceding proof.
\end{proof}

\begin{proof}[Proof of \ref{thm:finite_coh}]
We may assume that $N$ is countable with largest cardinal $\kappa$. Since
$E\in N$, $\lh(E)<\OR^N$, so we may also assume $N$ is passive.
Let $\lambda$ be such that $\kappa<\lambda<\OR^N$ and $E\in N|\lambda$ 
and $\rho_\om^{N|\lambda}=\kappa$ and $\cof^N(\kappa)=\cof^{N|\lambda}(\kappa)$.
Let $\gamma$ be least $>\lambda$ such that $N|\gamma$ is admissible.
We have $\Ww$ on $N$ and $\lh(\Ww)<\om$.
We claim that there is a $0$-maximal tree $\Vv$ on $N|\gamma$
using the same extenders (with the same indices) as does
$\Ww$, and moreover,
$\Ww,\Vv$
have the same tree, drop and degree structure, and
letting $l=\{\kappa,\lh(E^\Ww_0),\ldots,\lh(E^\Ww_{\zeta-1})\}$,
then for $m<\lh(\Ww)$,
\begin{enumerate}[label=--]
\item $M^\Vv_m$ is a $\Delta_1^{N|\gamma}(l)$-definable transitive class of $N|\gamma$ and:
 \item if $[0,m]_\Ww$ does not drop then $M^{\Vv}_m=i^\Ww_{0m}(N|\gamma)$ has height $\gamma$, and
\item if $[0,m]_\Ww$ drops then $M^{\Vv}_m=M^{\Ww}_m\in N|\gamma$.
\end{enumerate}
For suppose $[0,m]_\Ww$ does not drop. Then $i^\Ww_{0m}$ is continuous at $\kappa$
and $i^\Ww_{0m}(N|\kappa)=\Ult_0(N|\kappa,F)$ where $F$ is the branch extender,
since $N\sats$``$\cof(\kappa)$ is not measurable''. Therefore if $\rho_\om^{N|\alpha}=\kappa$
then $i^\Ww_{0m}(N|\alpha)$ is determined by $i^\Ww_{0m}(t)$ where $t\sub\kappa$ is the appropriate theory,
and $i^\Ww_{0m}(t)=\bigcup_{\beta<\kappa}i^\Ww_{0m}(t\rest\beta)$. This gives a $\Delta_1^{N|\gamma}(l)$ description of
the function $N|\alpha\mapsto i^\Ww_{0m}(N|\alpha)$ for such $\alpha<\gamma$.
Therefore $M^{\Vv}_m$ is $\Delta_1^{N|\gamma}(l)$.
Using the admissibility of $N|\gamma$,
it follows that $\OR(M^{\Vv}_m)=\gamma$ and $M^\Vv_m$ is admissible.
So $M^{\Vv}_m=i^\Vv_{0m}(N|\gamma)$ by the minimality of $\gamma$.

Now considerations as above show that if the conclusion of the theorem fails,
the failure is a first-order
sentence satisfied by $N|\gamma$ (taking $l,E$ to be minimal counterexamples with respect to $N|\gamma$ and with properties as above,
then they also yield a counter-example with respect to $N$). So we may assume
that $l,E$ are minimal. Fix
$0<k<\om$ such that the failure
and all properties established reflect into
$M=\cHull_{k+1}^{N|\gamma}(\emptyset)$. Let
$\pi:M\to N|\gamma$ be the
uncollapse. Write $\pi(\bar{E})=E$ etc.

Now $\Vvbar$ is a $0$-maximal tree on $M$, but is in fact equivalent to a $k$-maximal tree.
For $\rho_1^M=\rho_k^M=\bar{\kappa}$, so it suffices to see that the relevant $\bfrSigma_k^M$ functions
with range $\sub\bar{\kappa}$ are bounded on measure one sets.
So let $\mu<\bar{\kappa}$ and $F\in\es^M$ be $M$-total with $\crit(F)=\mu$. Let $b\in[\nu(F)]^{<\om}$.
Let $f:[\mu]^{|b|}\to\bar{\kappa}$ be an $\bfrSigma_k^M$ function.
We want $A\in F_b$ such that $f``A$ is bounded in $\bar{\kappa}$.
If $\kappa$ is singular in $N$ then $\bar{\kappa}$ is singular in $M$
with cofinality $\neq\mu$, which easily gives such an $A$.
Suppose $\kappa$ is regular in $N$. Let $\pi(f)$ be the $\bfrSigma_k^{N|\gamma}$-function
defined over $N|\gamma$ as $f$ is over $M$. Because $\pi$ is a near $k$-embedding and $\pi(\bar{\kappa})=\kappa=\rho_k^{N|\gamma}$,
we have $\pi(f):[\pi(\mu)]^{|b|}\to\kappa$, but $\pi(f)\in N$, so $\pi(f)$ is bounded in $\kappa$.
But the fact that there is a bound on $\pi(f)$ is an $\rSigma_{k+1}$ assertion about the relevant parameters,
and therefore $f$ is bounded in $\bar{\kappa}$, so $A=\mu^{|b|}$ suffices.

Now let $\Vvbar^+=\Vvbar\conc\left<E\right>$ as a $k$-maximal potential tree.
Let $\Ubar=M^{\Vvbar^+}_\infty$.
As in the proof
of \ref{thm:strategy_coh_proj} we obtain a suitable lifting map $\psi':\Ubar\to
M'$ with $M'\pins M^{\Vv}_\infty||\lh(E)$. (Let $\Ww^+=\Ww\conc\left<E\right>$ and $\Vv^+=\Vv\conc\left<E\right>$,
both $0$-maximal. As above, $M^{\Vv^+}_\infty=i^{\Ww^+}_{0\infty}(N|\gamma)$
and $i^{\Vv^+}_{0\infty}=i^{\Ww^+}_{0\infty}\rest N|\gamma$.
So $i^{\Vv^+}_{0\infty}$ is fully elementary and we
can define $M'$ as a hull of $M^{\Vv^+}_\infty$.) Define the phalanx
$\pP=\Phi(\Vvbar,\iotabar,\Ubar)$, where $\iotabar=\lgcd(\Pbar|\lh(\Ebar))$, and
show that $\pP$ is iterable in $V$, contradicting
\ref{lem:extendermaximality}.
\end{proof}

\begin{proof}[Proof of \ref{thm:strategy_coh_card}]
Because $\eta$ is regular in $N$ and by our assumptions about $\cof^\R(\tau)$
(when $\R$ has a largest cardinal $\tau$),  there is $\gamma<\eta$ and an $\om$-maximal tree $\Vv$ on $N|\gamma$
equivalent to $\Ww$. (Choose $\gamma$ so that: For $\alpha<\lh(\Ww)$, if $[0,\alpha]_\Ww$ drops let $Q_\alpha=M^\Ww_\alpha$
and otherwise let $Q_\alpha=i^\Ww_{0\alpha}(N|\gamma)$; then $M^\Vv_\alpha=Q_\alpha$
and the factor map $\sigma_\alpha\colon M^{\Vv}_\alpha\to Q_\alpha$ is just the identity.)
Because $\lh(\Ww)<\eta$, we have
$\Vv,E\in\R$. We set $M=\cHull_{k+1}^R(\{q\})$ with an appropriate $q,k$,
and $\pi:M\to R$ the uncollapse with $\Vv,E\in\rg(\pi)$. The
rest is much like before.
\end{proof}

\begin{proof}[Proof of \ref{thm:cohering_simple}]
This is basically like for \ref{thm:strategy_coh_proj},
but the iterability is established in $V$, not in $N$.
Let $\gamma<\OR^N$ with $\Ww,E\in\J(N|\gamma)$.
Let $x\in N|\gamma$ with $\Ww,E$ definable over $N|\gamma$ from $x$.
Let $k$ be sufficiently large and let
$q\in N|\gamma$ witness \ref{lem:sound_hull} with
respect to $(N|\gamma,k,x,\theta=\om)$. Let
\[ M=\cHull_{k+1}^{N|\gamma}(\{q\})\]
and $\pi:M\to N|\gamma$ be the uncollapse.
Let $\pi(\bar{x})=x$ etc.
Let $\Wwbar,\Ebar$, defined over $M$ from $\bar{x}$,
be the preimages of $\Ww,E$.
Let $\Xx$ be the $k$-maximal tree on $N|\gamma$ determined by $\Ww$;
so $\Xx$ is via the strategy induced by $\Sigma$.
Similarly, let $\Xxbar$ be the $k$-maximal tree on $M$ determined by $\Wwbar$.
So $\Phi(\Xxbar)$ is $(\theta^++1)$-iterable (in $V$), since $\pi$ lifts its models appropriately
to models of $\Phi(\Xx)$.
And
$(k,\Xxbar,\zetabar,\Pbar^*,\Ebar,\Xxbar^+)$ is potential for
$M$ (with the obvious definitions for $\Pbar^*,\Xxbar^+$).
Let $\bar{U}=M^{\Xxbar^+}_\infty$. Then
$\bar{U}$ is wellfounded and the phalanx $\pP=\Phi(\Xxbar,\iotabar,\Ebar)$ is
$(\theta^++1)$-iterable, where $\iotabar=\lgcd(\Pbar^*||\lh(\Ebar))$.
For we can find
$M'\pins P|\lh(E)$ and an appropriate lifting map $\psi:\bar{U}\to M'$,
essentially as in the proof of \ref{thm:strategy_coh_proj}.
So by \ref{lem:extendermaximality},
$\Ebar\in\es_+^{\Pbar}$, so $E\in\es_+^P$, as desired.
\end{proof}

None of the remainder of \S\ref{sec:cohering} is needed in
\S\S\ref{sec:stacking},\ref{sec:bicephali}.

We next proceed toward the proof of
\ref{thm:cohering}. But first we need to
make its statement precise, in terms of the manner in which
$\Ww$ and $E$ are definable over $N$. This involves coding
iteration trees on $N$, definably over $N$.

\begin{rem}\label{rem:type3}
Squashing of premice complicates our tree coding. If $N$ is type 3,
we will only have direct representations for elements of $N^\sq$. But a tree
$\Tt$ on $N$ can have $\nu(F^N)<\lh(E^\Tt_0)<\OR(N)$, so
$E=E^\Tt_0\notin N^\sq$. One option is to represent such an $E$ with a
pair $(a,f)$ such that $i_{F^N}(f)(a)=E$. 
But instead, we will adjust the rules of normal iteration trees, to
rule out such $\Tt$.\end{rem}

\begin{dfn}\label{dfn:adjusted}
Given an iteration tree $\Tt$ and $\alpha+1<\lh(\Tt)$,
$\alpha$ is \emph{$\Tt$-exceptional} iff $M^\Tt_\alpha$ is type 3 and
$\nu(M^\Tt_\alpha)<\lh(E^\Tt_\alpha)<\OR(M^\Tt_\alpha)$. A tree $\Tt'$ on a
$k$-sound premouse $N$ is \emph{pre-adjusted $k$-maximal} iff there is a
$k$-maximal
tree $\Tt$ on $N$ such that $\Tt'$ has index set
\[ \lh(\Tt)\un\{(\alpha,0)\mid\alpha+1<\lh(\Tt)\ \&\ \alpha\text{\ is\
}\Tt\text{-exceptional}\}, \]
ordered with $\alpha<(\alpha,0)<\beta$ for $\alpha<\beta$,
and if $\alpha$ is $\Tt$-exceptional, then
$E^{\Tt'}_\alpha=F(M^\Tt_\alpha)$ and $E^{\Tt'}_{(\alpha,0)}=E^\Tt_\alpha$,
and otherwise $E^{\Tt'}_\alpha=E^{\Tt}_\alpha$, and
$<_{\Tt'}$, $D^{\Tt'}$, $\deg^{\Tt'}$ are determined as for $k$-maximal trees.

Given $N,\Tt,\Tt'$ as above, $\adjust(\Tt)$ denotes the
unique tree $\Uu$ on $N$ with $\Uu\iso\Tt'$ and the index set of $\Uu$ an
ordinal. Such trees $\Uu$ are \emph{adjusted $k$-maximal}.

Given $\Ww,P,\gamma,E,\Ww^+$ as in \ref{dfn:potentialtree}, we define the
corresponding \emph{adjusted potential $k$-maximal tree}
$\Uu=\adjust(\Ww^+)$ similarly. In particular, if
$P$ is type 3 and $\nu(F^P)<\gamma<\OR(P)$, then for some $\alpha$,
$\lh(\Uu)=\alpha+3$, $E^\Uu_\alpha=F^P$ and $E^\Uu_{\alpha+1}=E$.
\end{dfn}

\begin{rem}
We make some remarks on the preceding definition; see \cite{copy_con} for more details. We 
use notation as in \ref{dfn:adjusted}. For
$\Tt'$-indices $x<y$ with $E^{\Tt'}_y$ defined, we have
$\nu^{\Tt'}_x\leq\nu^{\Tt'}_y$, and if $\nu^{\Tt'}_x=\nu^{\Tt'}_y$ then
$x=\alpha$ and $y=(\alpha,0)$ for some $\Tt$-exceptional $\alpha$. The tree
structure of $\Tt'\rest\OR$ is essentially that of $\Tt$;  but for example if
$\pred^\Tt(\beta+1)=\alpha$ and $\alpha$ is 
$\Tt$-exceptional and
$\nu(M^\Tt_\alpha)\leq\crit(E^\Tt_\beta)$ then
$\pred^{\Tt'}(\beta+1)=(\alpha,0)$. (Thus, we also might have $\gamma<\lh(\Tt)$ such that 
$\gamma>_\Tt\beta+1>_\Tt\alpha$ but $\gamma\not>_{\Tt'}\alpha$.) 
In this case, $\Tt$ drops in model at $\beta+1$
(but with $M^{*\Tt}_{\beta+1}\notin(M^\Tt_\alpha)^\sq$), and
$M^{*\Tt'}_{\beta+1}=M^{*\Tt}_{\beta+1}$. So for each $\alpha<\lh(\Tt)$,
$M^{\Tt'}_\alpha=M^\Tt_\alpha$, and if $\alpha<_\Tt\beta$ and $(\alpha,\beta]_\Tt$ does not drop in 
model then $\alpha<_{\Tt'}\beta$ and $(\alpha,\beta]_{\Tt'}$ does not drop in model and 
$i^{\Tt'}_{\alpha\beta}=i^\Tt_{\alpha\beta}$. Moreover, for $\Tt$-exceptional $\alpha$, whenever 
$\Tt'$ forms an ultrapower of some $P\ins M^{\Tt'}_{(\alpha,0)}$, we have
$P\pins M^{\Tt'}_{(\alpha,0)}|\OR(M^\Tt_\alpha)$.

The map $\Tt\mapsto\adjust(\Tt)$ is well-defined and 1-1, and, for e.g.,
$(k,\om_1,\om_1+1)$-iterability is equivalent to the corresponding iterability for adjusted 
$k$-maximal trees. By coding only adjusted
$k$-maximal trees, we avoid the problem of
\ref{rem:type3}.

Given an iteration tree $\Tt$ on a premouse $N$, and
$\alpha+1<\lh(\Tt)$, let $\lambda^\Tt_\alpha=\nu(E^\Tt_\alpha)$ if
$M^\Tt_\alpha$ is type 3 and $E^\Tt_\alpha=F(M^\Tt_\alpha)$, and
let
$\lambda^\Tt_\alpha=\lh(E^\Tt_\alpha)$ otherwise. So
$\nu(E^\Tt_\alpha)\leq\lambda^\Tt_\alpha$. If $\Tt$ is adjusted
$k$-maximal, and $\alpha<\beta<\lh(\Tt)$, then $\lambda^\Tt_\alpha$ is a
cardinal of $M^\Tt_\beta$,
$M^\Tt_\alpha||\lambda^\Tt_\alpha=M^\Tt_\beta|\lambda^\Tt_\beta$, and if
$\beta+1<\lh(\Tt)$ then
$\lambda^\Tt_\alpha\leq\nu(E^\Tt_\beta)$.\end{rem}

\ifbool{standardmode}{}
{\begin{olddfn}\end{olddfn}
\begin{oldlem}\end{oldlem}}

The precise meaning of the following lemma is described in the definition to follow:

\begin{newlem}[Coding of finite trees]\label{lem:codes}
Let $N$ be a $k$-sound premouse and $\Tt$ be a finite, adjusted $k$-maximal tree
on $N$.
In the codes, the models and embeddings of $\Tt$ are $\bfrDelta_{k+1}^N$,
and the $\rSigma_{\deg^\Tt(n)+1}$ satisfaction relation for $M^\Tt_n$ is
$\bfrSigma_{k+1}^N$.\footnote{The $\rDelta_{k+1}^N$,
$\rSigma_{\deg^\Tt(n)+1}$
and $\rSigma_{k+1}^N$ referenced here are all pure, not generalized. Likewise
elsewhere in this paper, unless explicitly stated otherwise.}\end{newlem}

\begin{dfn}\label{dfn:codes}
Let $N,k,\Tt$ be as as above. We define
$\bfrDelta_{k+1}^N$ relations
\[ \M_n(x),\ \equality_n(x,y),\ \Ein_n(x,y),\ \es_n(x),\ F_n(x),\ i_{nm}(x,y)\]
(e.g. $\M_n(x)$ is in two variables $n,x$) such that properties
\ref{a}-\ref{last} below hold. (The intended meaning is as follows: $\M_n\sub\core_0(N)$
is a class of codes for elements of $\core_0(M^\Tt_n)$,
and $\equality_n,\Ein_n,\es_n,F_n$ represent equality,
membership, the internal extender sequence and active extender predicates of $\core_0(M^\Tt_n)$
with respect to the coding, and $i_{nm}$ codes $i^\Tt_{mn}$.) We will have:
\begin{enumerate}[label=(\arabic*),ref=(\arabic*),series=codelist]
\item\label{a} $\M_n\sub\core_0(N)$, and $\equality_n$ is an equivalence relation on $\M_n$.
\item $\Ein_n\sub \M_n\cross \M_n$ and $\es_n,F_n\sub \M_n$, and
$\Ein_n,\es_n,F_n$ each respect $\equality_n$.
\item $M^\Tt_n=(|M^\Tt_n|,\in,\es(M^\Tt_n),F(M^\Tt_n))\iso(\M_n,\Ein_n,
\es_n,F_n)/\equality_n$.
\item $i_{nm}\neq\emptyset$ just when $i^\Tt_{nm}$ is
defined, and in this case, $i_{nm}\sub \M_n\cross \M_m$ and respects
$(\equality_n,\equality_m)$, and $i^\Tt_{nm}\iso
i_{nm}/(\equality_n,\equality_m)$.
\end{enumerate}

We start with
$(\M_0,\equality_0,\Ein_0,\es_0,F_0)=(\core_0(N),{=},{\in},\es^{\core_0(N)},F^{\core_0(N)})$ and
$i_{00}=\id:\M_0\to\M_0$. Suppose we have defined $\M_n$, etc.
We make some definitions.
Let
\[ \picode_n:\M_n\to \core_0(M^\Tt_n)\] be the natural surjection, that is,
$\picode_n(x)$ is the image of $[x]_{\equality_n}$ under Mostowski collapse. 
If $\picode_n(x)=x'$,
we
say $(x,n)$ is a \emph{code} for $(x',n)$ (or just $x$ is a code for $x'$ if
$n$ is understood).

Let $\ttt=\ttt^\Tt=(<_\Tt,\dropset^\Tt,\deg^\Tt,d^\Tt)$ where\footnote{$d^\Tt$
is presently redundant, but included for consistency with later notation for transfinite trees.}
\[ d^\Tt=\{(\alpha,\beta)\in\lh(\Tt)^2\mid \alpha<_\Tt\beta\text{ and }(\alpha,\beta]_\Tt\inter D^\Tt\neq\emptyset\}.\]
Let $\lh(\ttt)=\lh(\Tt)$.
Let $\rho_{k-1}^{-N}=\rho_{k-1}^N$ if
$\rho_{k-1}^N<\rho_0^N$, and $\rho_{k-1}^{-N}=\emptyset$ otherwise.
We
say $p$ is \emph{good for $(\Tt,m)$}\footnote{We have encoded in good $p$
more information than strictly necessary, so as to skip some calculations.} iff
$m<\lh(\Tt)$ and
\[
p=(p_k^N,u_{k-1}^N,\rho_{k-1}^{-N},\ttt,\xvec)
\]
for some $\xvec=(x_0,\ldots,x_{\lh(\Tt)-2})\in N^{\lh(\Tt)-1}$ and for each $n<m$,
we have (i) $\M_n(x_n)$,
(ii) if $E^\Tt_n=F(M^\Tt_n)$ then $\picode_n(x_n)=\emptyset$, and
(iii) if $E^\Tt_n\neq F(M^\Tt_n)$ then
$\picode_n(x_n)=\lh(E^\Tt_n)$. We say $p$ is \emph{good for $\Tt$}
iff $p$ is good for $(\Tt,\lh(\Tt)-1)$. Note that ``good for $(\Tt,m)$''
makes reference to $\M_n$, etc, only for $n<m$.
We require $p$ to be good for $(\Tt,m)$
in order to define $\M_m$, $\equality_m$, etc.
It will become clear that we can find $p$ which is good for $\Tt$.

We continue with the properties of the coding,
giving upper bounds for its complexity, and
the complexity of the satisfaction relation for models of $\Tt$.
In order to assist in the calculations,
we also define functions which
yield standard parameters and translation procedures associated to $\Tt$,
in the codes:
$\critical_n$ is a code for $\crit(E^\Tt_n)$;
$\starlevel_{n+1}$ is a code for $\OR(M^{*\Tt}_{n+1})$;
given $m\neq n$,
$\shiftsupport$ translates codes for ordinals within the support of $E^\Tt_{\min(m,n)}$
between $\M_m$ and $\M_n$;
$\definition$  converts a given definition  over $\core_0(M^\Tt_{n+1})$ for a set $A\sub\crit(E^\Tt_n)$
 to a definition for $A$ over $\core_0(M^{*\Tt}_{n+1})$
 (both definitions are of the relevant complexity, from parameters);
and $\meas$ converts a given finite set $a$ of $E^\Tt_n$-generators
into a  $\bfSigma_1^{M^{*\Tt}_{n+1}}$-definition of $(E^\Tt_n)_a$.
The latter two functions are just effective implementations of the proofs
of \cite[4.5]{fsit} and the Closeness Lemma \cite[6.1.5]{fsit},
which the reader should probably have in mind:
\begin{enumerate}[resume*=codelist]
\item The definitions of $\M_n,\equality_n$, etc, are $\rDelta_{k+1}^N(\{p\})$ in any
parameter $p$ good for $(\Tt,n)$; the definitions used only depend on $k$ and the
type of $N$.
 \item \emph{Satisfaction}: Let
$\satisfaction_n(\varphi,x)$ iff $n<\lh(\Tt)$, $\varphi$ is an $\rSigma_{\deg^\Tt(n)+1}$ formula,  $x=(x_0,\ldots,x_{\ell-1})$ for some $\ell<\om$,
$\M_n(x_i)$ holds for each $i<\ell$, and
\[ \core_0(M^\Tt_n)\sats\varphi(\picode_n(x_0),\ldots,\picode_n(x_{\ell-1})).\]
 Then $\satisfaction$
is $\rSigma_{k+1}^N(\{p\})$ for good $p$, uniformly.

The set of
triples $(n,\varphi,x)$ such that
\[ n<\lh(\Tt),\ \dropset_{\deg}^\Tt\inter(0,n]_\Tt\neq\emptyset\text{ and }\satisfaction_n(\varphi,x)\]
is $\rDelta_{k+1}^N(\{p\})$ for good $p$, uniformly.
\item \emph{Parameters}: There are functions
$\critical$, $\starlevel$, uniformly $\rSigma_{k+1}^N(\{p\})$ for good $p$, such
that:
\begin{enumerate}
\item $\dom(\critical)=\lh(\Tt)-1$. We have
$\M_n(\critical_n)$ and $\picode_n(\critical_n)=\crit(E^\Tt_n)$.
 \item $\dom(\starlevel)=\lh(\Tt)\cut\{0\}$. If $n+1\notin
\dropset^\Tt$ then $\starlevel_{n+1}=\emptyset$; otherwise
\[ \M_{\pred^\Tt(n+1)}(\starlevel_{n+1})\text{ holds and } 
\picode_{\pred^\Tt(n+1)}(\starlevel_{n+1})=\OR
(M^{*\Tt}_{n+1}).\]
\end{enumerate}
\item\label{last} \emph{Translations}: There are functions $\shiftsupport$,
$\definition$,
$\meas$, uniformly $\rSigma_{k+1}^N(\{p\})$ for good $p$, such
that:
\begin{enumerate}
 \item \emph{Shifting generators}: $\dom(\shiftsupport)$ is the set of tuples
$(m,n,x)$ such
that
\begin{enumerate}[label=(\roman*)]
\item $m,n<\lh(\Tt)$ and $m\neq n$ and $\M_m(x)$, and
\item $\picode_m(x)\in\lambda^\Tt_{\min(m,n)}$.
\end{enumerate}

If
$\shiftsupport_{mn}(x)=y$ then $\M_n(y)$ and $\picode_n(y)=\picode_m(x)$.
\item \emph{Definitions of sets; cf.} \cite[4.5]{fsit}:  We have $\dom(\definition)=$
\[ \{(n+1,x,\varphi)\mid n+1<\lh(\Tt)\ \&\ \M_{n+1}(x)\ \&\ \varphi 
 \text{ is }
\rSigma_{\deg^\Tt(n+1)+1}\}.\]
Let $m=\pred^\Tt(n+1)$. Then $\definition_{n+1}(x,\varphi)=(x',\varphi')$ where:
\begin{enumerate}[label=(\roman*)]
\item $\M_m(x')$ with $\picode_m(x')\in\core_0(M^{*\Tt}_{n+1})$,
\item $\varphi'$ is an $\rSigma_{\deg^\Tt(n+1)+1}$ formula, and
\item for all $\alpha<\crit(E^\Tt_n)$, we
have
\[ \core_0(M^\Tt_{n+1})\sats\varphi(\picode_{n+1}(x),\alpha)\iff
\core_0(M^{*\Tt}_{n+1})
\sats\varphi'(\picode_m(x'),\alpha). \]
\end{enumerate}

\item \emph{Definitions of measures; cf.} \cite[6.1.5]{fsit}:
We have $\dom(\meas)=$
\[ \{(n,a)\mid n+1<\lh(\Tt)\ \&\ \M_n(a)\ \&\ \picode_n(a)\in[\lambda^\Tt_n]^{<\om}\},\]
and $\meas_n(a)=(x,\varphi)$ is such that, letting $k=\pred^\Tt(n+1)$ and $C=\core_0(M^{*\Tt}_{n+1})$, we have:
\begin{enumerate}[label=(\roman*)]
\item $\M_k(x)$ and $\picode_k(x)\in C$
and $\varphi$ is $\rSigma_1$, and
\item $(E^\Tt_n)_{\picode_n(a)}=\{A\in C\mid C\sats\varphi(A,\picode_k(x))\}$.
\end{enumerate}
\end{enumerate}
\end{enumerate}

This completes the list of properties.
Now suppose $p$ is good for $(\Tt,n)$,
and we have defined the following things:
\begin{itemize}[label=--]
 \item  $(\M_j,\equality_j,\Ein_j,\es_j,F_j)$, $i_{jk}$ and $\satisfaction_j$ for $j,k\leq n$,
\item $\critical_j$, $\meas_j$, $\starlevel_{j+1}$, $\definition_{j+1}$ for $j<n$, and
\item $\shiftsupport_{jk}$ for $j,k\leq n$ with $j\neq k$.
\end{itemize}
Suppose $n+1<\lh(\Tt)$, and $p$ is actually good for $(\Tt,n+1)$. We want to define $\M_{n+1}$ etc.
We have $\M_n(x_n)$ and $x_n$ determines $E^\Tt_n$. Let $\critical_n$ be the natural code in $\M_n$,
determined by $x_n$, for $\crit(E^\Tt_n)$. Let $k=\pred^\Tt(n+1)$;
note that $p$ determines $k$ and determines whether $k\in\dropset^\Tt$.
If $k=n$ let $c=\critical_n$; otherwise
let\[ c=\shiftsupport_{nk}(\critical_n).\]
So $\M_k(c)$ and $\picode_k(c)=\crit(E^\Tt_n)$
in general. If $k\in\dropset^\Tt$, let $\starlevel_{n+1}$ be the natural code in
$\M_k$ for $\OR(M^{*\Tt}_{n+1})$, determined by $c$ and $x_k$. Using these codes, take $\M_{n+1}(x)$ to
be the natural formula asserting ``$x=((q,t),b)$ where:
\begin{enumerate}[label=--]
 \item $\M_k(q)$ and $\picode_k(q)\in\core_0(M^{*\Tt}_{n+1})$,
 \item $t$ is a generalized $\deg^\Tt(n+1)$ term,
 \item $\M_n(b)$ and $\picode_n(b)\in[\lambda^\Tt_n]^{<\om}$.''
\end{enumerate}
Here we use $\satisfaction_k$ to assert ``$\picode_k(q)\in\core_0(M^{*\Tt}_{n+1})$'', etc.
Now to define $\equality_{n+1}$, $\satisfaction_{n+1}$, etc, we  Los' Theorem.
For this we need to translate statements about measure one sets
of $E^\Tt_n$ to statements over $M^\Tt_k$. For this, the main arguments follow the proofs of
\cite[4.5]{fsit} and \cite[6.1.5]{fsit}, observing that everything is
sufficiently effective, and in particular, that we get $\rSigma_{k+1}^N(\{p\})$ functions.
We will use degree $k+1$ min-terms to uniformize $\rSigma_{k+1}^N(\{p\})$ relations
and obtain our functions.
We will just sketch the definition of
$\meas_n$, assuming $k<n$; the case $k=n$ is much easier. Let $\kappa=\crit(E^\Tt_n)$, so $\kappa<\nu(E^\Tt_k)$.

Assume $E^\Tt_n\neq F(M^\Tt_n)$, so $\lambda^\Tt_n=\lh(E^\Tt_n)$. Given
 $a\in\M_n$ with $\picode_n(a)\in[\lambda^\Tt_n]^{<\om}$, we can identify
via $\rDelta_1$-satisfaction for $M^\Tt_n$ (and our good parameter $p$),
those codes $c\in\M_n$
such that $\gamma=\picode_n(c)$
 is the position of $(E^\Tt_n)_{\picode_n(a)}$ in the canonical wellorder of $M^\Tt_n$
(so $\gamma\in(\kappa^{++})^{M^\Tt_n}$).

Using $\shiftsupport$, such codes $c$ are translated to
$\M_{k+1}$-codes $c'$ for
$\gamma$, since if $k+1<n$ then
$(\kappa^{++})^{M^\Tt_n}\leq\lambda^\Tt_{k+1}$. Write
$c'=((q,t),b)$.
Let $\mu=\crit(E^\Tt_k)$ and
\[ \ell=\pred^\Tt(k+1).\]
Now
$\gamma<(\kappa^{++})^{M^\Tt_n}\leq(\kappa^{++})^{M^\Tt_{k+1}}$, so there is
$f:[\mu]^{|\picode_k(b)|}\to\mu$ such that $f\in \core_0(M^{*\Tt}_{k+1})$ and
\[ i^{*\Tt}_{k+1}(f)(\picode_k(b))=\gamma=t^{M^\Tt_{k+1}}(i^{*\Tt}_{k+1}
(\picode_\ell(q)),
\picode_k(b)).\]
We can identify the $\M_\ell$-codes $d$ for such functions $f$
sufficiently effectively; the main
point here
is that we can refer to $\satisfaction_{k+1}$ (or just $\equality_{k+1}$)
to check equality (we also have the $\M_k$-code $\crit_k$ for $\mu$,
so can check that $f:[\mu]^{|\picode_k(b)|}\to\mu$). Such $\M_\ell$-codes $d$
can then be converted to $\M_k$-codes $d'$ using $\shiftsupport$.

The set of tuples $(a,b,d')$ as above is
$\rSigma_{k+1}^N(\{p\})$, so there is an $\rSigma_{k+1}^N(\{p\})$ function
selecting some such $b,d'$ as a function of $a$.
Let $b^*=\picode_k(b)$ and $f=\picode_k(d')$.
Note that
\[ (E^\Tt_n)_{\picode_n(a)}\text{ is }\rSigma_1^{M^\Tt_k|\lh(E^\Tt_k)}(\{b^*,f\}),\]
and uniformly so.
Now $M^\Tt_k|\lh(E^\Tt_k)\ins M^{*\Tt}_{n+1}$
and $p$ incorporates the code $x_k$ for $\lh(E^\Tt_k)$, so this almost suffices.
However, it might be that $M^{*\Tt}_{n+1}$ is active type 3 and $M^\Tt_k|\lh(E^\Tt_k)\pins M^{*\Tt}_{n+1}$
but $M^\Tt_k|\lh(E^\Tt_k)\nins\core_0(M^{*\Tt}_{n+1})$,
in which case the relevant parameters are not available directly to $\core_0(M^{*\Tt}_{n+1})$.
But because $\Tt$ is adjusted $k$-maximal, note that $M^{*\Tt}_{n+1}\pins M^\Tt_k$,
and then it is easy to convert our (parameters for) definitions of measures over $M^\Tt_k|\lh(E^\Tt_k)$
into (parameters for) definitions for those measures over $\core_0(M^{*\Tt}_{n+1})$, using the method above.
This completes the definition of $\meas_n(a)$ assuming $E^\Tt_n\neq F(M^\Tt_n)$.

If instead $E^\Tt_n=F(M^\Tt_n)$, use the proof of
\cite[6.1.5]{fsit}, effectivized through the function $\definition$,
combined with the preceding argument. We leave the remaining details to the reader.

Using standard calculations with Los Theorem,
it is now straightforward to define
$\equality_{n+1}$, etc,
and $\satisfaction_{n+1}$, in terms of $\satisfaction_k$ and $\meas_n$,
and see the relevant properties.
The definition of $\satisfaction_{n+1}$
also easily yields $\definition_{n+1}$. For $\shiftsupport_{n,n+1}$,
we convert an $\M_n$-code $c$ for an ordinal $<\lambda^\Tt_n$ to the $\M_{n+1}$-code $((\dot{\emptyset},t_\id),c)$,
where $\dot{\emptyset}$ is the natural $\M_k$-code for the empty set and $t_\id$ the term for the identity map.
For $\shiftsupport_{n+1,n}$, an $\M_{n+1}$-code $((q,t),b)$ for an ordinal $\alpha<\lambda^\Tt_n$
can be converted to some $\M_n$-code for $\alpha$, using min-term uniformization to make the selection.
\end{dfn}

To summarize, we have:
\begin{newlem}
 Let $N,k,\Tt$ be as above. Then there is $p$ which is good for $\Tt$.
For any $p$ which is good for $\Tt$, the coded satisfaction relation $\satisfaction$,
defined relative to $p$, is $\rSigma_{k+1}(\{p\})$.
\end{newlem}

We next extend the coding to infinite trees $\Tt$. The plan is as follows.
A code for an object $z\in\core_0(M^\Tt_\alpha)$
will be of the form $(\alpha,x)$, and $x$ will essentially consist
of some finite support for $z$, in terms of functions and generators coming from finitely many earlier models $M^\Tt_\beta$.
We demand that the
full tree structure $\ttt$ of $\Tt$, and a function $\lhfunc$ specifying the sequence of extenders used in the tree
are given in advance, with both suitably definably over $N$ from some parameter.
The specification of the extenders must naturally be given through the coding,
analogous to the finite tree case; that is,
$(\alpha,\lhfunc(\alpha))$ will be  a code for $\lh(E^\Tt_\alpha)$ (or for $\emptyset$ if $E^\Tt_\alpha=F(M^\Tt_\alpha)$).
(So $\lhfunc$ is the analog of the parameter $\xvec$ used in the finite case.)
We now proceed to the details.

\begin{dfn}\label{dfn:coherent}
Let $N$ be a $k$-sound premouse. Let $1\leq\lambda\leq\rho_0^N$ and
\[ \ttt=(<_\ttt,D^\ttt,\deg^\ttt,d^\ttt) \]
where $<_\ttt$ is an iteration tree order,
$D^\ttt$ a drop structure and $\deg^\ttt$ a degree structure, each on $\lambda$,
and $d^\ttt$ as before.

Suppose $\lambda<\om$. For $\xvec\in N^{\lambda-1}$ let
\[ p=p(\ttt,\xvec)=(p_k^N,u_{k-1}^N,\rho_{k-1}^{-N},\ttt,\xvec); \]
We say $p$ is
\emph{coherent \textup{(}for $N$\textup{)}} iff there is an adjusted $k$-maximal
tree $\Tt$ on $N$ such that $p$ is good for $\Tt$.

Now remove the restriction that $\lambda<\om$, but
suppose that $\ttt$ is $\bfrDelta_{k+1}^N$. Fix an
$\bfrSigma_{k+1}^N$ function $\lhfunc$ with domain $\lambda$.
The intention is that $\lhfunc$ is a sequence of codes for extender lengths, for a tree $\Tt$
with structure $\ttt$. The elements $z\in\core_0(M^\Tt_\alpha)$ are coded with pairs $(\alpha,x)$,
where $x$ describes a finite support for $z$ in terms of codes for earlier models in $\Tt$.
We define $\Tt\rest(\alpha+1)$, and the collection $C_\alpha$ of all $x$ such that $(\alpha,x)$ is a code,
and the interpretation $\picode(\alpha,x)\in\core_0(M^\Tt_\alpha)$, by recursion on $\alpha$.

First, $C_0=\core_0(N)$, and $\picode(0,x)=x$.

Now let $\eta\geq 1$, and suppose:
\begin{itemize}[label=--]
 \item we have defined the adjusted $k$-maximal tree $\Tt\rest\eta$ on $N$, with structure $\ttt\rest\eta$,
 \item for all $\beta<\eta$, we have defined $C_\beta$ and $\picode(\beta,x)$ for all $x\in C_\beta$, and
\[ \core_0(M^\Tt_\beta)=\{\picode(\beta,x)\mid x\in C_\beta\}.\]
\end{itemize}
Then we say that $(\ttt,\lhfunc)$ is \emph{$\eta$-coherent}. (Note that $1$-coherence is trivial.)

Suppose $\eta$ is a limit and $\eta<\lambda$. We set $[0,\eta)_\Tt=[0,\eta)_\ttt$.
We set $C_\eta$ to be the set of all pairs $(\beta+1,x)$ such that $\beta+1<_\ttt\eta$
and $(\beta+1,\eta)_\ttt$ does not drop in model and $x\in C_{\beta+1}$.
We define
\[ \picode(\eta,(\beta+1,x))=i^\Tt_{\beta+1,\eta}(\picode(\beta+1,x)).\]
Assuming that $M^\Tt_\eta$ is wellfounded, note that $(\eta+1)$-coherence follows.

Suppose $\eta=\alpha+1<\lambda$ and that $\lhfunc(\alpha)\in C_\alpha$.
Let $\gamma=\picode(\alpha,\lhfunc(\alpha))$. Suppose either:
 \begin{itemize}[label=--]
  \item $\gamma=0$ and $E=F(M^\Tt_\alpha)\neq\emptyset$, or
  \item $\gamma<\rho_0(M^\Tt_\alpha)$ and $E=F(M^\Tt_\alpha|\gamma)\neq\emptyset$.
 \end{itemize}  
Then we set $E^\Tt_\alpha=E$; suppose this determines an adjusted $k$-maximal
tree $\Tt\rest(\alpha+2)$ (including wellfoundedness).
Let $\beta=\pred^\ttt(\alpha+1)$ and $M^*=M^{*\Tt}_{\alpha+1}$.
Then $C_{\alpha+1}$ is the set of pairs of the form
\[ x=((\beta,q,u),(\alpha,a)) \]
such that $q\in C_\beta$ and
\[ q'=\picode(\beta,q)\in\core_0(M^*), \]
$u$ is an $\rSigma_n$-Skolem term, where $n=\deg^\Tt(\alpha+1)$,
$a\in C_\alpha$ and
\[ a'=\picode(\alpha,a)\in[\lambda^\Tt_\alpha]^{<\om}.\]
For these objects, we define
\[ \picode(\alpha+1,x)=[f_{q',u},a']^{M^*,n}_{E^\Tt_\alpha}.\]
Note then that $(\eta+1)$-coherence follows.

A \emph{code} (relative to $(\ttt,\lhfunc)$) is a pair $(\alpha,x)$ such that $(\ttt,\lhfunc)$ is $(\alpha+1)$-coherent
and $x\in C_\alpha$. (So $\picode(\alpha,x)$ is defined for codes $(\alpha,x)$.)
We say that $(\ttt,\lhfunc)$ is \emph{coherent} iff it is $\lambda$-coherent.
\end{dfn}

\begin{newrem}\label{rem:hsstm_adap}
For the definability of the coding, and the associated satisfaction relation, 
etc, we will use the fact that adjusted $k$-maximal trees $\Tt$ are
natural direct limits of finite such trees $\bar{\Tt}$.
In \cite[\S2.3]{hsstm}, finite supports for $k$-maximal iteration trees were discussed.
We assume the reader is familiar with such methods.
Adjusted trees were not considered there,
but it is straightforward to adapt those methods  to adjusted trees.
In particular, we have that for each adjusted $k$-maximal tree
$\Tt$ of length $\lambda$ and for each finite set $J\sub\lambda\cross V$
with $J_\alpha\sub\core_0(M^\Tt_\alpha)$, there is a \emph{finite support} $S\sub\lambda\cross V$
(defined much as in \cite{hsstm}) with $J\sub S$.
Moreover, for any such $S$, letting $\bar{\Tt}=\bar{\Tt}_S$ be the
corresponding finite hull of $\Tt$ (finite meaning that $\lh(\bar{\Tt})$ is finite),
the natural ``copy map''
\[ \pi_{\bar{\alpha}}:M^{\bar{\Tt}}_{\bar{\alpha}}\to M^\Tt_{\alpha} \]
is a near $\deg^\Tt(\alpha)$-embedding
(here $\alpha\in I_S$ where $I_S$ is the projection of $S$ on the left coordinate,
and $\deg^{\bar{\Tt}}(\bar{\alpha})=\deg^\Tt(\alpha)$).\footnote{However, as in \cite{hsstm},
those $\alpha\in I_S$ such that $\alpha<\max(I_S)$ but $\alpha+1\notin I_S$,
do not correspond to any $\bar{\alpha}$; this is also discussed further below.}
The iteration maps commute in the obvious manner with the maps $\pi_{\bar{\alpha}}$.

If $\Tt$ is our coded tree, then given $\bar{\Tt},\bar{\alpha},\alpha$ as above and $\bar{z}\in\core_0(M^{\bar{\Tt}}_{\bar{\alpha}})$
and $z=\pi_{\bar{\alpha}}(\bar{z})$, we will convert 
the code $(\alpha,x)$ for $z$ into a code $c$ for $\bar{z}$;
here $c$ is a code in the sense of our earlier coding for finite trees (\ref{dfn:codes}).
We will write $c=\tc(\alpha,x)$ ($\tc$ for \emph{transitive collapse});
we remark that $c$ depends on the choice of $\bar{\Tt}$ though
(which below is determined through a \emph{support} $S$).
This code conversion will be appropriately effective, 
and since $\pi_{\bar{\alpha}}$ is a near $\deg^\Tt(\alpha)$-embedding,
we can define the $\rSigma_{\deg^\Tt(\alpha)+1}$ satisfaction relation
for $M^\Tt_\alpha$ in terms of that for such models $M^{\bar{\Tt}}_{\bar{\alpha}}$.
The details, to follow, are basically an effective version of
material from \cite[\S2.3]{hsstm}.
\end{newrem}

\begin{newdfn}\label{dfn:codes_trans}
Let $N,k,\ttt,\lhfunc$ be as in \ref{dfn:coherent} (we don't assume coherence),
with $N$ iterable for finite $k$-maximal trees.
Let $S\sub\lambda\cross\core_0(N)$ and let
$I_S$ be the projection of $S$ on its left coordinate.
We say $S$ is a \emph{support} with respect to $(\ttt,\lhfunc)$ iff:
\begin{enumerate}
 \item $S$ is finite and $0\in I_S$.
 \item If $(\alpha+1,x)\in S$ then $(\alpha,\lhfunc(\alpha))\in S$ and $x=((\beta,q,u),(\alpha,a))$ where:
\begin{itemize}[label=--]
 \item $\beta=\pred^\ttt(\alpha+1)$,
  \item $u$ is an $\rSigma_{\deg^\ttt(\alpha+1)}$ Skolem term, and
 \item  $(\alpha,a),(\beta,q)\in S$ and $\beta+1\in I_S$.
\end{itemize}
 \item If $(\alpha,x)\in S$ and $\alpha$ is a limit
 then
 \[ \max(\alpha\inter I_S)=\gamma+1<_\ttt\alpha\]
  and $x=(\beta+1,y)\in S$
 where $\beta+1<_\ttt\alpha$ and $(\beta+1,\alpha)_\ttt$ does not drop in model or degree.
 Moreover, if $\alpha+1\in I_S$ and $x=\lhfunc(\alpha)$
 then $\beta+1<\gamma+1$.\footnote{The last requirement helps ensure
 that when $S$ is a support, then $\ttt_S$ is an adjusted $k$-maximal tree;
 see \cite{hsstm} for further explanation.}
\end{enumerate}

If $(\alpha,x)\in S$, say that $(\alpha,x)$ is a \emph{potential code}.
Now we want to translate from the codes for infinite trees discussed 
in \ref{dfn:coherent} and our earlier coding
for finite trees in \ref{dfn:codes}. Toward this, we compute the \emph{transitive collapse} $\tc(\alpha,x)$ of $(\alpha,x)$,
with respect to a given support $S$ with $(\alpha,x)\in S$ (though $(\alpha,x)$ might not actually be a code in the sense of \ref{dfn:coherent}).
Let
\[ E_S=\{\alpha\in I_S\mid\alpha+1\in I_S\},\]
\[ P_S=I_S\cut(E_S\un\{\max(I_S)\}).\]
Let $\ttt_S=\ttt\rest I_S$. We
consider $\ttt_S$ as a padded tree structure, indexed with ordinals in $I_S$, with padding occurring at
precisely the ordinals in $P_S$, and extenders used at the ordinals in $E_S$.\footnote{$P$ for \emph{Padding} and $E$ for \emph{Extenders}.}
Let $\trcollt(\ttt_S)$ denote the non-padded tree structure
which is otherwise isomorphic to $\ttt_S$. We enlarge $S$
to a support which includes canonical potential codes for the empty set
with respect to each $\alpha\in I_S$,
and which is closed under (coded) images under iteration maps.
So, recursively on $\alpha\in I_S$, we define $e_\alpha$, such that $(\alpha,e_\alpha)$
codes the empty set: Set $e_0=\emptyset$.
Given $\alpha+1\in I_S$ and $\beta=\pred^\ttt(\alpha+1)$ (so $\beta\in I_S$ also), let
\[ e_{\alpha+1}=((\beta,e_\beta,\mathrm{c}),(\alpha,e_\alpha)), \]
where $\mathrm{c}=\mathrm{c}(v)$ is the Skolem term for the constant function with value $v$.
Given a limit $\alpha\in I_S$ and $\gamma+1=\max(I_S\inter\alpha)$, let
\[ e_\alpha=(\gamma+1,e_{\gamma+1}).\]
Now for $\gamma,\delta\in I_S$ such that $\gamma\leq_\ttt\delta$ and $(\gamma,\delta]_\ttt\inter D^\ttt=\emptyset$,
and given a pair of the form
$d=(\gamma,x)$,
we define a pair $d^{\gamma\delta}$ of the form $(\delta,y)$.
Set $d^{\gamma\gamma}=d$. Suppose $\gamma<\delta$.
Suppose first that $I_S\inter(\gamma,\delta)_\ttt=\emptyset$.
If $\delta=\alpha+1$ then note that $\gamma=\pred^\ttt(\alpha+1)$, and we set
\[ d^{\gamma,\alpha+1}=(\alpha+1,((\gamma,x,\mathrm{c}),(\alpha,e_\alpha))) \]
with $\mathrm{c}$ as before. If $\delta$ is a limit (so $\gamma=\max(I_S\inter\delta)$
and $\gamma$ is a successor) then we set
\[ d^{\gamma\delta}=(\delta,(\gamma,d)).\]
Now if instead $I_S\inter(\gamma,\delta)_\ttt\neq\emptyset$ then letting
$\beta=\max(I_S\inter(\gamma,\delta)_\ttt)$, we set
\[ d^{\gamma\delta}=(d^{\gamma\beta})^{\beta\delta}.\]

Now let $S'=S\cup\{(\alpha,e_\alpha)\mid \alpha\in I_S\}$ and $S''$ the closure
of $S'$ under $d\mapsto d^{\gamma\delta}$ (for $\gamma,\delta\in I_S$).
Note $S',S''$ are (finite) supports with $I_{S''}=I_{S'}=I_S$.

For $(\alpha,x)\in S''$ we will define
$\trcollt(\alpha,x)$, recursively on $\alpha$.
Here if $(\ttt,\lhfunc)$ is coherent, determining tree $\Tt$,
and $(\alpha,x)$ is a code,
then $\ttt_S$ is a finite length adjusted $k$-maximal tree $\bar{\Tt}$,
a sub-tree of $\Tt$,
and with $\M_n$ the coding of $M^{\bar{\Tt}}_n$ defined as in \ref{dfn:codes},
then we will have $\trcollt(\alpha,x)\in\M_n$. The definition is as follows:
\begin{enumerate}
 \item $\trcollt(0,x)=x$,
 \item $\trcollt(\alpha+1,((\beta,q,u),(\alpha,a)))=((\trcollt(\beta,q),u),\trcollt(\alpha,a))$,
 \item for limit $\alpha$,
$\trcollt(\alpha,d)=\trcollt(\gamma+1,d^{\beta+1,\gamma+1})$ where $d=(\beta+1,y)$ and $\gamma+1=\max(I_S\inter\alpha)$.
\end{enumerate}
Let $p(\ttt,S) = p(\trcollt(\ttt_S),\xvec)$ where
\[ \xvec=\left<x_i\right>_{i<\card(E_S)}
=\left<\trcollt(\alpha,\lhfunc(\alpha))\right>_ {\alpha\in E_S}.\]

We say $(\ttt,\lhfunc)$ is \emph{pre-coherent \textup{(}for $N$\textup{)}} iff:
\begin{itemize}[label=--]
\item for every finite $J\sub\lambda$ there is a support $S$ such that $J\sub I_S$, and
 \item  $p(\ttt,S)$ is coherent for every support $S$.
\end{itemize}

By essentially the arguments in \cite{hsstm}, if $(\ttt,\lhfunc)$ is coherent,
then it is pre-coherent.
Note that if $S_1,S_2$ are supports with $I_{S_1}\sub I_{S_2}$ then $S_1\cup S_2$
is a support. Therefore if $(\ttt,\lhfunc)$ is pre-coherent
then the collection of all supports is directed under $\sub$.

Suppose $(\ttt,\lhfunc)$ is pre-coherent. Then we can make sense of ``$M^\Tt_\alpha$''
for each $\alpha<\lambda$, even if this structure is illfounded,
as the direct limit of models $M^{\bar{\Tt}}_{\bar{\alpha}}$, where $\bar{\Tt}$ is a finite tree determined by a support $S$
and $\bar{\alpha}$ the corresponding collapse of $\alpha$.
We write $\M^*_\alpha$, etc, for the codeset for elements of $\core_0(M^\Tt_\alpha)$,
etc.
So, we define
$\rDelta_{k+1}^N(N)$ relations $\M^*_\alpha(x)$, etc, and
$\rSigma_{k+1}^N(N)$ relations $\satisfaction^*_\alpha(\varphi,x)$, etc (we
define analogues of all relations defined in \ref{dfn:codes}) as follows. For
coherent $p$, let $\M^p$, etc, denote the relations $\M$, etc, defined as in
\ref{dfn:codes} from $p$, and let $\Tt^p$ be the corresponding tree on $N$. For
$x\in N$, set $x\in\M^*_\alpha\iff$
\[ \ex S\left[S\text{ is a support for }(\ttt,\lhfunc)\ \&\
(\alpha,x)\in S\ \&\ \M^{p(\ttt,S)}_{\ot(\alpha\inter
E_S)}(\trcollt(\alpha,x))\right]. \]
The remaining relations ($\satisfaction^*_\alpha$, etc) are defined similarly.
\end{newdfn}

\begin{rem}\label{rem:lhfunctree}
Let $N,k,\ttt,\lhfunc$ be as in \ref{dfn:coherent} with $(\ttt,\lhfunc)$
pre-coherent. Then there is a unique adjusted putative $k$-maximal tree $\Tt$ on
$N$, such that:
\begin{itemize}[label=--]
 \item  $\lh(\Tt)\leq\lambda$ and either $\lh(\Tt)=\lambda$ or $\Tt$ has a last, illfounded model,
 \item $\ttt\rest\lh(\Tt)=\ttt^\Tt$,
\item for each $\alpha<\lh(\Tt)$, we have
\[ M^\Tt_\alpha\iso(\M^*_\alpha,e^*_\alpha,\es^*_\alpha,
F^*_\alpha)/\equality^*_\alpha,\]
and letting
$\varsigma^*_\alpha:\M^*_\alpha\to M^\Tt_\alpha$
be the
natural surjection, if $E^\Tt_\alpha=F(M^\Tt_\alpha)$ then
$\varsigma^*_\alpha(\lhfunc(\alpha))=\emptyset$, and otherwise
$\varsigma^*_\alpha(\lhfunc(\alpha))=\lh(E^\Tt_\alpha)$.
\end{itemize}
Moreover,
$\satisfaction^*_\alpha$ defines $\rSigma_{\deg^\Tt(\alpha)+1}$ satisfaction for
$M^\Tt_\alpha$ (in the codes), etc, as in \ref{dfn:codes}.
Here the natural surjection $\varsigma^*_\alpha$ results from the considerations
in \ref{rem:hsstm_adap}, and the fact that $\satisfaction^*_\alpha$ is correct
also follows from
\ref{rem:hsstm_adap}.

We continue to assume that $(\ttt,\lhfunc)$ is pre-coherent.
Note then that $(\ttt,\lhfunc)$ is coherent
iff the tree $\Tt$ above has wellfounded models (hence length $\lambda$).
And if $(\ttt,\lhfunc)$ is coherent then
$C_\alpha=\{x\mid M^*_\alpha(x)\}$
and
$\picode(\alpha,x)=\varsigma^*_\alpha(x)$
for all $x\in C_\alpha$ (where $C_\alpha$ and $\picode$ are as in \ref{dfn:coherent}).

The set of all coherent $p$ (in the sense for codes for finite trees)
is a Boolean combination of $\rSigma_{k+1}^N(\{q\})$ sets, where
$q=(p_k^N,u_{k-1}^N,\rho_{k-1}^{-N})$,
uniformly in
$N$ ($k$-sound, of the same type). For example, with such complexity, one can
assert that for each $n$, $\M_n(x_n)$ and $x_n$ codes either $\emptyset$ or the
index of an extender on $\es(\core_0(M^\Tt_n))$, and that drops occur exactly
where
$\ttt$ specifies. (Note that asserting that an extender does not cause a drop in model
can require an $\rPi_{k+1}^N(\{q\})$ assertion.)

Let $X$ be the set of tuples $(x,\varphi_0,\varphi_1,\varphi_2)\in\core_0(N)$ such that
each $\varphi_i$ is an $\rSigma_{k+1}$ formula and
for some $\lambda\leq\rho_0^N$:
\begin{enumerate}[label=--]
 \item $\varphi_0(x,\cdot)$ defines over $\core_0(N)$ an iteration tree structure
$\ttt$ on $\lambda$, and $\varphi_1(x,\cdot)$ defines its complement,
 \item $\varphi_2(x,\cdot,\cdot)$ 
 defines over $\core_0(N)$ a function $\lhfunc$
with domain $\lambda$, and
 \item $(\ttt,\lhfunc)$ is pre-coherent.
\end{enumerate}
Note that $X$ is $\Pi_1$-over-$\rSigma_{k+1}^N(\{q\})$, for
\[ q=(\varphi_0,\ldots,\varphi_3,x,p_k^N,u_{k-1}^N,\rho_{k-1}^{-N}), \]
uniformly in $N$. Therefore this property (when true) is passed downward under
near $k$-embeddings.

The following definition completes the description of our coding.
\end{rem}

\begin{dfn}\label{dfn:infinitegood} Let $\Tt$ be a $k$-maximal tree on $k$-sound $N$.
Then
\[ (x,\varphi_{\lhfunc},\varphi_\ttt,\varphi_{\neg\ttt})\in
\core_0(N)\] is
\emph{transfinite-good for $\Tt$} iff there is $(\ttt,\lhfunc)$ which is coherent and yields $\Tt$
as in \ref{rem:lhfunctree} and $\lhfunc,\ttt,\neg\ttt$ are
$\rSigma_{k+1}^N(\{x\})$, via the formulas
$\varphi_{\lhfunc},\varphi_\ttt,\varphi_{\neg\ttt}$.\end{dfn}

\begin{rem}\label{rem:clarification}
We now clarify hypothesis
\ref{thm:cohering}\ref{item:Wdefinable} and
$\Xx^+=\adjust(\Ww^+)$. Let
$\adjlast+2=\lh(\Xx^+)$. So
$E^{\Xx^+}_{\adjlast}=E$ and either [$\lh(E)=\OR^P$ and $P=M^{\Xx^+}_{\adjlast}$]
or
\[ \lh(E)<\OR^P\text{ and }P|\lh(E)=\core_0(M^{\Xx^+}_{\adjlast})|\lh(E).\]
Let $\Xx=\Xx^+\rest(\adjlast+1)$.
Then \ref{thm:cohering}\ref{item:Wdefinable} first asserts that if $\lh(\Xx)\geq\om$ then there is
$p$ which is transfinite-good for $\Xx$.
If $\lh(\Xx)\geq\om$, let $\M^*_{\adjlast}$ be
defined from
$p$ via \ref{dfn:infinitegood} and \ref{dfn:coherent}. If $\lh(\Ww)<\om$, let
$p$ be good for $\Xx$ and let $\M_{\adjlast}$ be defined from $p$ as in
\ref{dfn:codes}.
Let $E^*\sub\M^*_{\adjlast}$ (or $E^*\sub\M_{\adjlast}$) be the
set of codes for elements of $\widetilde{E}$ (the $P||\lh(E)$-amenable predicate for
$E$).
Then \ref{thm:cohering}\ref{item:Wdefinable} secondly asserts that $E^*$ is
$\bfrSigma_{k+1}^N$.
\end{rem}

\begin{proof}[Proof of \ref{thm:cohering}]
We follow the notation used in the preceding discussion of coding.
Let $\Xx^+,\Xx,\adjlast,p$ be as in
\ref{rem:clarification}. Let
$e\in N$ be such that ($\dagger$)  $E^*$ is $\Sigma_{k+1}^N(\{e\})$.
Using $k$-reasonableness let
$q_0\in\OR(N)^{<\om}$ witness \ref{lem:sound_hull}
with respect to
$(N,k,x,\om)$ where
\[ x=(\cc,p,\adjlast,e,p_{k+1}^N).\]
Let $M=\cHull_{k+1}^N(\{q_0\})$, let $\pi:M\to N$ be the uncollapse, let
$\pi(\pbar)=p$ and let $\Xxbar$ be defined
over $M$ from $\pbar$ as $\Xx$ is defined over $N$ from $p$; ``bars''
denote preimages under $\pi$ in general.

\setcounter{clm}{0}
\begin{clm}\label{clm:propertiesreflect} We have:
\begin{enumerate}[label=\textup{(}\alph*\textup{)},ref=(\alph*)]
 \item\label{item:propertiesreflect} The hypotheses of
\ref{thm:cohering} and properties mentioned in \ref{rem:clarification}
and \tu{(}$\dagger$\tu{)}
regarding
$N$, $\Ww$, $\Xx$, $p$, etc, excluding
``$\crit(E)<\rho_{k+1}(M^\Ww_\pr)$'', hold regarding
$M$, $\Wwbar$, $\Xxbar$, $\pbar$,  etc.
Moreover,
$\crit(\Ebar)<\rho_k(M^\Wwbar_{\prbar})$.
\item\label{item:EbarE} $\Ebar\in\es_+(\fin^\Xxbar)$ iff
$E\in\es_+(\fin^\Xx)$.
\item\label{item:TbarTcopy} $\Xxbar$ is a hull of $\Xx$ \textup{(}see\textup{
\cite{sargsyan}}\textup{)}, via hull maps
$\pi\rest\lh(\Xxbar):\lh(\Xxbar)\to\lh(\Xx)$ and $\pi_\alpha:M^\Xxbar_\alpha\to
M^\Xx_{\pi(\alpha)}$, where
$\pi_\alpha\com\picodebar_\alpha=\picode_{\pi(\alpha)}
\com\pi\rest\Mbar_\alpha$.
\item\label{item:copymaps}
$\pi_\alpha$ is a near $\deg^\Xxbar(\alpha)$-embedding.
\end{enumerate}\end{clm}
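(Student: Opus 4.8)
I would prove the claim as a single downward-reflection argument along $\pi$: the plan is to recall why each listed property, read over $N$, has complexity low enough, in parameters from $\rg(\pi)$, to pass to $M$. First I would record the basic facts: $q_0$ witnesses \ref{lem:sound_hull} with respect to $(N,k,x,\om)$ — applicable as $N$ is above-$\cc$ iterable and, the case $\rho_{k+1}^N=\om$ being trivial (then $M=N$), $\om<\rho_{k+1}^N$ — so $M\pins N$, $\rho_{k+1}^M=\om$, $q_0=\pi(p_{k+1}^M)$, and $\pi$ is a near $k$-embedding; moreover $x=(\cc,p,\adjlast,e,p_{k+1}^N,u_k^N)\in\rg(\pi)$, so every parameter used over $N$ to define $\Xx$, $E$, and the witnessing sets has a $\pi$-preimage, written with a bar.

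I would establish parts \ref{item:TbarTcopy} and \ref{item:copymaps} first, since the rest rests on them. By \ref{rem:clarification} and \ref{rem:lhfunctree}, the assertion that $p$ is good for $\Xx$ amounts to coherence of the data $(\ttt,\lhfunc)$ coding the tree structure and extenders of $\Xx$, and \ref{rem:lhfunctree} shows this is $\Pi_1$-over-$\rSigma_{k+1}^N$ in parameters built from $\rg(\pi)$, uniformly in $N$; hence it is inherited by the $\rSigma_{k+1}$-elementary $M$, so $\pbar$ is good for the correspondingly defined $\Xxbar$. Copying the definably given tree $\Xxbar$ along the near $k$-embedding $\pi$ then yields, exactly as in the last paragraph of \ref{rem:lhfunctree}, the hull maps $\pi\rest\lh(\Xxbar)\colon\lh(\Xxbar)\to\lh(\Xx)$ and $\pi_\alpha\colon M^\Xxbar_\alpha\to M^\Xx_{\pi(\alpha)}$ commuting with the decoding surjections $\picodebar_\alpha,\picode_{\pi(\alpha)}$, and also that each $\pi_\alpha$ is a near $\deg^\Xxbar(\alpha)$-embedding. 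Here I would invoke \ref{rem:codes} when $\lh(\Ww)<\om$ and \ref{dfn:coherent}--\ref{dfn:infinitegood} when $\lh(\Ww)\geq\om$ (the latter apparatus also covering the case $\lh(\Xxbar)>\OR^M$, where ``$\pi(\alpha)$'' is read via the map on lengths). In particular the models of $\Xxbar$ are wellfounded, embedding into those of $\Xx=\adjust(\Ww)$.

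For part \ref{item:EbarE}, $E^*$ — the set of codes for the amenable predicate $\tilde E$ in the coded model $\M_\adjlast$ (or $\M^*_\adjlast$) — is $\Sigma_{k+1}^N(\{e\})$ with $e\in\rg(\pi)$, so its preimage is $\Sigma_{k+1}^M(\{\ebar\})$ and $\pi_{\adjlastbar}$ carries it onto $E^*$; since ``$(P||\gamma,E)$ is a premouse and $E\in\es_+(\fin^\Xx)$'' is, via $E^*$ and $\M_\adjlast$, a $\bfrSigma_{k+1}^N$ assertion in $\rg(\pi)$-parameters, the equivalence follows by the elementarity of $\pi$ and the hull maps of \ref{item:TbarTcopy}. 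Part \ref{item:propertiesreflect} is then a checklist. That $M$ is $(k+1)$-sound, $M\pins N$ is above-$\bar{\cc}$ iterable, and $(k,\Wwbar,\zetabar,\Pbar,\Ebar,\Wwbar^+)$ is potential for $M$ are immediate (the last using \ref{item:TbarTcopy} plus elementarity for ``$(\Pbar||\gammabar,\Ebar)$ is a premouse''); ``$\lh(\Ww)<\eta^+$'', ``$\cc<\lh(E)$'', ``$\cc$ is a cutpoint of $N$'', and ``$[0,\pr]_\Ww$ does not drop in model or degree'' transfer by $\rSigma_{k+1}$-elementarity of $\pi$ together with \ref{item:TbarTcopy} (which carries the drop/degree structure); ``$\Ww$ is $\bfrSigma_{k+1}^N$ in the codes'' transfers by the observation in \ref{rem:lhfunctree} that goodness of $(x,\varphi_\lh,\varphi_\ttt,\varphi_{\neg\ttt})$ passes to $\rSigma_{k+1}$-elementary substructures; ``$E$ is $\bfrSigma_{k+1}^N$ in the codes'' transfers as just noted; ``$\Sigma$ is an above-$\cc$, $(k,\eta^++1)$-strategy for $N$ with $\Ww$ via $\Sigma$'' transfers to the corresponding statement for $M$ with $\Sigmabar$ the pullback of $\Sigma$ under $\pi$ (a strategy for $M$, iterable as a proper initial segment of $N$), $\Wwbar$ being correct as a hull of the $\Sigma$-tree $\Ww$; and the facts recorded in the preceding paragraph of the proof reflect since, after \ref{lem:sound_hull} has been applied, they are first-order over data already in $\rg(\pi)$.

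The main obstacle — and the reason ``$\crit(E)<\rho_{k+1}(M^\Ww_\pr)$'' is explicitly excluded — is the top projectum. Since $[0,\prbar]_\Wwbar$ does not drop, \ref{cor:k+1solid} gives $\rho_{k+1}(M^\Wwbar_{\prbar})=\sup i^\Wwbar_{0,\prbar}``\rho_{k+1}^M=\om$, below which $\crit(\Ebar)\geq\om$ cannot lie; so instead I would reflect the strictly weaker statement ``$\crit(E)<\rho_k(M^\Ww_\pr)$'' (true since $\rho_{k+1}\leq\rho_k$), which is a $\bfrSigma_{k+1}^N$ assertion in $\rg(\pi)$-parameters about the coded degree-$k$ model to which $E$ applies. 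Passing from this to the desired ``$\crit(\Ebar)<\rho_k(M^\Wwbar_{\prbar})$'' requires checking that the copy maps $\pi_{\prbar}$ and $\pi_{\adjlastbar}$ agree far enough to identify $\crit(\Ebar)$ with the $\pi$-preimage of $\crit(E)$, and that near $k$-embeddings preserve $\rho_k$ and $p_k$ suitably — routine, but this is the step that must be carried out with care. (The remark after \ref{thm:cohering}, again an instance of \ref{cor:k+1solid}, records $\rho_{k+1}(M^\Ww_\pr)=\sup i^\Ww_{0,\pr}``\rho_{k+1}^N$, consistent with the above.)
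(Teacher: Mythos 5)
Your proposal is correct and follows essentially the same route as the paper's sketch: exploit that the coding of $\Xx$ and of $E$ is $\bfrSigma_{k+1}^N$ (or $\Pi_1$-over-$\rSigma_{k+1}^N$) in parameters in $\rg(\pi)$, reflect along the $\rSigma_{k+1}$-elementary collapse map, and build the hull maps $\pi_\alpha$ inductively through $\lh(\Xxbar)$ using the uniformity of the satisfaction coding (the paper cites \cite{fstim} or \ref{rem:codes} for near-embedding preservation, matching your appeal to \ref{rem:lhfunctree}). Your explicit explanation of why ``$\crit(E)<\rho_{k+1}(M^\Ww_\pr)$'' must be excluded (since $\rho_{k+1}^M=\om$ forces $\rho_{k+1}(M^\Wwbar_{\prbar})=\om$ by \ref{cor:k+1solid}) and replaced with the $\rho_k$ version is a useful elaboration the paper leaves implicit.
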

\begin{proof}[Proof sketch] The premousehood
of $(\Pbar||\lh(\Ebar),\Ebar)$ uses that the
premouse axioms are Q-formulas, $E^*$ is $\rSigma_{k+1}^N(\{e\})$,
$\M^\Xx_{\adjlast}$ is $\rDelta_{k+1}^N(\{p,\adjlast\})$, and
$\pi$ is $\rSigma_{k+1}$
elementary. The rest of \ref{item:propertiesreflect} and \ref{item:EbarE} are
similar.
Parts \ref{item:TbarTcopy} and \ref{item:copymaps} are proved by induction
through $\lh(\Xxbar)$; cf. \cite[\S2.3]{hsstm}. Here part
\ref{item:copymaps} involves an adaptation of the argument in \cite{fstim}, or alternatively uses the uniformity of the
definition of $\satisfaction^*$ in \ref{dfn:codes_trans}.\footnote{The proof of uniformity itself
used the argument of \cite{fstim}.}
\end{proof}

Let $\iota=\lrgcrd(P||\lh(E))$. Let $\pP$ be the phalanx
$\Phi(\Xxbar,\iotabar,\Ebar)$ and $\pP'$ the phalanx $\Phi(\Wwbar,\iotabar,\Ebar)$.
Note that $\pP,\pP'$ have the same last model $\Ubar=\fin^{\overline{\Xx^+}}=\fin^{\overline{\Ww^+}}$.
By \ref{lem:extendermaximality}, the following claim completes the proof.

\begin{clm}\label{clm:pPiterable}$\Ubar$ is wellfounded and $\pP'$ is
$(\om_1+1)$-iterable.\end{clm}
\begin{proof}
The $(\om_1+1)$-iterability of $\pP'$ follows easily from that
of $\pP$, and we prove the latter.
We lift trees on $\pP$ to trees on $\Phi(\Xx)$, using the
maps $\pi_\alpha$ from Claim
\ref{clm:propertiesreflect}\ref{item:TbarTcopy}, and
a weak $k$-embedding
$\psi:\Ubar\to M'$
with some $M'\pins P|\lh(E)$. We will get $M'$ and $\psi$
through a variant of the earlier arguments. Let $\upsilon'=\pred^\Xx(\zeta'+1)$.
We have
\[ \crit(E)<\rho_{k+1}(M^\Xx_{\upsilon'})=\sup i^\Xx_{0\upsilon'}``\rho_{k+1}^N.
\]
Let $\mu<\rho_{k+1}^N$ be a cardinal of $N$ with $\crit(E)\leq
i^\Xx_{0\upsilon'}(\mu)$. Again using $k$-reasonableness let $q_1\in\OR(N)^{<\om}$ witness \ref{lem:sound_hull}
with respect to $(N,k,q_0,\mu)$. Let
\[K_\mu=\cHull_{k+1}^N(\mu\un\{q_1\}).\]
So $K_\mu\pins N||\rho_{k+1}^N$. We have the natural near $k$-embeddings
\[ \sigma:M\to K_\mu\text{ and }\tau:K_\mu\to N,\]
with
$\pi=\tau\com\sigma$. Let $\tau(\qhat_0)=q_0$. Now
\[ N\sats\all\varphi\left[\varphi\text{ is }\rSigma_{k+1}\shortimplies\all z\in\mu^{<\om}\left[\varphi(q_0,z,u_k)\shortimplies
K_\mu\sats\varphi(\qhat_0,z,u_k^{K_\mu})\right]\right],\]
and this is an $\rPi_{k+1}$ assertion
$\psi(q_0,\qhat_0,\mu,K_\mu,u_k)$.\footnote{It's not clear that this would hold if
we allowed $\varphi$ to be \emph{generalized} $\rSigma_{k+1}$.}
Let $U=M^{\Xx^+}_\infty$. We have virtual $k$-embeddings
\[ j=i^{\Xx^+}:N\to U\text{ and }\jbar=i^{\Xxbar^+}:M\to\Ubar. \]
So
$j(K_\mu)\pins U$ and
$U\sats\psi(j(z))$ where $z=(q_0,\qhat_0,\mu,K_\mu,u_k)$.

Let $\varrho:\Ubar\to U$ be given by the Shift Lemma. So
$\varrho$ is a virtual weak $k$-embedding.\footnote{It's not clear yet that $\varrho$ is
a virtual near $k$-embedding, because the proof of \cite{fstim} depends on strong
closeness with respect to $(M^\Xxbar_{\prprime},\Ebar),(M^\Xx_{\prprime},E)$,
but here, the
measures of $E,\Ebar$ are ostensibly not definable at all over
$M^\Xx_{\prprime},M^\Xxbar_{\prprime}$.}
Define a (putative) virtual weak $k$-embedding
$\varrho':\Ubar\to j(K_\mu)$
as follows. First set
\[ \varrho'\com\jbar=j\com\sigma.\]
Now
$\Ubar=\Hull_{k}^\Ubar(\rg(\jbar)\un\nu(\Ebar))$
and
\[ \rg(\varrho)=\Hull_k^U(\rg(\varrho\com\jbar)\cup\varrho``\nu(\Ebar)).\]
We set
\[ \rg(\varrho')=\Hull_k^{j(K_\mu)}(\rg(\varrho'\com\jbar)\cup\varrho``\nu(\Ebar)).\]
Note that the two hulls are isomorphic (to $\Ubar$),
and $\varrho'$ is a virtual weak $k$-embedding, because $U\sats\psi(j(z))$
and $\varrho``\nu(\Ebar)\sub j(\mu)$.

Now $U\sats$``$j(K_\mu)$ is reasonable''.
So let $q_2\in j(K_\mu)^{<\om}$ witness this for
$(j(K_\mu),k,r,\iota)$
where $\iota=\lgcd(P||\lh(E))$ and $r=(j(\qhat_0),\nu(E),\cc)$.
Let
\[ M'=\cHull_{k+1}^{j(K_\mu)}(\iota\cup\{q_2\}) \]
and
$\varsigma:M'\to j(K_\mu)$
be the uncollapse.
Note
$M'\pins P|\lh(E)$ (hence is wellfounded)
and
$\rg(\varrho')\sub\rg(\varsigma)$.
Define $\psi:\Ubar\to M'$ by
$\psi=\varsigma^{-1}\com\varrho'$.
Then $\psi$
 is a weak $k$-embedding,
and the rest is like before.
\renewcommand{\qedsymbol}{$\Box$(Claim
\ref{clm:pPiterable})(\ref{thm:cohering})}\end{proof}
\renewcommand{\qedsymbol}{}\end{proof}

\section{Definability of $\es$}\label{sec:stacking}
Given a mouse $M$ and $x\in M$, we can ask whether $\es^M$ is definable
over $\univ{M}$ from $x$. If so,\footnote{One might want to assume
that $\univ{M}\sats\ZFC$, in order to know that $\HOD_x$ is defined as usual.}
\[ \univ{M}\sats\text{``}V=\HOD_x\text{''}. \]
Steel showed that
for $n\leq\om$, $K^{M_n}=M_n$, so $\es^{M_n}$ is $\lpole M_n\rpole$-definable
(from $x=\emptyset$, i.e.~without parameters); see
\cite{cmpw} for $n<\om$. In this context, $K$ is fully iterable
``in intervals between Woodin cardinals''. As we show in \ref{prop:it_error}, this degree of self-iterability
fails for mice with a measurable limit of Woodins.
The author does not know if one can prove a generalization
of Steel's result at this level.

We next prove, in \ref{thm:es_def} and \ref{thm:es_def_2}, that $\es^M$
is nonetheless
$\lpole M\rpole$-definable for various mice $M$
with measurable limits of Woodins,
and also give a new proof for
$M_n$.\footnote{Of course, the fact that $K^{M_n}=M_n$
gives more information than just the fact that $\es^{M_n}$ is definable over $\univ{M_n}$;
and significantly, the ``$K$'' in Steel's result
has a corresponding generic absoluteness which does not seem to be present
in our results here.}
Both theorems require a certain degree of self-iterability;
\ref{thm:es_def_2} requires less, but requires also
that the internal strategy agrees with a fuller external strategy for the mouse.
We then prove in \ref{thm:tamesi} that \ref{thm:es_def_2}
applies in particular to $M_{\nt}|\delta$, where $M_{\nt}$ is the least
non-tame mouse (see \ref{dfn:tame}) and
$\delta$ is its largest (Woodin) cardinal.
This approach to proving
``$V=\HOD$'' (in a mouse) is essentially limited to tame mice
(cf. \ref{prop:nontame}),
as the degree of self-iterability required
typically fails beyond there.

We remark that, using a quite different approach, Woodin has proved the
following weaker version of \ref{thm:es_def}. Let $M$ be a tame mouse satisfying
$\PS$, and having no Woodin cardinals. Suppose $M\sats$``Every proper
segment of me is fully iterable''. Then $\univ{M}\sats$``$V=\HOD$''.
However, Woodin's
proof does not seem to show that $\es^M$ is definable over $\univ{M}$. Also,
Nam Trang and Martin Zeman have recently found a proof of
self-iterability sufficient to apply \ref{thm:es_def}, very
different from what we do here, and which works in a
different context.

\begin{lem}\label{lem:unique_iterable}
Let $N$ be a premouse. Let $\gamma<\OR^N$ be an $N$-cardinal. Suppose
$N\sats$``$\gamma^{++}$ exists''. Let $\R,\Ss\in N$ be reasonable premice such that
\[ \univ{\R}=\univ{\Ss}=(\her_{\gamma^+})^N.\]
Let
$\cc,\dd<(\gamma^+)^N$ be cutpoints of $\R,\Ss$ respectively.
Suppose $R|\mu=\Ss|\mu$ where $\mu=\max(\cc,\dd)$,
and $N\sats$``$\R$ \tu{(}$\Ss$\tu{)}
is above-$\cc$ \tu{(}above-$\dd$\tu{)}, $(\om,\gamma^++1)$-iterable''.
Then
$\R=\Ss$.
\end{lem}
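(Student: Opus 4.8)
The plan is to coiterate $\R$ with $\Ss$ inside $N$ and then read off equality from the agreement of their universes. Write $\delta=(\gamma^+)^N$ and $\eps=\max(\cc,\dd)$; then $\OR^\R=\OR^\Ss=\delta$, and moreover $\rho_\om^\R,\rho_\om^\Ss\geq\gamma$, since otherwise there would be a surjection from some ordinal $<\gamma$ onto $\delta$ lying in $\univ{\R}=\univ{N|\delta}\sub N$, contradicting that $\delta$ is a cardinal of $N$. Work inside $N$ and fix above-$\cc$ and above-$\dd$, $(\om,\gamma^++1)$-iteration strategies $\Sigma_\R,\Sigma_\Ss\in N$ for $\R,\Ss$ (these exist as elements of $N$ because $N\sats$``$\gamma^{++}$ exists''). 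Run the least-disagreement coiteration of $\R$ with $\Ss$, using $\Sigma_\R$ on the $\R$-side and $\Sigma_\Ss$ on the $\Ss$-side. Since $\R|\eps=\Ss|\eps$ and $\cc,\dd$ are cutpoints of $\R,\Ss$ respectively, a standard argument (no extender used can overlap $\eps$, by the ISC together with the cutpoint hypotheses) shows that the resulting trees $\Tt$ on $\R$ and $\Uu$ on $\Ss$ are above $\eps$, so $\Sigma_\R,\Sigma_\Ss$ apply; and, as usual, using that $\delta$ is regular in $N$, the coiteration terminates by some stage $\leq\delta$, so that these strategies suffice. Let $\fin^\Tt,\fin^\Uu$ be the last models, which are coherent; WLOG $\fin^\Uu\ins\fin^\Tt$.

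The heart of the proof, and the step I expect to be the main obstacle, is the analysis of this coiteration: the goal is to show that $\Tt$ and $\Uu$ are both trivial, so that $\fin^\Tt=\R$ and $\fin^\Uu=\Ss$. This proceeds along the lines of the standard argument that an iterable premouse coiterates trivially with itself, but one must accommodate the above-$\eps$ restriction and the fact that $\R,\Ss$ are only assumed to agree \emph{below} $\eps$. The relevant ingredients are: (i) if $b^\Tt$ (say) drops in model or degree, then $\fin^\Tt$ is a sound premouse with $\rho_\om^{\fin^\Tt}<\OR(\fin^\Tt)$; (ii) if $b^\Tt$ does not drop, then $i^\Tt\colon\R\to\fin^\Tt$ is cofinal and $\rho_\om^{\fin^\Tt}=\sup i^\Tt``\rho_\om^\R\geq\sup i^\Tt``\gamma$, and symmetrically for $\Uu$; and (iii) $\fin^\Tt,\fin^\Uu$ are iterates of $\R,\Ss$ formed inside $N$, and $\univ{\R}=\univ{\Ss}$. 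Combining the cases for $b^\Tt$ and $b^\Uu$ with the coherence of $\fin^\Tt,\fin^\Uu$ and with $\delta$ being a cardinal of $N$, each non-trivial possibility yields a contradiction: either a member of $\univ{\R}$ witnessing that $\delta$ is singular or is collapsed in $N$, or a premouse that is simultaneously sound with $\rho_\om$ below its ordinal height and has $\rho_\om$ equal to its ordinal height. Hence $b^\Tt,b^\Uu$ do not drop; then $i^\Tt\colon\R\to\fin^\Tt$ and $i^\Uu\colon\Ss\to\fin^\Uu$ exist, and since $\OR^\R=\OR^\Ss=\delta$ a nontrivial $\Tt$ would have $E^\Tt_0$ on $\es^\R$ with $\crit(E^\Tt_0)\geq\eps$, which one checks (again from $\univ{\R}=\univ{\Ss}$ and the coherence of the final models) is incompatible with $i^\Uu\colon\Ss\to\fin^\Uu\ins\fin^\Tt$; so $\Tt$, and symmetrically $\Uu$, is trivial.

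Finally I would rule out that $\fin^\Tt=\R$ and $\fin^\Uu=\Ss$ are coherent but unequal, i.e.\ that (say) $\R$ is active and $\Ss=\R||\delta$ is its passive reduct. Here I would invoke \ref{thm:easy_coh}, or, if only above-$\dd$ iterability of $\Ss$ is available and $\crit(F^\R)<\dd$, the appropriate relativised variant among \ref{thm:strategy_coh_proj}--\ref{thm:strategy_coh_card}. Since $\univ{\Ss}=\univ{\R}$, the top extender $F^\R$ is total over $\Ss$ and $(\Ss||\lh(F^\R),F^\R)=\R$ is a premouse, whence $F^\R$ lies on $\es^\Ss$; but $\lh(F^\R)=\delta=\OR^\Ss$, so this would make $\Ss$ active, contrary to $\Ss=\R||\delta$. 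This contradiction gives $\R=\Ss$, completing the proof.
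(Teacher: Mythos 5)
Your proposal misses the engine that drives the paper's argument: Theorem~\ref{thm:strategy_coh_proj}. You compare $\R$ with $\Ss$ directly and assert that, ``by the ISC together with the cutpoint hypotheses,'' the resulting trees are above $\eps=\max(\cc,\dd)$. This is not a standard fact and is in fact false in general: $\cc$ being a cutpoint of $\R$ and $\dd$ a cutpoint of $\Ss$ together with $\R|\eps=\Ss|\eps$ give (via cutpoint preservation under iteration) that $\Tt$ is above $\cc$ and $\Uu$ is above $\dd$ --- enough for the strategies to apply --- but nothing forces $\Tt$ to be above $\dd$ when $\dd>\cc$, nor vice versa. What the paper actually does is set $\eta$ to be the least ordinal with $\R|(\eta^+)^N\neq\Ss|(\eta^+)^N$, coiterate the proper segments $\R'=\R|(\eta^+)^N$ and $\Ss'=\Ss|(\eta^+)^N$, and then show the coiteration stays above $\eta$. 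The latter step is precisely where \ref{thm:strategy_coh_proj} enters: if the comparison tried to use an extender $E^\Uu_\alpha$ with $\crit(E^\Uu_\alpha)<\eta<\lh(E^\Uu_\alpha)$ at the first stage where this happens, then \ref{thm:strategy_coh_proj} (applied with $\Ww=\Tt\rest\alpha+1$ viewed on an initial segment projecting to $\eta$) puts $E^\Uu_\alpha$ onto $\es_+(M^\Tt_\alpha)$, contradicting that $\lh(E^\Uu_\alpha)$ was a disagreement. This is the non-trivial maximality input, and it is exactly the point of the lemma appearing in \S\ref{sec:stacking} (which, the introduction notes, uses only \ref{thm:strategy_coh_proj} from \S\ref{sec:cohering}). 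Your sketch contains no substitute for it, and without it you have no control over which extenders the comparison applies, so the subsequent ``case analysis'' cannot get off the ground.

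There are also smaller defects. Your ingredient (i) is backwards: if $b^\Tt$ drops in model or degree then $\fin^\Tt$ is \emph{unsound} (cf.\ \ref{cor:k+1solid}), not sound --- the usual contradiction is that a dropping last model cannot be a sound proper initial segment of the other side. And your last paragraph invoking \ref{thm:easy_coh} to rule out $\Ss=\R||\delta$ does not quite parse: $F^\R$ has $\lh(F^\R)=\OR^\R=\OR^\Ss$, so $F^\R$ is not an element of $\Ss$ and \ref{thm:easy_coh} does not apply. (In fact that case is vacuous here, since your own computation $\rho_\om^\R=\rho_\om^\Ss=(\gamma^+)^N=\OR^\R$ forces both $\R$ and $\Ss$ to be passive.) The paper's route, localizing to $\R',\Ss'$ which can be viewed as trees on segments $Q$ projecting to $\eta$, avoids these issues: once the comparison is above $\eta$, nontriviality of $\Tt$ forces $b^\Tt$ to drop, forces $\Uu$ trivial with $\Ss\pins\fin^\Tt$, and then $E^\Tt_0\in\univ{\R}=\univ{\Ss}\in\fin^\Tt$ gives an immediate contradiction.
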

\begin{proof}
We work in $N$. Suppose
$\R\neq\Ss$, and let $\eta$ be least such that
\[ \R|(\eta^+)^N\neq\Ss|(\eta^+)^N.\]
Let $\R'=\R|(\eta^+)^N$ and $\Ss'=\Ss|(\eta^+)^N$. Let $\Sigma$ be
an above-$\cc$, $(\om,\eta^++1)$-strategy for $\R'$, and $\Gamma$ likewise but above-$\dd$ for $S'$. Consider
the (possibly partial) comparison $(\Tt,\Uu)$ of $\R'$ with $\Ss'$, formed using
$\Sigma,\Gamma$. (If the comparison reaches length $\eta^++1$, then
we stop.) We take $(\Tt,\Uu)$ to be padded as usual. 

We claim $(\Tt,\Uu)$ is above
$\eta$.
For $\eta<\lh(E)$ and
$\max(\cc,\dd)<\lh(E)$ for all extenders $E$ used in $(\Tt,\Uu)$.
Suppose $\alpha$ is such that $(\Tt,\Uu)\rest\alpha+1$ is above $\eta$ but
$\crit(E^\Uu_\alpha)<\eta$. Then by \ref{thm:strategy_coh_proj},  $E^\Uu_\alpha\in\es_+(M^\Tt_\alpha)$, so
$M^\Tt_\alpha|\lh(E^\Uu_\alpha)=M^\Uu_\alpha|\lh(E^\Uu_\alpha)$, contradiction.

Now suppose $\Tt$ is non-trivial. Then there is
$Q\pins\R$ such that $\rho_\om^Q=\eta$ and $\Tt$ can be considered a tree on
$Q$. This is because $\Tt$ is above
$\eta$. Moreover, $b^\Tt$ drops. The same things hold for $\Uu$.
Therefore the comparison is successful (and has length $<(\eta^+)^N$), and
(still assuming $\Tt$ is non-trivial), $\Uu$ is trivial, and
$\Ss\pins\fin^\Tt$. But then $E^\Tt_0\in\univ{\R}=\univ{\Ss}\in\fin^\Tt$,
contradiction.
\end{proof}

\begin{lem}\label{lem:es_def}
Let $N$ be a premouse and let $\eta<\OR^N$ be an $N$-cardinal such that
$N\sats$``$\eta^{++}$ exists''. Suppose $N|(\eta^+)^N$ is reasonable, $\cc$ is a cutpoint of $N|(\eta^+)^N$ and
\[ N\sats\text{``}N|\eta^+\text{ is above-}\cc,\ (\om,\eta^++1)\text{-iterable''.}\]
Let $\Hh=(\Hh_{\eta^{+3}})^N$ \tu{(}if $(\eta^{+3})^N=\OR^N$ then set
$\Hh=\lpole N\rpole$\tu{)}. Then
\begin{enumerate}[label=\tu{(}\alph*\tu{)}]
 \item\label{item:N|eta^+_def}
$\{N|(\eta^+)^N\}$ is a $\Sigma_{2}^\Hh(\{N|\cc\})$
singleton, uniformly in $N$, $\eta$ and $\cc$.
\item\label{item:N|eta^+_def_from_N|eta} If $\cc\leq\eta$ then $\{N|(\eta^+)^N\}$ is a
$\Sigma_2^\Hh(\{N|\eta\})$ singleton, uniformly in $N,\eta$.
\end{enumerate}
\end{lem}

\begin{proof}
Part (a): By \ref{lem:unique_iterable}, $\Hh\sats$``$N|\eta^+$ is the unique reasonable premouse
$\R$ such that
(i) $\univ{\R}=\Hh_{\eta^+}$,
(ii) $\N|\cc\pins\R$,
(iii) $\cc$ is a cutpoint of $\R$,
and (iv) $\R$ is above-$\cc$, $(\om,\eta^++1)$-iterable.''

Part (b): If $\cc\leq\eta$ then by \ref{lem:unique_iterable},
$\Hh\sats$``$N|\eta^+$ is the unique reasonable premouse $\R$ such that  $\N|\eta\pins\R$ and for some
$\cc\leq\eta$, items (i), (iii) and (iv) above
hold.''
\end{proof}

We can now deduce the first main theorem of this section.
In the theorem, if $N\not\sats\ZFC$ then we take
$\HOD^N$ to be the union of all transitive sets coded by a set of ordinals
$X$ such that for some $\gamma<\OR^N$, $X$ is definable from ordinal parameters
over $\Hh_\gamma^N$.

\begin{tm}\label{thm:es_def}
 Let $N$ be a premouse satisfying $\PS$, all of whose proper segments are reasonable. Suppose that for each $N$-cardinal
$\eta<\OR^N$ there is $\cc\leq\eta$ such that $\cc$ is a cutpoint of $N$ and
\[ N\sats\text{``}N|\eta^+\text{ is above-}\cc,\ (\om,\eta^++1)\text{-iterable''.}\]
Then
$\es^N$ is definable over $\univ{N}$ and $\univ{N}\sats$``$V=\HOD$''. In fact,
$\{N|(\eta^+)^N\}$ is $\Sigma_2^\her$ where $\her=(\her_{\eta^{+3}})^N$,
uniformly in $\eta$.
\end{tm}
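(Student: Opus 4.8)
The plan is to run a recursion on the $N$-cardinals $\mu<\OR^N$, showing that $\{N|\mu\}$ is definable over $\univ{N}$ from the parameter $\mu$, uniformly in $\mu$, and with the complexity asserted in the ``in fact'' clause. Since $N\sats\PS$, $N$ has no largest cardinal, so the $N$-cardinals are cofinal in $\OR^N$, and for every $N$-cardinal $\eta<\OR^N$ we have that $(\eta^{++})^N$ exists; hence Lemma~\ref{lem:es_def}(b) applies at every such $\eta$, giving that $\{N|(\eta^+)^N\}$ is a $\Sigma_2^{\Hh}(\{N|\eta\})$ singleton, with $\Hh=(\Hh_{\eta^{+3}})^N$, uniformly in $N$ and $\eta$ (this is where the hypothesis ``$\cc\le\eta$'' is used). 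Since between consecutive cardinals no new cardinal appears, the recursion reconstructs all of $N|(\eta^+)^N$ in a single step from $N|\eta$.

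The base case is $\mu=\om$: $N|\om$ is passive with empty extender sequence and is absolutely definable. For the successor-cardinal case $\mu=(\eta^+)^N$: by the induction hypothesis $N|\eta$ is definable over $\univ{N}$ from $\eta$ by a formula whose restriction to $(\Hh_{\eta^{+3}})^N$ is a $\Sigma_2$ singleton in the relevant variable; substituting this definition for the parameter $N|\eta$ in the Lemma~\ref{lem:es_def}(b) formula, and absorbing the resulting existential quantifier into the $\Sigma_2$ prefix, yields a $\Sigma_2$-singleton definition of $N|\mu$ over $(\Hh_{\eta^{+3}})^N$; relativizing $(\Hh_{\eta^{+3}})^N$ up to $(\Hh_{\mu^{+3}})^N$ (harmless, since the former is definable over the latter from $\mu$) then gives a $\Sigma_2$-singleton definition of $N|\mu$ over $(\Hh_{\mu^{+3}})^N$ from $\mu$. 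For the limit-cardinal case: $N|\mu=\bigcup\{N|\eta\st\eta<\mu\text{ an }N\text{-cardinal}\}$, and since the $N|\eta$ are uniformly definable, so is $N|\mu$. This establishes the ``in fact'' clause (exactly the successor step) together with the recursion.

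From the recursion, $\es^N$ is definable over $\univ{N}$ with no parameters: $x\in\es^N$ iff there is an $N$-cardinal $\mu$ and a premouse $R$ with $R=N|\mu$ (as recognized by the recursion) and $x$ on $\es^R$; cofinality of the $N$-cardinals in $\OR^N$ guarantees every extender on $\es^N$ is captured. Consequently every element of $\univ{N}$ lies in some ordinal-definable $N|\mu$, so $\univ{N}\sats$``$V=\HOD$'' in the sense fixed before the statement (and literally so when $N\sats\ZFC$).

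The main obstacle I expect is the bookkeeping of definitional complexity and uniformity through the recursion — in particular, eliminating the parameter $N|\eta$ from the Lemma~\ref{lem:es_def}(b) formula while keeping the result a genuine $\Sigma_2$ singleton, and reconciling the model $(\Hh_{\eta^{+3}})^N$ over which Lemma~\ref{lem:es_def}(b) speaks with the model $(\Hh_{\mu^{+3}})^N$ wanted by the induction hypothesis. The uniqueness lemma~\ref{lem:unique_iterable} (already underlying Lemma~\ref{lem:es_def}) is what makes the singleton-ness survive the substitution, and there is no new iteration-theoretic content beyond Lemmas~\ref{lem:unique_iterable} and~\ref{lem:es_def}.
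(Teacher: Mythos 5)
Your proposal is correct and matches the paper's own (very terse) proof: both proceed by recursion on $N$-cardinals, using Lemma~\ref{lem:es_def}(b) at successor cardinals to recover $N|(\eta^+)^N$ from $N|\eta$, and then absorbing the recursion into a $\Sigma_2$ definition over $(\Hh_{\eta^{+3}})^N$. You simply spell out the base, successor, and limit cases and the complexity bookkeeping more explicitly than the paper does.
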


\begin{proof}
Using \ref{lem:es_def}(b), we can define $N|\eta$, for $N$-cardinals $\eta$, by
induction on $\eta$, uniformly in $\eta$. (Given $N|\eta$, we determine
$N|(\eta^+)^N$ by an application of \ref{lem:es_def}(b).) Moreover, the
induction leading to and producing $N|(\eta^+)^N$ is in fact
$\Sigma_2$-definable over $(\her_{\eta^{+3}})^N$ (from no parameters).
\end{proof}

We can now give an alternate proof of Steel's result
that $\univ{M_n}\sats$``$V=\HOD$'' for $n\leq\om$:
Theorem \ref{thm:es_def} applies to $N=M_n$, using
$\cc$ as the supremum of the Woodin cardinals $\delta$ of $N$ such that
$\delta\leq\eta$. (See
\cite{projwell} for the proof of self-iterability in the $n<\om$ case, and
\cite[\S7]{outline} for the $n=\om$
case.) Now recall:

\begin{newdfn}\label{dfn:tame}
 A premouse $M$ is \emph{non-meek} iff there is $E\in\es_+^M$ such that
 $M|\crit(E)\sats$``There is a proper class of Woodins''.
 A premouse $M$ is \emph{non-tame} iff there is $E\in\es_+^M$ and $\delta$
 such that $\crit(E)\leq\delta\leq\nu(E)$ and $M|\lh(E)\sats$``$\delta$ is Woodin''.
 \emph{Meek/tame} means not non-meek/non-tame.
We write $M_{\nt}$ for the least non-tame, sound, $(\om,\om_1+1)$-iterable mouse,
assuming such exists.
\end{newdfn}

We show below that if a
non-tame mouse exists then $M_\nt$ is well-defined; that is, there is a unique
minimal sound non-tame mouse $M$ which projects to $\om$.
We will show that $\univ{M_\nt|\lambda}$
can define $\es^{M_\nt|\lambda}$,
but we need a variant of the results above to achieve this,
as in this case the self-iterability
hypothesis of
\ref{thm:es_def} fails.

\begin{newdfn}
 Let $N$ be a premouse
 and $\Sigma$ a (possibly partial) iteration strategy for $N$.
 We say that $\Sigma$ is \emph{extender-maximal}
iff for every
\[ (k,\Ww,\zeta,P,E,\Ww^+) \]
 potential for $N$, with $\Ww$ via $\Sigma$,
 $\pred^{\Ww^+}(\zeta+1)=0$, and $b^{\Ww^+}$ does not drop in model or degree, then $E\in\es_+(M^\Ww_\zeta)$.
\end{newdfn}

\begin{newlem}\label{lem:unique_ext-max_it}
 Let $R,S$ be passive sound premice
 with $\univ{R}=\univ{S}$, with largest cardinal $\eta$,
 and $R|\eta=S|\eta$. 
 Suppose there are  extender-maximal, above-$\eta$, $(\om,\eta^++1)$-iteration strategies
 for $R,S$.
 Then $R=S$.
\end{newlem}
\begin{proof}
 This is like  \ref{lem:unique_iterable}.
 Let $\Sigma_R,\Sigma_S$ be strategies witnessing the assumption.
 Compare $(R,S)$ using $(\Sigma_R,\Sigma_S)$, producing
 trees $(\Tt,\Uu)$.
 It suffices to see that $(\Tt,\Uu)$ is above $\eta$.
 This follows from extender-maximality.
\end{proof}

Analogously to Theorem \ref{thm:es_def}, we now conclude:

\begin{newtm}\label{thm:es_def_2}
Let $N$ be a premouse satisfying $\PS$,
and $\Sigma$ a $(0,\theta^++1)$-strategy for $N$,
where $\theta=\card(N)$.
For an $N$-cardinal $\eta$,
let
 $\Sigma_\eta$ be the partial strategy induced by $\Sigma$ for trees
 which are in $N$, on $N|(\eta^+)^N$, above $\eta$, and of length $\leq(\eta^+)^N$.
 Suppose that $\Sigma_\eta\in N$ for each such $\eta$.
 Then $N\sats$``$\Sigma_\eta$ is extender-maximal''
 and $\es^N$ is definable over $\univ{N}$, so $\univ{N}\sats$``$V=\HOD$''.
\end{newtm}
\begin{proof}
Because $N$ is iterable, all its proper segments are reasonable.
Note that because $\Sigma_\eta\in N$, $N$ is closed under $\Sigma_\eta$
and $N\sats$``$\Sigma_\eta$ is an above-$\eta$, $(\om,\eta^++1)$-strategy for $N|\eta^+$''.
The extender-maximality of $\Sigma_\eta$ follows from \ref{thm:cohering_simple}.
 
 Now work in $\univ{N}$; we define $\es^N$. We identify $N|\eta$ inductively on $N$-cardinals $\eta$.
 Suppose we have $N|\eta$.
 Then by \ref{lem:unique_ext-max_it} and the properties of $\Sigma_\eta$,
 note that $N|\eta^+$ is the unique premouse $R$
 such that $\univ{R}=\her_{\eta^+}$
 and $R|\eta=N|\eta$ and there is an extender-maximal,
 above-$\eta$, $(\om,\eta^++1)$-iteration strategy for $R$.
\end{proof}

We will next show that whenever
$\lambda$ is a limit cardinal of $M_{\nt}$,
then the hypotheses of \ref{thm:es_def_2} apply to $M_\nt|\lambda$,
with $\Sigma$ the $(0,\om_1+1)$-strategy for $M_\nt|\lambda$
induced by the unique $(\om,\om_1+1)$-strategy for $M_\nt$.
The general method of proof is
standard, but some non-standard details are involved. The method also works for
many mice simpler than $M_{\nt}$.

\begin{lem}\label{lem:Mnt} 

\textup{(}a\textup{)} Let $N$ be a non-tame premouse with no
non-tame proper segment. Let $X\sub\core_0(N)$. Then $\cHull_1^N(X)$ is
non-tame. Therefore if $N$ is $(0,\om_1+1)$-iterable then $M_\nt$ is
well-defined,
\[ M_\nt=\cHull_1^N(\emptyset),\]
$\rho_1(M_\nt)=\om$ and
$p_1(M_\nt)=\emptyset$.

\textup{(}b\textup{)} Let $M$ be a type 3 premouse with
$\nu(M)$ regular in $M$. Let
$\nu(M)=[a,f]^M_{F^M}$ with $a,f\in M$. Let $\Tt$ be a $0$-maximal tree on $M$
such that $i^\Tt$ exists. Let $R=\fin^\Tt$. Then
\[ \nu(R)=[i^\Tt(a),i^\Tt(f)]^R_{F^R}.\]
Therefore if $M$ is non-tame then so is
$R$.
\end{lem}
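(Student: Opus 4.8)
\emph{Part (a).} First I would argue that, since $N$ is non-tame but no proper segment of $N$ is, the witness to non-tameness must be $N$ itself: $N$ is active, and, writing $F=F^N$ and $\kappa=\crit(F)$, there is a cardinal $\delta$ of $N$ with $\kappa\le\delta<\OR^N$ which $F$ overlaps and which $N$ believes to be Woodin --- for if the witnessing active level were a proper $P\pins N$, then $P$ itself would already be a non-tame proper segment of $N$. I would then let $\bar N=\cHull_1^N(X)$ with uncollapse $\pi:\bar N\to N$, which is $\rSigma_1$-elementary (in the type 3 case one works through $N^\sq$ and the amenable coding of $F$, as usual), so that $\bar N$ is active; the task is to produce a Woodin cardinal $\ge\crit(F^{\bar N})$ of $\bar N$ that $F^{\bar N}$ overlaps. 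For this I would use that ``$\delta$ is Woodin'' is $\rPi_1$ over a premouse (the ``there is a witnessing extender'' clause being bounded, ranging over $\es$), hence pulled down by $\pi$ whenever the relevant ordinal lies in $\rg(\pi)$; to drop that assumption I would instead reflect the $\rSigma_1$ statement ``$N$ is active and, for some cardinal $\delta\ge\crit(F^N)$, some level $N|\beta$ is a Woodin-witness for $\delta$'' --- i.e.\ a $Q$-structure-like level whose $\es$-sequence certifies ${<}\delta$-$A$-strongness for every $A\sub\delta$ in $N$. Since $N|\beta\in N$ this is genuinely $\rSigma_1$ over $N$, so $\pi$ carries it to $\bar N$, and $\bar N$ is non-tame.

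For the ``therefore'', I would take any $(0,\om_1+1)$-iterable non-tame premouse, pass to its least non-tame segment $P$ (which inherits iterability and has no proper non-tame segment), and conclude by the above that $\cHull_1^P(\emptyset)$ is non-tame, sound, with $\rho_1=\om$ and $p_1=\emptyset$. Comparing this object with the corresponding one built from any other iterable non-tame premouse --- using $\rho_1=\om$, $p_1=\emptyset$ on both sides together with the usual minimality/Dodd--Jensen argument --- the two coincide; hence $M_\nt=\cHull_1^P(\emptyset)$ is well-defined, is $\ins$ every iterable non-tame premouse, and has $\rho_1(M_\nt)=\om$ and $p_1(M_\nt)=\emptyset$.

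\emph{Part (b).} Write $F=F^M$ and $\nu=\nu(M)=\nu(F)$. Since $i^\Tt$ is the non-dropping iteration embedding it is a near $0$-embedding preserving the active predicate, so $i^\Tt(F)=F^R$ (in the squashed, coded sense). I would apply the shift lemma to the pair $(i^\Tt,i^\Tt)$, copying $F$ along $i^\Tt$ to $F^R$, to obtain
\[ k:\Ult(M,F)\to\Ult(R,F^R) \]
with $k\com i_F=i_{F^R}\com i^\Tt$, with $k$ agreeing with $i^\Tt$ below $\nu$ (so $k(\xi)=i^\Tt(\xi)$ for each generator $\xi<\nu$ of $F$, and $k(a)=i^\Tt(a)$ for finite tuples $a$ from $\nu$). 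Since the generators of $F$ are cofinal in $\nu$ and $i^\Tt$ is cofinal at $\nu$, $k$ maps those generators cofinally into $\nu(R)=\nu(F^R)$, so $k$ is continuous at $\nu$ and $k(\nu)=\nu(R)$. Then, with $\nu(M)=[a,f]^M_F=i_F(f)(a)$,
\[ \nu(R)=k(\nu)=k\bigl(i_F(f)(a)\bigr)=i_{F^R}\bigl(i^\Tt(f)\bigr)\bigl(i^\Tt(a)\bigr)=[\,i^\Tt(a),\,i^\Tt(f)\,]^R_{F^R}, \]
the asserted equality. For the final clause: if $M$ is non-tame of type 3 with $\nu$ regular in $M$, then $F$ overlaps the Woodin $\delta=\nu$, i.e.\ $\crit(F)<\nu<\OR^M$ and $M\sats$``$\nu$ is Woodin''; then $R$ is active, $\crit(F^R)=i^\Tt(\crit F)<\nu(R)<\OR^R$, and, using the representation just obtained, regularity of $\nu$, and elementarity of $i^\Tt$ on the squashes (pull $A\sub\nu(R)$ in $R$ back through $k$, find a strongness witness in $M$, push it forward), $R\sats$``$\nu(R)$ is Woodin''. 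So $F^R$ overlaps the Woodin $\nu(R)$ of $R$, and $R$ is non-tame.

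\emph{The main obstacles.} In (a) the crux will be justifying that the non-tameness configuration is genuinely $\rSigma_1$ over $N$: producing a Woodin-witness level $N|\beta\in N$ and checking that $\pi$ transports it correctly, with extra care (and a reformulation over $N^\sq$) in the boundary case where the Woodin in question is the largest cardinal of $N$. In (b) the crux will be the continuity of $k$ at $\nu$ and, in the final clause, the downward transfer of Woodin-ness --- and regularity of $\nu$ together with cofinality of the generators in $\nu$ are exactly what make these go through.
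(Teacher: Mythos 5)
Both parts of your argument have gaps at exactly the points you flag as the ``main obstacles,'' and they are not minor. For part (a): minimality of $N$ together with the ISC forces the overlapped Woodin to be $\nu(F^N)$ itself, the largest cardinal of $N$ (so $N$ is type 3) --- any Woodin $\delta<\nu(F^N)$ with $\crit(F^N)\le\delta$ would, via the ISC, give a non-tame proper segment of $N$ indexed below $\nu(F^N)$. So your ``boundary case'' is the \emph{only} case, and neither of your two strategies reaches it. The first fails because $\nu(F^N)=\OR(\core_0(N))\notin\rg(\pi)$. The second fails because $\pow(\nu(F^N))\cap N$ strictly exceeds $\pow(\nu(F^N))\cap(N|\beta)$ for every $\beta<\OR^N$, so there is no level $N|\beta\in\core_0(N)$ that certifies Woodinness of $\nu(F^N)$ with respect to all of $\pow(\nu(F^N))\cap N$; the $\rSigma_1$ statement you propose asserts something strictly weaker than Woodinness, and pulling it down to $\bar N$ does not yield that $\nu(F^{\bar N})$ is Woodin in $\bar N$. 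The paper proceeds entirely differently: it observes $\nu(F^N)=[\{\kappa\},f]^N_{F^N}$, where $\kappa=\crit(F^N)$ and $f:\kappa\to\kappa$ maps $\alpha$ to the least Woodin of $N$ above $\alpha$. Since $\kappa,f\in\core_0(N)$, they lie in $\Hull_1^N(X)$, and checking that the representation persists in $H=\cHull_1^N(X)$ directly gives that $\nu(F^H)$ is Woodin in $H$, hence $H$ non-tame.

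For part (b): the step ``$k$ maps those generators cofinally into $\nu(R)$, so $k$ is continuous at $\nu$'' is a non sequitur. The cofinal mapping of generators gives $\sup k``\nu = \nu(R)$, which is the easy inequality; what must be shown is $k(\nu)=\sup k``\nu$, and nothing in your argument produces this. Shift/factor maps are not automatically continuous at limit cardinals, and both hypotheses of the lemma are genuinely used here. The paper's argument: given $\gamma<\psi(\nu(M))$, write $\gamma=[b,g]^R_{F^R}$; since $\Tt$ is $0$-maximal, $(b,g)$ has the form $i^\Tt(h')(a)$ with $h'\in M$, so $\gamma\in i_{F^R}(i^\Tt(h))``\alpha^{<\om}$ for some $h\in M$ and some $\alpha<\nu(R)$; choose $\beta<\nu(M)$ with $\alpha<i^\Tt(\beta)$, giving $\gamma\in\psi(i_{F^M}(h)``\beta^{<\om})$; finally, regularity of $\nu(M)$ in $\Ult(M,F^M)$ bounds $i_{F^M}(h)``\beta^{<\om}$ strictly below $\nu(M)$, so $\gamma<i^\Tt(\delta')<\nu(R)$ for some $\delta'<\nu(M)$. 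Your ``main obstacles'' paragraph correctly identifies that regularity and $0$-maximality are the needed inputs, but the body of the argument does not actually deploy them.
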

\begin{proof}
(a) $N$ is type 3 and $\nu(F^N)=[\{\kappa\},f]^N_{F^N}$ where
$\kappa=\crit(F^N)$ and
\[ f:\kappa\to\kappa \]
is such that $f(\alpha)$ is the
least Woodin $\delta$ of $N$ such that $\delta>\alpha$. Now
$\kappa,f\in\Hull_1^N(X)$. Let $H=\cHull_1^N(X)$ and let
$\kappabar,\fbar\in H$ be the collapses of $\kappa,f$. One can
show
\[ \nu(F^H)=[\{\kappabar\},\fbar]^H_{F^H}.\]
Therefore $H$ is
non-tame.

(b) Let
\[ \psi:\Ult(M^\sq,F^M)\to\Ult(R^\sq,F^R)\]
be induced by
$i^\Tt$; i.e.,
\[ \psi([a,f]^{M^\sq}_{F^M})=[i^\Tt(a),i^\Tt(f)]^{R^\sq}_{F^R}.\]
We need to
see that $\psi$ is continuous at $\nu(F^M)$. So let $\gamma<\psi(\nu(F^M))$. Let
$b,g\in R^\sq$ be
such that $\gamma=[b,g]^{R^\sq}_{F^R}$. Because $\Tt$ is only $0$-maximal, there is
$f\in M^\sq$ and some $a$ such that $(b,g)=i^\Tt(f)(a)$. Therefore there are $h\in M^\sq$ and
$\alpha<\nu(F^R)$ such that
\[ \gamma\in i_{F^R}\com i^\Tt(h)``[\alpha]^{<\om}.\]
There is $\beta<\nu(F^M)$ such that $\alpha<i^\Tt(\beta)$. It follows
that
\[ \gamma\in\psi(i_{F^M}(h)``[\beta]^{<\om}).\]
We may assume
\[ i_{F^M}(h)``[\beta]^{<\om}\sub\nu(F^M).\]
But $\nu(F^M)$ is regular in
$\Ult(M,F^M)$, so $i_{F^M}``[\beta]^{<\om}$ is bounded in $\nu(F^M)$. It
follows that $\gamma<\nu(F^R)$, as required.
\end{proof}

\begin{tm}\label{thm:tamesi} Assume $M_\nt$ exists \tu{(}so is $(\om,\om_1+1)$-iterable\tu{)}.
Let $\Sigma$ be the unique $(\om,\om_1+1)$-strategy for $M_\nt$.
Let $\eta<\OR^{M_\nt}$ be an $M_\nt$-cardinal and $\cc$ be the sup of all $\delta\leq\eta$
such that $\delta=0$ or $\delta$ is Woodin in $M_\nt$.
\tu{(}So $\cc$ is a cutpoint of $M_\nt||\OR(M_\nt)$,
but maybe not a strong cutpoint.\tu{)} Suppose $\cc<\eta$.
Let $\lambda>\eta$ be a limit cardinal of $M_\nt$.

If $\eta=(\cc^+)^{M_\nt}$, let $\xi=\cc$; if $\eta>(\cc^+)^{M_\nt}$,
let $\xi=(\cc^+)^{M_\nt}$. Let
$\Gamma$ be the partial strategy
induced by $\Sigma$ for $0$-maximal above-$\xi$ trees on $M_\nt|\eta$
which are in $M_\nt|\lambda$.

Then $\Gamma$ is a class of $M_\nt|\lambda$.

Therefore $M_\nt|\lambda\sats$``$\Gamma$ is an extender-maximal, above-$\xi$, $(0,\lambda)$-strategy
for $M_\nt|\eta$'', and $\es^{M_\nt|\lambda}$ is definable over $\univ{M_\nt|\lambda}$.
\end{tm}

\begin{newrem}\label{rem:it_error}
A draft of this paper
claimed that $M_\nt|\lambda\sats$``$Z|\eta$ is above-$\cc$, $\eta$-iterable''.
But this is false for some $\cc,\eta,\lambda$, as we will show in \ref{prop:it_error}.
The earlier putative proof of this claim ignored the fact that
if $\cc$ is measurable in $M_\nt$, hence a limit of Woodins, and $E$ is an $M_\nt$-total extender with $\crit(E)=\cc$,
then $\Ult(M_\nt|\eta,E)$ has Woodins above $\lh(E)$,
which means that correct Q-structures for trees using
such an $E$ can be non-tame.
\end{newrem}

\begin{proof}\setcounter{clm}{0}
 The last sentence follows from \ref{thm:cohering_simple}
 (which gives the extender-maximality) and \ref{thm:es_def_2}.
So we just need to see that $\Gamma$ is a class of $M_\nt|\lambda$.
We may easily assume that $\eta$ is a successor cardinal of $M_\nt$
and $\lambda\leq\kappa=\crit(F^{M_\nt})$.
We will consider first the case that $\lambda=\kappa$, and then deduce the general case from this one.
Let $N=M_\nt|\eta$ and $Z=M_\nt|\kappa$.

Note that $\Gamma$ is everywhere Q-structure guided (as $\eta$ is a successor cardinal
and $\cc$ is a cutpoint; $\cc$ might be a measurable limit of Woodins,
but if $(\cc^+)^{M_\nt}<\eta$ then the trees we consider are above $\xi=(\cc^+)^{M_\nt}$).

Let $\Ww\in Z$ be a limit length, above-$\xi$ tree on $N$, via $\Gamma$.
Work in 
$Z$. Let
$\CC=\left<N_\alpha\right>_{\alpha\leq\Omega}$ be the maximal
fully backgrounded $L[\es]$-construction above $N_0=M(\Ww)$,
with all background extenders $E^*$ such that $E^*\in\es^Z$
and $\crit(E^*)>\delta(\Ww)$, and where $\Omega$ is least such that either
$\Omega=\kappa$ or the $\delta(\Ww)$-core $Q$ of $N_\Omega$ is a Q-structure for $M(\Ww)$. (Here each
$N_\alpha$ is a premouse with $M(\Ww)\ins N_\alpha$ and $\delta(\Ww)$ a cutpoint, not just an $M(\Ww)$-premouse.)

If $\Omega<\kappa$ then in $Z$ we can use $Q$ to determine $b$ as usual,
completing the proof in this case (that $\lambda=\kappa$).
So assume $\Omega=\kappa$ for a contradiction. Let
$H=F^{M_{\mathrm{nt}}}$, let $R=i_H(N_\kappa)$, let $\delta=\nu(H)$,
let $G=H\rest R\cross[\delta]^{<\om}$,
let
\[ U=\Ult(R|(\kappa^+)^R,G),\]
let $S=U|(\delta^+)^U=R||(\delta^+)^U$, let $G'$ be the
trivial completion of $G$, and $Q=(S,G')$.
Given an iterate $M'$ of $M_\nt$ and iteration map $j:M_\nt\to M'$ with $\delta(\Ww)<\crit(j)$,
let $Q(M')$ denote ``$j(Q)$'',
that is, the structure defined over $M'$ from $M(\Ww)$ in the same
manner as we have just defined $Q$ over $M_\nt$ from $M(\Ww)$.

\begin{clm*} $Q$ is a non-tame premouse
and is  $(0,\om_1+1)$-iterable above $\delta(\Ww)$.
\end{clm*}

Assume the claim. If $M(\Ww)\neq Q$
then $M(\Ww)\pins Q$
and $\OR^{M(\Ww)}$ is Woodin in $Q$. So comparing $Q$ with the tame $\delta(\Ww)$-sound Q-structure
extending $M(\Ww)$ leads to contradiction, using \ref{lem:Mnt}(b).
So the following completes the proof in this case:

\begin{proof}[Proof of Claim]
In this proof we write $\delta(P)$ to denote $\lrgcrd(P)$, for premice
$P$.

Non-tameness and the ISC for
$Q$ is via the usual proof
that Woodinness is absorbed by $L[\es]$-constructions. For
iterability, we define a $0$-maximal strategy for $Q$, lifting trees $\Tt$
on $Q$ to
$0$-maximal trees $\Tt'$ on $M_\nt$ via our strategy for $M_\nt$, using a variant of the lift/resurrect
procedure of \cite[\S12]{fsit}. We assume the reader is familiar with that
procedure, and just explain the differences.
The differences arise because $Q$ is not built by a standard
$L[\es]$-construction: $S$ is
built by $i_F(\CC)$ in $\Ult(M_\nt,F)$, but $F^Q$ is not available to
$\Ult(M_\nt,F)$. 

We will decompose $\Tt$ into the form $\Uu\conc\Vv$, where $(\Uu,\Vv)$ is a stack
of two normal trees,
and decompose $\Tt'$ into the form $\Uu'\conc\left<F\right>\conc\Vv'$, also a stack of two normal
trees $(\Uu'\conc\left<F\right>,\Vv')$.
We lift $\Uu$ to $\Uu'$ via almost the usual procedure. The tree
$\Vv$ begins at
the least $\theta$ such that the appropriate ancestor for $E^\Tt_\theta$ is not
directly available to $\core_0(M^{\Tt'}_\theta)$. We set
$F=F(M^{\Tt'}_\theta)$, and then lift $\Vv$ to $\Vv'$ via the
usual procedure.

The tree $\Uu'$ results from lifting/resurrection of $\Uu$ to
$M_\nt$, except that for each $\alpha<\lh(\Uu)$, if
$[0,\alpha]_\Uu$ does not drop then the lift of $M^\Uu_\alpha$ is
$Q(M^{\Uu'}_\alpha)$ (the ``image'' of $Q$), whereas if $[0,\alpha]_\Uu$ drops then the lift of
$M^\Uu_\alpha$ is $\core_{n_\alpha}(N_{\gamma_\alpha}^{\CC_\alpha})$, where
\[ \CC_\alpha=i^{M^{\Uu'}_\alpha}_{F(M^{\Uu'}_\alpha)}(
i^{\Uu'}_{0,\alpha}(\CC)),\]
$\gamma_\alpha<\lh(\CC_\alpha)$,
and $n_\alpha=\deg^\Tt(\alpha)$. So the lift is in
$\Ult(M^{\Uu'}_\alpha,F(M^{\Uu'}_\alpha))$,
but may not be in
$M^{\Uu'}_\alpha$. Likewise for partial resurrections. Resurrection,
if necessary, is computed in $\Ult(M^{\Uu'}_\alpha,F(M^{\Uu'}_\alpha))$.

We choose $\Uu$ as long as possible such that for all
$\alpha+1<\lh(\Uu)$,
\begin{enumerate}[label=(\roman*)]
 \item if $[0,\alpha]_\Uu$ does not drop then either
$E^\Uu_\alpha\in\core_0(M^\Uu_\alpha)$ or $E^\Uu_\alpha=F(M^\Uu_\alpha)$,\footnote{As $Q$ is type 3, when $[0,\alpha]_\Uu$ does not drop,
we can have $\nu(M^\Uu_\alpha)<\lh(E^\Uu_\alpha)<\OR(M^\Uu_\alpha)$,
in which case (i) fails.}and
 \item letting $E^*=E^{\Uu'}_\alpha$, the ancestor of the lift of
$E^\Uu_\alpha$,
either $E^*\in\core_0(M^{\Uu'}_\alpha)$ or
$E^*=F(M^{\Uu'}_\alpha)$ (the latter occurs just when $[0,\alpha]_\Uu$ does not
drop and $E^\Uu_\alpha=F(M^\Uu_\alpha)$).
\end{enumerate}

Let $\beta=\pred^\Uu(\alpha+1)$ and $k=\deg^\Uu(\alpha+1)$. Let
\[ \sigma:M^{*\Uu}_{\alpha+1}\to\core_k(N^{\CC_\beta}_\eta)\]
be the partial
resurrection map determined by $\crit(E^\Uu_\alpha)$. If
$\core_k(N^{\CC_\alpha}_\eta)\in\core_0(M^{\Uu'}_\alpha)$ or
$\core_k(N^{\CC_\alpha}_\eta)=\core_0(Q(M^{\Uu'}_\alpha))$ then define
$\pi_{\alpha+1}$ as usual. Suppose otherwise. Then
\[ \Ult_0(\Ult_0(M^{\Uu'}_\beta,F(M^{\Uu'}_\beta)),E^{\Uu'}
_\alpha)=\Ult_0(M^{\Uu'}_{\alpha+1},F(M^{\Uu'}_{\alpha+1}))\] and the ultrapower
maps commute. Let
\[ j:\Ult_0(M^{\Uu'}_\beta,F(M^{\Uu'}_\beta))\to\Ult_0(M^{\Uu'}_{\alpha+1},F(M^{
\Uu' }_{\alpha+1}))\]
be the resulting ultrapower map (via $E^{\Uu'}_\alpha$). Then
we define $\pi_{\alpha+1}$ in the usual way, except that $j$ replaces
$i^{\Uu'}_{\beta,\alpha+1}$.

Note that by our choice of $\Uu$, the first drop in
model along any $\Uu$-branch is to some
$M^{*\Uu}_{\alpha+1}\pins\core_0(M^\Uu_\beta)$, where
$\beta=\pred^\Uu(\alpha+1)$. So $M^{*\Uu}_{\alpha+1}\in\dom(\pi_\beta)$. (But
the resurrection of $\pi_\beta(M^{*\Uu}_{\alpha+1})$
may not be in $M^{\Uu'}_\beta$.)

If $\Vv=\emptyset$ then
$F,\Vv'=\emptyset$.

Suppose $\Vv\neq\emptyset$.
Then $\lh(\Uu)$ is a successor $\theta+1$, and either (i) or (ii) above fails
for $\alpha=\theta$. We set
$E^{\Tt'}_{\theta}=F=F(M^{\Uu'}_\theta)$.
Let $\lambda=i_F(\crit(F))$. We will define $\Tt'=\Uu'\conc\left<F\right>\conc\Vv'$, indexed with
\[ (0,1,\ldots,\theta,\theta*,\theta+1,\ldots). \]
Let
$M=M^{\Uu'}_\theta$. Then $M^{\Tt'}_{\theta*}|\lambda=\Ult(M,F)|\lambda$
and
$i^{\Tt'}_{0,\theta*}(\CC)=C_\theta$. Suppose (i) fails
for $\alpha=\theta$. Let
\[ \psi:\Ult(M^\Uu_\theta,F(M^\Uu_\theta))\to Q(\Ult(M,F))\]
be induced by
$\pi_\theta$, through the Shift Lemma, and let
$E^*=\psi(E^\Uu_\theta)$. By \ref{lem:Mnt}(b),
$\psi(\delta(M^\Uu_\theta))=\delta(M^{\Uu'}_\theta)$, so
$\delta(M^{\Uu'}_\theta)<\lh(E^*)$. If instead (i) holds, use
$\psi=\pi_\theta$ to lift $E^\Uu_\theta$ to an extender $E^*$. In either case
let $E^{\Tt'}_{\theta*}$ be the ancestor of $E^*$, according to
$\CC_\theta$. Note that $\lh(F)$ is a cutpoint of $M^{\Uu'}_{\theta^*}$, and
$\lh(F)<\crit(E^{\Tt'}_{\theta*})$.

Let $\sigma$ be the map resurrecting $E^*$ to $E^{\Tt'}_{\theta*}$. Let
$\gamma\in\OR$ be least such that
$\sigma(\psi(\gamma))\geq\delta(M^{\Uu'}_\theta)$. Then $\gamma$ is a limit
cardinal cutpoint of $M^\Tt_\theta|\lh(E^\Tt_\theta)$. For the limit cardinality
is clear; and if $G\in\es_+(M^\Tt_\theta|\lh(E^\Tt_\theta))$ overlaps
$\gamma$ then the ancestor $G'$ of $\sigma(\psi(G))$ is such that
$\crit(G')<\delta(M^{\Uu'}_\theta)<\lh(G')$, contradicting the tameness of
$\Ult(M,F)|\lambda$. Also, $\gamma<\lh(E^\Tt_\theta)$.

Let $\Vv$ be the remainder of $\Tt$, so $\Tt=\Uu\conc\Vv$. Then by the
previous paragraph, $\Vv$ is on $M^\Uu_\theta$ and is above $\gamma$. We
lift $\Vv$ to a normal tree $\Vv'$ on $M^{\Tt'}_{\theta*}$, above
$\delta(M^{\Tt'}_\theta)$. For this we use the standard process; we need not
lift to any $Q(M^{\Vv'}_\alpha)$. This is because $[0,\beta]_\Tt$ drops in model
for every $\beta$ such that $\theta<_\Tt\beta$.
For suppose  $[0,\theta]_\Tt$ does not drop
and let $\alpha+1<\lh(\Tt)$ with $\pred^\Tt(\alpha+1)=\theta$; then
$\lh(E^\Tt_\theta)$ has cardinality
$\leq\gamma$ in $M^\Tt_\theta$, and therefore $\alpha+1\in D^\Tt$.
If (i) fails this is immediate, and  (i) holds it is because
$\gamma<\lh(E^\Tt_\theta)<M^\Tt_\theta)$
and $\psi(\gamma)<\delta(M^{\Tt'}_\theta)$,
but $\sigma(\psi(\gamma))\geq\delta(M^{\Tt'}_\theta)$,
and $\sigma$ results from resurrection of projecting structures (so if $(\gamma^+)^{M^\Tt_\theta}<\lh(E^\Tt_\theta)$,
then $\sigma(\psi(\gamma))=\psi(\gamma)$, contradiction).
We leave the remaining details to the reader.
\renewcommand{\qedsymbol}{$\Box$(Claim)}\end{proof}

Now consider the case that $\lambda\leq\kappa$ is a limit cardinal of $M_\nt$.
 As before,
we have $\Ww\in Z=M_\nt|\lambda$ and want to compute $\Gamma(\Ww)$.
We simply look for some $R$
with $N\pins R\pins M_\nt|\lambda$
and $\Ww\in R\sats\ZFC$ and $R\sats$``My $L[\es]$-construction
above $M(\Ww)$ reaches a Q-structure $Q$ for $M(\Ww)$
as above''. We  use this $Q$ to compute $b$. Since $M_\nt|\kappa$ 
has these properties and $\lambda\leq\kappa$ is a limit cardinal of $M_\nt$,
by condensation there is also some such $R\pins M|\lambda$, and any
such $R$ computes the correct Q-structure, so we are done. This completes the proof of the theorem.
\end{proof}

\begin{rem}
The methods above can be adapted to many other canonical
tame mice,
for example,
the least proper class mouse with an inaccessible limit of Woodin cardinals, etc.

It is known that under sufficient large cardinal assumptions
and for sufficiently complex reals $x$, $\HOD^{L[x]}\inter\RR=M_1\inter\RR$.
So if $x$ is a sound mouse projecting to $\om$ which extends $M_1^\#$,
then $L[x]$ is a mouse satisfying ``$\RR\not\sub\HOD$''.

Steel asked: (1)
 Suppose $M$ is a mouse satisfying $\ZFC$. Does $\univ{M}$ satisfy
``$V=\HOD_X$ for some $X\sub\om_1^M$''? Naturally we can also consider the
variant of (1) in which we require $X$ to be a real.

Some related questions are: (2) To what extent can the methods
used to prove \ref{thm:tamesi} be adapted to more arbitrary (non-``canonical'') tame mice?
(3) Do or can non-tame mice known enough of their own iteration strategies
in order that one can answer (1) using the methods of this paper?

It turns out that non-tame mice very typically do \emph{not} know enough of
their own iteration strategy to suffice for (3).
This is demonstrated in \ref{prop:nontame} below, which is a simple variant of an observation
possibly due to Steel; see \cite[1.1]{sile}.
However, the author has since answered (1) positively; the proof uses methods quite different to those of this paper.
Moreover, if  $M$ is tame then  we may take $X$ to be a real, but it is not known to the author whether
the same holds for non-tame $M$.
These results are to appear.
The author has also shown
that various canonical non-tame mice, for example $M_{\mathrm{adr}}$, satisfy ``$V=\HOD$''.
\end{rem}

\begin{prop}\label{prop:nontame}
Let $N$ be a countable premouse satisfying $\ZFC$ and
$\Sigma$ an $(\om,\om_1+1)$-strategy
for $N$. Let $P\pins N$ be active and suppose $\tau,\delta,\eta$ are such that
\[ \crit(F^P)<\tau<\delta<\OR^P\leq\eta<(\tau^+)^N, \]
$\tau$ is a cardinal of $N$,
$P\sats$``$\delta$ is Woodin'', and
 $\eta$ is a cutpoint of $N$.
Let $\Sigma'$ be the strategy induced by $\Sigma$, for $0$-maximal trees
$\Tt$ on $P$ such that $\Tt\in N$, $\Tt$ is above $\tau$, and of
length $\leq(\tau^+)^N$.\footnote{That is, $\Sigma'(\Tt)$ is defined for such $\Tt$ of length $\leq(\tau^+)^N$.}
Then
$\Sigma'\notin N$.\end{prop}

\begin{proof}
For simplicity we assume that
$\delta<\nu_E$. The argument in this case can be adapted to the case that
$\delta=\nu_E$, using part of the argument for \ref{lem:Mnt}.

Suppose $\Sigma'\in N$. Let $\kappa$ be the least measurable of $P$ such that
$\kappa>\tau$. Let $\BB$ be the $\delta$-generator extender
algebra of $P$ at $\delta$,
using extenders $E$ with $\crit(E)>\kappa$. Working in $N$, let
$\Tt$ on $P$ be given by first iterating $\kappa$ out past $\eta$
and then making $N|\eta$
generic for $i^\Tt(\BB)$ over $\fin^\Tt$. Because $\Sigma'\in N$, this succeeds
with $\Tt\in N$ of length $<(\tau^+)^N$. Let $E=F(\fin^\Tt)$, so $\crit(E)=\crit(F^P)<\tau$,
$E$ is $N$-total
and $\Ult(N,E)$
is wellfounded, because $\Sigma'$ is induced by $\Sigma$. Moreover, $N|\eta$ is generic over
$\Ult(N,E)$. Let $Q\ins N$ be least such that
$\rho_\om^Q=\tau$ and $i^\Tt(\delta)\leq\OR^{Q}$. Then
\[ Q\notin\Ult(N,E)[N|\eta] \]
since $\delta$ is a cardinal in $\Ult(N,E)[N|\eta]$.
But $Q\in\Ult(N,E)[N|\eta]$ as in the proof of \cite[1.1]{sile},\footnote{Sketch: In the generic extension after collapsing $\eta$,
we can search for a $Q$ of the right form, extending $N|\eta$, and embedded
into $i_E(Q)$. But then by uniqueness we get $Q$ in the ground model.}
contradiction.\renewcommand{\qedsymbol}{$\Box$(\ref{prop:nontame})}\end{proof}

We finally adapt the example above to show that
the above-$(\cc^+)^{M_\nt}$ iterability of $M_\nt|\eta$ in $M_\nt$ 
established in \ref{thm:tamesi}
cannot in general be improved to above-$\cc$ iterability,
and that the same holds for typical non-meek mice. The example also indicates that if non-meek
mice satisfy ``$V=K$'' (whatever this means)
then there must be  new difficulties involved in the proof,
as the core model $K$ involved would be less iterable than that in the case of $M_n$:

\begin{newprop}\label{prop:it_error} Let $N$ be a countable premouse satisfying $\ZFC$
and $\Sigma$ a $(0,\om_1+1)$-strategy for $N$.
Let $\cc$ be a limit of Woodin cardinals of $N$ and
suppose there is $E\in\es^N$ such that
\[ \cc=\crit(E)<(\cc^+)^N<\lh(E).\]
Suppose there is a cutpoint $\dd$ of $N$ such that $\lh(E)\leq\dd<\OR^N$.
Then
\[ N\sats\text{``}N|\lh(E)\text{ is not above-}\cc\text{, }(0,\dd^++1)\text{-iterable''.}\]
\end{newprop}
\begin{proof}
 Suppose otherwise. Let $E\in\es^N$ be least witnessing the assumption.
 Let $\Ubar=\Ult(N|(\cc^+)^N,E)$.
 Then $i^N_E(\cc)$ is a limit of Woodins of $U$.
 Let $\delta$ be the least Woodin of $\Ubar$ such that $\delta>\lh(E)=(\cc^{++})^\Ubar$.
Form an above-$\lh(E)$ tree $\Uu$ on $\Ubar$,
first linearly iterating past $\dd$, and then iterating to make $N|\dd$ generic for the above-$\lh(E)$ extender algebra of $M^\Tt_\infty$
at $\delta^*=i^\Uu(\delta)$.
By the iterability assumption, this succeeds,
with $\lh(\Uu)<(\dd^+)^N$. Let $\Tt=E\conc\Uu$.
Now 
$\Tt$ can be considered a tree $\Tt'$ on $N$, with last model $W$
($\Tt'$ has wellfounded models; otherwise pull a counterexample
to wellfoundedness down below $N|(\cc^+)^N$ for a contradiction).
Let $P\pins N$ be least such that $\delta^*\leq\OR^P$ and $\rho_\om^P<\delta^*$.
Let $P^*=i^{\Tt'}(P)\in W$.

Now $\delta^*$ is regular in $W[N|\dd]$, so $P\notin W[N|\dd]$.
By absoluteness as in the proof of \ref{prop:nontame},
it follows that in $W[N|\dd][G]$, where $G\sub\Coll(\om,\delta^*)$
is $W[N|\dd]$-generic, there are $P_1,P_2,\pi_1,\pi_2$
such that $P_i$ is a sound premouse, $P_1\nins P_2\nins P_1$, $N|\dd\ins P_i$, $\dd$ is a cutpoint of $P_i$,
$\rho_\om^{P_i}<\delta^*$, and
$\pi_i:P_i\to P^*$
is elementary.
The existence of such objects embedded into $P^*$ is therefore forced over $W$ by $\Coll(\om,\delta^*)$.
This pulls back elementarily to $N,P$ and 
some Woodin $\delta'<\cc$. But this contradicts the iterability of
$N$ in $V$.
\end{proof}

\section{Mixed bicephali}\label{sec:bicephali}
In this section we generalize Mitchell and Steel's
``Uniqueness of the next extender''
result from \cite[\S9]{fsit}:

\begin{newtm}\label{thm:uniquemaxLEcon}
Let $R$ be an $(\om_1+1)$-iterable countable transitive model of $\ZFC$.
Work in $R$. Let $\CC=\left<N_\alpha\right>_{\alpha\leq\lambda}$ be a fully backgrounded
$L[\es]$-construction. Suppose there
are fully backgrounded extenders $E,F\neq\emptyset$ such that $(N_\lambda,E)$ and $(N_\lambda,F)$ are
premice. Then $E=F$.\footnote{We leave the precise interpretation of ``fully backgrounded''
to
the reader; it should be enough to guarantee the $(0,\om_1+1)$-iterability of
$B$ in the proof.}\end{newtm}

The assumption that $R\sats\ZFC$ is made for simplicity; one can as usual get by with less than this.
In the case that $E,F$ are either both type 2, or neither is type 2,
this already follows from the bicephalus arguments in \cite{fsit}.
The remaining case, that say $E$ is type 2 and $F$ is not,
was not considered in \cite{fsit} (this case was not necessary for the results there).
However, the uniqueness question in this case is of course also natural, and we verify it here.

In what follows we use $\delta(P)=\delta^P$ to denote $\lrgcrd(P)$.

\begin{dfn}
A \emph{mixed bicephalus} is a structure $(N,E,F)$, where $(N,E)$ is a
type 2 premouse and $(N,F)$ is a type 1 or type 3 premouse.

A \emph{mis-bicephalus} is
a structure $(N,E,F)$, where $(N,E)$ is a
type 2 premouse and $(N,F)$ is a segmented-premouse.\footnote{``mis-''
abbreviates ``mixed segmented-''. \emph{Segmented-premouse} is defined in
\ref{dfn:segmented}.}

Given a mixed or mis-bicephalus $B=(N,E,F)$,
$F^B_0=E$ and $F^B_1=F$.
\end{dfn}

Remarks similar to those in \ref{rem:segmented}, regarding fine structure and
ultrapowers, also apply to mis-bicephali. Ultrapowers of such are
always constructed at the unsquashed level. This works like for type 2
bicephali in \cite[\S9]{fsit}.
The reader should now recall the two examples of proper seg-pms in \ref{rem:segmented};
in the comparison argument to follow we need to deal with these and more general examples.

In a normal iteration tree $\Tt$, the exchange ordinal $\nu^\Tt_\alpha$ associated with extender $E^\Tt_\alpha$
is usually the strict sup of generators $\nu(E^\Tt_\alpha)$.
For our trees on mixed bicephali, it is convenient to tweak this in special cases;
the tweak makes the proof of iterability of bicephali slightly smoother.

\begin{dfn}\label{dfn:iterationgame}Let $N$ be a mixed bicephalus. The
\emph{maximal iteration game on $N$} is defined as usual, except that for
$\alpha+1<\lh(\Tt)$: Set
$\nu^\Tt_\alpha=\nu(E^\Tt_\alpha)$, unless $[0,\alpha]_\Tt$ does not drop
and $E^\Tt_\alpha=F_1(M^\Tt_\alpha)$, in which case set $\nu^\Tt_\alpha=\delta(M^\Tt_\alpha)$.\footnote{Steve Jackson
noticed that we could have instead set $\nu^\Tt_\alpha=\nu(E^\Tt_\alpha)$ in all
cases. However, doing so would slightly complicate the standard proof of iterability of
the bicephalus (when the bicephalus is constructed by fully backgrounded $L[\es]$-construction in an iterable background universe).}
Then $\pred^\Tt(\alpha+1)$ is the least
$\beta$ such that $\crit(E^\Tt_\alpha)<\nu^\Tt_\beta$.
We say $N$ is \emph{$\alpha$-iterable} iff player $\Two$ has a
winning strategy in the maximal game on $N$ of length $\alpha$.
\end{dfn}

\begin{lem}\label{lem:bicephalus} There is no $(\om_1+1)$-iterable mixed
bicephalus.\end{lem}

From now on we write \emph{bicephalus} for \emph{mis-bicephalus}.

\begin{proof}
     Suppose $P$ is otherwise. We may assume $P$ is countable. As
in 
\cite[\S9]{fsit}, we compare $P$ with itself, producing padded $0$-maximal trees
$\Tt,\Uu$.
For $\alpha+1<\lh(\Tt)$, say $E^\Tt_\alpha$ is
\emph{$\Tt$-special} iff
$[0,\alpha]_\Tt$ does not drop and $E^\Tt_\alpha=F_1(M^\Tt_\alpha)$. We may and
do require that if $E^\Tt_\alpha$ is $\Tt$-special then
$E^\Uu_\alpha\neq\emptyset$ (we have
\[ F_0(M^\Tt_\alpha)\neq F_1(M^\Uu_\alpha),\]
because $F_0^P\neq F_1^P$). Likewise for $\Uu$.

     We claim that the comparison fails. This is as in \cite[\S9]{fsit},
but we discuss a detail not discussed there. Suppose the
comparison succeeds, with
\[ Q=\fin^\Tt\ins R=\fin^\Uu.\]
Then $b^\Tt$ drops (otherwise $F_0^Q\neq F_1^Q$ and the comparison can continue). In
\cite{fsit}, it is argued that therefore
$Q$ is unsound, so $Q=R$, so $b^\Uu$ drops, and standard fine structure leads to
contradiction. The appeal to fine structure assumes
that enough extenders used in $\Tt,\Uu$ are close to the models to which
they apply. But \cite[6.1.5]{fsit} was not proven for
bicephali. We deal with this by proving:

\begin{enumerate}[label=($*$\arabic*)]
\item\label{*1} If $[0,\delta+1]_{\Tt}$ drops then $E^{\Tt}_\delta$ is close
to
$(M^*_{\delta+1})^{\Tt}$; likewise for $\Uu$.\footnote{Establishing
``closeness'' in general would require consideration of $\Sigma_1^{(M,F_1,F_2)}$
in the language using both $F_1,F_2$, which we prefer to avoid. We could
alternatively make do with semi-closeness,
using \ref{cor:k+1solid},
but we also need the consequences of the proof of $(*1)$ later.}
\end{enumerate}

     Suppose that $E^\Tt_\alpha$ is
$\Tt$-special. Let
$\kappa=\crit(E^\Tt_\alpha)$. If $\gamma\in[0,\alpha]_\Tt$ is least
such that $\gamma=\alpha$ or
$\crit(i^\Tt_{\gamma\alpha})>\crit(F_1(M^\Tt_\gamma))$, then
$\kappa=\crit(F_1(M^\Tt_\gamma))$. So for $\delta<\gamma$, we have
$\nu^\Tt_\delta<\kappa$, and if $\gamma<\alpha$, then
$\kappa<\crit(i^\Tt_{\gamma\alpha})<\nu^\Tt_\gamma$. Therefore
$\pred^\Tt(\alpha+1)=\gamma$ and
$\crit(E^\Tt_\alpha)=\crit(F_1(M^\Tt_\gamma))$.

     Applying this to all $\Tt$-special extenders, we get:
\begin{enumerate}[resume*]
 \item\label{*2} If $E^\Tt_\alpha$ is $\Tt$-special, $\gamma$ is as above
and $\delta+1\in(\gamma,\alpha]_\Tt$, then $E^\Tt_\delta$ is not $\Tt$-special.
\end{enumerate}

By these and similar considerations regarding those
$\alpha$ such that $[0,\alpha]_\Tt$ does not drop and $E^\Tt_\alpha=F_0(M^\Tt_\alpha)$,
we have:
\[ \text{If }[0,\delta+1]_\Tt\text{ drops then either }\lh(E^\Tt_\delta)<\OR(M^\Tt_\delta)\text{ or }[0,\delta]_\Tt\text{ drops.}\]
By this and the argument of \cite[6.1.5]{fsit}, \ref{*1} follows.

     We examine the generators of $\Tt$-special extenders. Given a
short extender $E$ weakly amenable to a premouse $M$,
let $\varrho(E)$ denote the least
$\varrho\geq(\kappa_E^+)^M$ such
that $E\rest\varrho\notin\Ult_0(M,E)$ and $E\rest\varrho$ is not type Z. (Note that $\varrho(E)$ is independent of the choice of $M$.)

     If $M$ is a premouse and $E\in\es_+^M$, then
$\varrho(E)=\nu(E)$. Suppose $[0,\alpha]_\Tt$ is
non-dropping. Then
$\varrho(F_1(M^\Tt_\alpha)) = \sup i^\Tt_{0\alpha}``\delta(P)$.
Moreover,
\[ \varrho(F_1(M^\Tt_\alpha)) < \nu(F_1(M^\Tt_\alpha)) \]
    iff there is $\beta<_\Tt\alpha$ such that
    \begin{equation}\label{eqn:bicephalus_tp3_projectum}      
     i^\Tt_{0\alpha}``\delta(P)=
     i^\Tt_{0\beta}``\delta(P)\sub\crit(i^\Tt_{\beta\alpha}).
    \end{equation}
    In this case, we have that
    \begin{equation}\label{eqn:bicephalus_tp3_sup_gens}
    \nu(F_1(M^\Tt_\alpha))=\sup_{\gamma<\alpha}(\nu'(E^\Tt_\gamma)),
    \end{equation}
    where $\nu'(E)=\nu(E)$ if $E$ is type 2 or type 3, and $\nu'(E)=\crit(E)+1$
if $E$ is type 1. These statements can be proven inductively along the branch
$[0,\alpha]_\Tt$, using \ref{lem:Dsp3} and \ref{*2} for the successor case, and
$\rPi_1$ elementarity in the limit case.

    It follows that if $[0,\alpha]_\Tt$ is non-dropping, then
$\nu(F_1(M^\Tt_\alpha))\leq\delta(M^\Tt_\alpha)$, and therefore $M^\Tt_\alpha$
is a bicephalus, and player $\Two$ does not win through
\ref{dfn:iterationgame}(a).

     So the comparison fails, reaching length $\om_1+1$.
We take a hull and get $\pi:H\to V_\theta$ elementary with $H$
countable. Let $\kappa=\crit(\pi)$. Then $M^\Tt_\kappa\in H$ and $\pi\rest
M^\Tt_\kappa=i^\Tt_{\kappa\om_1}$, and likewise with $\Uu$. Also
\[
M^\Tt_\kappa|(\kappa^+)^{M^\Tt_\kappa}=M^\Uu_\kappa|(\kappa^+)^{M^\Uu_\kappa},\]
and the $(\kappa,\om_1)$-extenders derived from $i^\Tt_{\kappa,\om_1}$ and $i^\Uu_{\kappa,\om_1}$ are
identical. Denote this extender $G$. We have
$M^\Tt_{\om_1}|\om_1=M^\Uu_{\om_1}|\om_1$, so $M^\Uu_{\om_1}$ agrees
with $M^\Uu_{\om_1}$ about $V_{\om_1}$.

    Let $\alpha+1=\min(\kappa,\om_1]_\Tt$ and
$\beta+1=\min(\kappa,\om_1]_\Uu$. So $E^\Tt_\alpha$, $G$ and $E^\Uu_\beta$
measure the same sets and are compatible through
$\min(\nu^\Tt_\alpha,\nu^\Uu_\beta)$. Now
\[ \varrho(G)=\varrho(E^\Tt_\alpha)\leq\nu(E^\Tt_\alpha)\leq\nu^\Tt_\alpha.\]
Likewise for $\Uu,\beta$, so
$\varrho(E^\Tt_\alpha)=\varrho(G)=\varrho(E^\Uu_\beta)$.

\setcounter{case}{0}
\begin{case}\label{case:varrho=nu}
$\nu(E^\Tt_\alpha)=\varrho(E^\Tt_\alpha)=\varrho(G)=\varrho(E^\Uu_\beta)=
\nu(E^\Uu_\beta)$.

    If $\alpha=\beta$, then also $\lh(E^\Tt_\alpha)=\lh(E^\Uu_\beta)$, so the
compatibility implies $E^\Tt_\alpha=E^\Uu_\beta=E^\Uu_\alpha$, contradiction. So
assume $\alpha<\beta$. Then
$\nu(E^\Uu_\beta)<\lh(E^\Tt_\alpha)<\lh(E^\Uu_\beta)$, and $\lh(E^\Tt_\alpha)$
is a cardinal in $M^\Uu_\beta|\lh(E^\Uu_\beta)$, so $E^\Uu_\beta$ is indexed too
late for a premouse, so is $\Uu$-special. So $M^\Uu_\beta$
has form $(N,F_0,E^\Uu_\beta)$, and by our rules on
using $\Uu$-special extenders, $E^\Tt_\beta\neq\emptyset$. So
$M^\Tt_\beta|\OR^N$ is active.

Now $\OR^N$ is a cardinal of $\Ult_0(M^\Uu_\beta,E^\Uu_\beta)$. But
$E^\Tt_\alpha\rest\nu(E^\Tt_\alpha)=E^\Uu_\beta\rest\nu(E^\Uu_\beta)$ and
$M^\Tt_\alpha|\kappa=M^\Uu_\beta|\kappa$. So $\Ult_0(M^\Tt_\alpha,E^\Tt_\alpha)$
and $M^\Tt_{\alpha+1}$ agree with $\Ult_0(M^\Uu_\beta,E^\Uu_\beta)$ past
$\OR^N+1$, and agree with $M^\Uu_\beta$ strictly below
$\OR^N$. But then for all $\gamma\in(\alpha,\beta)$,
$E^\Tt_\gamma=\emptyset$. So $M^\Tt_\beta=M^\Tt_{\alpha+1}$, so
$M^\Tt_\beta|\OR^N$ is passive, contradiction.
\end{case}

\begin{case}
\label{case:varrho<nu}
$\varrho(E^\Tt_\alpha)<\nu(E^\Tt_\alpha)$.

By the case hypothesis, $E^\Tt_\alpha$ is $\Tt$-special. We digress
for a moment.

\begin{dfn}Let $E$ be a short extender, weakly amenable over $N$. For
$\varrho\leq\nu(E)$,
we say $E\rest\varrho$ is a \emph{natural segment of $E$} iff either
(i) $\varrho=0$, or (ii)
$\varrho\geq(\kappa_E^+)^N$ and $\nu(E\rest\varrho)=\varrho$ and
\[ \pow(\varrho)\inter\Ult_0(N,E)=\pow(\varrho)\inter\Ult_0(N,
E\rest\varrho).\]
Let the \emph{segmentation} of $E$ be
\[ S=\{\varrho\mid E\rest\varrho\text{ is a natural segment of }E\}.\]
For
$\sigma,\varrho\in S$
with
$\sigma\leq\varrho$ let
\[ j_{\sigma\varrho}:\Ult_0(N,E\rest\sigma)\to\Ult_0(N,E\rest\varrho) \]
be the
factor embedding, and if $\sigma<\varrho$, let
$\kappa_\sigma=\crit(j_{\sigma\varrho})$. If
$\varrho=\min(S\cut(\sigma+1))$, let $G_\sigma$ be the
$(\kappa_\sigma,\varrho)$ extender derived from $j_{\sigma\varrho}$. We call
the extenders $G_\sigma$ the \emph{natural factors} of $E$.
\end{dfn}

    If $N$ is a premouse and $E=F^N$, then clearly $S=\{0,\nu(E)\}$. If
$E=E^\Tt_{\alpha'}$ is $\Tt$-special, then
$\varrho(E)=\min(S\cut\{0\})$. Let
$\beta\in[0,\alpha']_\Tt$
be least as in line (\ref{eqn:bicephalus_tp3_projectum}) (with $\alpha'$
replacing $\alpha$). Then $S\cut\{0\}$
consists of those ordinals $\varrho$ of either the form $\varrho=\varrho(E)$,
or the form $\varrho=\nu(E^\Tt_\delta)$ for some $\delta$ such that
$\delta+1\in(\beta,\alpha']_\Tt$. Moreover,
\[ G_0=E^\Tt_{\alpha'}\rest\varrho(E^\Tt_{\alpha'})\]
and if $\sigma\in S\cut\{0\}$ and there is $\varrho\in S$ such that
$\sigma<\varrho$ then
\[ G_\sigma=E^\Tt_\delta\rest\nu(E^\Tt_\delta),\] where $\delta$ is least
such
that $\delta+1\leq_\Tt\alpha'$ and $\crit(E^\Tt_\delta)\geq\sigma$. Further,
$E^\Tt_\delta$ is not $\Tt$-special. These remarks follow
from the calculations in \cite[4.3 and following Remark]{cmip}
using fact \ref{*2} and \ref{lem:extass}, much as in the
proof of \ref{lem:Ddecomp}.

We now continue with Case \ref{case:varrho<nu}. Let
$S=\left<\varrho_\xi\right>_{\xi<\om_1}$ be the segmentation of $G$ and let
$\left<G_\sigma\right>_{\sigma\in S}$ be the sequence of
natural factors of $G$. This sequence is just
the concatenation of the sequences of natural factors of the extenders
$E^\Tt_\delta$ for $\delta+1\in(\kappa,\om_1]_\Tt$. Likewise for the natural
factors of the extenders $E^\Uu_\delta$ for $\delta+1\in(\kappa,\om_1]_\Uu$.

We have $\varrho_0=0$ and $\varrho_1=\varrho(E^\Tt_\alpha)<\nu(E^\Tt_\alpha)$,
and
$E^\Tt_\alpha$ is $\Tt$-special. So
\[ G_{\varrho_0}=E^\Tt_\alpha\rest\varrho_1=E^\Uu_\beta\rest\varrho_1\]
and there is $\gamma_0$ such that $E^\Tt_{\gamma_0}$ is non-$\Tt$-special and
$\varrho_2=\nu(E^\Tt_{\gamma_0})$ and
\[ G_{\varrho_1}=E^\Tt_{\gamma_0}\rest\varrho_2.\]
Now if $\varrho_1<\nu(E^\Uu_\beta)$ then there is $\gamma'$ such that
$E^\Uu_{\gamma'}$ is non-$\Uu$-special and $\varrho_2=\nu(E^\Uu_{\gamma'})$ and
\[ G_{\varrho_1}=E^\Uu_{\gamma'}\rest\varrho_2, \]
but then $(M^\Tt_{\gamma_0}||\lh(E^\Tt_{\gamma_0}),E^\Tt_{\gamma_0})$ is a
premouse, and
likewise for $E^\Uu_{\gamma'}$, so $E^\Tt_{\gamma_0}=E^\Uu_{\gamma'}$,
contradiction. So $\varrho_1=\nu(E^\Uu_\beta)$.

Now let $\alpha_0=\alpha$ and $\beta_0=\beta$ and
\[ \beta_1+1=\min((\beta_0+1,\om_1]_\Uu).\]
So $\varrho_2=\varrho(E^\Uu_{\beta_1})$ and
\[ G_{\varrho_1}=E^\Uu_{\beta_1}\rest\varrho_2.\]
Now like before, $\varrho_2<\nu(E^\Uu_{\beta_1})$ (here if
$\lh(E^\Tt_{\gamma_0})<\lh(E^\Uu_{\beta_1})$, use the argument
from Case \ref{case:varrho=nu}). So there is a non-$\Uu$-special
$E^\Uu_{\delta_1}$ such that $\varrho_3=\nu(E^\Uu_{\delta_1})$ and
\[ G_{\varrho_2}=E^\Uu_{\delta_1}\rest\varrho_3. \] 
But then as before,
\[ G_{\varrho_2}=E^\Tt_{\alpha_2}\rest\varrho(E^\Tt_{\alpha_2})\] for
some $\Tt$-special $E^\Tt_{\alpha_2}$ with
$\varrho(E^\Tt_{\alpha_2})<\nu(E^\Tt_{\alpha_2})$. So by \ref{*2},
$E^\Tt_{\alpha_0}$
has only two natural factors, $G_{\varrho_0}$ and $G_{\varrho_1}$.

This
pattern continues through $\om$ stages, producing a sequence of overlapping
$\Tt$- and $\Uu$-special extenders $E^\Tt_{\alpha_{2i}}$ and
$E^\Uu_{\beta_{2i+1}}$, for $i<\om$, each having exactly two natural
factors. The natural factors of $E^\Tt_{\alpha_{2i}}$
are $G_{\varrho_{2i}}$ and $G_{\varrho_{2i+1}}$; the natural factors of
$E^\Uu_{\beta_{2i+1}}$ are $G_{\varrho_{2i+1}}$ and $G_{\varrho_{2i+2}}$.
   
For $i<\om$ let $\kappa_i=\crit(G_{\varrho_i})$ and
$F_{2i}=E^\Tt_{\alpha_{2i}}$. So $\kappa_{2i}=\crit(F_{2i})$. The next claim
shows that $M^\Tt_{\om_1}$ is illfounded, a contradiction.

\begin{clm*} For each
$i<\om$ we have
$i_{F_{2i}}(\kappa_{2i})>\kappa_{2i+2}$.
\end{clm*}

\begin{proof} We will take $i=0$ for simplicity, but the general case is
similar. We have
\[
E^\Tt_{\gamma_0}\rest\nu(E^\Tt_{\gamma_0})=G_{\varrho_1}=E^\Uu_{\beta_1}
\rest\varrho(E^\Uu_{\beta_1}), \]
and $\varrho_2=\varrho(E^\Uu_{\beta_1})<\nu(E^\Uu_{\beta_1})$. Let
$\zeta<_\Uu\beta_1$ be least such that
$\varrho_2\leq\crit(i^\Uu_{\zeta\beta_1})$.
Then $\gamma_0\geq\zeta$ because otherwise there is some extender used along
$[0,\zeta]_\Uu$ which sends its critical point $>\varrho_2$, but then
$\varrho(E^\Uu_{\beta_1})>\varrho_2$, a contradiction.
And $\varrho_2=\nu(F_1(M^\Uu_\zeta))$ and
\[ F_1(M^\Uu_\zeta)\rest\varrho_2=G_{\varrho_1}. \]
So $E^\Tt_{\gamma_0}$ and $F_1(M^\Uu_\zeta)$ are essentially the same extender.
Therefore
\[ M^\Uu_\zeta||\OR(M^\Uu_\zeta) =
\Ult(M^\Tt_{\gamma_0},E^\Tt_{\gamma_0})||\OR(M^\Uu_\zeta), \]
which implies that $E^\Uu_\xi=\emptyset$ for all $\xi\in[\zeta,\gamma_0)$,
so $M^\Uu_{\gamma_0}=M^\Uu_\zeta$, and implies that either
\[ \lh(F_1(M^\Uu_\zeta))=\lh(E^\Tt_{\gamma_0})\ \ \&\ \
F_1(M^\Uu_\zeta)=E^\Tt_{\gamma_0}\ \ \&\ \
E^\Uu_{\gamma_0}=F_0(M^\Uu_{\zeta}) \]
or
\[ \lh(F_1(M^\Uu_{\zeta}))>\lh(E^\Tt_{\gamma_0})\ \ \&\
\ E^\Uu_{\gamma_0}=\emptyset=E^\Tt_{\gamma_0+1}\ \ \&\
\ E^\Uu_{\gamma_0+1}=F_0(M^\Uu_\zeta). \]
Note that, in any case, letting
$F=F_1(M^\Uu_\zeta)$, we have
\[ i_{G_{\varrho_1}}(\kappa_1)=i_F(\kappa_1)>\delta(M^\Uu_\zeta)\geq\kappa_2. \]
The fact that $i_F(\kappa_1)>\delta(M^\Uu_\zeta)$ is because $F$ is not of
superstrong type, which is because $F_1^P$ is not of superstrong type (this is
easily preserved under iteration). The last inequality is because $\kappa_2=\crit(E^\Uu_{\delta_1})$ and $M^{*\Uu}_{\delta_1+1}=M^\Uu_\zeta$.

But $i_{F_0}(\kappa_0)>i^{*\Tt}_{\gamma_0+1}(\kappa_1)=i_F(\kappa_1)$. This completes the proof of the claim (in the case that $i=0$).\end{proof}

This completes Case \ref{case:varrho<nu}, and
by symmetry, the
proof.\qedhere\end{case}\end{proof}

Using the lemma, one can establish  ``Uniqueness of the next extender'':

\begin{proof}[Theorem \ref{thm:uniquemaxLEcon}, Proof Sketch]
Suppose not.
By \cite[\S9, \S12]{fsit}, $B=(N_\lambda,E,F)$ is a
mixed bicephalus. But $B$ is $(0,\om_1+1)$-iterable, by the proof in
\cite[\S 12]{fsit}. But the iterability of $B$ contradicts Lemma \ref{lem:bicephalus}.

(At the request of the referee, we provide a sketch of the iterability proof.
It is almost the same as the case for a bicephalus with two active type 2 extenders.
Given a tree $\Tt$ on $B$ and its lift $\Uu$ on $R$, and given $\alpha<\lh(\Tt)$ such that $[0,\alpha]_\Tt$ does not drop,
write \[ M^\Tt_\alpha=M_\alpha=(N_\alpha,F_{0\alpha},F_{1\alpha}),\]
\[ B_\alpha=i^\Uu_{0\alpha}(B)=(A_\alpha,E_{0\alpha},E_{1\alpha}).\]
We will have then a lifting map
\[ \pi_\alpha:M_\alpha\to B_\alpha,\]
with
\[ \pi_\alpha\com i^\Tt_{0\alpha}=i^\Uu_{0\alpha}\rest B,\]
and in particular, $\pi_\alpha(\delta(M_\alpha))=\delta(B_\alpha)$.
Moreover,
\[ \pi_\alpha:(N_\alpha,F_{i\alpha})\to(A_\alpha,E_{i\alpha}) \]
is a weak $0$-embedding for both $i=0$ and $i=1$ (that is, $\Sigma_0$-elementary, and $\Sigma_1$-elementary
on an $\in$-cofinal set). So $\pi_\alpha$ lifts extenders from $\es^{N_\alpha}$
to $\es^{B_\alpha}$, and lifts $F_{i\alpha}$ to $E_{i\alpha}$.
Moreover, in the case that $E^\Tt_\alpha=F_{1\alpha}$,
because we use the exchange ordinal
$\nu^\Tt_\alpha=\delta(M^\Tt_\alpha)$
and since $i^\Uu_{0\alpha}$ is fully elementary, we have
\[ \pi_\alpha(\nu^\Tt_\alpha)=\delta(B_\alpha)=\nu(E_{1\alpha}), \]
so in this case we have the usual sort of correspondence of exchange ordinals $\nu^\Tt_\alpha$ and $\nu^\Uu_\alpha$.)
\end{proof}

\bibliographystyle{plain}
\bibliography{biblio}

\end{document}